\newcommand\bH{\mathbb{H}}
\newcommand\bR{\mathbb{R}}
\newcommand\bE{\mathbf{E}}
\newcommand\bB{\mathbf{B}}
\newcommand\cC{\mathcal{C}}
\newcommand\cD{\mathcal{D}}
\newcommand\cE{\mathcal{E}}
\newcommand\cH{\mathcal{H}}
\newcommand\cI{\mathcal{I}}
\newcommand\cS{\mathcal{S}}
 \theoremstyle{definition}
\newtheorem{theorem}{Theorem}[section]
\newtheorem{lemma}[theorem]{Lemma}
\newtheorem{corollary}[theorem]{Corollary}
\newtheorem{proposition}[theorem]{Proposition}
\newtheorem{definition}{Definition}[section]
\theoremstyle{remark}
\newtheorem{remark}[theorem]{Remark}
\newtheorem{assumption}[theorem]{Assumption}
\numberwithin{equation}{section}
\newcommand\sff{\mathsf{f}}
\newcommand{\hathat}[1]{%
\begingroup%
  \let\macc@kerna\z@%
  \let\macc@kernb\z@%
  \let\macc@nucleus\@empty%
  \hat{\raisebox{.4ex}{\vphantom{\ensuremath{#1}}}\smash{\hat{#1}}}%
\endgroup%
}
\newcommand{\dtilde}[1]{%
\begingroup%
  \let\macc@kerna\z@%
  \let\macc@kernb\z@%
  \let\macc@nucleus\@empty%
  \tilde{\raisebox{.4ex}{\vphantom{\ensuremath{#1}}}\smash{\tilde{#1}}}%
\endgroup%
}
\begin{document}

\title[VML with the SBC]{The Local-well-posedness of the relativistic Vlasov-Maxwell-Landau system with the specular reflection boundary condition}

\author[H. Dong]{Hongjie Dong}
\address[H. Dong]{Division of Applied Mathematics, Brown University, 182 George Street, Providence, RI 02912, USA}
\email{Hongjie\_Dong@brown.edu }
\thanks{H. Dong was partially supported by a Simons fellowship, grant no. 007638 and the NSF under agreement DMS-2055244. Y. Guo's research was supported in part by NSF Grant DMS-2106650.}

\author[Y. Guo]{Yan Guo}
\address[Y. Guo]{Division of Applied Mathematics, Brown University, 182 George Street, Providence, RI 02912, USA}
\email{Yan\_Guo@brown.edu}

\author[Z. Ouyang]{Zhimeng Ouyang}
\address[Z. Ouyang]{Department of Mathematics, University of Chicago, 5734 S. University Avenue, 
Chicago, IL, 60637}
\email{ouyangzm9386@uchicago.edu}

\author[T. Yastrzhembskiy]{Timur Yastrzhembskiy}
\address[T. Yastrzhembskiy]{Division of Applied Mathematics, Brown University, 182 George Street, Providence, RI 02912, USA}
\email{Timur\_Yastrzhembskiy@brown.edu}

\subjclass[2010]{35Q83, 35Q84, 35Q61, 35K70, 35H10, 34A12 }
\keywords{Relativistic Vlasov-Maxwell-Landau system,  collisional plasma,  specular reflection boundary condition, div-curl estimate, kinetic Fokker-Planck equation }

\begin{abstract}
We prove the local-in-time well-posedness of the relativistic Vlasov-Maxwell-Landau system in a bounded domain $\Omega$ with the specular reflection condition. Our result covers the case when $\Omega$ is a non-convex domain, e.g., solid torus.
To the best of our knowledge, this is the first local well-posedness result for a nonlinear kinetic model with a self-consistent magnetic effect in a three-dimensional \textbf{bounded} domain. 
\end{abstract}

\maketitle

\tableofcontents

\section{Introduction}
                \label{section 1}
Let $z = (t, x, p)$, where $t \in \bR$, $x, p \in \bR^3$ are the temporal, spatial, and momentum variables, respectively. For a spatial domain $\Omega \subset \bR^3$, we denote the incoming/outgoing boundaries and the grazing sets, respectively, as follows:
\begin{align*}
  & \index{$\gamma_{\pm}$}  \gamma_{-} = \{(x, p): x \in \partial \Omega,  n_x \cdot p < 0 \},  \quad
    \gamma_{+} = \{(x, p): x \in \partial \Omega,  n_x \cdot p > 0 \}, \\
   & \index{$\gamma_0$} \gamma_0 = \{(x, p): n_x \cdot p = 0\},
\end{align*}
 where $n_x$ is an outward unit normal vector at $x \in \partial \Omega$.
Furthermore, we denote 
$$
   \index{$p_0$}  p_0 = (1+|p|^2)^{1/2}, \quad \index{$v (p)$} v (p) = \frac{p}{p_0}.
$$

We study the relativistic Vlasov-Maxwell-Landau (RVML) system  in a bounded domain:
\begin{equation}
    \label{VML}
\begin{aligned}
&
\partial_t F^{+} + v (p) \cdot \nabla_x F^{+} + (\bE + v (p) \times \bB) \cdot \nabla_p F^{+} = \cC (F^{+}, F^{+}) + \cC (F^{+}, F^{-}),\\
&
	\partial_t F^{-} + v (p) \cdot \nabla_x F^{-} - (\bE + v (p) \times \bB) \cdot \nabla_p F^{-} = \cC (F^{-}, F^{-}) + \cC (F^{-}, F^{+}),\\
			&\partial_t \bE - \nabla_x \times \bB = -  \int v (p) (F^{+} - F^{-}) \, dp,\\
 &
	\partial_t \bB +  \nabla_x \times \bE = 0, \\
 &
	\nabla_x \cdot \bE = \int (F^{+} - F^{-}) \, dp,
	   \quad \nabla_x \cdot \bB = 0,\\
	   & (\bE \times n_x)|_{\partial \Omega} = 0, \quad (\bB \cdot n_x)|_{\partial \Omega} = 0
\end{aligned}
\end{equation}
with the initial conditions 
$$
    F^{\pm} (0, \cdot) = F_0^{\pm} (\cdot), \, \,  \bE (0, \cdot) = \bE_0 (\cdot), \, \,  \bB (0, \cdot) = \bB_0 (\cdot),
$$ and the specular reflection boundary condition (SRBC)
$$
    F^{\pm} (t, x, p) = F^{\pm} (t, x, R_x p), \quad \index{$R_x p$} R_x p = p - 2 (p \cdot n_x) n_x.
$$
Here $F^{+}$ and $F^{-}$ are the density functions of ions and electrons, respectively, and $\mathcal{C}$ is the relativistic Landau collision operator given by
   \begin{equation}
            \label{eq1.1.2}
 \index{$\cC (f, g)$}	\cC (f, g) (p) =  \nabla_p \cdot \int_{\bR^3}  \Phi (P, Q) \big(\nabla_p f (p) g (q) -   f (p) \nabla_q g (q)\big) \, dq,
  \end{equation}
  where $\Phi (P, Q)$ is the Belyaev-Budker kernel defined in Section \ref{section 2} (see \eqref{eq1.11}). The RVML is a fundamental model of a hot dilute collisional plasma with magnetic and relativistic effects. Such a model is relevant for plasma fusion in tokamaks, where plasma particles may reach high velocities. For the formal derivation, see, for example,  \cite{LP_81}.
  
 For the sake of simplicity,  all the physical constants are set to $1$ (cf. \cite{GS_03}) since the exact relationships among them do not play any role in our analysis.
Our goal is to prove the local-in-time well-posedness of the RVML system near the relativistic Maxwellian
\begin{equation}
            \label{eq1.1.3}
    \index{$J (p)$}	J (p) = e^{- p_0 },
\end{equation}
which is called the J\"uttner's solution.  The global-well-posedness of the RVML system was first established in  \cite{GS_03} for the periodic boundary conditions,  and later, this result was extended to the whole space in \cite{YY_12}.
For the related studies of this model, see  \cite{LZ_14} and \cite{X_15}.


The presence of spatial boundaries is natural in kinetic models, and the study of boundary value problems is one of the foci of contemporary kinetic  PDE theory. In this context, the investigation of hyperbolic kinetic models poses a formidable challenge due to the non-uniformly characteristic nature of the grazing set $\gamma_0$ associated with the free streaming operator $\partial_t + p \cdot \nabla_x$. Near the grazing set,  the regularity of a solution is expected to deteriorate significantly, resulting in profound mathematical intricacies. The standard energy techniques, which typically rely on differentiating with respect to spatial and velocity variables, become inadequate in such a scenario.

Particularly noteworthy is the occurrence of singularities emanating from the grazing set in non-convex domains \cite{K_11}, where hyperbolic kinetic PDEs are expected to yield solutions of, at best, bounded variation \cite{GKTT_16}. Furthermore, the introduction of magnetic effects can trigger singularities even in a half-space domain. An illustrative example is the one-dimensional relativistic Vlasov-Maxwell (RVM) system subject to the perfect conductor boundary conditions  \cite{G_95} (see also \cite{G_94} for an example in a three-dimensional half-space). This specific case underscores the limited knowledge we possess, as only the global existence of a weak solution is currently known for the RVM system in a three-dimensional bounded domain \cite{G_93}.

In stark contrast, in convex domains and in the absence of magnetic effects, recent papers have demonstrated global well-posedness for several important hyperbolic plasma models such as the Vlasov-Poisson and Vlasov-Poisson-Boltzmann systems \cite{H_04}, \cite{HV_10}, \cite{CKD_19}.

Conversely, when velocity diffusion is introduced, a higher degree of regularity near the grazing set is expected, owing to a hypoelliptic gain \cite{S_22}.  The nature of this regularity depends on the specific boundary conditions imposed on the outgoing boundary $\gamma_{-}$. Notably, a linear kinetic Fokker-Planck equation (KFP) with the inflow (Dirichlet) boundary conditions is anticipated to exhibit at most H\"older regularity in both spatial and velocity variables \cite{HJL_14}.

However, in the presence of the SRBC, a unique avenue opens. Employing a flattening and extension strategy, one can extend the solution of the KFP equation to the entire space and invoke the $S_p$ theory of KFP equations, akin to the Calderon-Zygmund theory for parabolic PDEs  \cite{GHJO_20}, \cite{DGO_22}, \cite{DGY_21}. This approach yields H\"older regularity not only for the solution but also for its velocity gradient. Such an extension argument is unknown for other boundary conditions in kinetic theory.

In recent years, an $L_2$ to $L_{\infty}$ framework has been developed for the Boltzmann equation in bounded domains (see \cite{G_10} and \cite{EGKM_13}, \cite{EGKM_18}, \cite{KGH_20} for further developments). The method is based on interpolating between the 
 natural entropy or energy bound and the interplay between characteristics and velocity averaging in the collision (\cite{G_10QAM}). However, 
  this approach is less applicable to the Landau equation due to the absence of characteristic curves. We emphasize that a higher regularity of the velocity gradient is required to establish the uniqueness for the Landau equation due to the nonlinear diffusion term (see \cite{KGH_20}). To handle the  Landau and the Vlasov-Poisson-Landau equations with the SRBC, the authors of \cite{GHJO_20} and \cite{DGO_22} combined the aforementioned mirror-extension method with the $S_p$ estimate. Their results require merely $C^2$ regularity of domains and, hence, allow a solid tori domain, which resembles a tokamak.

Adapting the aforementioned framework to the  RVML system poses a formidable challenge due to the anticipated low regularity of solutions to Maxwell's equations. The intricate nature of the relativistic Landau kernel, coupled with the presence of the relativistic transport term, introduces additional mathematical complexities. Our innovative approach involves deducing the regularity of solutions to Maxwell's equations by treating them as an elliptic system of the Hodge type. This inspired the development of a delicate iteration scheme, where we propagate temporal derivatives and employ a descent argument, leveraging div-curl estimates and a relativistic adaptation of the $S_p$ estimates for KFP equations with the SRBC.

Our main result is, informally speaking, the following (see Theorem \ref{theorem 5.1}):   if $F_0 - J$, $\bE_0$, $\bB_0$ are of order  $\varepsilon$ in some sense, then, the RVML system has a unique strong solution $[F, \bE_f, \bB_f]$ on $[0, T]$ for some $T > 0$ such that $F^{\pm} - J, \bE_f, \bB_f$ are of order $\varepsilon$. 
Due to the delicate behavior of kinetic PDEs near the boundary, there have been 
 few results on well-posedness for any kinetic models with a self-consistent magnetic effect in three-dimensional domains (see \cite{CK_23} for the result on RVM in a half-space). To the best of our knowledge, Theorem \ref{theorem 5.1} provides the first well-posedness result for the system with the Vlasov-Maxwell structure in a 3d \textbf{bounded} domain.  In a separate paper \cite{GWP_23}, the first, second, and fourth authors established a global estimate and asymptotic stability for the RVML system near a global J\"uttner's solution.

\section{Notation and conventions}
                                            \label{section 2}
Before we state the main results, we introduce some notation.
Throughout the paper, $T > 0$ is a number.
\begin{itemize}
\item Geometric notation.
\begin{align}
 \label{eq1.7}
  & \index{ $P$}	P=  (p_0, p), \quad Q =  (q_0, q), \quad \index{ $P \cdot Q$} P \cdot Q = p_0 q_0 - p \cdot q,\\
   & \label{not1}
   B_r  (x_0) = \{x \in \bR^3: |x - x_0| < r\}, \quad 
  \index{$\Omega_{r} (x_0)$} \Omega_{r} (x_0) = \Omega \cap B_{r} (x_0), \\
	& 
\label{eq1.2.0}
 \bR^3_{\pm} = \{x \in \bR^3: \pm x_3 > 0\},
 \quad   \index{$\mathbb{H}_{\pm}$} \mathbb{H}_{\pm} = \{(x, p) \in \bR^3_{\pm} \times \bR^3\}, \\
&
  \index{$\mathbb{H}_{\pm}^T$}  \mathbb{H}_{\pm}^T = \{z \in (0, T) \times  \mathbb{H}_{\pm}\}, \quad  \bR^7_T = \{z \in (0, T) \times \bR^6\},\notag \\
&
 \index{$\Sigma^T$} \Sigma^T = (0, T)\times \Omega \times \bR^3,\quad \Sigma^T_{\pm} = (0,T) \times \gamma_{\pm} \notag.
\end{align}

\item Matrix notation.
\begin{align}
   & \index{$\bm{1}_3$} \bm{1}_3 = \text{diag} (1, 1, 1), \quad \index{$\bm{R}$} \bm{R} = \text{diag} (1, 1, -1), \notag\\
&  
	 \label{eq1.10}
  \index{$\bm{\xi}$}  \bm{\xi} = \text{diag} (1, -1),
\quad \index{$\bm{\xi}_0$} \bm{\xi}_0 =\begin{bmatrix} 1 \\ 1 \end{bmatrix}, \quad 	\quad \index{$\bm{\xi}_1$} \bm{\xi}_1 = \begin{bmatrix} -1 \\ 1 \end{bmatrix}.
\end{align}

\item Relativistic kinetic transport operator.
\begin{equation*}
\begin{aligned}
 &  \index{$Y$} Y  = \partial_t + v (p) \cdot \nabla_x.
\end{aligned}
\end{equation*}

\item Relativistic  Belyaev-Budker kernel. We introduce
 \begin{align}
 \label{eq1.8}
&	\index{$\Lambda (P, Q)$} \Lambda (P, Q) = (P\cdot Q)^2 \big((P\cdot Q)^2 - 1\big)^{-3/2},\\
 \label{eq1.9}
	& \index{$S (P, Q)$} S (P, Q)  =  \big((P\cdot Q)^2 - 1\big) \bm{1}_3  - (p-q) \otimes (p-q) \\
	& \quad \quad  \quad \quad+  (P\cdot Q - 1)(p \otimes q  + q \otimes p),\notag\\
&
\label{eq1.11}
\index{$\Phi (P, Q)$}	\Phi (P, Q) =\frac{ \Lambda (P, Q)}{p_0 q_0} S (P, Q).
\end{align}

\item Function spaces. Let $G \subset \bR^7$ be an open set.
\begin{itemize}[--]
\item[--] $C (\overline{G})$ be the set of all bounded continuous functions on $\overline{G}$, 
and $C^k (\overline{G}), k \in \{1, 2, \ldots\}$ be the subspace of $C (\overline{G})$  functions
with  partial derivatives up to order $k$ belonging to $C (\overline{G})$.

\item[--] \index{$C^1_0$} $C^1_0 (\overline{G})$ ($C^{0, 1}_0 (\overline{G})$) is the subset of $C^1 (\overline{G})$   (Lipschitz) functions on $G$  that vanish for large $z$. Similarly, one can define $C^k_0 (\overline{G}), k \in \{2, 3, \ldots\}$.

\item[--] $C^{\infty}_0 (G)$ is the set of all infinitely differentiable functions with the support contained in $G$.

\item[--] \textit{Anisotropic H\"older space.}
For  an open set $D \subset \bR^6$ and $\alpha \in (0, 1]$,
by $C^{\alpha/3, \alpha}_{x, p} (D)$,
we denote the set of all bounded functions
$f = f (x, p)$ such
that
\begin{align*}
        &\index{$C^{\alpha/3, \alpha}_{x, p} (D)$} [f]_{C^{\alpha/3, \alpha}_{x, p} (D)  } \\
        &:= \sup_{ (x_i, p_i) \in \overline{D}:  (x_1, p_1) \neq (x_2, p_2) } \frac{|f (x_1, p_1)- f (x_2, p_2)|}{(|x_1-x_2|^{1/3}+|p_1 - p_2|)^{\alpha}} < \infty.
\end{align*}
Furthermore, the norm is given by
\begin{equation}
			\label{1.2.0}
    \|f\|_{C^{\alpha/3, \alpha}_{ x, p } (D)  } : = \|f\|_{  L_{\infty} (D) }  +  [f]_{C^{\alpha/3, \alpha}_{x, p} (D)  }.
\end{equation}

\textit{Weighted spaces on the kinetic boundary.}
For a weight $\omega \ge 0$ on $\partial \Omega \times \bR^3$, we set
\begin{align}
        \label{1.2.0.0}
    \|f\|^2_{ L_{2} (\Sigma^T_{\pm}, \omega) }  = \int_{ \Sigma^T_{\pm}} f^2 \omega \, dS_x dp.
\end{align}

\item \textit{Traces}. 
Let  $r \in [1, \infty)$ and $f \in L_r (\Sigma^T)$  be a function such that $Y  f \in L_r (\Sigma^T)$. Then, the traces of $f$ can be defined (see the details in Section \ref{section 9}). In particular, there exist functions  $(f_T, f_0, f_{+}, f_{-})$, which we call traces of $f$, such that a variant of Green's identity holds (see Proposition \ref{proposition C.1}).

\item 
\textit{Weighted Lebesgue space.}
For $\theta \in \bR$ and $r \in [1, \infty]$, by $L_{r, \theta} (G)$
we denote the set of all Lebesgue measurable functions $u$
such that
$$
  \index{$L_{r, \theta}$}  \|u\|_{ L_{r, \theta} (G) }:= \|p_0^{\theta} u\|_{L_r (G)} < \infty.
$$

\item \textit{Weighted Sobolev spaces.}
For $r \in [1, \infty]$, by $W^1_{r, \theta} (\bR^3)$ we denote the Banach space of functions  $u  \in L_{r, \theta} (\bR^3)$ such that  the norm
$$
   \index{$W^1_{r, \theta} (\bR^3)$} \|u\|_{ W^1_{r, \theta} (\bR^3)} :=   \||u|+|\nabla_p u|\|_{ L_{r, \theta} (\bR^3) } < \infty.
$$
For $\theta = 0$, we set $W^1_r (\bR^3): = W^1_{r, 0} (\bR^3)$.

\item \textit{Dual Sobolev space.} \index{$W^{-1}_{2,   \theta} (\bR^3)$} Let
$W^{-1}_{2,   \theta} (\bR^3)$  be the space of all distributions  $u$
such that 
\begin{align}
      \label{eq1.2.15}
        u =  \partial_{p_i}  \eta_i + \xi
\end{align}
for some $\xi, \eta_i \in L_{2, \theta} (\bR^3), i = 1, 2, 3$.
Furthermore, for $u \in W^{-1}_2 (\bR^3)$ and $f \in W^{1}_2 (\bR^3)$, by
\begin{align}
      \label{eq1.2.16}
 \index{$ \langle \cdot, \cdot \rangle$}   \langle u, f \rangle =  \int_{\bR^3}  (- \eta_i \cdot \partial_{p_i} f+ \xi f) \, dp,
\end{align}
we denote the duality pairing between $W^{-1}_2 (\bR^3)$ and $W^{1}_2 (\bR^3)$, which is independent of the choice of $\eta_i$ and $\xi$.

\item \textit{Non-relativistic (Newtonian) Kinetic  Sobolev space.} 
$$
  \quad \index{unsteady $S_r^N$ space}  S_{r}^{N} (G) = \{f \in L_{r} (G): (\partial_t + p \cdot \nabla_x) f, \nabla_p f, D^2_{p} f \in L_{r} (G)\},
$$    
and the norm is defined as follows:
\begin{equation}
			\label{eq1.2.4.0}
 \quad \quad   \quad \quad \quad     \|f\|_{S_{r}^{N} (G)} = \||f| + |\nabla_p f| + |D^2_p f| + |(\partial_t +p \cdot \nabla_x) f|\|_{ L_{r} (G) }.
\end{equation}

\item \textit{Mixed-norm spaces.} For normed spaces $X$ and $Y$, we write $u = u (x, y) \in X Y$ if for each $x$, $u_x : = u (x, \cdot) \in Y$, and 
$$
   \index{Mixed-norm spaces $X Y$}  \|u\|_{ X Y  }  :=   \| \|u_x\|_{ Y } \|_X  < \infty.
$$

\item \textit{Weighted unsteady relativistic kinetic Sobolev spaces.}
Let $S_{r, \theta} (G) = \{f \in L_{r, \theta} (G): Y f, \nabla_p f, D^2_p f \in L_{r, \theta} (G)\}$
be  the Banach space  with the norm
\begin{equation}
			\label{1.2.4}
 \quad \quad   \quad \quad \quad  \quad \index{unsteady $S_r$ space} \|f\|_{S_{r, \theta} (G)} = \||f| + |\nabla_p f| + |D^2_p f| + |Y  f|\|_{ L_{r, \theta} (G) }.
\end{equation}
In the case when $\theta = 0$, we set
$S_{r} (G) = S_{r, 0} (G)$.

\item 
 \textit{Steady $S_r$ spaces.}  For $r \in [1, \infty]$, by $ S_{ r, \theta} (\Omega \times \bR^3)$, we denote the set of all functions $u$ on $\Omega \times \bR^3$ such that
\begin{equation}
                \label{eq1.2.7.0}    
  \begin{aligned}
 &\index{steady $S_r$ space}  u, v (p) \cdot  \nabla_x u, \nabla_p u, D^2_p u \in L_{r, \theta} (\Omega \times \bR^3)
 \end{aligned}
 \end{equation}
 The norm is given by
\begin{equation}
			\label{eq1.2.7}
\begin{aligned}
	& \quad \quad \quad  \|u\|_{ S_{ r, \theta} (\Omega \times \bR^3) }  =  \||u| + |v (p) \cdot \nabla_x u| + |\nabla_p u| + |D^2_p u|\|_{ L_{r, \theta} ( \Omega \times \bR^3 ) }.
\end{aligned}
\end{equation}
\end{itemize}

\textit{Vector fields.} We use boldface letters to denote vector fields. We write $\bm{u} \in  X$, where $X$ is some vector space if each component of $\bm{u}$ belongs to $X$.

\item 
\textit{Conventions.}
\begin{itemize}
\item We assume the summation with respect to repeated indexes.
\item  If functions $f$ and $g$ are defined on $D \subset \bR^3$ and $\bR^3$, respectively, and  $g$ vanishes outside $D$, 
then,  for $x \not \in D$, we set $(f g) (x) = 0$.

\item  By $N = N (\cdots)$, we denote a constant depending only on the parameters inside the
parentheses. The constants $N$ might change from line to line. Sometimes, when it is clear what parameters $N$  depends on, we omit them.
\end{itemize}

\end{itemize}

\section{Main results}
            \label{section 3}
Let  $f = (f^{+}, f^{-})$ be  perturbations  of $F^{\pm}$ near the relativistic Maxwellian given by
$$
    F^{\pm} = J + J^{1/2} f^{\pm}
$$ 
(see \eqref{eq1.1.3}).
We denote
\begin{equation}
            \label{eq5}    
\begin{aligned}
&
\index{$L$}	L f = -A f - K f,\\
 &
\index{$A$}	A_{\pm} f = 2 J^{-1/2} \cC (J^{1/2} f^{\pm}, J), \quad  A f = (A_{+} f, A_{-} f),\\
&
\index{$K$}	K f =   J^{-1/2} \cC \big(J, J^{1/2} (f^{+}+f^{-})\big) \bm{\xi}_0,\\
 &
	\Gamma_{\pm} (f, g) = J^{-1/2}\cC \big(J^{1/2} f^{\pm}, J^{1/2} (g^{+} + g^{-})\big),\\
		&
\index{$\Gamma (f, g)$}	\Gamma (f, g) = (\Gamma_{+} (f, g), \Gamma_{-} (f, g)),
\end{aligned}
\end{equation}
where $\cC$ is defined by \eqref{eq1.1.2} and \eqref{eq1.8} - \eqref{eq1.11}.
Then, the triple $[f, \bE, \bB]$ satisfies the following system (see p. 276 in \cite{GS_03}):
\begin{align}
            \label{eq5.0}
	&Y f = -\bm{\xi} (\bE + v (p) \times \bB) \cdot \nabla_p f +  \frac{\bm{\xi}_1}{2}  (v (p) \cdot \bE) f
 + A f  \\
	&+  \bm{\xi} (v (p) \cdot \bE)  J^{1/2}  + K f + \Gamma (f, f), \notag\\
		& f (0, \cdot) = f_0 (\cdot), \, \,  f (t, x, p) = f (t, x, R_x p), \, \, z \in \Sigma^T_{-}, \notag\\
             \label{eq5.0.1}
	& \index{$\bm{j}_f$} \partial_t \bE - \nabla_x \times \bB = -  \bm{j}_f: =  - \int v (p) J^{1/2} (p) f (p) \cdot \bm{\xi} \, dp,\\
 &
             \label{eq5.0.2}
	\partial_t \bB  + \nabla_x \times \bE = 0, \\
 &
              \label{eq5.0.3}
\index{$\rho_f$}	\nabla_x \cdot \bE = \rho_f: = \int J^{1/2}  f (p) \cdot \bm{\xi} \, dp, \quad \nabla_x \cdot \bB = 0,\\ 
	&
	             \label{eq5.0.4}
	   (\bE \times n_x)_{|\partial \Omega} = 0, \, \,   (\bB \cdot n_x)_{|\partial \Omega} = 0, \, \,   \bE (0, \cdot) = \bE_0 (\cdot), \, \,  \bB (0, \cdot) = \bB_0 (\cdot),
\end{align}
where   $\bm{\xi}$ and $\bm{\xi}_1$ are defined in \eqref{eq1.10}.
For the sake of convenience, we also call \eqref{eq5.0} - \eqref{eq5.0.4} the RVML system.

Before we state the definition of the  strong solution to the RVML system, we introduce the notions of finite energy  and  strong solutions to the linear relativistic Landau equation  
   \begin{align}
   			\label{eq1.0}
	&	Y f  - \nabla_p \cdot (\sigma_g \nabla_p f) + b \cdot \nabla_p f + (c + \lambda)  f = \eta, \\
\label{eq1.0.0}
 &f (t, x, p) = f (t, x, R_x p), \, z \in \Sigma^T_{-}, \, \, f (0, \cdot) = f_{0} (\cdot), 
  \end{align}
where
\begin{equation}    
            \label{eq6.0}
    \index{$\sigma_g$}    \sigma_g (t, x, p) = \int_{\bR^3} \Phi (P, Q) (2 J + J^{1/2} g (t, x, q))\, dq.
\end{equation}
\begin{remark}
The Landau equation \eqref{eq5.0} can be rewritten as \eqref{eq1.0} with
$g = f^{+}+f^{-}$ in $\eqref{eq6.0}$, $\lambda = 0$, and certain $b$, $c$,  $\eta$ depending on $f$. See the details in the proof of Proposition \ref{proposition 7.3} (cf. \eqref{eq7.48}).
\end{remark}

\begin{definition}[finite energy   solution]
    \label{definition 27.1}
    We say  that
    $$f \in C ([0, T]) L_2 (\Omega \times \bR^3) \cap L_2 ((0, T) \times \Omega) W^1_2 (\bR^3)$$ is a finite energy   solution to Eq. \eqref{eq1.0} - \eqref{eq1.0.0}
    if for any test function $\phi$ satisfying
        \begin{align}
   \label{eq27.2.1}
    & 
       \phi  \in L_2 ((0, T) \times \Omega) W^1_2 (\bR^3),   Y \phi \in  L_2 (\Sigma^T), \\
       \label{eq27.2.2}
           & \phi \in  C ([0, T]) L_2 (\Omega \times \bR^3), \\
      \label{eq27.2.3}
    &   \phi (t, x, p) = \phi (t, x, R_x p), \, (t, x, p) \in \Sigma^T_{-} \, \, \text{(in the trace sense)},
        \end{align}
and all $t \in [0, T]$, one has
\begin{align}
    \label{eq27.2}
    &   \int_{\Omega \times \bR^3} (f \phi) (t, x, p) - f_0 (x, p) \phi (0, x, p) \, dx dp
    - \int_{ (0, t)  \times \Omega \times \bR^3 }  f (Y \phi) \, dz\\
    &+ \int_{(0, t)  \times \Omega \times \bR^3} \bigg((\nabla_p \phi)^T \sigma_g \nabla_p f 
    + (b \cdot \nabla_p f) \phi + (c + \lambda) f \phi\bigg) \, dz \notag  \\
    &= \int_{(0, t)  \times \Omega }  \langle \eta (\tau, x, \cdot), \phi (\tau, x, \cdot) \rangle \notag \, dx d\tau.
\end{align}
where $\langle \cdot, \cdot \rangle$ is the duality pairing between $W^{-1}_2 (\bR^3)$  and $W^{1}_2 (\bR^3)$ (see \eqref{eq1.2.16}).

Furthermore, let $g$, $b, c$,  $f$, and $\eta$ be $t$-independent functions. Then,  we say that $f \in L_2 (\Omega) W^1_2 (\bR^3)$ is a finite energy  solution to the steady equation
\begin{equation}
\begin{aligned}
    \label{eq27.1.2}
    	&v (p) \cdot \nabla_x f  - \nabla_p \cdot (\sigma_g \nabla_p f) + b \cdot \nabla_p f + (c + \lambda)  f = \eta, \\
 &f (x, p) = f (x, R_x p), \, z \in \gamma_{-}, 
 \end{aligned}
\end{equation}
if for any test function $\phi =  \phi (x, p)$ satisfying the conditions analogous to \eqref{eq27.2.1} - \eqref{eq27.2.3}, the  `steady' counterpart of the identity \eqref{eq27.2} holds.  
\end{definition}

\begin{remark}
    \label{remark 27.7}
By Lemma \ref{lemma C.5},  if $\phi$  satisfies the conditions \eqref{eq27.2.1}, \eqref{eq27.2.3},  and $\phi_0 \in L_2 (\Omega \times \bR^3)$ in the trace sense (see Definition \ref{definition C.4}), then $\phi \in C ([0, T]) L_2 (\Omega \times \bR^3)$.
\end{remark}

\begin{definition}
    \label{definition 6.1}
    We say that $f \in S_2 (\Sigma^T)$ is a  strong solution to \eqref{eq1.0}-\eqref{eq1.0.0} if 
    \begin{itemize}
        \item the identity \eqref{eq1.0} holds in the $L_2 (\Sigma^T)$ sense, 
        \item the initial condition and the SRBC in \eqref{eq1.0.0} hold a.e. for the trace functions $f_0$ and $f_{\pm}$  (see Definition \ref{definition C.4}).
         \end{itemize}
        Similarly, we define a strong solution to the steady counterpart of \eqref{eq1.0}-\eqref{eq1.0.0}.
\end{definition}


\begin{remark}
    \label{remark 27.8}
By using the Green's identity in  \eqref{eqC.5.1}, one can show that any strong solution is also a finite energy solution.
Conversely, if $f$ is a finite energy solution such that $f \in S_2 (\Sigma^T)$, then $f$ is a strong solution.  
\end{remark}

\begin{definition}
                \label{definition 3.0}
We say that the VML system \eqref{eq5.0} - \eqref{eq5.0.4} has a  strong solution  $[f, \bE, \bB]$ on the time interval $[0, T]$ if
\begin{itemize}
    \item[--]  $f$ is a strong solution to the Landau equation  \eqref{eq5.0} (see Definition   \ref{definition 6.1}),
    \item[--]  $\bE, \bB \in C^1 \big([0, T], L_2 (\Omega))$,
  \item[--] for any $t \in [0, T]$,  $\bE (t, \cdot), \bB (t, \cdot) \in   W^1_2 (\Omega)$,  and 
  $(\bE (t, \cdot) \times n_x)_{|\partial \Omega} \equiv 0$, $(\bB (t, \cdot) \cdot n_x)_{|\partial \Omega} \equiv 0$,
\item[--] the identities \eqref{eq5.0.1} - \eqref{eq5.0.3} hold in the $L_2 ((0, T) \times \Omega)$ sense.
\end{itemize}
\end{definition}


\begin{assumption}
                                        \label{assumption 3.0}
The domain  $\Omega$ satisfies the following variant of the div-curl estimate.
  For any $r \in (1, \infty)$ and any $\bm{u} \in L_r(\Omega)$ such that 
  \begin{itemize}
     \item[--] $\nabla_x \times \bm{u} \in L_r (\Omega)$, $\nabla_x \cdot \bm{u} \in L_r (\Omega)$,
      \item[--] either $(\bm{u} \times n_x)_{|\partial \Omega} = 0$ or  $(\bm{u} \cdot n_x)_{|\partial \Omega} = 0$,
  \end{itemize}
one has $\bm{u} \in W^1_r (\Omega)$, and
\begin{equation}
                \label{eq3.0.0}
        \| \bm{u}\|_{ W^1_r (\Omega) } \le  N  \| |\nabla_x \times \bm{u}| + |\nabla_x \cdot \bm{u}|+|\bm{u}|\|_{ L_r (\Omega) },
\end{equation}
where $N = N (r, \Omega)$.
\end{assumption}

\begin{remark}
Loosely speaking,   if    $\Omega$  can be transformed into a simply connected domain of class $C^{1, 1}$ by removing a finite number of `cuts', then $\Omega$ satisfies Assumption \ref{assumption 3.0}. See Hypothesis 1.1 and Theorems 3.2 - 3.3 in \cite{AS_13}.  
We point out that a solid torus $B_1 \times S^1$  satisfies the aforementioned assumption since one needs to make a single cut to obtain a simply connected $C^{ 1, 1} $ domain.      
\end{remark}

\begin{remark}
A reader might be familiar with a variant of \eqref{eq3.0.0} where the right-hand side does not contain the term $\|\bm{u}\|_{ L_r (\Omega)}$ (see \cite{vW_92}). However, in the case when the boundary condition $(\bm{u} \cdot n_x)_{| \partial \Omega} = 0$ is imposed, such an estimate might be false if   $\Omega$ is not simply connected. We refer a reader to a beautiful counterexample in Section 9 of \cite{CDG_02}.
\end{remark}

We will construct the solution to the VML system via a Picard-type iteration argument.
 It turns out that to close such an argument, one needs to control the temporal derivatives up to order $m \ge 20$  of the particle density functions and the electromagnetic field (see \eqref{eq5.2} - \eqref{eq5.5}).  
To this end, one needs the initial data to satisfy certain regularity and compatibility conditions. Loosely speaking, those are the conditions on the temporal derivatives at $t = 0$. One can deduce the expression of such derivatives from the RVML system as follows.  Given 
$
    [\partial_t^k f (0, x, p), \partial_t^k \bE (0, x), \partial_t^k \bB (0, x)], 
$
we formally apply the operator $\partial_t^{k}$ to Eqs. \eqref{eq5.0}, \eqref{eq5.0.1}-\eqref{eq5.0.2}, plug $t = 0$, and solve for 
$
    [\partial_t^{k+1} f (0, x, p), \partial_t^{k+1} \bE (0, x), \partial_t^{k+1} \bB (0, x)].
$

\begin{definition} 
\label{definition 3.4}
\index{$[f_{0, k}, \bE_{0, k}, \bB_{0, k}]$} We set $[f_{0, 0}, \bE_{0, 0}, \bB_{0, 0}] = [f_0, \bE_0, \bB_0]$.
Furthermore, given $f_{0, j} (x, p)$, $\bE_{0, j} (x)$, $\bB_{0, j} (x), j = 0, \ldots, k$,    we set  
\begin{align}
  \label{eq3.3.2}
   &  f_{0, k+1} =   - v (p) \cdot \nabla_x f_{0, k} +  (A+K) f_{0, k}  + \bm{\xi}_1 (v (p) \cdot \bE_{0, k}) J^{1/2}\\
   &+ \sum_{j =0}^k \binom{k}{j} \bigg(-\bm{\xi} (\bE_{0, j} + v (p) \cdot \bB_{0, j}) \cdot \nabla_p  f_{0, k-j}  + \frac{\bm{\xi}_1}{2} (v (p) \cdot \bE_{0, j}) f_{0, k-j} + \Gamma (f_{0, j}, f_{0, k-j})\bigg), \notag\\
   \label{eq3.3.3}
    &  \bE_{0, k+1} (x) := \nabla_x  \times \bB_{0, k} (x) - \int_{\bR^3} v (p) J^{1/2} (p) f_{0, k} (x, p)  \cdot \bm{\xi} \, dp,\\
      \label{eq3.3.4}
    & \bB_{0, k+1} = - \nabla_x \times \bE_{0, k}.
\end{align}
\end{definition}

\begin{assumption}[Compatibility conditions]
                    \label{assumption 3.3}

We assume 
\begin{align}
  \label{eq3.3.7}
 & f_{0, k} \, \text{is a finite energy  solution to \eqref{eq3.3.2} with the SRBC}, \, k \le m-1,\\
  \label{eq3.3.7.1}
  &  f_{0, k} (x, p) = f_{0, k} (x, R_x p), \,  (x, p) \in \gamma_{-}, \, \text{(in the trace sense)} \, \, k \le m-8, \\
  \label{eq3.3.8}
  &  (\bE_{0, k} \times n_x)|_{\partial \Omega} \equiv 0, \, \, (\bB_{0, k} \cdot n_x)_{\partial \Omega} \equiv 0, \, k \le m-1,\\
          \label{eq3.10.1}
&\nabla \cdot \bB_{0, k} \equiv 0, \quad   \nabla \cdot \bE_{0, k} (x) = \int J^{1/2} (p)  f_{0, k} (x, p) \cdot \bm{\xi} \, dp, \,   k \le m-1,
\end{align}
where in  \eqref{eq3.3.7.1}, we implicitly assume that $f_{0, k}, \displaystyle \frac{p}{p_0} \cdot \nabla_x f_{0, k} \in L_2 (\Omega \times \bR^3)$, so that the trace is well defined.
\end{assumption}


\begin{remark}
        \label{remark 3.10}
Here, we show that for $k \ge 1$,  \eqref{eq3.10.1} can be derived formally from \eqref{eq3.10.1} with $k = 0$.  The first identity in \eqref{eq3.10.1} follows directly from \eqref{eq3.3.4}.
 Due to  \eqref{eq3.3.3}, to prove the second one,  it suffices to demonstrate that for $k =  0, \ldots, m$,
\begin{equation}
        \label{eq3.10.2}
      \int_{\bR^3}  J^{1/2} f_{0, k+1} \cdot \bm{\xi} \, dp   + \nabla_x \cdot \int_{\bR^3} v (p) J^{1/2} f_{0, k}  \cdot \bm{\xi} \, dp = 0.
\end{equation}
To this end, we denote $F_{0, k} = J + J^{1/2} f_{0, k}$ and note that the function  $F_{0, k+1}$ satisfies 
\begin{align}
\label{eq3.10.3}
&
F_{0, k+1}^{+}= - v (p) \cdot \nabla_x F_{0, k}^{+}\\
&+\sum_{j =0}^k \binom{k}{j} \bigg((\bE_{0, j} + v (p) \times \bB_{0, j}) \cdot \nabla_p F^{+}_{0, k-j} +\cC (F^{+}_{0, j}, F^{+}_{0, k-j}) + \cC (F^{+}_{0, j}, F^{-}_{0, k-j})\bigg)  = 0,\notag\\
\label{eq3.10.4}
&
	F^{-}_{0, k+1} = - v (p) \cdot \nabla_x F^{-}_{0, k}\\
& + \sum_{j =0}^k \binom{k}{j} \bigg(-(\bE_{0, j} + v (p) \times \bB_{0, j}) \cdot \nabla_p F^{-}_{0, k-j} + \cC (F^{-}_{0, j}, F^{-}_{0, k-j}) + \cC (F^{-}_{0, j}, F^{+}_{0, k-j})\bigg) = 0.\notag
\end{align}
The above identities can be derived by using the definition of $A, K, \Gamma$ (see \eqref{eq5}) and the fact that $\cC (J, J) = 0$.
One can also deduce  \eqref{eq3.10.3} - \eqref{eq3.10.4}   by differentiating formally the first two equations in \eqref{VML}, plugging  $t = 0$, and replacing $\partial_t^k f$ with $f_{0, k}$.
  Finally, subtracting \eqref{eq3.10.4} from \eqref{eq3.10.3}, integrating over $p \in \bR^3$, and using the definition of $\cC$ (see \eqref{eq1.1.2}),  we obtain \eqref{eq3.10.2}. 
\end{remark}

\begin{remark}
One can show  that  Assumption \ref{assumption 3.3} is satisfied if $f_0, \bE_0$, and $\bB_0$ are smooth compactly supported functions  away from $\partial \Omega$, $f_0$ decays sufficiently fast for large $p$, and \eqref{eq3.10.1} holds with $k = 0$.
\end{remark}

We introduce the key functionals that will be controlled in the proof of the local existence.  Let  $\theta, \tau > 0$ be numbers, and $f$ and  $[\bE_{  f }, \bB_{    f }]$ be  sufficiently regular functions on $\Sigma^{\tau}$ and $(0, \tau) \times \Omega$, respectively.

\textit{Instant energy functionals.} 
 We introduce the baseline instant energy
\begin{align}
\label{eq5.10}
   \index{$\cE_{||, f}$}     \cE_{||, f} (\tau) &=  \sum_{k=0}^{ m}  \bigg(\|\partial_t^k  f (\tau, \cdot) \|^2_{ L_2  (\Omega \times \bR^3) } +  \|\partial_t^k  [\bE, \bB] (\tau, \cdot)\|^2_{ L_2 (\Omega) }\bigg), \tau > 0,
\end{align}
and the   energy 
   \begin{align}
   \label{eq5.10.15}
        & \index{$\cE_f$} \cE_f (\tau)  =  \cE_{||, f} (\tau) +  \sum_{k=0}^{ m - 4 } \|\partial_t^k  f (\tau, \cdot)\|^2_{ L_{2, \theta/2^{k} } (\Omega \times \bR^3) }.
\end{align}

\textit{Higher regularity instant functional.} 
Let $\Delta r \in (0, \frac{1}{42})$   and denote  
\begin{align}
\label{eq5.10.10}
& \index{$r_i$} r_1 = 2, \quad     \frac{1}{r_{i}} = \frac{1}{r_{i-1}} -  \big(\frac{1}{6} - \Delta r\big), i = 2, 3, 4,  \\
\label{eq5.10.2}
& r_2 \in (2, 3), \quad r_3 \in (3, 6), \quad r_4 > 14,\\
\label{eq5.10.1}
& \index{$\mathcal{H}_f$} \mathcal{H}_f (\tau) =  \sum_{i=1}^4 \sum_{k=0}^{  m-4-i } \|\partial_t^k f (\tau, \cdot)\|^2_{  S_{r_i, \theta/2^{ k + 2i   } } (\Omega \times \bR^3) }  \\
& + \sum_{k=0}^{m-1} \|\partial_t^k  [\bE_f, \bB_f] (\tau, \cdot)\|^2_{ W^1_{ 2 } (\Omega) } 
+ \sum_{i=2}^3 \sum_{k=0}^{ m  - 4 -  i} \|\partial_t^k  [\bE_f, \bB_f] (\tau, \cdot)\|^2_{ W^1_{r_i} (\Omega) }. \notag
\end{align}

\textit{Total instant functional.} The total instant functional  is the sum of the total instant energy and the higher regularity  functional:
   \begin{align}
    \label{eq5.10.17}
        & \index{$\cI_f$} \cI_f (\tau)  =  \cE_{f} (\tau)   + \mathcal{H}_{f} (\tau), \tau > 0, \\
      \label{eq5.10.17.1}
        & \cI_f (0)  \, \text{is given by} \, \eqref{eq5.10.17} \, \text{with} 
        \,\partial_t^k [f, \bE_f, \bB_f] (\tau, \cdot) \, \text{replaced with} 
       \, [f_{0, k}, \bE_{0, k}, \bB_{0, k}].
        \end{align}

\textit{Dissipation functionals}.
The baseline dissipation is defined by 
\begin{align}
\index{$\cD_{||, f}$}   \cD_{||, f} (\tau) & =  \sum_{k=0}^{ m}  \|\partial_t^k  f (\tau, \cdot) \|^2_{ L_2 (\Omega) W^1_{2 } (\bR^3) }
\end{align}
and the total dissipation is 
\begin{align}
    \label{eq5.10.16}
      \index{$\cD_{f}$}  \cD_{f} (\tau) & = \cD_{||, f} (\tau) +   \sum_{k=0}^{ m - 4 }  \|\partial_t^k  f (\tau, \cdot) \|^2_{ L_2 (\Omega) W^1_{2, \theta/2^{k} } (\bR^3) }.
\end{align}

\textit{Total functional.}
\begin{equation}
    \label{eq5.10.4}
 \index{$y_f$}   y_f (\tau) =   \sup_{t \le \tau} \cI_f (\tau) + \int_0^{\tau} \cD_f (t) \, dt.
\end{equation}

Here is the main result of the present paper.
\begin{theorem}
            \label{theorem 5.1}
Let $m \ge 20$ be an integer,  $r_1, \ldots, r_4,$ be numbers satisfying \eqref{eq5.10.10} - \eqref{eq5.10.2}, and $\Omega$ be a $C^{1, 1}$ bounded domain satisfying Assumption \ref{assumption 3.0}.
Then, there exists a constant $\theta_0 = \theta_0 (r_1, \ldots, r_4) > 1$ such that for any $\theta \ge \theta_0$ there exist constants
\begin{equation}
            \label{5.1.0}
\begin{aligned}
  &   M = M (r_1, \ldots, r_4, m, \theta, \Omega) >  1, \\
  & \varepsilon_0 = \varepsilon_0 (\theta, r_1, \ldots, r_4, m,  \Omega) \in (0, 1), 
  \quad T  = T (\theta, r_1, \ldots, r_4, m, \Omega) \in (0, 1)
\end{aligned}
\end{equation}
    such that if $\cI_f (0) < \infty$ (see \eqref{eq5.10.17.1}), and
\begin{itemize}
\item[--] 
\begin{align}
            \label{eq5.1.1}
             & \cE_f (0) :=   \sum_{k = 0}^m \bigg(\|[\bE_{0, k}, \bB_{0, k}]\|^2_{ L_2 (\Omega) } + \| f_{0, k}\|^2_{ L_2 (\Omega \times \bR^3) }\bigg) 
  +  \sum_{k=0}^{m-4}  \|f_{0, k}\|^2_{ L_{2, \theta/2^k} (\Omega \times \bR^3) }
  \le  \varepsilon_0/M
\end{align}  
(see \eqref{eq3.3.2} - \eqref{eq3.3.4}),
\item[--] the compatibility conditions \eqref{eq3.3.7} - \eqref{eq3.10.1} in  Assumption  \ref{assumption 3.3}  hold,
\end{itemize} 
then the following assertions hold.

$(i)$  RVML system \eqref{eq5.0} - \eqref{eq5.0.4} has a strong solution $[f, \bE_f, \bB_f]$  (see Definition \ref{definition 3.0}) on $\Sigma^{T}$ such that
 \begin{equation}
                \label{eq5.1.4}
    y_f (T) < \varepsilon_0 \, \text{(see \eqref{eq5.10.4})}. 
 \end{equation}

    $(ii)$ For $k \le m$, $\partial_t^k f$ is a finite energy   solution (see Definition \ref{definition 27.1})  to Eq. \eqref{eq5.0} differentiated $k$ times in $t$ with the initial data $\partial_t^k f (0, \cdot) \equiv f_{0, k} (\cdot)$ and SRBC.
   
     $(iii)$ For $k \le m-1$, $\partial_t^k [\bE_f, \bB_f] \in C ([0, T]) L_2 (\Omega) \cap L_{\infty} ((0, T)) W^1_2 (\Omega)$ is a  strong solution to Maxwell's equations \eqref{eq5.0.1} - \eqref{eq5.0.2} differentiated $k$ times with the initial data $[\bE_{0, k}, \bB_{0, k}]$ and the perfect conductor BC, whereas $\partial_t^m [\bE_f, \bB_f] \in C ([0, T]) L_2 (\Omega)$ is a  weak solution (see \cite{DL_76}).
     
     $(iv)$ The identities $\nabla_x \cdot \partial_t^k \bE_f  = \partial_t^k  \rho_f$, $\nabla_x \cdot \partial_t^k  \bB_f  = 0$ hold thanks to the compatibility conditions \eqref{eq3.10.1} and the continuity equation
     $$
        \partial_t (\partial_t^k \rho_f) + \nabla_x \cdot \partial_t^k \bm{j}_f = 0, k \le m
     $$
     (see \eqref{eq5.0.1}, \eqref{eq5.0.3}).

$(v)$ In addition, if $[f_i, \bE_{f_i}, \bB_{f_i}], i = 1, 2,$ are   strong solutions to the RVML system on $\Sigma^{T}$ satisfying the bound \eqref{eq5.1.4}, then, we have $f_1 = f_2$ on $\Sigma^{T}$ and $\bE_{f_1} = \bE_{f_2}$, $\bB_{f_1} = \bB_{f_2}$ on $(0, T) \times \Omega$. 
\end{theorem}

\subsection{Iteration scheme.}
                    \label{subsection 3.1}
To prove the existence, we set up an iteration scheme (cf.  \cite{GS_03}).
Let $[f^0, \bE^0, \bB^0]   = [f_{0}, \bE_{0}, \bB_{0}]$, and, given $[f^{n}, \bE^{n}$, $\bB^{n}]$, we set $[f^{n+1}, \bE^{n+1}$, $\bB^{n+1}]$ to be the strong solution to the following linear system   (see Proposition \ref{proposition 7.2}):
\begin{align}
    \label{eq5.1}
	&Y f^{n+1}+ \bm{\xi} (\bE^n + v (p) \times \bB^n) \cdot \nabla_p f^{n+1} -  \frac{\bm{\xi}}{2}  (v (p) \cdot \bE^n) f^{n+1}
+  L f^{n+1}  \\
	&=   \bm{\xi}_1 (v (p) \cdot  \bE^{n+1})  J^{1/2}  +   \Gamma (f^{n+1}, f^n), \notag\\
  \label{eq5.1'}	
 & f^{n+1} (t, x, p) = f^{n+1} (t, x, R_x p), \, \, z \in \Sigma^T_{-}, \quad  f^{n+1} (0, \cdot) \equiv f_{0, 0},\\
	& \label{eq5.1.0*}
	\partial_t \bE^{n+1} - \nabla_x \times \bB^{n+1} = - \int v (p) J^{1/2} (p)   f^{n+1} (p) \cdot \bm{\xi} \, dp,\\
    \label{eq5.1.0}
 &
	\partial_t \bB^{n+1}  + \nabla_x \times \bE^{n+1} = 0, \\
     \label{eq5.1.0.0}
 &
	\nabla_x \cdot \bE^{n+1} = \int J^{1/2}   f^{n+1} (p) \cdot \bm{\xi} \, dp, \quad \nabla_x \cdot \bB^{n+1} = 0,\\ 
	     \label{eq5.1.0.0.0}
	   & (\bE^{n+1} \times n_x)_{|\partial \Omega} = 0, \quad (\mathbf{B}^{n+1} \cdot n_x)_{|\partial \Omega} = 0,\\
\label{eq5.1.0.1}
    & \bE^{n+1} (0, \cdot) \equiv \bE_{0, 0} (\cdot),  \quad  \bB^{n+1} (0, \cdot) \equiv \bB_{0, 0} (\cdot),
 \end{align}
where $L= -A-K$ is the linearized Landau operator (see \eqref{eq5}).

 Setting $f: = f^{n+1}$, $g: = f^n$, $\mathbf{E}_f := \mathbf{E}^{n+1}$, $\mathbf{E}_g := \mathbf{E}^n$, $\bB_f = \mathbf{B}^{n+1}$, $\mathbf{B}_g := \mathbf{B}^n$ gives
\begin{align}
    \label{eq5.2}
	&Y f + \bm{\xi} (\mathbf{E}_g + v (p)   \times \mathbf{B}_g) \cdot \nabla_p f -  \frac{\bm{\xi}}{2} (v (p) \cdot  \mathbf{E}_g)  f
+ L f  \\
	&=  \bm{\xi}_1 (v (p) \cdot  \bE_f)  J^{1/2} + \Gamma (f, g),\notag\\
		& f (0, \cdot) = f_{0, 0}, \, \, f (t, x, p) = f (t, x, R_x p), \, \, z \in \Sigma^T_{-}, \notag\\
	    \label{eq5.2.0}
	&\partial_t \mathbf{E}_f - \nabla_x \times \mathbf{B}_f = - \int v  (p) J^{1/2} (p)   f (p) \cdot \bm{\xi} \, dp,\\
 &
 \label{eq5.3}
	\partial_t \mathbf{B}_f  + \nabla_x \times \mathbf{E}_f = 0,  \\
  \label{eq5.4}
 &	\nabla_x \cdot \mathbf{E}_f = \rho_f = \int J^{1/2} (p)  f (p) \cdot \bm{\xi} \, dp, \quad  \nabla_x \cdot \mathbf{B}_f = 0, \\ 
	 \label{eq5.5}
	& (\mathbf{E}_f \times n_x)_{|\partial \Omega} = 0, \, \,  
	   (\mathbf{B}_f \cdot n_x)_{|\partial \Omega} = 0, \, \, \bE_{f} (0, \cdot) \equiv \bE_0 (\cdot), \, \, 
    \bB_{f} (0, \cdot) \equiv \bB_0 (\cdot).
 \end{align}

The following proposition is the crux of the proof of the existence part in Theorem \ref{theorem 5.1}.

\begin{proposition}[propagation of smallness]
                \label{proposition 5.2}
Invoke the assumptions of Theorem \ref{theorem 5.1} and let $\theta, M, \varepsilon_0$, and $T$   be the constants as in \eqref{5.1.0}. We assume that 
there exist a triple $g = (g^{+}, g^{-}), \bE_g, \bB_g$ such that
for each $k \in \{0, \ldots, m\}$,  
    \begin{align}
   \label{eq5.2.12.1}
   &  \partial_t^k g \in C ([0, T]) L_2 (\Omega \times \bR^3), \, \,   \partial_t^k [\bE_g, \bB_g] \in C ([0, T]) L_2 (\Omega), \, k \le m,\\
& \label{eq5.2.16}
 g (t, x, p) =   g (t, x, R_x p), \,  (t, x, p) \in \Sigma^T_{-},  \\
\label{eq5.2.19}
&
    \partial_t^k g (0, \cdot) = f_{0, k} (\cdot) \, (\text{see} \, \eqref{eq3.3.2}), \\
    \label{eq5.2.21}
    &\partial_t^k  [\bE_g, \bB_g] (0, \cdot) = [\bE_{0, k} \bB_{0, k}] (\cdot) \, \,  (\text{see} \, \eqref{eq3.3.3} - \eqref{eq3.3.4}).
\end{align}
Then, if $\theta$, and $M$ are sufficiently large and $\varepsilon_0$ is sufficiently small, and
\begin{equation}
                \label{eq5.2.4.2}
    y_g (T) < \varepsilon_0, 
    \quad \text{\eqref{eq5.1.1} holds},
\end{equation}
  then, the 
  linear RVML system \eqref{eq5.2} - \eqref{eq5.5} with the initial conditions $[f_{0}, \bE_{0}, \bB_{0}]$ has a unique  strong  solution $[f, \bE_f, \bB_f]$ (see Definition \ref{definition 3.0}).
 Furthermore, we have 
\begin{equation}
                \label{eq5.2.4.1}
y_f (T) < \varepsilon_{ 0 }, 
\end{equation}
and in addition, 
the assertions analogous to $(ii) - (v)$ hold for $\partial_t^k [f, \bE_f, \bB_f]$. Moreover,
the conditions \eqref{eq5.2.12.1} - \eqref{eq5.2.21} hold with $[g, \bE_g, \bB_g]$ replaced with $[f, \bE_f, \bB_f]$.
\end{proposition}

\section{Method of the proof and organization of the paper}
                    \label{section 4}
                    
The goal of this section is to highlight the key difficulties and to delineate the main ideas in the proof of Theorems \ref{theorem 5.1} and Proposition \ref{proposition 5.2}. For the sake of clarity,  we will omit some technical details. 

\subsection{Unique solvability and the velocity Hessian estimates for the linear Landau equation}
First, we need to show that for each $n$, the iteration scheme \eqref{eq5.1} - \eqref{eq5.1.0.1} is well-posed. We will focus on the case when  $n = 0$ and will only consider the equation \eqref{eq5.2} with   $g = f_0$. We want to show that it has a unique strong solution $f$ (see Definition \ref{definition 6.1}), under the assumption that  $f_0$, $\bE_0$, and $\bB_0$ are sufficiently regular functions. In addition, our argument will enable us to deduce that $f$ and $\nabla_p f$ are bounded functions, which is important for proving both the existence and the uniqueness parts of Theorem \ref{theorem 5.1}.

\textit{Uniqueness and $S_2$ regularity.} The basic difficulty in establishing the uniqueness of the boundary value problems for the  velocity diffusive kinetic equations lies in the fact that for the natural solution class
$$
    f \in L_2 ((0, T) \times \Omega) W^1_2 (\bR^3), \, \,  Y f \in L_2 ((0, T) \times \Omega) W^{-1}_2  (\bR^3),
$$
it is unknown if the traces are well defined and if the energy identity for the transport operator $Y$ holds.
On the other hand, if $f, Y f \in L_2 (\Sigma^T)$, then the traces are well defined (see Section \ref{section 9}), and if, additionally, $f$ satisfies the SRBC, then a variant of the energy identity does hold (see Lemma \ref{lemma C.5}).
To summarize, we first construct a solution to \eqref{eq5.2} in the natural energy class. We show the  uniqueness by establishing the $S_2$ regularity, i.e.,
\begin{equation}
                \label{eq10.1}
    Y f, D^2_p f \in L_2 (\Sigma^T).
\end{equation}

\textit{Mirror-extension argument and $S_2$ regularity.} To prove \eqref{eq10.1}, we use an extension argument, which first appeared in \cite{GHJO_20} and was later used in the studies of the Vlasov-Poisson-Landau \cite{DGO_22} and linear Landau equations \cite{DGY_21}.
First, we localize in the spatial and momentum variables by deriving an equation for $f$ multiplied by a suitable cutoff function (see \eqref{eq2.36} - \eqref{eq2.60}).
 By using a flattening and extension argument (see the proof of Lemma \ref{lemma 7.4}), near the boundary, we reduce Eq. \eqref{eq5.2}  to a parabolic PDE  on the whole space \eqref{eq2.5.0} with discontinuous drift coefficients $\mathcal{X}$ (see  \eqref{eq2.0.1}, \eqref{eq2.11.1}). We point out that such a drift term is absent when the boundary is flat. Thus, one needs to use a Calderon-Zygmund type result to obtain \eqref{eq10.1}.  However, in contrast to \cite{GHJO_20}, \cite{DGO_22},   \cite{DGY_21}, our new equation \eqref{eq2.5.0} is quite different from the Newtonian kinetic Fokker-Planck (KFP) type equation (see \eqref{eq10.3}) since the coefficient in the transport term depends on both spatial and momentum variables. We are not aware of any Calderon-Zygmund-type result for such an equation.
To overcome this difficulty, we make a change of variables in the momentum variable, which enables us to reduce Eq. \eqref{eq2.5.0} to a Newtonian KFP  equation on the whole space (see \eqref{eq2.44}). Finally, we use  the $S_2^{N}$ (see \eqref{eq1.2.4.0}) estimate of \cite{DY_21a} to deduce \eqref{eq10.1}.
We remark that for other prominent boundary conditions in kinetic theory, e.g., inflow and diffuse boundary conditions, such an extension argument does not work.

\textit{Higher regularity}. Near the spatial boundary, we work with the Newtonian KFP equation  \eqref{eq2.44}, which we derived from Eq \eqref{eq5.2}. By using the Sobolev embedding theorem for $S_r^{N}$ spaces (see \cite{PR_98}) and the $S_r^{N}$ regularity theory developed in \cite{DY_21a}, we conclude that
$$
    f, \nabla_p f \in L_{\infty} (\Sigma^T).
$$

\textit{$S_r^{N}$ theory on the whole space for a Newtonian KFP equation with rough coefficients}. Here, we want to highlight one of the main ingredients of the present paper, that is, the Calderon-Zygmund theory for nonrelativistic KFP equations established in \cite{DY_21a}. We explain the importance of this theory by considering the equation 
$$
\partial_t f + p \cdot \nabla_x f - \Delta_p f = \eta
$$
in $\Sigma^T$ with the initial condition $f_0 \equiv 0$ and the  SRBC. Near the boundary, one can use a flattening and an extension argument as in \cite{GHJO_20}, \cite{DGO_22},   \cite{DGY_21} to derive the following equation for the   ``mirror extension" $\overline{f}$ on $(0, T) \times \bR^3_y \times \bR^3_w$ (see Section 2.1 in \cite{DGY_21}):
$$
     \partial_t \overline{f} + w \cdot \nabla_y \overline{f} - a^{i j} (y)  \partial_{w_i w_j}  \overline{f} - \nabla_w \cdot(X \overline{f}) = \overline{\eta} \,\,  \text{in} \, \,  \bR^7_T,
$$
where $X$ is the `geometric' term which is quadratic $w$, depends on the curvature of $\Omega$, and is discontinuous across the the hyperplane $\{y_3 = 0\} \times \bR^3_w$. Before the work \cite{DY_21a}, the unique solvability in the class of strong solutions and the global $L_r$ estimate of $D^2_w \overline{f}, (\partial_t + w \cdot \nabla_y) \overline{f}$ was unknown for the equation
\begin{equation}
                \label{eq10.3}
\begin{aligned}                
    & \partial_t u + w \cdot \nabla_y u - a^{i j} (t, y, w)  \partial_{w_i w_j} u \\
    &+ b^i \partial_{w_i} u + c u = \eta\,\,  \text{in} \, \,  \bR^7_T, \quad u (0, \cdot) \equiv 0,
\end{aligned}     
\end{equation}
with $a \in L_{\infty} ((0, T)) C^{\alpha/3, \alpha}_{y, w} (\bR^{2d})$.  
In particular, in the papers \cite{BCLP_13} and \cite{NZ_20}, the authors  imposed the uniform continuity assumption with respect to the following `kinetic distance':
$$
    d_{\text{kin}} ((t, y, w), (t', y', w')) = \max\{|t-t'|^{1/2}, |y-y'-(t-t')w'|^{1/3}, |w-w'|\}.
$$
It is easy to see that even in dimension $1$, the function $a^{i j} (t, y, w) = 2 + \sin (y)$ is not uniformly continuous on $\bR^7$ with respect to $d_{\text{kin}}$.
In contrast, the theory developed in \cite{DY_21a} covers Eq. \eqref{eq10.3}  with more general leading coefficients $a^{i j}$ including 
the ones satisfying the uniform continuity with respect to the metric $d ((y, w), (y', w')) = |y-y'|^{1/3}+|w-w'|$  uniformly in time.
As we mentioned above, to show the uniqueness and higher regularity of Eq. \eqref{eq5.2}, we reduce it to the one of the form \eqref{eq10.3} (see \eqref{eq2.44}). It turns out that for such an equation, 
$$
    a^{i j} \in L_{\infty} ((0, T)) C^{\alpha/3, \alpha}_{y, w} (\bR^6), 
$$
and, hence,  the results of \cite{DY_21a} are applicable. 

\subsection{Propagation of smallness} Here, we highlight the main difficulties in the proof of Proposition \ref{proposition 5.2} and describe the key features of the argument. 

\textit{Issue 1: $L_{\infty}$ bound of the electromagnetic field.} 
As usual, to control the `cubic' terms involving $[\bE_f, \bB_f]$ in the energy argument, we need an $L_{\infty}$ bound $[\bE_f, \bB_f]$. However, in contrast to the Vlasov-Poisson-Landau equation, we do not expect an `instant' regularization of the electromagnetic field due to the hyperbolic nature of Maxwell's equations.
 
 To overcome this issue, we rewrite the Maxwell system  into two div-curl systems
\begin{align}
        \label{eq10.4}
&\begin{cases}
	 \nabla_x \times  \bE_f  = - \partial_t \bB_f, \\
	 \nabla_x \cdot  \bE_f =  \int J^{1/2} (p)  f (p)    \cdot \bm{\xi}_1 \, dp,\\
  (\bE_f \times n_x)|_{\partial \Omega} = 0, 
 \end{cases}\\
& \label{eq10.5}
\begin{cases}
 	\nabla_x \times   \bB_f = \partial_t \bE_f  + \int v (p) J^{1/2} (p)    f (p)  \cdot \bm{\xi}_1  \, dp,\\
  \nabla_x  \cdot \bB_f  = 0, \\
 (\bB_f \cdot n_x)|_{\partial \Omega} = 0.
\end{cases}
\end{align}
If we have the bound of the $L_{ \infty } ((0, T)) L_2 (\Omega)$ norm of $\partial_t [\bE_f, \bB_f]$, then, by using the div-curl estimate \eqref{eq3.0.0} with $r = 2$, we can bound the $L_{   \infty } ((0, T)) W^1_2 (\Omega)$ norm of $[\bE_f, \bB_f]$. This   yields an $L_{ \infty } ((0, T)) L_6 (\Omega)$ estimate of $[\bE_f, \bB_f]$ due to the Sobolev embedding 
$W^1_2 (\Omega) \subset L_{6} (\Omega)$. To achieve this, we differentiate Maxwell's equations with respect to $t$ and use the div-curl estimate.  It is clear now that to close the iteration argument, we need to control certain norms for $\partial_t^k [\bE_f, \bB_f]$ and $\partial_t^k f$ for $k \le m$ for some $m$.

\textit{Issue 2: existence of the temporal derivatives of $[f, \bE_f, \bB_f]$.}
One needs to justify that the higher-order temporal derivatives $\partial_t^{k} [f, \bE_f, \bB_f], k \le m$  are sufficiently regular functions. Let us consider the case when $k = 1$. By differentiating formally the  linear VML system \eqref{eq5.2} - \eqref{eq5.5}, we write down the initial boundary value problem for  $\partial_t [f, \bE_f, \bB_f]$  with the initial data   $[f_{0, 1}, \bE_{0, 1}, \bB_{0, 1}]$ defined in \eqref{eq3.3.2} -   \eqref{eq3.3.4}.
 We then use the well-posedness theory for the   KFP equation in the finite energy solution class, developed in Sections  \ref{section 6}  of the present paper, and the well-known results for Maxwell's equations (see \cite{DL_76}). See the details in Proposition \ref{proposition 7.2} and Section \ref{section 7.2}.
 To apply these theories, one needs to impose certain regularity and compatibility conditions on the `initial data'  $[f_{0, 1}, \bE_{0, 1}, \bB_{0, 1}]$  (see \eqref{eq3.3.7}-\eqref{eq3.10.1}).

\textit{The scheme.}  The basic structure of the argument is similar to that in \cite{DGO_22}.
The primary functional that needs to be controlled throughout the iteration argument is the energy norm, while the $L_{\infty} ((0, T)) S_r (\Omega \times \bR^3)$ (see \eqref{eq1.2.7.0} - \eqref{eq1.2.7}) estimates are needed for establishing higher regularity bounds of the lower-order  $t$-derivatives for the closure of the energy argument.
Here, we explain the main steps of the argument and the motivation for designing the functional $y_f$ (see \eqref{eq5.10.4}).
\begin{itemize}
    \item First, we derive the energy bound of $\partial_t^k f, k \le m$    by using the estimates of the terms $A, K$ and $\Gamma (f, g)$ established in \cite{GS_03}. As usual, to close such estimates, one needs to control the $L_{\infty}$-norms of $\partial_t^k [f, \bE_f, \bB_f], k \le m/2$. 
    
    \item  To gain the $L_{\infty}$ regularity of the lower-order derivatives of the electromagnetic field via the $W^1_r$ div-curl estimate, one needs to descend from the top-order temporal derivatives to lower-order ones.  As we descend, the electromagnetic field gains integrability. 
    
    \item To prove the $L_{\infty}$ estimates of $\partial_t^k f, k \le m/2$, we use the weighted  $L_{\infty}^t S_r (\Omega \times \bR^3)$ estimate.  Due to the presence of the term $(v (p) \cdot \bE_f) \sqrt{J} \bm{\xi}_1$ in the linear Landau equation \eqref{eq5.2}  and the loss of $t$-derivatives in the higher regularity estimate of $\partial_t^k [\bE_f, \bB_f]$,   we combined  the $L_{\infty}^t S_r$ estimate with a descend argument. 

    \item The specific gap between the unweighted energy control (up to  $k \le m$)  and the weighted energy one (up to  $k \le m-4$) is motivated by the global estimate for the RVML system established in the subsequent paper \cite{GWP_23}.
\end{itemize}

\subsection{Organization of the paper}
The rest of the paper is organized as follows. In Section \ref{section 6}, we 
establish the results concerning the existence, uniqueness, and higher regularity of a strong solution to the linear Landau equations  \eqref{eq1.0} - \eqref{eq1.0.0} and \eqref{eq27.1.2}. Additionally, in this section, we present the well-posedness result for the KFP equation in the class of finite energy solutions. In Sections \ref{section 7} - \ref{section 8}, we prove Proposition \ref{proposition 5.2} and Theorem \ref{theorem 5.1}, respectively. In Appendix \ref{Appendix A} - \ref{section 7.2}, we collect various auxiliary results.

\section{Regularity theory of the linear relativistic Landau equation with the specular reflection boundary condition}
            \label{section 6} 
The purpose of this section is to present the results on the unique solvability and certain estimates for the linear equation \eqref{eq1.0} - \eqref{eq1.0.0} and its `steady' counterpart \eqref{eq27.1.2}. 
This section is organized as follows. First, in Section \ref{section 6.1},  we present the results on the strong solutions (see Definition \ref{definition 6.1}) to the unsteady linear Landau equations \eqref{eq1.0} - \eqref{eq1.0.0}. Second, in Section \ref{section 6.2}, we establish the `steady' counterparts of the aforementioned results. Finally, in Section \ref{section 6.3}, we prove the well-posedness of the unsteady linear Landau equation in the class of finite energy solutions (see Definition \ref{definition 27.1}).



\subsection{Strong solutions to the unsteady Landau equations}

\label{section 6.1}

\begin{assumption}
			\label{assumption 1.4}
			There exist  $\varkappa \in (0, 1]$ and $K > 0$ such that
\begin{align}
&\label{eq1.4.1}	\|g\|_{   L_{\infty} ((0, T))  C^{\varkappa/3, \varkappa}_{x, p} (\Omega \times \bR^3)   } \le K, \\
& \label{eq1.4.2}
 \|\nabla_p g\|_{   L_{\infty} (\Sigma^T)   } \le K.
\end{align}

\end{assumption}

\begin{assumption}
			\label{assumption 1.5}
For a.e. $(t, x, p) \in \Sigma^T_{-}$,
\begin{equation}
    \label{eq1.5.1}
	g (t, x, p) = g (t, x, R_x p). 
\end{equation}
\end{assumption}



We will use the fact that if   $g$ is sufficiently small (near Maxwellian regime), then the leading coefficients $\sigma_g$ are uniformly nondegenerate.  
\begin{lemma}
        \label{lemma 6.1}
There exists $\varepsilon_{\star} > 0$ and $\delta_0 \in (0, 1)$   such that if  for some $T > 0$, one has 
\begin{equation}
    	\label{eq1'}
\|g\|_{ L_{\infty} (\Sigma^T) }\le \varepsilon_{\star},
\end{equation} 
then, 
\begin{equation}
        \label{eq6.1.2}
    \sigma_g (z) \xi_i \xi_j \ge \delta_0 |\xi|^2, \, \,  z \in \Sigma^T, \xi \in \bR^3, 
    \quad \|[\sigma_g, \nabla_p \sigma_g]\|_{ L_{\infty} (\Sigma^T)  } \le \delta_0^{-1}.
\end{equation}
The constants $\varepsilon_{\star} > 0$ and $\delta_0 \in (0, 1)$ are  independent of $T$.
\end{lemma}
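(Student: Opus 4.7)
The plan is to split $\sigma_g$ into a Maxwellian background plus a perturbation proportional to $g$, and then use uniform ellipticity of the background together with smallness of $\|g\|_{L_\infty}$ to deduce the conclusion. Writing $\sigma_g = \sigma_J + \sigma_g^{\mathrm{pert}}$ with
\begin{equation*}
\sigma_J(p) = 2\int_{\bR^3} \Phi(P,Q) J(q)\,dq, \qquad \sigma_g^{\mathrm{pert}}(t,x,p) = \int_{\bR^3} \Phi(P,Q) J^{1/2}(q) g(t,x,q)\,dq,
\end{equation*}
the crucial input I would invoke is the well-known uniform (in $p\in\bR^3$) spectral bound for the relativistic Landau matrix at the J\"uttner Maxwellian: there exists $\delta_1 \in (0,1)$, depending only on absolute constants, such that $\delta_1 |\xi|^2 \le \sigma_J^{ij}(p)\xi_i\xi_j \le \delta_1^{-1}|\xi|^2$ and $|\nabla_p \sigma_J(p)| \le \delta_1^{-1}$ for all $p$. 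Unlike the non-relativistic case, this bound is truly uniform in $p$, and is proved via the explicit structure of $S(P,Q)$ and $\Lambda(P,Q)$ from \eqref{eq1.8}--\eqref{eq1.11} (see Lemma~3 of \cite{GS_03}).

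Next I would control the perturbation by a pointwise estimate in $(t,x)$:
\begin{equation*}
|\sigma_g^{\mathrm{pert}}(t,x,p)| + |\nabla_p \sigma_g^{\mathrm{pert}}(t,x,p)| \le \|g\|_{L_\infty(\Sigma^T)} \int_{\bR^3} \bigl(|\Phi(P,Q)| + |\nabla_p \Phi(P,Q)|\bigr) J^{1/2}(q)\,dq.
\end{equation*}
The key auxiliary fact is that the integral on the right is bounded by a constant $C_* < \infty$ independent of $p$. This reduces to two subclaims about the Belyaev--Budker kernel: (i) the singularity of $\Phi$ and $\nabla_p\Phi$ at $p=q$, controlled by powers of $((P\cdot Q)^2 - 1)^{-1/2}$, is integrable in three momentum dimensions, since near $p=q$ one has $(P\cdot Q)^2 - 1 \gtrsim |p-q|^2/(p_0 q_0)^2$; (ii) the polynomial growth of $\Phi$ and $\nabla_p\Phi$ in $|q|$ for large $|q|$ is swallowed by the Gaussian factor $J^{1/2}(q) = e^{-q_0/2}$.

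With this in hand I would choose $\varepsilon_0 := \delta_1/(2 C_*)$, so that $\|g\|_{L_\infty(\Sigma^T)} \le \varepsilon_0$ yields $|\sigma_g^{\mathrm{pert}}|, |\nabla_p \sigma_g^{\mathrm{pert}}| \le \delta_1/2$ pointwise on $\Sigma^T$. The lower bound then follows from
\begin{equation*}
\sigma_g^{ij}(z)\xi_i\xi_j \ge \sigma_J^{ij}(p)\xi_i\xi_j - |\sigma_g^{\mathrm{pert}}(z)||\xi|^2 \ge \tfrac{\delta_1}{2}|\xi|^2,
\end{equation*}
and the upper bounds on $|\sigma_g|$ and $|\nabla_p \sigma_g|$ from the triangle inequality. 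Setting $\delta_0 := \min\bigl(\delta_1/2,\, (\delta_1^{-1} + \delta_1/2)^{-1}\bigr)$ yields \eqref{eq6.1.2}; since every estimate is pointwise in $(t,x)$, independence of $T$ is automatic.

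The main obstacle I expect is the integrability claim (i) above: the factor $((P\cdot Q)^2-1)^{-3/2}$ in $\Lambda$ produces a singularity of order $|p-q|^{-3}$ at coincident momenta, and after differentiating in $p$ this becomes $|p-q|^{-4}$, so integrability in $\bR^3_q$ is borderline and requires exploiting the cancellation structure of $S(P,Q)$, which vanishes on the diagonal. A careful expansion of $S$ near $p=q$ combined with the algebraic identity $(P\cdot Q)^2 - 1 = \tfrac{1}{2}|P-Q|^2_{\mathrm{rel}}(\cdots)$ removes enough of the singularity to make the integrand locally integrable. This calculation is implicit in the kernel analysis of Section~\ref{section 2} and \cite{GS_03}, and I would quote or reproduce the relevant estimate as a preliminary step before running the perturbation argument above.
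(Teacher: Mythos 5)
Your proposal matches the paper's own argument: the paper likewise decomposes $\sigma_g$ into the J\"uttner background $\sigma(p)=2\int\Phi(P,Q)J(q)\,dq$ plus the linear-in-$g$ perturbation, invokes the uniform nondegeneracy and boundedness of $\sigma$ and $\nabla_p\sigma$ (Lemma~\ref{lemma G.2}, which is the same content as the bound you cite from \cite{GS_03}), and controls the perturbation and its $p$-gradient by the kernel estimate $\|D^k_p\int\Phi^{ij}J^{1/2}g\,dq\|_{L_\infty}\lesssim\|g\|_{W^k_\infty}$ (Lemma~\ref{lemma B.2} with $r=\infty$, $k=0,1$), which is exactly your $C_*$. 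Your extra discussion of the near-diagonal cancellation in $S(P,Q)$ against $\Lambda$ is the correct mechanism behind that cited kernel bound, so there is no gap.
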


\begin{proposition}
    [unique solvability in  weighted $S_2$ spaces]
			\label{proposition 2.1}
Let 
\begin{itemize}[--]

\item $\lambda \ge 0$, $\kappa \in (0, 1)$, $\varkappa \in (0, 1], K > 0$ be  numbers,

\item $\Omega$ be a $C^{  1, 1 }$ bounded domain,

\item[--]  $b  =  (b^1, b^2, b^3)^T$ and $c$ be bounded measurable functions on $\bR^7_T$ such that for some $K > 0$, 
\begin{equation}
			\label{eq1.2.2}
	\|b\|_{ L_{\infty} (\Sigma^T) } + \|c\|_{ L_{\infty} (\Sigma^T) }  \le K,
\end{equation}

\item $g$ satisfy \eqref{eq1.4.1} -  \eqref{eq1.5.1}, 

\item the condition \eqref{eq1'} hold.
\end{itemize}
Then, there exists  $\theta  = \theta (\kappa, \varkappa) > 0$ 
 such that if
\begin{align}
			\label{eq2'}
&	\eta \in L_{2, \theta} (\Sigma^T), \quad f_0 \in S_{2, \theta} (\Omega \times \bR^3),
\end{align}
then, the following assertions hold.

$(i)$
There exists a unique  strong solution $f$  to  \eqref{eq1.0} -  \eqref{eq1.0.0} (see Definition \ref{definition 6.1}).

$(ii)$ We have $f \in S_{2,  \kappa \theta } (\Sigma^T)$, and, furthermore,  
   \begin{align} 
    \label{eq2.1.20}
   & \|f\|_{ L_2 ((0, T) \times \Omega) W^1_{2, \theta} (\bR^3) } \\
  & \le N_{ 1  }  \|\eta\|_{ L_{2, \theta} (\Sigma^T) } + N_{ 1  } \|f_0\|_{ L_{2, \theta} (\Omega \times \bR^3)}, \notag \\
    			\label{eq2}
	& \|f\|_{ S_{2,  \kappa \theta } (\Sigma^T)  }  + \|f\|_{ L_{14/5, \kappa \theta}  (\Sigma^T)  }  + \| \nabla_p f\|_{ L_{7/3, \kappa \theta}  (\Sigma^T)  }\\
 & \le N_{ 2  } \|\eta\|_{  L_{2, \theta}   (\Sigma^T) } + N_{ 2  } (1+ \lambda) \|f_0\|_{ S_{2,    \theta } (\Omega \times \bR^3)  } + N_2 \|f\|_{ L_{2, \theta} (\Sigma^T) }, \notag
 \end{align}
where $N_1 = N_1 (K, \delta_0, T)$,  $N_{ 2  } = N_{ 2  } (\delta_0,  \varkappa, \kappa, K, \theta,  \Omega) > 0$.
\end{proposition}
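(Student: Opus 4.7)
The plan is to establish the weighted energy bound \eqref{eq2.1.20} first as an a priori estimate on smooth solutions, then construct a finite energy strong solution by approximation, and finally upgrade to the $S^{\text{rel}}_{2,\kappa\theta}$ estimate \eqref{eq2} via a localization and mirror-extension argument. For the energy estimate I would test \eqref{eq1.0} against $p_0^{2\theta} f$ and integrate over $\Sigma^T$. The relativistic transport term contributes the time trace $\tfrac12 \|p_0^{\theta} f(T,\cdot)\|^2_{L_2(\Omega\times\bR^3)} - \tfrac12\|p_0^\theta f_0\|^2_{L_2}$ plus boundary contributions on $\Sigma^T_\pm$; invoking the Green-type identity (Lemma C.5) and the SRBC \eqref{eq1.0.0}, the $\Sigma^T_\pm$ terms cancel because the weight $p_0^{2\theta}$ and $f^2$ are both invariant under $p \mapsto R_x p$. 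Integration by parts on $\nabla_p\cdot(\sigma_g \nabla_p f)$ produces the coercive $\int \sigma_g \nabla_p f\cdot\nabla_p f\, p_0^{2\theta}$ (bounded below by $\delta_0 |\nabla_p f|^2 p_0^{2\theta}$ via Lemma \ref{lemma 6.1}) plus the commutator $\int \sigma_g \nabla_p f\cdot \nabla_p(p_0^{2\theta}) f$, which I would absorb by Cauchy--Schwarz into the gradient term and the $\lambda f^2 p_0^{2\theta}$ contribution from the $(c+\lambda)f$ term once $\lambda$ is taken large relative to $K$, $\theta$, $\delta_0$. This yields \eqref{eq2.1.20}.

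Existence in the finite energy class would follow by a standard Galerkin-type approximation, or alternatively by regularizing $g$ (so that $\sigma_g$ is smooth and uniformly elliptic) and regularizing the initial data so that compatibility holds, then passing to the limit using \eqref{eq2.1.20} uniformly in the regularization parameter. Uniqueness of the finite energy strong solution follows by applying the energy identity to the difference of two solutions, which belongs to $S^{\text{rel}}_2(\Sigma^T)$ by definition, so that Lemma C.5 applies and the SRBC kills the boundary term.

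For the $S^{\text{rel}}_{2,\kappa\theta}$ regularity I would follow the mirror-extension scheme described in Section \ref{section 4}. Using a finite partition of unity on $\overline{\Omega}$ and a smooth cutoff in $p$, localize the equation as in \eqref{eq2.36}--\eqref{eq2.60}; the lower-order commutators produced carry extra powers of $p_0$ that are absorbed at the cost of shrinking the weight from $\theta$ to $\kappa\theta$. In an interior chart the localized equation is a KFP equation on $\bR^7_T$ with Hölder coefficients (via Assumption \ref{assumption 1.4}), so the $S_2$ theory of \cite{DY_21a} applies directly. In a boundary chart, I would flatten $\partial\Omega$ to $\{y_3=0\}$ and mirror-extend $f$ across using the SRBC, producing the equation \eqref{eq2.5.0} with the discontinuous geometric drift $\mathcal{X}$. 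Since the relativistic transport coefficient $v(p)$ depends on $p$, the resulting equation is not yet of the Newtonian KFP form \eqref{eq10.3}; I would then perform the momentum change of variables $p\mapsto w$ indicated in Section \ref{section 4} to convert $v(p)\cdot\nabla_x$ into $w\cdot\nabla_y$ (see \eqref{eq2.44}), and verify that the new leading coefficient lies in $L_\infty((0,T)) C^{\varkappa/3,\varkappa}_{y,w}(\bR^6)$. At that point the $S_2$ estimate of \cite{DY_21a} applies, delivering $D^2_p f, \Yr f \in L_{2,\kappa\theta}(\Sigma^T)$ with a constant depending on $\varkappa,\kappa,K,\theta$ and the $C^{1,1}$ character of $\Omega$. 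The mixed-norm bounds $\|f\|_{L_{14/5,\kappa\theta}}$ and $\|\nabla_p f\|_{L_{7/3,\kappa\theta}}$ then follow from the kinetic Sobolev embedding for $S_2$ of \cite{PR_98}, applied after multiplying by the weight.

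The main obstacle will be the boundary analysis: tracking how the weight $p_0^{2\theta}$ interacts with the flattening diffeomorphism and with the change of variable in momentum, and verifying that after these reductions the resulting Newtonian KFP equation has coefficients in the precise class covered by \cite{DY_21a} (uniformly continuous in the metric $|y-y'|^{1/3}+|w-w'|$ rather than the kinetic metric $d_{\mathrm{kin}}$). Secondarily, the non-flat boundary produces the discontinuous geometric drift $\mathcal{X}$ from \eqref{eq2.11.1}, which is not lower-order; handling it requires the full strength of the Calderón--Zygmund theory of \cite{DY_21a} rather than classical KFP estimates. The weight loss $\theta \to \kappa\theta$ is an artifact of commuting the $p$-cutoff with the weight and of absorbing quadratic-in-$p$ coefficients; this is where the freedom to choose $\theta$ large relative to $\kappa$ will be used.
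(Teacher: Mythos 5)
Your broad strategy (weighted energy estimate for \eqref{eq2.1.20}, localization plus flattening plus mirror extension plus a momentum change of variables to reduce to a Newtonian KFP equation, and then the $S_2$ Calder\'on--Zygmund theory of \cite{DY_21a} together with the embedding of \cite{PR_98} for \eqref{eq2}) is in the same spirit as the paper's, and you correctly identify the main technical points: the discontinuous geometric drift $\mathcal{X}$, the need for the coefficients to land in $L_\infty C^{\varkappa/3,\varkappa}_{y,v}$ rather than the kinetic-H\"older class, and the origin of the weight loss $\theta\to\kappa\theta$. However, there is a genuine gap in your existence construction, and it is precisely the difficulty the paper is organized around.

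You propose to produce a finite energy solution by a ``standard Galerkin-type approximation, or alternatively by regularizing $g$ and the initial data.'' Neither of these addresses the obstruction the paper emphasizes in Section~\ref{section 4}: in the natural energy class $f\in L_2 W^1_2$, $\Yr f\in L_2 W^{-1}_2$, the traces on $\Sigma^T_{\pm}$ are not known to exist, so there is no meaning to the SRBC, and Lemma~\ref{lemma C.5} (which you invoke both in the a priori estimate and in the uniqueness step) requires $\Yr f\in L_2(\Sigma^T)$ as a hypothesis, not as a conclusion. A Galerkin scheme in a basis adapted to $p$ gives no control on grazing behavior at $\gamma_0$, and regularizing $g$ (which is already H\"older by Assumption~\ref{assumption 1.4}) does not touch the boundary issue at all. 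The paper instead constructs a ``finite energy weak solution'' in Lemma~\ref{lemma 7.3} by discretizing the velocity diffusion, following \cite{DGY_21}, \cite{BP_87}, \cite{U_86}; this reduces to a perturbed kinetic transport equation for which boundary-value theory is available, and the specific discretization is chosen to respect the maximum principle, producing $L_\infty$ bounds \eqref{eq7.3.2} on the solution \emph{and on the auxiliary boundary functions $f^\star_\pm$} that survive the weak$^*$ limit. That is what lets the boundary identity \eqref{eq1.0.1} pass to the limit and what makes the SRBC meaningful. Your scheme, as written, would not yield those trace bounds.

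A secondary issue is the order of your argument. You derive \eqref{eq2.1.20} first as an a priori estimate via Lemma~\ref{lemma C.5}, but this lemma only applies to functions already known to have $\Yr u\in L_{2,\theta}$ together with $L_{2,\theta}$ traces at $t=0,T$; it is not available for a generic member of the energy class. The paper's order is deliberate: first the weak solution with its integral identity and $L_\infty$ trace bounds (Lemma~\ref{lemma 7.3}, under auxiliary boundedness of $f_0,\eta,\nabla_p b$), then the mirror-extension upgrade to $S^{\mathrm{rel}}_{2,\kappa\theta}$ (Lemma~\ref{lemma 7.4}), which in particular gives $\Yr f\in L_2$ and hence well-defined traces via Lemma~\ref{lemma C.6}, and only \emph{then} the energy identity, the limiting argument removing the auxiliary boundedness assumptions, the reduction of nonzero $f_0$ to zero by subtracting $\phi(t)f_0$, and the uniqueness step. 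If you want to keep your order, you would have to first restrict to smooth compactly supported data, obtain the $S^{\mathrm{rel}}_2$ regularity for that restricted class, and then close the loop by approximation exactly as the paper does; presenting the energy estimate as a self-contained a priori bound elides this.

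One smaller point: the single diffeomorphism you mention does two jobs in the paper --- the flattening map $\Psi$ of \eqref{eq2.55}--\eqref{eq2.56} sends $(x,p)\mapsto(y,w)$ and already changes the transport coefficient from $v(p)$ to the $(y,w)$-dependent field $W$ in \eqref{eq2.0}; a \emph{second}, $y$-dependent change of the momentum variable $w\mapsto v=\mathcal{W}_y(w)$ in \eqref{eq2.54} is then needed to arrive at the Newtonian transport $v\cdot\nabla_y$ in \eqref{eq2.44}. Your phrasing collapses these into one step; the two-stage structure matters because the second change produces the additional coefficient $\mathbb{G}$ in \eqref{eq2.9.1.1} and the Jacobian $\mathsf{J}_{\mathcal{W}}$, both of which have to be estimated on the dyadic $p$-shells (Lemma~\ref{lemma A.7}).
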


\begin{remark}
    \label{remark 2.8}
Invoke the assumptions of Proposition \ref{proposition 2.1} and let $f$ be the  strong solution to \eqref{eq1.0} - \eqref{eq1.0.0}. Then, $f$ satisfies the mirror-extension property, which is defined (imprecisely) below. We will make this statement precise in the proof of the present remark (see p. \pageref{eq2.8.10}).

Let $\xi_n, n \ge 1$ be a dyadic partition of unity in $\bR^3$ and $\chi_k, k  =1, \ldots, m$ be a partition of unity in $\Omega$.
    A  strong solution $f$ satisfies the mirror-extension property if, near the boundary, $f_{k, n}: = f \chi_k \xi_n$ can be `extended' to a function $\mathcal{\dtilde U}$ satisfying the identity
        \begin{align*}                
    & \partial_t  \, \mathcal{\dtilde U} + v \cdot \nabla_y \, \mathcal{\dtilde U} - \nabla_v \cdot (a (t, y, v)  \nabla_v \,  \mathcal{\dtilde U}) \\
    &+ b^i \partial_{v_i} \mathcal{\dtilde U} + c \,  \mathcal{\dtilde U} = \psi\,\,  \text{in} \, \,  \bR^7_T,
\end{align*}     
for certain $a, b, c$, and $\psi$,
      which are `under control.'
\end{remark}

\begin{proposition}[Higher  regularity of a  strong solution]
			\label{proposition 2.3}
Invoke the assumptions of Proposition \ref{proposition 2.1} and let  $r > 2$ be a number. 
Then, there exists a constant
$\theta = \theta (\kappa, \varkappa, r) > 0$
such that if, additionally,
$$
	\eta \in L_{2, \theta} (\Sigma^T)  \cap   L_{r, \theta} (\Sigma^T),
 \quad f_0 \in S_{2, \theta} (\Omega \times \bR^3)  \cap   S_{r, \theta} (\Omega \times \bR^3),
$$
then, for any  strong solution to Eq. \eqref{eq1.0} - \eqref{eq1.0.0}  one has
\begin{align}
\label{eq2.4.3}
&f \in S_{2, \kappa \theta} (\Sigma^T) \cap S_{r, \kappa \theta} (\Sigma^T), \\
			\label{eq2.3.1}
& \|f\|_{ S_{r,  \kappa \theta } (\Sigma^T)  }   \le   N  \sum_{ s \in \{2, r\} } 
\big(\|\eta\|_{  L_{s, \theta}   (\Sigma^T) }  +  (1+\lambda)  \|f_0\|_{ S_{s,   \theta } (\Omega \times \bR^3)  }\big)\\
	& + N  \|f\|_{  L_{2, \theta}   (\Sigma^T) },\notag
\end{align}
where  $N = N (\delta_0,  \kappa, \varkappa, r, K,  \theta, \Omega)$.
Furthermore, 
\begin{itemize}
\item[--] if $r \in (2, 7)$,    we have
    \begin{align}
        \label{eq2.3.1.2}
        \|f\|_{L_{r_1, \kappa \theta} (\Sigma^{T}) } +   \|\nabla_p f\|_{L_{r_2, \kappa \theta} (\Sigma^{T}) } \le \text{r.h.s. of \eqref{eq2.3.1}},
    \end{align}
    where $r_1, r_2 > 1$ are  numbers satisfying the relations
        \begin{align}
            \label{eq2.3.1.3}
        \frac{1}{r_1} = \frac{1}{r} - \frac{1}{7},  \quad   \frac{1}{r_2} =   \frac{1}{r}  - \frac{1}{14},
        \end{align}

\item[--] if $r \in (7, 14)$, 
\begin{align}
            \label{eq2.3.1.4}
    \|f\|_{ L_{\infty, \kappa \theta} (\Sigma^T)  }  + \|\nabla_p f\|_{L_{r_2, \kappa \theta} (\Sigma^{T}) } \le \text{r.h.s. of \eqref{eq2.3.1}},
\end{align}
where $r_2$ is defined  in \eqref{eq2.3.1.3},

\item[--] if $r > 14$, then, for any $\alpha \in (0, 1 - 14/r)$, one has
\begin{align}
                \label{eq2.3.1.5}
  & \sum_{s \in \{2, \infty\} } \|f\|_{ L_{\infty}  ((0, T) \times \Omega) W^1_{ s,  \theta/2} (\bR^3)  } \\
  & + \|[f, \nabla_p f]\|_{  L_{\infty} ((0, T))  C^{\alpha/3, \alpha}_{x, p} (\Omega \times \bR^3) }  \le \text{r.h.s. of \eqref{eq2.3.1}}. \notag
\end{align}
\end{itemize}
In all the estimates \eqref{eq2.3.1.2}, \eqref{eq2.3.1.4}, and \eqref{eq2.3.1.5}, one needs
to take into account the dependence of $N$ on $r_1, r_2$, and $\alpha$.
\end{proposition}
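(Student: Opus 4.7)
The plan is to bootstrap from the $S^{\text{rel}}_{2,\kappa\theta}$-solution supplied by Proposition \ref{proposition 2.1} to the $S^{\text{rel}}_{r,\kappa\theta}$ bound stated in \eqref{eq2.3.1}. Structurally, I would follow the mirror-extension / flattening procedure already outlined in Section \ref{section 4} and Remark \ref{remark 2.8}. The key technical input is the Calder\'on--Zygmund type $S_r$ estimate for Newtonian KFP equations with $L_\infty C^{\alpha/3,\alpha}_{x,p}$-coefficients from \cite{DY_21a}, together with the anisotropic Sobolev--Morrey embedding of \cite{PR_98}.

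\medskip

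Concretely, I would first introduce the partition of unity $\chi_k \xi_n$ in $(x,p)$ described in Remark \ref{remark 2.8}, where $\{\chi_k\}$ localizes in space and $\{\xi_n\}$ is a dyadic decomposition in momentum at scale $2^n$. Each localized piece $f_{k,n} = f\chi_k\xi_n$ satisfies a variant of \eqref{eq1.0} with additional source terms coming from commutators with the cutoffs; these sources contain $f$ and $\nabla_p f$ and are already bounded in $L_{2,\theta}(\Sigma^T)$ by \eqref{eq2}, which is why the term $\||f|+|\nabla_p f|\|_{L_{2,\theta}(\Sigma^T)}$ needs to appear on the right-hand side of \eqref{eq2.3.1}. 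For interior patches, $f_{k,n}$ extends by zero to $\bR^7_T$ and \cite{DY_21a} applies directly. For boundary patches, I would flatten $\partial\Omega$ and perform the mirror extension exactly as in the proof of Proposition \ref{proposition 2.1}; a change of variables in momentum then converts the result into a Newtonian KFP equation of the form
\begin{equation*}
    \partial_t \bar{f} + w\cdot\nabla_y \bar{f} - a^{ij}(t,y,w)\partial_{w_i w_j}\bar{f} + b^i \partial_{w_i}\bar{f} + c\,\bar{f} = \psi
\end{equation*}
on $\bR^7_T$, with $a \in L_\infty C^{\alpha/3,\alpha}_{y,w}$ inherited from Assumption \ref{assumption 1.4} on $g$, and with bounded first- and zero-order terms. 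This is precisely the class treated in \cite{DY_21a}, which furnishes an $L_r$-bound on $(\partial_t + w\cdot\nabla_y)\bar{f}$, $\nabla_w\bar{f}$ and $D^2_w\bar{f}$ in terms of the $L_r$-norms of $\psi$ and of $\bar{f}$. Summing over the dyadic shells against the weight $p_0^{\kappa\theta}$, the geometric-series cost is absorbed precisely by contracting the exponent from $\theta$ to $\kappa\theta$, producing \eqref{eq2.3.1}.

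\medskip

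Once the $S^{\text{rel}}_{r,\kappa\theta}$ bound is in hand, the three integrability and H\"older statements \eqref{eq2.3.1.2}, \eqref{eq2.3.1.4}, and \eqref{eq2.3.1.5} follow from the anisotropic Sobolev--Morrey embeddings for $S_r$ in \cite{PR_98}, with effective scaling dimension $14$. For $r\in(2,7)$ the embedding yields $f\in L_{r_1}$ and $\nabla_p f \in L_{r_2}$ with the exponents \eqref{eq2.3.1.3}; crossing $r=7$ upgrades $f$ to $L_\infty$ while $\nabla_p f$ remains in $L_{r_2}$ up to $r=14$; for $r>14$ one further gains $\nabla_p f \in L_\infty$ and the H\"older bound with exponent $\alpha<1-14/r$. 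These embeddings are applied piecewise to the localized/extended pieces and transferred back to $f$ by undoing the cutoffs, yielding the assertion \eqref{eq2.4.3}.

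\medskip

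The hardest part will be, in my view, controlling the interaction between the relativistic weight $p_0^{\theta}$ and the whole-space $L_r$ calculus uniformly across all momentum shells. Commuting $p_0^{\kappa\theta}$ through the operator $\nabla_p\cdot(\sigma_g\nabla_p\cdot)$ generates lower-order terms whose absorption forces the strict weight contraction $\theta\mapsto \kappa\theta$ and relies on the positive $\lambda$ term on the left; tracking this through the dyadic decomposition so that the constants from \cite{DY_21a} do not deteriorate with the shell index $n$ is the principal bookkeeping task. A secondary subtlety is verifying that after flattening and reflection the leading coefficient $a^{ij}$ retains its $C^{\alpha/3,\alpha}_{y,w}$-regularity (so that Assumption \ref{assumption 1.4} transfers correctly through the mirror extension), while the geometric drift of Remark \ref{remark 2.8}, which is genuinely discontinuous across the flattened boundary, is absorbed as a bounded lower-order perturbation within the hypotheses of \cite{DY_21a}.
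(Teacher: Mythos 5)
Your proposal captures the correct overall architecture: mirror extension, the change of variables in momentum to obtain a Newtonian KFP equation, the $S_r$ Calder\'on--Zygmund theory of \cite{DY_21a}, and the anisotropic Sobolev--Morrey embedding of \cite{PR_98}. However, it omits the central technical device that makes the argument go through, namely the bootstrap. After localization, extension, and the change of variables, the right-hand side of the whole-space KFP equation (the analogue of \eqref{eq2.44}) contains $\nabla_v\cdot(\mathbb{X}\,\mathcal{\hathat U})$ and $\mathbb{B}\cdot\nabla_v\,\mathcal{\hathat U}$, and the commutator source $\eta_{k,n}$ in \eqref{eq2.13} contains $\nabla_p f$ and $f$. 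The known regularity from Proposition \ref{proposition 2.1} only gives $\nabla_p f \in L_{2,\kappa\theta}(\Sigma^T)$. You therefore cannot conclude that the right-hand side lies in $L_r$ for $r>2$, so the $L_r$ estimate of \cite{DY_21a}, as you invoke it, is not applicable in a single pass. This is reflected in the shape of \eqref{eq2.3.1} itself, which bounds the $S^{\text{rel}}_{r,\kappa\theta}$-norm by the weighted \emph{$L_2$}-norm of $|f|+|\nabla_p f|$, not by the corresponding $L_r$-norm.

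The paper's actual proof runs a finite induction with base $S_2(\bR^7_T)$, already established in Proposition \ref{proposition 2.1}. At each step one combines: (a) the embedding theorem for $S_q$ spaces (Theorem 2.1 of \cite{PR_98}), which upgrades the integrability of $f$ and $\nabla_p f$ so that the right-hand side is seen to lie in $L_{q'}$ for some $q'>q$; (b) the a priori regularity-improvement result Lemma \ref{lemma D.3}, needed to assert that the solution belongs to $S_{q'}$ before the estimate can be invoked; (c) the $S_{q'}$ estimate \eqref{eq4.1.4} of Lemma \ref{lemma 4.1}(ii). The iteration terminates after finitely many steps once $q'\ge r$. Your write-up should make this induction explicit; without it, the step ``apply \cite{DY_21a} and sum over dyadic shells'' is circular. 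The rest of what you write --- the derivation of \eqref{eq2.3.1.2}, \eqref{eq2.3.1.4}, \eqref{eq2.3.1.5} from the embedding, the role of the weight contraction $\theta\mapsto\kappa\theta$ in absorbing the dyadic geometry, and the observation that the geometric drift is merely bounded (not continuous) across the reflected boundary --- is consistent with the paper.
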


\begin{remark}
    We point out that for the Newtonian KFP equation on the whole space, there is no loss of the weight in the momentum variable in the $S_r^{N}$ (see \eqref{eq1.2.4.0}) estimate (see \cite{DY_21a}). Furthermore,  in \cite{DGY_21},   the present authors have established an $S_r^{N}$ estimate with the loss of weight in the presence of a spatial boundary. In the relativistic case, we,  loosely speaking, lose weight due to the presence of the spatial boundary and the relativistic transport term.
\end{remark}

We will prove the assertions in the order we stated them.

\begin{proof}[Proof of Lemma \ref{lemma 6.1}]
Denote
\begin{equation}
            \label{eq6.1.1}
\index{$\sigma$} \sigma (p) =  2 \int_{\bR^3} \Phi (P, Q)  J (q) \, dq.
\end{equation}
It is well known that $\sigma$ is a bounded uniformly nondegenerate symmetric matrix-valued function (see Lemma \ref{lemma G.2} $(i)$). The desired assertion follows from this and Lemma \ref{lemma A.1} $(i)$.
\end{proof}

We will break down the proof of Proposition \ref{proposition 2.1} into three significant steps. The initial two will be explained in Lemmas \ref{lemma 7.3} - \ref{lemma 7.4}.
Our argument goes as follows. 
\begin{itemize}
    \item First, we construct a variational solution, which we call the ``finite energy weak solution" (see Lemma \ref{lemma 7.3}). It is a quadruple $(f, f_{+}^{\star}, f_{-}^{\star}, f_T^{\star})$, where $f_{\pm}^{\star}$ and $f_T^{\star}$ are the functions that appear in the boundary terms in the integral formulation. We impose additional conditions $f_0 \in L_{\infty} (\Omega \times \bR^3)$ and $\eta \in L_{\infty} (\Sigma^T), \nabla_p b \in L_{\infty} (\Sigma^T)$. 
    The first two are needed to ensure that the boundary terms in the integral formulation are well defined for any test function $\phi \in C^{0, 1}_0 (\overline{\Sigma^T})$.
    
    \item By using a mirror-extension argument as in \cite{GHJO_20}, \cite{DGO_22},   \cite{DGY_21},   we show that if $\theta$ is sufficiently large, then any ``finite energy weak solution" is  a  strong solution (see Definition \ref{definition 6.1}). To implement the mirror-extension argument in the integral formulation,  one needs to work with general test functions that are Lipshitz up to the grazing set. This explains the necessity of the additional boundedness assumptions in the previous paragraph.
    
    \item We use a limiting argument to get rid of the boundedness conditions on $f_0$, $\eta$, and $\nabla_p b$. The resulting solution $f$ satisfies $Y f \in L_2 (\Sigma^T)$, so that the traces are well-defined  (see \eqref{eqC.6.1} in Lemma \ref{lemma C.6}), and they coincide with the functions $f_{\pm}^{\star},  f_T^{\star}, f_0$, and, in addition, the SRBC holds. We will also explain why the limiting procedure preserves the energy identity and the  ``mirror-extension property".
\end{itemize}

We first state a lemma concerning  finite energy weak solutions to the general KFP equation 
   \begin{align}
    \label{eq6.3}
	&	Y f  - \nabla_p \cdot (a \nabla_p f) + b \cdot \nabla_p f + (c + \lambda) f = \eta, \\
     \label{eq6.4}
 &f(t, x, p) = f (t, x, R_x p), \, z \in \Sigma^T_{-}, \, \, f (0, \cdot) = f_0 (\cdot).
  \end{align}

The nonrelativistic counterpart of this result was established in \cite{DGY_21}.

\begin{lemma}
        \label{lemma 7.3}
Let
\begin{itemize}
\item[--] $\Omega$ be a $C^{1, 1}$ domain
    \item[--]  $a = a (z), z \in \bR^7_T$ be a bounded measurable function satisfying
\begin{equation}
			\label{eq1.1}
	\delta |\xi|^2 \le a (z) \xi_i \xi_j \le \delta^{-1}  |\xi|^2,  \, \forall z \in \Sigma^T, \, \xi \in \bR^3
\end{equation}
for some $\delta \in (0, 1)$,

\item[--]  $b$ and $c$ satisfy \eqref{eq1.2.2}
\item[--]  $\nabla_p a, \nabla_p b \in L_{\infty} (\Sigma^T)$, 
 \item[--]  $f_0 \in L_{2, \theta} (\Omega \times \bR^3) \cap L_{\infty} (\Omega \times \bR^3),$

\item[--] $\eta \in L_{2, \theta} (\Sigma^T) \cap L_{\infty} (\Sigma^T)$.
 \end{itemize}
 Then, for any $\lambda \ge 0$, there exists a quadruple $(f$, $f_{+}^{\star}$, $f_{-}^{\star}$,  $f_T^{\star}$) such that
\begin{enumerate}[i)]
     \item \label{i}
 $f, \nabla_p f \in L_{2, \theta} (\Sigma^T)$, 
 $f_{\pm}^\star  \in L_{\infty} ( \Sigma^T_{\pm})$,
 $f_T^{\star} \in L_{2, \theta} (\Omega \times \bR^3) \cap L_{\infty} (\Omega \times \bR^3)$,

 \item \label{ii} $f_{-}^\star (t, x, p) =  f_{+}^\star (t, x, R_x p)$ a.e. on $\Sigma^T_{-}$,

 \item   \label{iii}
 for any $\phi \in C^{0, 1}_0 (\overline{\Sigma^T})$,
 \begin{align}
 			\label{eq1.0.1}
	&-	\int_{\Sigma^T}
    (Y \phi)    f \, dz\\
&
		 + \int_{\Omega \times \bR^3 }     \big( f_T^{\star} (x, p)  \phi (T, x, v)  -  f_0 (x, p) \phi (0, x, p)\big)\, dx dp \notag\\
 	 &
		+  \int_{\Sigma^T_{+}}     f_{+}^\star \phi \, |v (p) \cdot n_x|  dS_x  dp dt
	-   \int_{\Sigma^T_{-}}   f_{-}^\star \phi \, |v (p) \cdot n_x|    dS_x  dp dt \notag\\
	  &
	  +\int_{\Sigma^T}  (a\nabla_{p} f) \cdot \nabla_{p} \phi  \,dz
	   + \int_{\Sigma^T} (c + \lambda) f \phi \, dz
	+ \int_{\Sigma^T} (b \cdot  \nabla_{p} f ) \phi  \,dz
	  = \int_{\Sigma^T}   \eta \phi   \, dz. \notag
	  \end{align}
Furthermore, one has
\begin{align}
    \label{eq7.3.1}
&\|f_T^{\star}\|_{ L_{2, \theta} (\Omega \times \bR^3) } 
+  \|f\|_{   L_2 ((0, T) \times \Omega) W^1_{2, \theta} (\bR^3)   } \\
&\le N   \|\eta\|_{ L_{2, \theta} (\Sigma^T) }
+ N \|f_0\|_{ L_{2, \theta} (\Omega \times \bR^3) },  \notag\\ 
    \label{eq7.3.2}
& \max\{\|f_T^{\star}\|_{ L_{\infty} (\Omega \times \bR^3) },
 \|f^{\star}_{\pm}\|_{ L_{\infty} (\Sigma^T_{\pm}) },
  \|f\|_{ L_{\infty} (\Sigma^T) } \}\\
&\le   \|\eta\|_{ L_{\infty} (\Sigma^T) }
+  \|f_0\|_{ L_{\infty} (\Omega \times \bR^3) }, \notag
\end{align}
where $N = N (\delta, \theta, K, T)$.

We say that $f$ is a finite energy weak solution to \eqref{eq6.3} - \eqref{eq6.4}.
\end{enumerate}
\end{lemma}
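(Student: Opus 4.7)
The plan is to construct the finite energy weak solution via a Galerkin-plus-regularization scheme combined with Stampacchia truncation, and then extract the trace quadruple through a limiting procedure. First I would introduce the weighted Hilbert space $H = L_{2, \theta}(\Omega \times \bR^3)$ and, for each $\epsilon > 0$, consider a regularized problem in which \eqref{eq6.3} is augmented by $-\epsilon \Delta_x f$ so that the equation becomes uniformly parabolic in both variables. The SRBC on the incoming set $\gamma_{-}$ can then be treated by standard methods: choose a Hilbert basis for the space of functions on $\Omega \times \bR^3$ that are symmetric under $p \mapsto R_x p$ at $\partial\Omega$, project \eqref{eq6.3} onto finite-dimensional subspaces, and solve the resulting system of ODEs. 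The uniform-in-$\epsilon$ and uniform-in-$n$ energy estimate comes from testing the approximate equation against $\phi = p_0^{2\theta} f_{\epsilon, n}$: the elliptic lower bound \eqref{eq1.1} produces the control of $\|\nabla_p f\|_{L_{2,\theta}}$; the weight commutators generated by $\nabla_p(p_0^{2\theta})$ together with the terms $b \cdot \nabla_p f$ and $c f$ are absorbed for large $\lambda$; and the spatial transport, together with the $-\epsilon \Delta_x f$ contribution, yields only non-positive boundary contributions plus a temporal boundary term, giving \eqref{eq7.3.1}.

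The crucial observation is that the boundary integrand $p_0^{2\theta} f^2 (v(p) \cdot n_x)$ cancels between $\gamma_{+}$ and $\gamma_{-}$: the weight $p_0$, the measure $dp$, and the modulus $|v(p) \cdot n_x|$ are all invariant under $p \mapsto R_x p$, and the SRBC forces $f^2$ on $\gamma_{-}$ to equal the pullback of $f^2$ on $\gamma_{+}$, so the two boundary integrals agree in absolute value and differ in sign. For the $L_{\infty}$ estimate \eqref{eq7.3.2}, I would run a Stampacchia truncation with $M := \lambda^{-1} \|\eta\|_{L_{\infty}} + \|f_0\|_{L_{\infty}}$: testing \eqref{eq6.3} against $(f - M)_{+}$ keeps the boundary contributions zero by the same symmetry (now applied to $(f-M)_{+}^2$), and for $\lambda$ large enough to dominate $\|c\|_{L_{\infty}}$ the remaining terms force $(f-M)_{+} \equiv 0$. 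Applying the same argument to $-f$ yields the full $L_{\infty}$ bound; the bounds on the traces $\|f_T^{\star}\|_{L_{\infty}}$ and $\|f_{\pm}^{\star}\|_{L_{\infty}}$ follow since in the regularized problem they are classical traces of $f_{\epsilon,n}$ inheriting the same pointwise bound.

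The main obstacle is extracting the trace quadruple $(f, f_T^{\star}, f_{\pm}^{\star})$ in the limit without assuming $\Yr f \in L_2(\Sigma^T)$, since in the absence of such control the limit $f$ does not carry classical traces. My plan is to exploit the weak-$\ast$ compactness of $f_{\epsilon,n}|_{t=T}$ in $L_{\infty}\cap L_{2,\theta}(\Omega\times\bR^3)$ and of $f_{\epsilon,n}|_{\gamma_{\pm}}$ in $L_{\infty}(\Sigma^T_{\pm})$, which is uniform by the estimates above, to produce $f_T^{\star}$ and $f_{\pm}^{\star}$ along a subsequence. The identity $f_{-}^{\star}(t,x,p) = f_{+}^{\star}(t,x,R_x p)$ is preserved in the weak-$\ast$ limit since it holds for each approximation, and \eqref{eq1.0.1} passes to the limit against any fixed $\phi \in C^{0,1}_0(\overline{\Sigma^T})$ by standard weak-convergence arguments; interior integrability of $\nabla_p f$ and of $f$ in $L_{2,\theta}$ follows from the uniform energy bound. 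The technical care is in verifying that the bounds $f_{\pm}^{\star} \in L_{\infty}(\Sigma^T_{\pm})$ (rather than mere distributional traces) survive the limit, which is exactly why the additional $L_{\infty}$ hypotheses on $f_0$, $\eta$, and $\nabla_p b$ are imposed at this stage and will be removed later by a density argument once $\Yr f \in L_2$ has been recovered by the mirror-extension step.
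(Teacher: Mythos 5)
Your approach diverges from the paper's in a key structural way, and the divergence introduces a genuine gap. The paper does \emph{not} regularize by adding $-\epsilon\Delta_x f$; instead it discretizes the velocity diffusion $-\nabla_p\cdot(a\nabla_p\,\cdot)$ (via a finite-difference approximation in $p$, chosen to respect the maximum principle), so that the approximate equation is a bounded perturbation of the pure kinetic transport operator $\Yr$. For such transport equations the trace theory of \cite{BP_87}, \cite{U_86} applies as is: traces on $\gamma_\pm$ are classical, the SRBC is a natural inflow condition, and the approximate solution and its traces enjoy uniform $L_\infty$ bounds precisely because the discretization is designed to respect the maximum principle. The uniform $L_{2,\theta}$ and $L_\infty$ estimates then pass to the weak$^*$ limit, which is how the quadruple $(f,f^\star_\pm,f^\star_T)$ is extracted.

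The gap in your proposal is the introduction of $-\epsilon\Delta_x f$ together with the SRBC. Once $-\epsilon\Delta_x$ is present, the equation is elliptic in $x$, and well-posedness requires a boundary condition on \emph{all} of $\partial\Omega\times\bR^3$ (Dirichlet, Neumann, Robin). The SRBC does not provide one: it only links the outgoing trace on $\gamma_+$ to the incoming trace on $\gamma_-$, leaving the grazing set and, more importantly, the normal derivative entirely unconstrained. Building a Hilbert basis ``symmetric under $p\mapsto R_x p$ at $\partial\Omega$'' does not resolve this, since $R_x$ depends on $x$ through $n_x$ and such symmetry constrains the trace of $f$, not $\partial_n f$; the integration by parts in $x$ for the $-\epsilon\Delta_x$ term produces a boundary integral $\epsilon\int_{\partial\Omega\times\bR^3}\partial_n f\,\phi$ which does not vanish under SRBC-type symmetry and is not controlled by your energy estimate. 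A related issue: even if the $\epsilon$-problem is solved with, say, an artificial homogeneous Neumann condition, it is not clear (and not addressed in the proposal) that the limiting traces $f^\star_\pm$ satisfy the SRBC, because the $\epsilon$-problem never enforces it. By contrast, the paper's velocity-diffusion discretization keeps the hyperbolic structure intact, so the SRBC is imposed exactly at every stage of the approximation and is trivially preserved under the weak$^*$ limit.

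Your observations about the cancellation of the boundary integrand $p_0^{2\theta} f^2(v(p)\cdot n_x)$ under SRBC, and the Stampacchia truncation idea for the $L_\infty$ estimate, are both correct and are indeed the right energy mechanisms. But they must be run on an approximation scheme that (a) preserves the transport structure (so classical trace theory applies), (b) enforces the SRBC exactly at each step, and (c) is compatible with a uniform maximum principle; these are exactly the properties guaranteed by the discretization of the velocity diffusion used in \cite{DGY_21}, and they are the properties that the $\epsilon\Delta_x$ regularization sacrifices.
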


\begin{remark}
    \label{remark 5.9}
Here, we elaborate on various notions of weak solutions to the linear Landau equation \eqref{eq1.0} - \eqref{eq1.0.0} that we use in the present paper.

\begin{itemize}
    \item \textit{Finite energy solutions.} 
    These are functions of class $C ([0, T]) L_2 (\Omega \times \bR^3) \cap L_2 ((0, T) \times \Omega) W^1_2 (\bR^3)$ that satisfy the integral formulation of \eqref{eq1.0} - \eqref{eq1.0.0} with the test functions satisfying the SRBC. See Definition \ref{definition 27.1}.

    \item \textit{Finite energy weak solutions.}  In the proof of the existence part in Proposition \ref{proposition 2.1}, we need to construct a `weak solution' in the class $f \in L_2 ((0, T) \times \Omega) W^1_2 (\bR^3)$  such that its integral formulation holds for test functions that are Lipschitz up to the kinetic boundary $\partial \Omega \times \bR^3$. This integral formulation is necessary for the mirror-extension trick, requiring well-defined 'kinetic boundary' terms. Lemma \ref{lemma 7.3} provides a weak solution meeting these requirements.

    \item \textit{Very weak solutions.} We justify the uniqueness in the class of finite energy  solutions via a duality argument, which works for a class of weaker solutions, which we call ``very weak solutions." These are $L_2 (\Sigma^T)$ functions satisfying an integral formulation with all derivatives  `transferred' onto a test function satisfying the SRBC. See Definition \ref{definition 27.3}. 

    \item \textit{Intermediate finite energy solutions.} In the proof of the existence of finite energy solutions (see the argument of Proposition \ref{proposition 27.4}), we first construct a solution in a slightly weaker class, where elements lack the temporal continuity in $L_2 (\Omega \times \bR^3)$ (see definition \ref{definition 27.6}).
\end{itemize}
 We note that
\begin{align*}
     &   \text{finite energy solution} \implies \text{intermediate finite energy solution}
    \implies \text{very weak solution}, \\
    &\text{finite energy weak solution} \implies \text{intermediate finite energy solution}.
\end{align*}

The present authors also used the notion of the \textit{finite energy weak solution}  in the construction of a strong solution to a linear non-relativistic Landau equation (see \cite{DGY_21}).
\end{remark}

\begin{proof}[Proof of Lemma \ref{lemma 7.3}]
We repeat almost word-for-word the argument of Theorem 1.5 in  \cite{DGY_21} (see Section 3 therein). Here, we delineate the argument. The main idea is to discretize the velocity diffusion to obtain a perturbed kinetic transport equation in a bounded domain for which the well-posedness problem is well understood (see, for example,   \cite{U_86} and \cite{BP_87}).  We use the energy argument to derive uniform estimates with respect to the parameters of our approximation scheme.
The key difficulty is to `preserve' the `boundary information' on $(0, T) \times \gamma_{\pm}$ in the $\text{weak}^{*}$ compactness argument.
  By designing a specific discretization of the velocity diffusion that respects the maximum principle, we are able to obtain  $L_{\infty}$ estimates of the solution and its traces that are uniform throughout the approximation scheme.
\end{proof}

We first prove the following lemma, which is  Proposition \ref{proposition 2.1} under more restrictive assumptions mentioned at the beginning of the section.

\begin{lemma}
    \label{lemma 7.4}
Invoke the assumptions of Proposition \ref{lemma 7.3} and assume, additionally,  
\begin{equation}
        \label{eq7.4.1}
        f_0 = 0, \quad \eta, \nabla_p b \in L_{\infty} (\Sigma^T).
\end{equation}
Then, for sufficiently large $\theta  = \theta (\kappa, \varkappa) > 0$, the following assertions hold.

$(i)$ Any finite energy weak solution $f$  to  \eqref{eq6.3} - \eqref{eq6.4} constructed in Lemma \ref{lemma 7.3} must be a strong solution (see Definition \ref{definition 6.1}).

$(ii)$ The  estimate \eqref{eq2} holds.
\end{lemma}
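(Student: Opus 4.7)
The plan is to adapt the mirror-extension strategy of \cite{GHJO_20,DGO_22,DGY_21} to the relativistic setting, using the full Calder\'on--Zygmund theory of \cite{DY_21a}. I would first establish $(ii)$, and then $(i)$ follows essentially for free. Fix a finite $C^{1,1}$ partition of unity $\{\chi_k\}$ subordinate to interior and boundary patches of $\overline{\Omega}$, together with a dyadic partition $\{\xi_n\}$ in $p$, and localize to $f_{k,n}=f\chi_k\xi_n$. Interior pieces reduce to the $S_2$ theory of \cite{DY_21a} after straightening the transport term as described below. For a boundary patch I would flatten $\partial\Omega$ by a $C^{1,1}$ diffeomorphism $y=\Psi(x)$; in the new variables the transport reads $\partial_t+v(p)\cdot M(y)\nabla_y$ with $M\in C^{0,1}$, and the SRBC combined with the boundedness of $f^{\star}_{\pm}$ from \eqref{eq7.3.2} permits a mirror extension of $f_{k,n}$ across $\{y_3=0\}$ to a function $\widetilde U$ on $\bR^7_T$ solving a Fokker--Planck-type equation with H\"older leading coefficient (by Lemma \ref{lemma 6.1} together with Assumption \ref{assumption 1.4}), bounded lower-order terms, and a discontinuous \emph{geometric drift} $\mathcal{X}(y,p)$, quadratic in $p$, across the extension hyperplane.

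The new difficulty compared to \cite{DGY_21} is that the transport is $\partial_t+v(p)\cdot M(y)\nabla_y$ rather than $\partial_t+p\cdot\nabla_y$, so existing KFP Calder\'on--Zygmund statements do not apply verbatim. I would resolve this by a smooth bi-Lipschitz change of momentum variables $w=\Xi(y,p)$, chosen so that $\Xi$ rectifies $v(p)\cdot M(y)\nabla_y$ to $w\cdot\nabla_y$ while commuting with the reflection $p\mapsto R_xp$ in the flattened coordinates (so that the mirror extension remains coherent after the change), with Jacobian controlled on $\mathrm{supp}\,\xi_n$. After this change, the equation takes the Newtonian KFP form \eqref{eq10.3} on $\bR^7_T$ with $a^{ij}\in L_{\infty}((0,T))\,C^{\alpha/3,\alpha}_{y,w}(\bR^{6})$ plus bounded (discontinuous) drift, and the global $S_2$ theorem of \cite{DY_21a} applies to yield
\begin{align*}
\|\widetilde U\|_{S_2(\bR^7_T)}\le N\|\mathrm{r.h.s.}\|_{L_2(\bR^7_T)}+N\|\widetilde U\|_{L_2(\bR^7_T)}.
\end{align*}
Unfolding the change of variables, summing over $k$ and $n$ with standard commutator bookkeeping, and absorbing a fixed power of $\jb$ per dyadic shell (this being the origin of the loss $\theta\mapsto\kappa\theta$), I arrive at
\begin{align*}
\|f\|_{S^{\text{rel}}_{2,\kappa\theta}(\Sigma^T)}\le N\|\eta\|_{L_{2,\theta}(\Sigma^T)}+N\bigl(\|f\|_{L_{2,\theta}(\Sigma^T)}+\|\nabla_pf\|_{L_{2,\theta}(\Sigma^T)}\bigr).
\end{align*}
The parenthesized terms are absorbed via \eqref{eq7.3.1} (using $f_0=0$) after taking $\lambda$ large, which together with Corollary \ref{corollary 2.5} applied at $r=2$ delivers the full estimate \eqref{eq2}.

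Once $f,\nabla_pf,D^2_pf,\Yr f\in L_{2,\kappa\theta}(\Sigma^T)$, part $(i)$ follows: the trace theory of Section \ref{section 9} endows $f$ with well-defined traces, and Lemma \ref{lemma C.6} identifies them with the auxiliary boundary functions $f_T^{\star},f^{\star}_{\pm}$ appearing in \eqref{eq1.0.1}, so that the identity $f^{\star}_{-}(t,x,p)=f^{\star}_{+}(t,x,R_xp)$ from Lemma \ref{lemma 7.3} becomes the SRBC in the trace sense, and $f$ is a finite energy strong solution in the sense of Definition \ref{definition 6.1}.

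The main obstacle I anticipate is the construction of the momentum change of variables $\Xi$: it must be globally defined on $\bR^3$, compatible with the reflection $R_x$ so the mirror extension is preserved, keep $\jb^{\theta}$ uniformly comparable to $\langle w\rangle^{\theta}$, and, above all, preserve the anisotropic H\"older regularity of $\sigma_g$ under pullback so that the hypotheses of \cite{DY_21a} are satisfied. The discontinuous geometric drift $\mathcal{X}$ that survives the extension is precisely what forces the use of \cite{DY_21a} rather than the earlier KFP results of \cite{BCLP_13} or \cite{NZ_20}, since those require continuity with respect to the kinetic metric, which $\mathcal{X}$ fails across $\{y_3=0\}$.
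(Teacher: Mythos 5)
Your proposal follows the paper's proof essentially step for step: localize in $x$ and dyadically in $p$, flatten $\partial\Omega$, mirror-extend across $\{y_3=0\}$, rectify the relativistic transport $v(p)\cdot M(y)\nabla_y$ to a Newtonian one via a $y$-dependent momentum change of variables, and invoke the $S_2$ Calder\'on--Zygmund theory of \cite{DY_21a}. The one concrete logical snag is your appeal to Corollary \ref{corollary 2.5} at $r=2$ to obtain the $L_{14/5}$ and $L_{7/3}$ pieces of \eqref{eq2}: that corollary is itself proved from the mirror-extension property (Remark \ref{remark 2.8} and its proof), which is established by precisely the argument of the present lemma, so invoking it here is circular. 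The fix, and what the paper does in Step 6, is to apply the $S_2(\bR^7_T)$ Sobolev embedding of \cite{PR_98} directly to the extended function $\mathcal{\dtilde U}$ before unfolding the changes of variables, so that all three quantities on the left-hand side of \eqref{eq2.35} are controlled simultaneously by the $S_2$ estimate of Lemma \ref{lemma 4.1}. The remaining concerns you flag --- compatibility of the momentum map $\mathcal{W}_y$ with the reflection across $\{y_3=0\}$, comparability of $p_0$ with $\langle w\rangle$ on each dyadic shell, and preservation of the anisotropic H\"older class of $\sigma_g$ under pullback --- are precisely the verifications carried out in Lemmas \ref{lemma A.3}--\ref{lemma A.7}; the $2^{\beta n}$ dyadic Jacobian loss that results is absorbed by the weight $p_0^{\theta/2}$ the paper builds into the localization $f_{k,n}=f\chi_k\xi_np_0^{\theta/2}$, which is the precise mechanism behind the $\theta\mapsto\kappa\theta$ loss you describe informally as absorbing a fixed power of $\jb$ per shell.
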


\begin{proof}[Proof of Lemma \ref{lemma 7.4}] 
The proof is split into six steps. First, we localize in space and momentum variables and use a boundary flattening argument. Then, we use a `mirror extension argument (see Step 3 on p. \pageref{eq2.11.4}) to `erase' the boundary conditions and reduce the equation to the one on the whole space. Then, we use a transformation that reduces the equation to a Newtonian KFP equation, and we apply the  $S_2^{N}$ (see \eqref{eq1.2.4.0}) estimate of \cite{DY_21a}.

\textbf{Step 1: localization.}
Let  $\chi_k = \chi_k (x), k  = 1, \ldots, m$ be a standard partition of unity in $\Omega$ 
	 such that
	 supp $\chi_1 \subset \Omega$,
	$0 \le \chi_k \le 1$, $k = 1, \ldots, m$,
	and
\begin{equation}
			\label{eq2.3}
  |\nabla_x \chi_k| \le N/r_0, \quad
  \begin{cases}
    \chi_k = 1\quad \text{in} \, \,  B_{r_0/4} (\mathsf{x_k})\\
    \chi_k = 0 \quad  \text{in} \, \,  B_{ r_0/2 }^c (\mathsf{x_k})
    \end{cases},\quad k  = 2, \ldots, m,
\end{equation}
where $\mathsf{x_k} \in \partial \Omega, k  = 2, \ldots, m.$

Let $\xi_n = \xi_n (p), n \ge 1$  be  sequence of  functions such that 
\begin{align*}
&  \xi_0  \in C^{\infty} (B_1), \quad	\xi_0 = 1 \, \, \text{on} \, \,  \{ |p| \le 3/4\}, \\
&   \xi_n  \in C^{\infty} (\{2^{n-1} < |p| < 2^{n+3/2}\}), \quad \xi_n = 1 \,  \text{on} \, \, \{  2^{n - 1/2} \le |p| \le  2^{n+1}\},  \\
&	|D_p^l \xi_n|   \le N (l) 2^{-n l},  n, l  \in \{1, 2, \ldots\}.
\end{align*}
We will assume that $n \ge 2$ because the case $n = 1$ is handled in the same way.

We denote
\begin{equation}
			\label{eq2.36}
	f_{k, n} (z) := f (z) \chi_k  (x) \xi_n (p) p_0^{ \omega \theta }
\end{equation}
and note that $f_{k, n}$ satisfies the identity 
\begin{equation}
			\label{eq2.60}
Y f_{k, n}  - \nabla_p \cdot (\sigma_g \nabla_p f_{k, n}) +  b \cdot \nabla_p f_{k, n} + (c + \lambda) f_{k, n} = \eta_{k, n}
\end{equation}
in the sense of the integral identity \eqref{eq1.0.1},
where 
\begin{equation}
			\label{eq2.13}
 \begin{aligned}
\eta_{k, n} =&   (v (p) \cdot \nabla_x \chi_k) f p_0^{\omega \theta } \xi_n + \eta \chi_k  p_0^{\omega \theta } \xi_n \\
&
+ 	\chi_k\bigg(- (\partial_{p_i} \sigma^{i j}_g)  (\partial_{p_j} (\xi_n p_0^{\omega \theta })) f  \\
&
- 2 \sigma_g^{i j} (\partial_{p_i} f) (\partial_{p_j} (p_0^{\omega \theta  } \xi_n)) - \sigma_g^{i j}   \partial_{ p_i p_j} (p_0^{\omega \theta  } \xi_n) f \\
&
 + \big(b \cdot \nabla_p (\xi_n p_0^{\omega \theta })\big) f\bigg),
  \end{aligned}
\end{equation}
where $\omega \in (0, 1)$.
We will focus on the near boundary case when $k \ge 2$. At the end of the proof, we discuss the case when $k = 1$.  
Our goal is to show that
\begin{equation}
			\label{eq2.35}
\begin{aligned}
	&\||Y f_{k, n}| + |D^2_p (f_{k, n})| \|_{  L_2  (\Sigma^T) }   \\
 &+\|f_{k, n}\|_{  L_{14/5}  (\Sigma^T) } + \|\nabla_p f_{k, n}\|_{  L_{7/3}  (\Sigma^T) } \\
& \le N  \|(|f| + |\nabla_p f| + |\eta|) 1_{ 2^{n-1} < |p| < 2^{n+3/2}  }\|_{  L_{2, \beta + \omega \theta } (\Sigma^T) }
\end{aligned}
\end{equation}
for some number $\beta =  \beta (\varkappa) > 0$. When $n = 0$, the indicator function in \eqref{eq2.35} should be replaced with $1_{ |p| < 1}$. 

If \eqref{eq2.35} is true, we take $\omega_1 \in (\omega, \frac{1+\omega}{2})$ and $\theta$ large so that, so that $\beta+ \omega \theta  < \omega_1 \theta$,
  raise \eqref{eq2.35} to the power $2$, and sum with respect to $n$ and $k$. We obtain
\begin{align}
    \label{eq2.70}
    	\||Y (f p_0^{ \omega \theta }) \|_{  L_2 (\Sigma^T) } + \|D^2_p (f p_0^{ \omega \theta })| \|_{  L_2  (\Sigma^T) } 
	\le N \||f| + |\nabla_p f| + |\eta|\|_{  L_{2,  \omega_1 \theta} (\Sigma^T) }.
\end{align} 
 Integrating by parts and using the Cauchy-Schwarz inequality, we get for $\varepsilon \in (0, 1)$,
\begin{align*}
  &  \int_{\Sigma^T} |\nabla_p f|^2 p_0^{2 \omega_1 \theta} \, dz    \lesssim_{\theta, \omega_1} \varepsilon \|\nabla_p f\|^2_{ L_{2, \omega_1 \theta} (\Sigma^T)  } + \varepsilon^{-1} \|f\|^2_{ L_{2, \omega_1 \theta} (\Sigma^T)  } \\
  &+ \varepsilon  \|D^2_p f\|^2_{ L_{2, \omega \theta } (\Sigma^T)  } 
  +  \varepsilon^{-1} \|f\|^2_{ L_{2, (2 \omega_1  -  \omega) \theta } (\Sigma^T)  }. 
\end{align*}
Due to our choice of $\omega_1$, we have $2 \omega_1 -  \omega \le 1$. 
Hence, by choosing $\varepsilon$ sufficiently small, 
we can drop the norm involving $\nabla_p f$ on the r.h.s. of \eqref{eq2.70} and replace $\omega_1$ with $1$ therein, and obtain the desired estimate of the weighted $S_2$ norm in \eqref{eq2}.  Similarly, we conclude the validity of the estimates of the second and the third terms of the left-hand side of \eqref{eq2}.

For the sake of convenience, we denote 
\begin{equation}
                \label{eq2.50}
    	U = f_{k, n} \quad H = \mathsf{\eta}_{k, n}.
\end{equation}
Without loss of generality, we may assume that $\omega = \frac{1}{2}$.

\textbf{Step 2: boundary flattening.}
We fix a point $\mathsf{x}_k \in \partial \Omega, k = 2, \ldots, m$ and relabel it as $x_0$.
There exists a function $\rho \in C^{1, 1}_b (\bR^2)$ such that
\begin{align*}
     &   \partial \Omega \cap B_{r_0} (x_0) \subset \{x: x_3 = \rho (x_1, x_2) \},\\
    &\Omega_{r_0} (x_0) : =  \Omega \cap B_{r_0} (x_0) \subset \{x: x_3 < \rho (x_1, x_2)\}.
\end{align*}
Let 
\begin{equation}
            \label{eq2.55}
    \Psi: \Omega_{r_0} (x_0)  \times \bR^3 \to \bH_{-} = \bR^3_{-}
\times \bR^3, \quad (x, p) \to (y, w)
\end{equation}
 be the  transformation given by
 \begin{equation}
            \label{eq2.56}
   y = \psi (x), \quad w = (D \psi (x)) p,
\end{equation}
where $\psi$ is the inverse of 
\begin{equation*}
    \psi^{-1} (y)
    = \begin{pmatrix}
	y_1 \\ y_2 \\ \rho (y_1, y_2)
       \end{pmatrix}
	+ y_3 \begin{pmatrix}
			-\rho_1^{(y_3)} \\ -\rho_2^{(y_3)} \\ 1
	          \end{pmatrix},
\end{equation*}
where $\rho_i = \partial_{x_i} \rho, i = 1, 2$, and  $\rho^{\varepsilon}$ is a standard mollification of $\rho$.
It follows from the expression of the Jacobi matrix  $(\frac{\partial x}{ \partial y})$ (see \eqref{eqA.6.0}) that $\psi$ is local a $C^{1, 1}$ diffeomorphism.

 A similar diffeomorphism was used to study the Newtonian KFP and the Landau equations in a bounded domain with the SRBC (see \cite{GHJO_20}, \cite{DGO_22}, \cite{DGY_21}).
$\Psi$ has two  special features:
\begin{itemize}
\item it preserves the form of the Newtonian kinetic Fokker-Planck equation in the sense explained in Section 2.1 of \cite{DGY_21};

\item 
 it preserves the SRBC, i.e.,
\begin{equation}
			\label{eq2.8}
	\widehat U_{-}^{\star} (t, y_1, y_2,  w) = \widehat U^{\star}_{+} (t, y_1, y_2, \bm{R} w), \, 
	\text{whenever} \, w_3 < 0, 
\end{equation}
where
\begin{equation}
			\label{eq2.8.1}
 \widehat U_{\pm}^{\star} (t, y_1, y_2,  w) = U_{\pm}^{\star} (t, x (y_1, y_2, 0), p (y_1, y_2, 0, w)), 
\end{equation}
and $U_{\pm}^{\star}$ were introduced in Lemma \ref{lemma 7.3}.
The identity \eqref{eq2.8}  follows from the fact that whenever $y_3 = 0$, one has
\begin{align*}
     &(R_x p) (y, w) = (p - 2 (p\cdot n_x) n_x) (y, w) \\
	& = \begin{pmatrix}w_1 + \rho_1 w_3\\
     w_2 + \rho_2 w_3 \\ \rho_1 w_1 + \rho_2 w_2 - w_3 \end{pmatrix}
     = \bigg(\frac{\partial x}{\partial y}\bigg)_{| y_3 = 0} \begin{pmatrix} w_1\\ w_2\\ -w_3 \end{pmatrix}.
\end{align*}
where  the Jacobi matrix is computed in  \eqref{eqA.6.0}.
\end{itemize}
The first property does not hold for the relativistic Fokker-Planck (see \eqref{eq2.1}). Nevertheless, 
 this equation  can still be reduced to a Newtonian KFP  type equation (see Step 4 below).

 Next,  denote
\begin{align}
&\label{eq2.0.0}
	\widehat u (t, y, w) = u (t, x (y), p (y, w)), \quad	\mathsf{J}_{\psi} = \bigg|\text{det} \bigg(\frac{\partial x}{\partial y}\bigg)\bigg|^2,   \\
	& \label{eq2.0}
 W  = \frac{w}{ \big(1 +  \big|\frac{\partial x}{\partial y} w\big|^2\big)^{1/2}},
 \quad	A  = \bigg(\frac{\partial y}{\partial x}\bigg) \widehat \sigma_g \bigg(\frac{\partial y}{\partial x}\bigg)^T, \quad B  =  \bigg(\frac{\partial y}{\partial x}\bigg) \widehat b,\\
	&\label{eq2.0.1}
	        X =  (X_1, X_2, X_3)^T =   \bigg(\frac{\partial y}{\partial x}\bigg) \bigg(\frac{\partial p}{\partial y}\bigg) W
        =  \bigg(\frac{\partial y}{\partial x}\bigg) \frac{\partial \big(\frac{\partial x}{\partial y} w\big)} {\partial y} W.
\end{align}
For a function $\Xi$ on $\Omega_{r_0} (x_0) \times \bR^3$, we denote 
\begin{equation}
            \label{eq2.49}
    	\widetilde \Xi (y, w) = \widehat \Xi (y, w) \mathsf{J}_{\psi} (y).
\end{equation}

Changing variables in \eqref{eq1.0.1} (see Section \ref{section E.1}), we conclude for any $\phi \in C^{0, 1}_0 ([0, T] \times \overline{\Omega_{r_0} (x_0) } \times \bR^3)$ (see  p. \pageref{eq1.11}), we have 
	\begin{align}
							\label{eq2.1}
 	& - \int_{  \mathbb{H}^T_{-} }
      (\partial_t \widehat \phi +  W \cdot \nabla_y \widehat  \phi) \,     \widetilde U  dy dw dt
      + \int_{\bH_{-}}   \widetilde U^{*}  (T, y, w) \widehat \phi (T, y, w) \, dy dw \notag\\
	   &
       + \int_{  \mathbb{H}_{-}^T }
       \big((\nabla_w \widetilde U)^T A \nabla_{w} \widehat \phi
       + \widetilde U X \cdot \nabla_{w} \widehat \phi
	+ \big(B \cdot \nabla_{w} \widetilde U \, \big) \, \widehat \phi
       + (\widehat c  + \lambda)  \widetilde U  \, \widehat \phi \,\big) \,  dy dw dt \notag\\
         &
		+ \int_0^T \int_{\bR^2 \times \bR^3_{+}}  |w_3|    \widetilde U^{  \star }_{+} \widehat \phi \, dy_1 dy_2  dw dt 
	-   \int_0^T \int_{\bR^2 \times \bR^3_{-}}  |w_3|    \widetilde U^{  \star }_{-} \widehat \phi \, dy_1 dy_2 dw  dt\notag\\
	&= \int_{ \mathbb{H}^T_{-}} \widehat \phi \,  \widetilde H \, dy dw dt,
\end{align}	
where $\widetilde U^{  \star }_{+} = \widehat U^{  \star }_{+} \mathsf{J}_{\psi}|_{y_3= 0}$.

\textbf{Step 3: mirror extension.} For a function $\Xi  = \Xi (x, p)$ on $\Omega_{r_0} (x_0) \times \bR^3$, we denote 
\begin{equation}
        \label{eq2.11.4}
    \overline{\Xi} (y, w) : = \begin{cases} 	\widetilde \Xi (y,  w), \, \, (y, w) \in \bH_{-},\\
								\widetilde \Xi (\bm{R} y, \bm{R} w),  \, \, (y, w)  \in  \bH_{+}
					\end{cases}
\end{equation}
(see \eqref{eq2.49}. We call $ \overline{\Xi}$ the mirror extension of $\Xi$.

Next, let $G \subset \bR^3$ be the even extension of $\psi (\Omega_{r_0} (x_0)) \subset \overline{\bR^3_{-}}$ across the plane $y_3 = 0$.
We  set
\begin{align}
 &        \label{eq2.11}
   \mathcal{A} (t, y, w)
         = \begin{cases} A (t, y, w), \quad (t, y, w) \in (0, T) \times \overline{\psi (B_{r_0} (x_0))} \times \bR^3,\\
            \bm{R} \, A (t, \bm{R} y, \bm{R} w)\,   \bm{R}, \, \, (t, y, w) \in  (0, T) \times (G \cap \bR^3_{+}) \times \bR^3,
            \end{cases}  \\
&
\label{eq2.11.2}
 \mathcal{B} (t,  y,  w) = \begin{cases} B (t, y, w), \, \, (t, y, w) \in \overline{\bH^T_{-}}, \\
 \bm{R} \,  B (t, \bm{R} y, \bm{R} w), \, \, (t, y, w) \in \bH^T_{+}, \end{cases} \\
 & \label{eq2.11.1}
   \mathcal{X} (y, w) = \begin{cases} X (y, w), \, \, (y, w) \in \overline{\bH_{-}}, \\ \bm{R} \,  X (\bm{R} y, \bm{R} w), \,\,  (y, w) \in \bH_{+}, \end{cases} \\
&\label{eq2.11.3}
\mathcal{W} (y, w) = 
\begin{cases} 
\frac{w}{ \big(1 + |V_1|^2\big)^{1/2}}, \, \, (y, w) \in \overline{\psi (B_{r_0} (x_0))} \times \bR^3, \\
  \frac{w}{ \big(1 +  |V_2|^2\big)^{1/2}}, \,\,  (y, w) \in  (G \cap \bR^3_{+}) \times \bR^3, 
\end{cases}
\end{align}
where
\begin{align*}
& V_1 = \bigg(\frac{\partial x}{\partial y}\bigg) (y) w, \quad V_2 =    \bigg(\big(\frac{\partial x}{\partial y}\big) (\bm{R} y)\bigg) (\bm{R} w).
\end{align*}
 We also set $\mathcal{C}$ to be the even extension in $y_3$ and $w_3$ of $\widehat c$.

We now find an equation satisfied by $\overline{U}$.
  We fix a test function $\phi \in C^{0, 1} ([0, T) \times \overline{G} \times \bR^3)$ vanishing for large $z$.
Replacing $\widehat \phi$ with $\phi (t, \bm{R} y, \bm{R} w)$ in the identity   \eqref{eq2.1} and changing variables $x \to \bm{R} x, w \to \bm{R} w$ give
	\begin{align}
	            	\label{eq2.2}
 	& - \int_{  \mathbb{H}^T_{+} }
      (\partial_t  \phi +  \mathcal{W} \cdot \nabla_y   \phi) \,     \overline{U}  dy dw dt
      + \int_{ \bH_{+} } \phi (T, y, w) \overline{U} (y, w) \, dy dw \\
	   &
       + \int_{  \mathbb{H}_{+}^T }
       \big(   (\nabla_w \overline{U})^T \mathcal{A} \nabla_{w}  \phi
       + \overline{U} \mathcal{X} \cdot \nabla_{w}  \phi
	+ \big(\mathcal{B} \cdot \nabla_{w} \overline{U} \, \big) \,  \phi
       + (\mathcal{C}  + \lambda)  \overline{U}  \,  \phi \,\big) \,  dy dw dt \notag\\
         &
		+ \int_0^T \int_{\bR^2 \times \bR^3_{-}}  |w_3|    \overline{U}^{  \star }_{+} (t, y_1, y_2, \bm{R} w)  \phi \, dy_1 dy_2  dw dt \notag\\
	&-   \int_0^T \int_{\bR^2 \times \bR^3_{+}}  |w_3|    \overline{U}^{  \star }_{-}  (t, y_1, y_2, \bm{R} w) \phi \, dy_1 dy_2 dw  dt \notag\\
	&= \int_{ \mathbb{H}^T_{+}}  \phi \,  \overline{H} \, dy dw dt. \notag
\end{align}	
Adding Eq. \eqref{eq2.2} to  Eq. \eqref{eq2.1} with $\widehat \phi$ replaced with $\phi$  and using  \eqref{eq2.8}, we cancel  the integrals over the incoming/outgoing boundaries  and   conclude that
the mirror extension $\overline{U}$ satisfies the  identity 
\begin{equation} 	
			\label{eq2.5.0}
		\partial_t \overline{U} +  \mathcal{W}   \cdot \nabla_y \overline{U}  - \nabla_w \cdot (\mathcal{A} \nabla_w \overline{U}) + \mathcal{B} \cdot \nabla_w \overline{U} - \nabla_w \cdot (\mathcal{X} \overline{U})   + (\mathcal{C} + \lambda) \overline{U} = \overline{H}
\end{equation}
in the weak sense on $[0, T) \times \overline{G} \times \bR^3$, i.e., for any $\phi \in C^{0, 1}_0 ([0, T) \times \overline{G} \times \bR^3)$,
\begin{equation} 	
			\label{eq2.5}
\begin{aligned}
 	& - \int_{\bR^7_T}
      (\partial_t  \phi +  \mathcal{W} \cdot \nabla_y   \phi) \,     \overline{U}  dy dw dt
	  \\
	   &
       + \int_{  \bR^7_T }
       \big(   (\nabla_w \overline{U})^T \mathcal{A} \nabla_{w}  \phi
       + \overline{ U } \mathcal{X} \cdot \nabla_{w}  \phi
	+ \big(\mathcal{B} \cdot \nabla_{w} \overline{ U } \, \big) \,  \phi
       + (\mathcal{C}  + \lambda)  \overline{ U }  \,  \phi \,\big) \,  dy dw dt\\
	&= \int_{ \bR^7_T }  \phi \,  \overline{H}\, dy dw dt.
\end{aligned}	
\end{equation}

\textbf{Step 4: reducing Eq. \eqref{eq2.5} to  a Newtonian  KFP  equation}.
Recall  that
 $U (t, \cdot) = f_{k, n} (t, \cdot)$  is supported on $\Omega_{r_0/2} (x_0)  \times \{ 2^{n-1} < |w| < 2^{n+3/2}\}$, and hence, 
\begin{equation}
            \label{eq2.53.0}
	\widehat U (t, \cdot, \cdot) \, \, \text{vanishes outside} \, \, B_{3r_0/4} (x_0) \times \{  2^{n-3/2} <  |w| < 2^{n+2}\}
\end{equation}
for sufficiently small $r_0$.
For any $y \in G$, we denote
\begin{equation}
            \label{eq2.53}
    	  \mathcal{W}_y (w) = \mathcal{W} (y, w)
\end{equation}
(see \eqref{eq2.11.3}). By the assertion $(ii)$ in  Lemma \ref{lemma A.3}, for any $y \in G$, the mapping
$$
	\mathcal{W}_y: \{ |w| < 2^{n+2} \} \to \bR^3
$$
is a diffeomorphism onto its image, and by \eqref{A.3.0}-\eqref{A.3.8}, one has
\begin{equation}
			\label{eq2.15}
\begin{aligned}
	&  \sup_{ |w| < 2^{n+2} }  |D^j \mathcal{W}_y| \le N 2^{  - j n }, j = 0, 1, 2, \, \, \sup_{ \mathcal{W}_y (\{|w| < 2^{n+2}\}) }  |D (\mathcal{W}_y)^{-1}| \le N 2^{3 n}, \\
&  \sup_{ \mathcal{W}_y (\{|w| < 2^{n+2}\}) } |D^2 (\mathcal{W}_y)^{-1}| \le N 2^{5 n},
\end{aligned}
\end{equation}
where $N=  N (\Omega) > 0$.
We also introduce the mapping 
\begin{equation}
            \label{eq2.52}
	\Upsilon_n (y, w) = (y, \mathcal{W} (y, w)) : G \times \{|w| < 2^{n+2} \}  \to \bR^6.
\end{equation}
Due to Lemma \ref{lemma A.5} $(i)$,  $\Upsilon_n$ is a globally bi-Lipschitz map onto its image,
so that, if we change variables 
\begin{equation}
            \label{eq2.54}
	v = \mathcal{W}_y w
\end{equation}
 in \eqref{eq2.5}, then the new integral identity \eqref{eq2.14} will hold on a set of Lipschitz test functions.  

Next, for  a function 
$\Xi = \Xi (y, w)$ on $G \times \{|w| < 2^{n+2}\}$, we set
\begin{equation}
			\label{eq2.22}
	  \hathat \Xi (y, v) = \Xi (y, (\mathcal{W}_y)^{-1} (v)), \,  (y, v) \in \Upsilon_n (G \times  \{|w| < 2^{n+2}\}).
\end{equation}
For the sake of convenience, we change the notation as follows:
\begin{equation}
			\label{eq2.22.1}
	\mathcal{U} := \overline{U}, \quad  \mathcal{H} := \overline{H}.
\end{equation}
We fix a test function 
$$
	\phi \in C^{0, 1}_0 ([0, T] \times \overline{G} \times \{|w| \le 2^{n+2}\})
$$
and   change variables  
 $$
    w = (\mathcal{W}_y)^{-1} (v)
 $$
in the identity  \eqref{eq2.5}. Due to the identity \eqref{eqC.3} in Section \ref{section E.2}, we obtain
\begin{align}
\label{eq2.14}
 	&- \int_{  \bR^7_T }
      (\partial_t  \hathat\phi +  v \cdot \nabla_y  \hathat \phi) \,  \mathcal{\hathat U}  \mathsf{J}_{\mathcal{W}} dy dv dt
      + \int_{ \bR^6 } \hathat\phi (T, y, v) \,   \mathcal{\hathat U}  (T, y, v) \mathsf{J}_{\mathcal{W}} dy dv \\
	   &
       + \int_{  \bR^7_T }
       \big(   (\nabla_v\,  \mathcal{\hathat U})^T \mathbb{A} \nabla_{v} \hathat  \phi
       + \big((\mathbb{X} + \mathbb{G}) \cdot \nabla_v \hathat \phi\big) \,  \mathcal{\hathat U} \notag \\
&	+ \big(\mathbb{B} \cdot \nabla_{v} \, \mathcal{\hathat U}\, \big) \,  \hathat \phi
       + (\mathbb{C}  + \lambda) \,  \mathcal{\hathat U}  \, \hathat \phi \,\big) \, \mathsf{J}_{\mathcal{W}} dy dv dt\notag\\
	& 
	= \int_{ \bR^7_T  } \hathat \phi \,  \mathcal{ \hathat H } \, \mathsf{J}_{\mathcal{W}} dy dv dt, \notag
\end{align}	
where 
\begin{align}
&\label{eq2.9.2.1}
	\mathsf{J}_{\mathcal{W}} = \bigg|\text{det} \frac{\partial w}{\partial v}\bigg|,\\
&	\label{eq2.9}
\mathbb{A} (t, y, v)
=  \big(\frac{\partial v}{\partial w}\big)    \mathcal{\hathat A}  (t, y, v) \big(\frac{\partial v}{\partial w}\big)^T,\\
&
\label{eq2.9.2}
	  \mathbb{B} (t, y, v) = 
	  \big(\frac{\partial v}{\partial w}\big)  \mathcal{\hathat B} (t, y, v),
	  \quad 	\mathbb{C} (t, y, v) = (\widehat c) (t, y, w (y, v)), \\
	&\label{eq2.9.1}
\mathbb{X} (t, y, v) =   \big(\frac{\partial v}{\partial w}\big)\mathcal{\hathat X}  (t, y, w (y, v)) 1_{ y \in G,  |w (y, v)| < 2^{n+2}  }, \\
\label{eq2.9.1.1}
&\mathbb{G} (t, y, v) = (\frac{\partial v}{\partial w}\big)\big(\frac{\partial w}{\partial y}\big) v \,  1_{ y \in G,  |w (y, v)| < 2^{n+2}  }. 
\end{align}
As we mentioned in the previous paragraph, thanks to Lemma \ref{lemma A.5} $(i)$, we may replace $\hathat \phi$ with any  test function
$$
	\phi \in C^{0, 1}_0 \big([0, T] \times \Upsilon_n (\overline{G} \times \{|w| \le 2^{n+2}\})\big)
$$
in the identity \eqref{eq2.14}.

We  now  replace $\mathbb{A}$ with $\mathfrak{A}$ as follows so that $\mathfrak{A} = \mathbb{A}$ on the support of $\dtilde U$ 
 contained in 
$
	 \Upsilon_n (B_{3r_0/4} (x_0) \times \{ |w| < 2^{n+2}\}).
$
Let $\zeta_n = \zeta_n (y, v)$ be a smooth cutoff function  such that $0 \le \zeta_n \le 1$ and
\begin{align}
	&	\zeta_n  = 1 \, \,  \text{on} \,\,   \Upsilon_n (B_{3r_0/4} (x_0) \times \{|w| < 2^{n+2}\}), \notag\\
& \label{eq2.17}
	|\nabla_x \zeta_n| + |\nabla_v \zeta_n| \le N (\Omega).
\end{align}
Introduce
\begin{align}
	& 	 \label{eq2.10}
	\mathfrak{A}  = \mathbb{A}  \zeta_n + (1 -  \zeta_n) \bm{1}_3, \\
	 & \label{eq2.10.1}
\dtilde U = \hathat U \mathsf{J}_{\mathcal{W}}.
\end{align}
We also extend $\mathbb{B}, \mathbb{X}$, and $\mathbb{C}$ by $0$ outside  $[0, T] \times \Upsilon (\overline{G} \times \{|w| \le 2^{n+2}\})$.
It follows that for any $\phi \in C^{0, 1}_0 ([0, T] \times \bR^6)$ such that $\phi (T, \cdot) \equiv 0$, we have 
	\begin{align}
 	& 
 	 \int_{ \bR^7_T }
     \bigg(-(\partial_t \phi +  v \cdot \nabla_y \phi) \,     \mathcal{\dtilde U}  
	+     (\nabla_v \, \mathcal{\dtilde U})^T \mathfrak{A} \nabla_{v} \phi 
 + \lambda \, \mathcal{\dtilde U} \phi\bigg) \,  dy dv dt  \notag \\
	   &
	   	\label{eq2.7} 
   = \int_{  \bR^7_T } \big(-\mathcal{\hathat U} \,  (\mathbb{X}+\mathbb{G}) \cdot \nabla_{v} \hathat \phi
	- \big(\mathbb{B} \cdot  \nabla_{v} \, \mathcal{\hathat U}\, \big) \, \hathat \phi
       - \mathbb{C} \, \mathcal{\hathat U}  \, \hathat \phi \,\big) \, \mathsf{J}_{\mathcal{W}} dy dv dt \\
	&+ \int_{ \bR^7_T } \hathat \phi \,  \mathcal{\hathat H} \mathsf{J}_{\mathcal{W}}  + (\nabla_v \mathsf{J}_{\mathcal{W}})^T\mathbb{A}   (\nabla_v \hathat \phi) \,  \mathcal{\hathat U} \,   dy dv dt.  \notag
\end{align}
In other words, the identity 
\begin{equation}
			\label{eq2.44}
\begin{aligned}
&\partial_t \, \mathcal{\dtilde U} +  v \cdot \nabla_y \, \mathcal{\dtilde U} - \nabla_v \cdot (\mathfrak{A} \nabla_v \, \mathcal{\dtilde U}) + \lambda \, \mathcal{\dtilde U} \\
&   = \bigg(\nabla_v \cdot \big((\mathbb{X}+\mathbb{G})  \, \mathcal{\hathat U}) 
	- \mathbb{B} \cdot \nabla_{v}\,  \mathcal{\hathat U}
       - \mathbb{C} \,    \mathcal{\hathat U}    
	+   \mathcal{\hathat H}\bigg) \mathsf{J}_{\mathcal{W}} \\
	& -\nabla_v \cdot \big(\mathbb{A}  (\nabla_v \mathsf{J}_{\mathcal{W}}) \, \mathcal{\hathat U}\big)
	=: \text{RHS}
\end{aligned}
\end{equation}
holds in the weak sense.
For the reader's convenience, we briefly review the notation introduced above.
\begin{itemize}
\item[--] $U = f_{k, n}$, $H = \eta_{k, n}$ (see  \eqref{eq2.50}, \eqref{eq2.13}),
\item[--] $\widehat U$ is $U$ in coordinates $(t, y, w)$,
\item[--] $\widetilde U$ is $\widehat U$ multiplied by the Jacobian  determinant of the change of variables $(x, v) \to (y, w)$ (see \eqref{eq2.49}),
\item[--] $\mathcal{U} := \overline{U}$ is the mirror-extension of $\widetilde U$ (see \eqref{eq2.11.4}),
\item[--] $\mathcal{\hathat U}$ is $\mathcal{U}$ in coordinates $(t, y, v)$,
\item[--] $\mathcal{\dtilde U}$ is $\mathcal{\hathat U}$ multiplied by the Jacobian  determinant of the change of variables $w \to v$,
\item[--]    $\sigma_g (z)$  (see \eqref{eq6.0}) is the matrix of the leading coefficients in the original equation, 
\item[--] $G$ is the even extension of $\psi (\Omega_{r_0} (x_0))$ across the plane $\{y_3 = 0\}$,
\item[--] $A$ and $B$ (see \eqref{eq2.0}) are the diffusion and drift  coefficients on $(0, T) \times \bR_{-}^3 \times \bR^3$ obtained after the change of variables $(x, v) \to (y, w)$,
\item[--] $\mathcal{A}, \mathcal{B}, \mathcal{C}, \mathcal{X}$ (see \eqref{eq2.11}) are the drift, diffusion, and discount, and `geometric' coefficients  `extended' across the boundary $\{y_3 = 0\} \times \bR^3$,
\item[--] $\mathcal{\hathat A}$, $\mathcal{\hathat B}$, $\mathcal{\hathat C}$, $\mathcal{\hathat X}$ (see \eqref{eq2.22})  are the  coefficients $\mathcal{A}$, $\mathcal{B}$, $\mathcal{C}$, $\mathcal{X}$  in the new coordinates $t, y, v$,
\item[--] $\mathbb{A}, \mathbb{B}, \mathbb{C}$, $\mathbb{X}$ are the drift, diffusion,  discount, and `geometric' coefficients on $(0, T) \times \Upsilon_n (G \times \bR^3)$ obtained after the change of variable $w \to v$, 
\item[--]  $\mathbb{G}$ (see \eqref{eq2.9}) is a second `geometric' coefficient due to the change of variables $w \to v$,
\item[--] $\mathfrak{A}$ (see \eqref{eq2.10}) is an extension of $\mathbb{A}$  to $\bR^7_T$.
\end{itemize}

\textbf{Step 5: $S_2^{N}$ estimate in the $t, y, v$ coordinates.}
We now apply Lemma \ref{lemma 4.1}. We first check its conditions.

\textit{Estimates of the coefficients $\mathbb{A}$, $\mathfrak{A}$, $\mathbb{B}$, $\mathbb{X}$.}
In Lemma \ref{lemma A.7}, we show that the following bounds are valid:
\begin{align}
	\label{eq2.58}
 &   	N_0 (\Omega) 2^{- 6 n} |\xi|^2    \le  \mathfrak{A} (z) \xi_i \xi_j,\\
 & \label{eq2.62}
 \quad |\mathbb{A}| I_{ (y, v) \in \Upsilon_n (G \times \{|p| < 2^{n+2}\})} + |\mathfrak{A}| \le N (\Omega, K),\\
			\label{eq2.18}
& \|\nabla_v \mathbb{A}\|_{   L_{\infty}  ((0, T) \times \Upsilon_n (G \times \{|w| < 2^{n+2}\}))  } + 	\|\nabla_v \mathfrak{A}\|_{   L_{\infty}  (\bR^7_T)  } \le N,\\
						\label{eq2.23}
&	\|\mathfrak{A}\|_{   L_{\infty} ((0, T)) C^{\varkappa/3, \varkappa}_{y, v} (\bR^6)} \le  N (K, \Omega, \varkappa)  2^{  n}, \\
		\label{eq2.45}
&	\|\mathbb{B}\|_{   L_{\infty}  ((0, T) \times \Upsilon_n (G \times \{|w| < 2^{n+2}\}))  } \le N (\Omega, K) 2^{-n}, \\
		\label{eq2.46}
&	\|\mathbb{X}\|_{   L_{\infty}  (\Upsilon_n (G \times \{|w| < 2^{n+2}\}))  } \le N (\Omega),\\
		\label{eq2.47}
&	\|\nabla_v \mathbb{X}\|_{   L_{\infty}  (\Upsilon_n (G \times \{|w| < 2^{n+2}\}))  } \le N (\Omega)  2^{ 2 n}, \\
	\label{eq2.47.1}
& 	\|\mathbb{G}\|_{   L_{\infty}  (\Upsilon_n (G \times \{|w| < 2^{n+2}\}))  }  \le N (\Omega), \\
	\label{eq2.47.2}
& 	\|\nabla_v \mathbb{G}\|_{   L_{\infty}  (\Upsilon_n (G \times \{|w| < 2^{n+2}\}))  }  \le N (\Omega) 2^{ 4 n}.
\end{align}

\textit{$L_2$-integrability of the right-hand side (RHS) of Eq. \eqref{eq2.44}.} 
To show this, we need to first estimate $\mathsf{J}_{\mathcal{W}}$, $\mathcal{\hathat H}$, $\mathcal{\hathat U}$, $\nabla_v \,  \mathcal{\hathat U}$.

First, we estimate  $\mathsf{J}_{\mathcal{W}}$ (see \eqref{eq2.9.2.1}). 
By \eqref{eq2.15} and Lemmas \ref{lemma A.3}  $(ii)$ (see \eqref{A.3.8})
\begin{equation}
			\label{eq2.31}
 \begin{aligned}
	& N_1  \le |\mathsf{J}_{\mathcal{W}} 1_{ y \in G,   w (y, v) < 2^{n+2} }| \le N 2^{9 n}, \\
	&|\nabla_v \mathsf{J}_{\mathcal{W}} 1_{  y \in G,  w (y, v) < 2^{n+2} }| \le  N 2^{11 n},\\
		& |\nabla_y \mathsf{J}_{\mathcal{W}} 1_{  y \in G,  w (y, v) < 2^{ n+2} }| \le N 2^{ 11 n},\\
	& |D^2_v \mathsf{J}_{\mathcal{W}} 1_{  y \in G,  w (y, v) < 2^{ n+2} }| \le N 2^{ 13 n},\\
 \end{aligned}
\end{equation}
where $N_1  = N_1 (\Omega)$, $N = N (\Omega)$.

Second, we bound $\mathcal{\hathat H}$, $\mathcal{\hathat U}$, $\nabla_v \,  \mathcal{\hathat U}$.
By \eqref{eq2.31},  
\begin{equation}
		\label{eq2.27.1} 
\begin{aligned}
	&\|\mathcal{\hathat H}  \mathsf{J}_{\mathcal{W}} 1_{  y \in G,  w (y, v) < 2^{n+2} } \|_{  L_{2} (\bR^7_T) }  \le N 2^{(9 n)/2} \||\mathcal{\hathat H}|^2  \mathsf{J}_{\mathcal{W}} 1_{ y \in G,  w (y, v) < 2^{n+2} }\|^{1/2}_{  L_1 (\bR^7_T) }\\
& = N 2^{(9 n)/2} \|\overline{H}\|_{  L_2    ((0, T) \times G \times \{|w| < 2^{n+2}\})    } \le N 2^{(9 n)/2} \|H\|_{  L_{2} (\Sigma^T) },
\end{aligned}
\end{equation}
where $N  = N (\Omega)$.
Similarly,  by \eqref{eq2.31},
\begin{equation}
			\label{eq2.27}
\begin{aligned}			
&	\|(|\mathcal{\hathat U}| + |\nabla_v \, \mathcal{\hathat U}|) \mathsf{J}_{\mathcal{W}} 1_{ y \in G,  w (y, v) < 2^{n+2} }\|_{  L_{2} (\bR^7_T) }\\
	&\le N (\Omega) 2^{(9 n)/2} \||U| + |\nabla_p U|\|_{  L_{2} (\Sigma^T) }.
	\end{aligned}
\end{equation}

Next, combining  \eqref{eq2.62} - \eqref{eq2.18}, \eqref{eq2.45} - \eqref{eq2.47.2}, and \eqref{eq2.31}, we get 
\begin{equation}
		\label{eq2.29.1}
\begin{aligned}
&	\|\text{RHS}\|_{  L_2 (\Sigma^T) } \\
&\le N 2^{\beta n}  \|(|\mathcal{\hathat U}| + |\nabla_v  \, \mathcal{\hathat U}| + | \mathcal{\hathat H}|) \mathsf{J}_{\mathcal{W}} 1_{ y \in G,  w (y, v) < 2^{n+2} } \|_{  L_2 (\Sigma^T) },\\
\end{aligned}
\end{equation}
where $\beta > 0$ is some constant independent of $n, \varepsilon, c$, and $\theta$, which might change from  line to line. 
Furthermore, by \eqref{eq2.27.1} - \eqref{eq2.27}, we get
\begin{equation}
		\label{eq2.29}
		\|\text{RHS}\|_{  L_2 (\Sigma^T) }  \le N 2^{\beta n}  \||U| + |\nabla_p U| + |H\|_{  L_2 (\Sigma^T) }.
\end{equation}

\textit{$S_2^{N}$-estimate in the $t, y, v$ variables.}
An application of  Lemma \ref{lemma 4.1} with 
\begin{equation}			\label{eq2.41}
	\delta = N_0 2^{-6n} \, \text{(see} \,  \eqref{eq2.58}),  \quad K = N 
 \, \,  \text{(see} \,  \eqref{eq2.23}),
\end{equation}
 gives $\mathcal{\dtilde U} \in S_2^{N} (\bR^7_T)$. Furthermore, by the same lemma (see \eqref{eq4.1.3}),  one has
\begin{equation}
            \label{eq2.41.1}
\begin{aligned}
	\|\mathcal{\dtilde U}\|_{ S_2^{N} (\bR^7_T) } 
	 &\le N 2^{ \beta n} \big(\||\text{RHS}| + |\mathcal{\dtilde U}| + |\nabla_v \,  \mathcal{\dtilde U}|\|_{ L_2 (\bR^7_T) }\big).
\end{aligned}
\end{equation}
 By the fact that $\mathcal{\dtilde U} = \mathcal{\hathat U} \mathsf{J}_{\mathcal{W}}$ (see \eqref{eq2.10.1}) and the bounds \eqref{eq2.31} and \eqref{eq2.27},  we have
 \begin{equation}
                        \label{eq2.41.2}
 \begin{aligned}
  &  \||\mathcal{\dtilde U}| + |\nabla_v \,  \mathcal{\dtilde U}|\|_{ L_2 (\bR^7_T) }\\
  &  \le N 2^{\beta n} \|\mathsf{J}_{\mathcal{W}} (|\mathcal{\hathat U}|^2 + |\nabla_v \, \mathcal{\hathat U}|^2) 1_{ y \in G,  w (y, v) < 2^{n+2} }\|^{1/2}_{ L_1 (\bR^7_T) }\\
  & \le  N 2^{\beta n} \||U| + |\nabla_p U|\|_{  L_2 (\Sigma^T) }.
 \end{aligned}
 \end{equation}
 Combining \eqref{eq2.41.1} - \eqref{eq2.41.2} with \eqref{eq2.29}, we obtain
\begin{equation}
			\label{eq2.30}
	\||(\partial_t + v \cdot \nabla_y) \,  \mathcal{\dtilde U}| +  |D^2_v \, \mathcal{\dtilde U}|\|_{ L_2 (\bR^7_T) } \le N 2^{\beta n} \||U| + |\nabla_p U| + |H|\|_{  L_2 (\Sigma^T) }.
\end{equation}

\textbf{Step 6: Going back to the original variables $t, x, p$.}
\textit{Estimate of $D^2_p U$.}
 First, by the chain rule and change of variables,
\begin{equation}
			\label{eq2.32}
\begin{aligned}
      &   \|D^2_p U\|_{  L_2 ((0, T) \times \Omega_{r_0} (x_0) \times \bR^3)   } \\
&  \le N (\Omega)  \||\nabla_w \widehat U| + |D^2_w \widehat U|\|_{ L_2 ((0, T) \times \psi (\Omega_{r_0} (x_0)) \times \bR^3) }.	 
\end{aligned}
\end{equation}
Furthermore, recall  
\begin{itemize}
\item $\mathcal{U} = \overline{U}$, where the latter is the even extension in the $y_3, w_3$ variables of the function $\widehat U$ (see \eqref{eq2.11.4});
\item the definition of $\mathcal{\dtilde U}$ in \eqref{eq2.10.1}.
\end{itemize}
Then, by \eqref{eq2.15} and the estimates of $\mathsf{J}_{\mathcal{W}}$ and its derivatives (see \eqref{eq2.31}), 
\begin{equation}
			\label{eq2.33}
\begin{aligned}
 	&\||\nabla_w \widehat U| + |D^2_w \widehat U|\|_{ L_2 ((0, T) \times \psi (\Omega_{r_0} (x_0)) \times \bR^3) } \\
		&\le  N 2^{\beta n} \|(|\nabla_v\,  \mathcal{\hathat U}|^{ 2 } + |D^2_v \, \mathcal{\hathat U}|^{ 2 }) \mathsf{J}_{\mathcal{W}} 1_{ y \in G, |w (y, v)| < 2^{n+2}} \|^{1/2}_{ L_1 (\bR^7_T) } \\
	&\le  N 2^{\beta n}  \||\mathcal{\dtilde U}| + |\nabla_v \,  \mathcal{\dtilde U}| + |D^2_v \,  \mathcal{\dtilde U}|\|_{ L_2 (\bR^7_T) }.
\end{aligned}
\end{equation}

Next, combining \eqref{eq2.30} -  \eqref{eq2.33},  we obtain
\begin{equation}
			\label{eq2.34}
\begin{aligned}
	&	\|D^2_p U\|_{ L_2 ((0, T) \times \Omega_{r_0} (x_0) \times \bR^3) } \\
&
\le  N 2^{\beta n} \||U| + |\nabla_p U| + |H|\|_{  L_2 (\Sigma^T) }.
\end{aligned}
\end{equation}
Since $U (t, x, \cdot), H (t, x, \cdot)$ vanish outside $\{ 2^{n-1} < |p| < 2^{n+3/2}\}$, we may replace the right-hand side of \eqref{eq2.34} with
\begin{equation}
			\label{eq2.38}
	  \||U| + |\nabla_p U| + |H|\|_{  L_{2, \beta} ((0, T) \times \Omega \times \bR^3)}.
\end{equation}
Recall that 
\begin{equation}
\label{eq2.37}
U = f \eta_k p_0^{\theta/2}, \quad H = \eta_{k, n},
\end{equation}
  where $\eta_{k, n}$ is defined in \eqref{eq2.13}.   
We conclude that the expression in \eqref{eq2.38} is less than
$$
	N  \||f| + |\nabla_p f| + |\eta|\|_{  L_{2, \theta/2 + \beta} ((0, T) \times \Omega \times \{ 2^{n-1} < |p| < 2^{n+3/2}\})  }.
$$
Thus, the estimate \eqref{eq2.35} for $D^2_p U$ is proved.

\textit{Estimate of the transport term.}
First, by the estimate \eqref{eqD.1.1} in Lemma \ref{lemma D.1}, we have 
\begin{equation}
            \label{eq2.37.0}
\begin{aligned}
\|Y U\|_{ L_2 (\Sigma^T ) } 
&\le N \|(\partial_t + W \cdot \nabla_y)\widehat U\|_{ L_2 ( \bH^T_{-}   ) } \\
&\quad + N \||U|+|\nabla_p U|\|_{ L_{2, 1} (\Sigma^T) }.
\end{aligned}
\end{equation}
Similarly, by the identity \eqref{eqC.3} in Section \ref{section E.2}, 
\begin{equation}
            \label{eq2.37.1}
\begin{aligned}
&	\|(\partial_t + W \cdot \nabla_y)\widehat U\|_{ L_2 ( \bH^T_{-}   ) } \\
&\le N    \||\big(|\partial_t + v \cdot \nabla_y) \,  \mathcal{\hathat U}|^2 + |\mathbb{G}|^2 \, |\nabla_v \,  \mathcal{\hathat U}|^2\big) \mathsf{J}_{\mathcal{W}} \|^{1/2}_{ L_1 ( \bR^7_T) },
\end{aligned}
\end{equation}
where $\mathbb{G}$ is defined in \eqref{eq2.9.1.1}.
Note that
\begin{equation}
            \label{eq2.37.2}
    (\partial_t + v \cdot \nabla_y) \,  \mathcal{\dtilde U} = (\partial_t + v \cdot \nabla_y) \,  \mathcal{\hathat U}
    + (v \cdot \nabla_y \mathsf{J}_{\mathcal{W}}) \,  \mathcal{\hathat U} \mathsf{J}_{\mathcal{W}}.
\end{equation}
Then, by \eqref{eq2.37.1} - \eqref{eq2.37.2}, the Jacobian estimate \eqref{eq2.31}, and the estimate of $\mathbb{G}$ \eqref{eq2.47.1}, and \eqref{eq2.27}, we get
\begin{align*}
 &   \|(\partial_t + W \cdot \nabla_y)\widehat U\|_{ L_2 ( \bH^T_{-}   ) }\\
  &  \le N  \|(\partial_t + v \cdot \nabla_y) \,  \mathcal{\hathat U}\|_{ L_2 (\bR^7_T) }
  + N 2^{\beta n} \||\widehat U| + |\nabla_p  \widehat U|\|_{ L_2 (\bR^7_T) }.
\end{align*}
Combining \eqref{eq2.37.0} with \eqref{eq2.37.2} and \eqref{eq2.30} gives
\begin{align*}
&	\|Y  U\|_{ L_2 (\Sigma^T ) } \\
&\le  N 2^{\beta n}   \||U| + |\nabla_p U|+ |H|\|_{ L_2 ( \bR^7_T) }
	  + N \|\nabla_p U\|_{ L_{2, 1} (\Sigma^T) }\\
	&\le	N  \|(|f| + |\nabla_p f| + |\eta|) \|_{  L_{2, \theta/2 + \beta} ((0, T) \times \Omega \times \{ 2^{n-1/2} < |p| < 2^{n+3/2}\})  }
\end{align*}
provided that $\beta > 1$, which we may certainly assume.
Thus, the estimate \eqref{eq2.35} holds for $Y U$. 
Finally, note that by the embedding theorem for the $S_2^{N} (\bR^7_T)$ space (see Theorem 2.1 \cite{PR_98}), the norms
$$
    \|\mathcal{\dtilde U}\|_{ L_{14/5} (\bR^7_T) }, \|\nabla_v \, \mathcal{\dtilde U}\|_{ L_{7/3} (\bR^7_T) }
$$
are bounded by the right-hand side of \eqref{eq2.41.1}. Then, repeating the above argument, we prove the bound of the second and the third terms on the right-hand side of \eqref{eq2.35}.
\end{proof}

\begin{proof}[Proof of Proposition \ref{proposition 2.1}]
We first impose the additional assumptions  \eqref{eq7.4.1}, which will be removed at the end of the proof.

    \textit{Existence.} Let $b_n \in W^1_{\infty} (\Sigma^T), n \ge 1$ be sequence of functions such that
    $b_n \to b$  a.e., and $\|b_n\|_{L_{\infty}  (\Sigma^T)} \le N_1$ with $N_1$ independent of $n$. We set $f_n$ to be a finite energy weak solution (see Lemma  \ref{lemma 7.3}) to the equation
    \begin{align}
            \label{eq2.1.10}
            Y f_n - \nabla_p \cdot (\sigma_g \nabla_p f_n) + b_n \cdot \nabla_p f_n + (c+\lambda) f_n =  \chi_n (\eta), f_n (0, \cdot) \equiv 0,
    \end{align}
    where $\chi_n (t) = -n \vee t \wedge n$. By \eqref{eq2} in Proposition \ref{proposition 2.1}, we have
    \begin{align}
        \label{eq2.1.11}
        \|f_n\|_{ S_{2,  \kappa \theta } (\Sigma^T)  }  + \|f_n\|_{ L_{14/5, \kappa \theta}  (\Sigma^T)  }  + \| \nabla_p f_n\|_{ L_{7/3, \kappa \theta}  (\Sigma^T)  }  \le N \|\eta\|_{  L_{2, \theta}   (\Sigma^T) }.
    \end{align}
    By  this estimate, there exists a function $f$ such that  $f_n \to f$ in the weak* topology of $S_{2, \kappa \theta} (\Sigma^T)$, so that the bound \eqref{eq2} is true for the limiting function $f$.  We now show that $f$ satisfies the initial condition and the SRBC.  By Ukai's trace lemma \eqref{eqC.6.1} and \eqref{eq2.1.11}, 
    \begin{align*}
         \|f_n\|_{ L_{2} (\Sigma^T_{\pm}, w |v \cdot n_x|)  } \le N \|\eta\|_{ L_{2, \theta} (\Sigma^T) },
    \end{align*}
     Then, $(f_n)_{\pm} \to f_{\pm}^{\star}$ in the weak* topology of $L_{2} (\Sigma^T_{\pm}, w |v \cdot n_x|)$, and $f_{\pm}^{\star}$ satisfy the SRBC.
    Furthermore, since $Y f_n \in L_2 (\Sigma^T)$, we have   Green's identity \eqref{eqC.1.1} with $u$ replaced with $f_n$.
    Then, by using a limiting argument, we see that the integrals over $\Sigma^T_{\pm}$ converge to those with the integrand  $f_{\pm}^{\star}$. Hence, the latter are the traces  of $f$ on $\Sigma^T_{\pm}$, and $0$ is the trace on $\{t = 0\} \times \Omega \times \bR^3$. Testing Eq. \eqref{eq2.1.11} and passing to the limit in \eqref{eq2.1.10}, we conclude that the identity \eqref{eq1.0} holds in the $L_2 (\Sigma^T)$ sense, and, thus, $f$  is a  strong solution to \eqref{eq1.0} - \eqref{eq1.0.0} (see Definition \ref{definition 6.1}). Finally, the `energy' estimate \eqref{eq2.1.20} is obtained via the same limiting argument.
    
    To show the existence with the initial condition $f_0 \in S_{2, \theta} (\Omega \times \bR^3)$ satisfying the SRBC, we reduce the problem to the case when $f_0 \equiv 0$ by replacing $f (z)$ with $\sff (z) = f (z) - \phi (t) f_0 (x, p)$, where $\phi \in C^{\infty}_0 (\bR)$ such that $\phi (0) = 1$. We note that $\sff$ satisfies the identities
    \begin{align*}
   & \partial_t \sff  + v (p) \cdot \nabla_x \sff -  \nabla_p \cdot (\sigma_g \nabla_p \sff) + b \cdot \nabla_p \sff + (c + \lambda) \sff = \widetilde \eta,\\
   &\sff^{*}_{-} (t, x, p) = \sff^{*}_{+} (t, x, R_x p), z \in \Sigma^T_{-}, \quad \sff (0, \cdot) \equiv 0,
\end{align*}
where
\begin{align*}
    \widetilde \eta = \eta  + \phi' f_0 +  \phi \big(v (p) \cdot \nabla_x f_0) - \nabla_p \cdot (\sigma_g \nabla_p f_0) + b \cdot \nabla_p f_0 + (c + \lambda) f_0\big). 
\end{align*}
Since the $L_{\infty}$-norms of  $\sigma_g, \nabla_p \sigma_g,  b, c$ are bounded by $N$ (see 
\eqref{eq6.1.2} and \eqref{eq1.2.2}), we have 
$$
    \|\widetilde \eta\|_{  L_{2, \theta} (\Sigma^T) } \le N \|\eta\|_{  L_{2, \theta} (\Sigma^T) }  + N (1+ \lambda) \|f_0\|_{S_{2,  \theta} (\Sigma^T)}.
$$ 
This concludes the proof of the existence part.

\textit{Uniqueness.}
Let $f$ be a  strong solution to Eq \eqref{eq1.0} with $\eta \equiv 0, f_0 \equiv 0$. Then, we may use a variant of the energy identity for functions satisfying the SRBC (see \eqref{eqC.5.1} in Lemma \ref{lemma C.5}) with $u = f$ and $\phi =  f e^{-2 \lambda' t}$. Integrating by parts in $p$ and using the Cauchy-Schwarz inequality, we get 
\begin{equation}
                \label{eq2.4.14}
    \int_{\Sigma^T} \big(\delta_0 |\nabla_p f|^2 +  (\lambda + \lambda' - N) |f|^2\big) \, dz \le 0,
\end{equation}
where $N = N (K) > 0$ and $\delta_0$ is the ellipticity constant of $\sigma_g$ (see Lemma \ref{lemma 6.1}). Hence, taking $\lambda' > N$ gives $f \equiv 0$. The uniqueness is proved. 
\end{proof}

\begin{proof}[Proof of Remark \ref{remark 2.8}]
Invoke all the notation in the proof of Lemma \ref{lemma 7.4}. We say that $f$ satisfies the mirror extension property if 
\begin{equation}
    \label{eq2.8.10}
    \text{the identity   \eqref{eq2.44}  holds for} \,\, \mathcal{\dtilde U} \,\, \text{on} \,  \bR^7. 
\end{equation}

To show this,  we regularize $f$ by using an approximation scheme $f_n$ defined as in the proof of Proposition \ref{proposition 2.1} (see \eqref{eq2.1.10}). Then we construct $\mathcal{\dtilde U}$ for such $f_n$ and $\eta_n$. Since $f_n \to f$ in the weak* topology of $S_{2, \kappa \theta} (\Sigma^T)$, by passing to the limit in the integral formulation of \eqref{eq2.44} as $n \to \infty$, we conclude that \eqref{eq2.8.10} is true.
\end{proof}

\begin{proof}[Proof of Proposition \ref{proposition 2.3}]
We inspect the proof of Proposition \ref{proposition 2.1}. 
We use a bootstrap method to show that $\mathcal{\dtilde U}$ (see \eqref{eq2.10.1}) is of class $S_r^{N} (\bR^7_T)$ and to estimate $\|\mathcal{\dtilde U}\|_{S_r^{N} (\bR^7_T) }$.
In particular, one needs to use an induction argument with `the base' at $S_2^{N} (\bR^7_T)$. In the induction step, one uses the embedding theorem for $S_r^{N}$ spaces (see Theorem 2.1 in \cite{PR_98}) combined with the $S_r^{N}$-estimate \eqref{eq4.1.4} in Lemma \ref{lemma 4.1} $(ii)$. Cf. the proof of Theorem 1.7 in \cite{DGY_21} on p. 493 - 494.  We point out that the embedding theorem  in \cite{PR_98} is stated for $S_2^{N} (\bR^7_T)$ with $T  = \infty$. Nevertheless, it is easily seen that the case $T < \infty$ is treated by the same method, which involves using the explicit fundamental solution to $\partial_t + v \cdot \nabla_x - \Delta_v$ (cf. Lemma \ref{lemma 14.C.7}).
\end{proof}

\subsection{Strong solutions to steady linear Landau equations}

\label{section 6.2}
In this section, we establish the results analogous to those in Propositions \ref{proposition 2.1} and  \ref{proposition 2.3} for the steady KFP equation \eqref{eq27.1.2}.

\begin{proposition}[steady $S_r$ estimate in the presence of SRBC]
\label{proposition 14.C.6}
Invoke the assumptions of Proposition \ref{proposition 2.1}. In addition, assume that $g, b$,  $c$, and $\eta$ are independent of $t$.
Let $r \in [2, \infty)$. 
Then, there exists a constant
$\theta = \theta (r, \varkappa, \kappa) > 0$
such that if, additionally,
\begin{equation}
    \label{eq14.C.6.1}
    	\eta \in L_{2, \theta} (\Omega \times \bR^3)  \cap   L_{r, \theta} (\Omega \times \bR^3),
     \end{equation}
then, the following assertions hold.

$(i)$ There exists a unique  strong solution $f$ to Eq. \eqref{eq27.1.2}. In addition, $f \in L_2 (\Omega) W^1_{2, \theta} (\bR^3)$.

$(ii)$ For any  strong solution $f$  to Eq. \eqref{eq27.1.2} such that  $f\in L_2 (\Omega) W^1_{2, \theta} (\bR^3)$,  one has
\begin{align}
    \label{eq14.C.6.3}
f \in S_{2, \kappa \theta} (\Omega \times \bR^3) \cap S_{r, \kappa \theta}  (\Omega \times \bR^3),
 \end{align}
 and,
 \begin{align}
    \label{eq14.C.6.6}
	&\|f\|_{ S_{2, \kappa \theta} (\Omega \times \bR^3) } + \|f\|_{ S_{r, \kappa \theta} (\Omega \times \bR^3) } \\
& \le  N   \bigg(\|\eta\|_{  L_{2, \theta}   (\Omega \times \bR^3) } +   \|\eta\|_{  L_{r, \theta}   (\Omega \times \bR^3) }   + \|f\|_{  L_{2, \theta}   (\Omega \times \bR^3) }\bigg), \notag
 \end{align}
 where  $N = N (\varkappa, \kappa,  r,  \delta_0,  \theta, K,  \Omega)$.

Furthermore, in the case when $r < 6$,  we have
    \begin{align}
        \label{eq14.C.6.7}
        \|f\|_{L_{r_1, \kappa \theta} (\Omega \times \bR^3) } +   \|\nabla_p f\|_{L_{r_2, \kappa \theta} (\Omega \times \bR^3) } \le \text{r.h.s. of \eqref{eq14.C.6.6}},
    \end{align}
    where $r_1, r_2 > 1$ are the numbers satisfying the relations
        \begin{align}
        \label{eq14.C.6.10}
        \frac{1}{r_1} > \frac{1}{r} - \frac{1}{6},  \quad   \frac{1}{r_2} >  \frac{1}{r}  - \frac{1}{12}.
        \end{align}
In the case when $r \in (6,  12)$,
\begin{align}
    \label{eq14.C.6.8}
    \|f\|_{ L_{\infty, \kappa \theta} (\Omega \times \bR^3)  }  + \|\nabla_p f\|_{L_{r_2, \kappa \theta} (\Omega \times \bR^3) } \le \text{r.h.s. of \eqref{eq14.C.6.6}},
\end{align}
where $r_2$ satisfies \eqref{eq14.C.6.10}.
Finally, in the case when $r > 12$,
 \begin{align}
    \label{eq14.C.6.9}
& \|[f, \nabla_p f]\|_{ L_{\infty, \kappa \theta} (\Omega\times \bR^3)  } +	\|[f, \nabla_p f]\|_{  C^{\alpha/3,  \alpha}_{x, p} (\Omega \times \bR^3) } \\
& \le \text{r.h.s. of \eqref{eq14.C.6.6}},  \notag
\end{align}
where $\alpha \in  (0, 1 - \frac{12}{r})$. In  \eqref{eq14.C.6.7}, \eqref{eq14.C.6.8}, and \eqref{eq14.C.6.9}, one needs to take into account the dependence of $N$  on the additional parameters such as $r_1, r_2$ and $\alpha$.
\end{proposition}

\begin{proof}[Proof of Proposition \ref{proposition 14.C.6}]
We repeat the argument  of Propositions \ref{proposition 2.1} - \ref{proposition 2.3}
with the following modifications:
\begin{itemize}
    \item[--] One needs to use the steady counterparts of Theorem 2.6 and Corollary 2.8 in \cite{DY_21a} (see Remark 2.11 therein).
    \item[--] The estimates \eqref{eq14.C.6.7} and \eqref{eq14.C.6.8}-\eqref{eq14.C.6.9} are proved by using the embedding results in Lemma \ref{lemma 14.C.7}.
\end{itemize}
\end{proof}

\begin{corollary} 
   \label{corollary 2.5}
For any $\kappa \in (0, 1)$, there exists $\theta  = \theta (\kappa, r) > 0$ such that for any $f \in S_{r, \theta} (\Omega \times \bR^3)$ satisfying the SRBC, the following assertions hold.

$(i)$ If $r \in [2, 7)$, we have
\begin{align}
    \label{eq2.5.1}
    \|f\|_{L_{r_1, \kappa \theta} (\Omega \times \bR^3) } +   \|\nabla_p f\|_{L_{r_2, \kappa \theta} (\Omega \times \bR^3) } \lesssim_{\theta,  \kappa,    r, r_1, r_2, \Omega} \|f\|_{S_{r, \theta} (\Omega \times \bR^3)},
\end{align}
where $r_1$ and $r_2$ are numbers satisfying \eqref{eq14.C.6.10}.

$(ii)$ If $r \in (6, 12)$, 
\begin{align}
        \label{eq2.5.2}
 \|f\|_{ L_{\infty, \kappa \theta} (\Omega \times \bR^3)  }  + \|\nabla_p f\|_{L_{r_2, \kappa \theta} (\Omega \times \bR^3) } \lesssim_{\theta, \kappa,   r,  r_2, \Omega} \|f\|_{S_{r, \theta} (\Omega \times \bR^3)}, 
\end{align}
where $r_2$ satisfies \eqref{eq14.C.6.10}.

 $(iii)$ If $r > 12$, then, for any $\alpha \in (0, 1-12/r)$, we have
 \begin{align}
            \label{eq2.5.3}
\|[f, \nabla_p f]\|_{   C^{\alpha/3, \alpha}_{x, p} (\Omega \times \bR^3) } \lesssim_{\theta, \kappa,  r, \alpha, \Omega} \|f\|_{S_{r} (\Omega \times \bR^3)}.
 \end{align}
 \end{corollary}

\begin{proof}[Proof of Corollary \ref{corollary 2.5}]
Let
$$
  \eta: =   \frac{p}{p_0} \cdot \nabla_x f - \nabla_p \cdot (\sigma_g \nabla_p f)
$$
and note that $\eta \in L_{r, \theta} (\Omega \times \bR^3)$.
Since $f$ has the mirror extension property (see Remark \ref{remark 2.8} and its proof on p. \pageref{eq2.8.10}), the function  $\mathcal{\dtilde U}$ satisfies (cf. \eqref{eq2.44})
\begin{align*}
& v \cdot \nabla_y \, \mathcal{\dtilde U} - \nabla_v \cdot (\mathfrak{A} \nabla_v \, \mathcal{\dtilde U})\\
&   = \big(\nabla_v \cdot (\mathbb{X} \, \mathcal{\hathat U}) 
	+   \mathcal{\hathat H}\big) \mathsf{J}_{\mathcal{W}} \\
	& -\nabla_v \cdot \big(\mathbb{A}  (\nabla_v \mathsf{J}_{\mathcal{W}}) \, \mathcal{\hathat U}\big)
	- \big(v \cdot \nabla_y \mathsf{J}_{\mathcal{W}}\big) \,\,  \mathcal{\hathat U}.
\end{align*}
Then, applying the steady  $S_r^{N}$ estimate in Proposition \ref{proposition 14.C.6} to the above equation and using the embedding theorem for the steady $S_r^{N}$ spaces (see Lemma \ref{lemma 14.C.7}), and going back to the original variables as in the proof of Lemma \ref{lemma 7.4}, we obtain the desired estimates \eqref{eq2.5.1} - \eqref{eq2.5.3}.
\end{proof}

\subsection{Finite energy solutions to unsteady KFP equations}
\label{section 6.3}

The goal of this section is to establish the existence and uniqueness result for the unsteady linear Landau equation \eqref{eq1.0} - \eqref{eq1.0.0} in the class of finite energy solutions (Definition \ref{definition 27.1}). In particular, we employ a duality argument to prove the uniqueness and utilize an approximation argument to establish existence. The well-posedness result is used to prove  Lemma  \ref{lemma 6.8} about differentiating finite energy solutions in $t$. This lemma plays a  crucial role in demonstrating the temporal differentiability of the nonlinear RVML system (see the assertion $(ii)$ in Theorem \ref{theorem 5.1} and $a)-b)$ in Proposition  \ref{proposition 7.2}).

\begin{proposition}
\label{proposition 27.4}
We invoke the assumptions of Proposition \ref{proposition 2.1}
and assume, additionally, that
\begin{align}
    \label{eq27.4.1}
    \|\nabla_p \cdot b\|_{ L_{\infty} (\Sigma^T) } \le K.
\end{align}
Then, for any $\theta \ge 0$ and  
$$
    f_0 \in L_{2, \theta} (\Sigma^T), \quad \eta \in L_2 ((0, T) \times \Omega) W^{-1}_{2, \theta} (\bR^3),
$$
there exists a unique finite energy  solution to Eq. \eqref{eq1.0} - \eqref{eq1.0.0} (see Definition \ref{definition 27.1}).  In addition, for any $t \in (0, T)$, $f$ satisfies the energy identity
\begin{align}
        \label{eq27.4.12}
   & \int_{\Omega \times \bR^3} \big(f^2 (t, x, p)  -  f^2_0 (x, p)\big) p_0^{2 \theta}\, dx dp \\
   & + \int_{\Sigma^t} (\nabla_p f)^T \sigma_g \nabla_p (f p_0^{2 \theta}) + \big((b \cdot \nabla_p f) f + (c+\lambda) f^2\big)  p_0^{2 \theta}\, dz = \int_{(0, t)  \times \Omega }  \langle \eta, f p_0^{2 \theta} \rangle \notag \, dx d\tau, \notag
\end{align}
where  $\langle  \cdot, \cdot \rangle$ is defined in \eqref{eq1.2.16}.
A similar result holds for the steady equation \eqref{eq27.1.2}.
\end{proposition}

Before we prove Proposition \ref{proposition 27.4}, we first establish the uniqueness in the class of very weak solutions defined below.

\begin{definition}[very weak solution]
\label{definition 27.3}
        We say that $f$  is a very weak  solution to \eqref{eq1.0} - \eqref{eq1.0.0} if
    $$
        f \in  L_2 (\Sigma^T),
    $$
    and for any test function $\phi \in S_2 (\Sigma^T)$ satisfying SRBC and $\phi (T, \cdot) \equiv 0$, we have
    \begin{align}
        \label{eq27.3}
    &  - \int_{\Omega \times \bR^3}  f_0 (x, p) \phi (0, x, p) \, dx dp \\
    &+ \int_{\Sigma^T}  f \bigg(- Y \phi- \nabla_p \cdot (\sigma_g \nabla_p \phi) 
    -   f \nabla_p  \cdot  (b  \phi)  + c \phi\bigg) \, dz =    \int_{(0, t)  \times \Omega }  \langle \eta, \phi \rangle \notag \, dx d\tau. \notag
\end{align}
\end{definition}

\begin{remark}
    \label{remark 27.3.1}
We note that due to Lemma \ref{lemma C.5}, any test function $\phi$ in Definition \ref{definition 27.3} belong to $C ([0, T]) L_2 (\Omega \times \bR^3)$. Hence, any finite energy solution (see Definition \ref{definition 27.1}) is a very weak solution provided that $b$ is sufficiently regular. See also Remark \ref{remark 5.9} for the comparison with other notions of weak solutions used in this paper.
\end{remark}

\begin{lemma}[Uniqueness of very weak solutions]
    \label{lemma 27.2}
We invoke the assumptions of Proposition \ref{proposition 2.1} and assume, additionally, that $\nabla_p \cdot b \in L_{\infty} (\Sigma^T)$.
Then, the uniqueness holds for the problem \eqref{eq1.0} -\eqref{eq1.0.0}  in the class of very weak solutions.
\end{lemma}

\begin{proof}
Assume that $u^{(j)}, j = 1, 2,$ are  very weak solutions  to \eqref{eq1.0} -\eqref{eq1.0.0} and denote $u = u^{(1)} -u^{(2)}$.
Then, for any function $\phi \in S_2 (\Sigma^T)$ satisfying SRBC and the condition $\phi (T, \cdot) \equiv 0$,  we have
\begin{align*}
    \int_{\Sigma^T}  u \bigg(-Y\phi - \nabla_p \cdot  (\sigma_g \nabla_p \phi)  -(\nabla_p \cdot b) \phi - b \cdot \nabla_p \phi + (c+\lambda) \phi\bigg)  \, dz = 0.
\end{align*}
Let $\zeta \in C^{\infty}_0 (\bR^3)$ be a nonnegative function such that $\zeta = 1$ on $B_1 (0)$ and denote $\zeta_n (\cdot) = \zeta (\cdot/n), n > 0$.
We consider the equation 
\begin{align*}
       &    - Y \phi_n  -  \nabla_p \cdot (\sigma_g \nabla_p \phi_n)  - b \cdot \nabla_p \phi_n + (c + \lambda - \nabla_p \cdot b)  \phi_n = u \zeta_n, \\
       & \phi_n (T, \cdot) \equiv 0, \quad  \phi_n (t, x, p) = \phi_n (t, x, R_x p), \, z \in \Sigma^T_{-}.
\end{align*}
Since $u \zeta_n \in L_{2, \theta} (\Sigma^T)$  for any $\theta > 0$,
 by Proposition \ref{proposition 2.1} the above equation has a unique strong solution  $\phi_n \in S_{2} (\Sigma^T)$.
Then, we have 
$$
    \int_{\Sigma^T} u^2 \zeta_n \, dz  = 0,
$$
and by nonnegativity of $\zeta$, we have $u^2  \zeta_n = 0$ a.e. Since $n$ is arbitrary, we conclude $u \equiv 0$, as desired.
\end{proof}

\begin{proof}[Proof of Proposition \ref{proposition 27.4}]
The uniqueness follows from Remark \ref{remark 27.3.1} and Lemma \ref{lemma 27.2}. 

\textit{Existence.}
For the sake of clarity, we will only consider the case when $\theta =  0$, as the case when $\theta > 0$ is handled by the same argument.
The proof is split into two steps.

We will need an auxiliary notion of finite energy solutions, which we call intermediate finite energy solutions.

\textbf{Step 1: construction of an intermediate finite energy solution.} 
\begin{definition}
    \label{definition 27.6}
    We say that $f$ is an intermediate finite energy  solution if 
$$
    f \in L_{\infty} ((0, T)) L_2 (\Omega \times \bR^3) \cap L_2 ((0, T) \times \Omega) W^1_2 (\bR^3),
$$
and for any test function $\phi$ satisfying the conditions \eqref{eq27.2.1} - \eqref{eq27.2.3},  and
$\phi (T, x, p)  \equiv 0$
(see Remark \ref{remark 27.7}), one has
\begin{align}
    \label{eq27.5}
    &- \int_{ \Sigma^T }  f (Y \phi) \, dz   - \int_{\Omega \times \bR^3}  f_0 (x, p) \phi (0, x, p) \, dx dp \\
    &+ \int_{ \Sigma^T } \bigg((\nabla_p \phi)^T \sigma_g \nabla_p f 
    + (b \cdot \nabla_p f) \phi + (c+\lambda) \phi \bigg) \, dz = 
    \int_{ \Sigma^T } \langle \eta, f \rangle \, dx d\tau \notag
\end{align}
(see \eqref{eq1.2.16}). 
\end{definition}
We refer to Remark \ref{remark 5.9} for a review of various definitions of weak solutions employed throughout this paper.

Proof by approximations and weak* compactness. 
Let $f_{0, n}, n \ge 1$ be a sequence of functions such that
\begin{align}
           \label{eq27.4.3}
     &   f_{0, n} \in S_{2, 2 \theta} (\Omega \times \bR^3), \quad f_{0, n} \, \text{satisfies SRBC}, \\
     & f_{0, n}  \,  \to f_0 \, \, \text{in} \, \, L_{2} (\Omega \times \bR^3), \notag
\end{align}
where $\theta$ is large. For example, one can choose $f_{0, n} \in C^{\infty}_0 (\Omega \times \bR^3)$ such that $f_{0, n} \to f_0$ in $L_{2} (\Omega \times \bR^3)$, so that both   conditions in \eqref{eq27.4.3} are   satisfied.

Furthermore, let  $\zeta, \xi \in C^{\infty}_0 (\bR^3)$ be functions such that $\int_{\bR^3} \zeta \, dp = 1$ and $\xi  = 1$ on $B_1 (0)$.
We set 
$$
       \zeta_n (p) =  n^{-3} \zeta (p/n), \quad \xi_n (p) = \xi (p/n).
$$ 
For any function $h \in L_{1, loc} (\bR^7_T)$, we denote
$$
    h_{(n)} (t, x, p) =  (h \ast_p \zeta_n) (t, x, p).
$$

Next,  let $\eta_0, \bm{\eta}_1$ be any $L_2 (\Sigma^T)$ functions such that
\begin{align}
    \label{eq27.4.10}
    \eta = \eta_0 + \nabla_p \cdot \bm{\eta}_1.
\end{align}
Since \eqref{eq27.4.3} is valid and one has
 \begin{align*}
     &  \xi_n \eta_0, \,  \nabla_p \cdot (\xi_n \bm{\eta}_1)_{(n)} \in L_{2, \theta} (\Sigma^T), \quad \forall \theta > 0,
 \end{align*}
by Proposition \ref{proposition 2.1} $(i)$  there exists a unique  strong solution $f_n \in S_2 (\Sigma^T)$ to the  equation
   \begin{align}
           \label{eq27.4.4}
 &	Y f_{n}  - \nabla_p \cdot (\sigma_g \nabla_p f_{n}) + b \cdot \nabla_p f_{n} + (c + \lambda)  f_{n} = \xi_n \eta_0 + \nabla_p \cdot (\xi_n  \bm{\eta}_1)_{(n)}, \\
 &f_{n} (t, x, p) = f_{n} (t, x, R_x p), \, z \in \Sigma^T_{-}, \quad f_n (0, \cdot) = f_{0, n} (\cdot). \notag
  \end{align}
By using the energy identity \eqref{eqC.5.1}, integration by parts, and the Cauchy-Schwarz inequality, we get
\begin{align}
       \label{eq27.4.6}
     &  \|f_n\|_{ L_{\infty} ((0, T)) L_{2} (\Omega \times \bR^3)  } + \|f_n\|_{ L_2 ((0, T) \times \Omega) W^1_2 (\bR^3)}  \\
     & \le N   \|f_n (0, \cdot)\|_{ L_{2} (\Omega \times \bR^3) }  + N \||\eta_0| + |\bm{\eta}_1|\|_{L_2 (\Sigma^T)}, \notag 
\end{align}
where $N = N (\delta_0, K, T) > 0$.

By the weak* compactness argument, there exists a function  $u$  and a subsequence  $n'$ such that
\begin{align*}
&
u \in L_{\infty} ((0, T)) L_{2} (\Omega \times \bR^3) \cap L_2 ((0, T) \times \Omega) W^1_2 (\bR^3),\\
    & f_{n'} \to u \quad \text{in the weak* topology of} \, L_{\infty} ((0, T)) L_{2} (\Omega \times \bR^3),\\
&  f_{n'} \to u \quad \text{in the weak topology of} \, L_2 ((0, T) \times \Omega) W^1_2 (\bR^3).
\end{align*}
Hence, by passing to the limit in the integral formulation \eqref{eq27.5} of  Eq. \eqref{eq27.4.4},    we conclude that $u$ satisfies the integral formulation \eqref{eq27.5} with $u$ in place of $f$. Thus, $u$ is an intermediate finite energy  solution to \eqref{eq1.0} - \eqref{eq1.0.0}, and, by the uniqueness in Lemma \ref{lemma 27.2}, we conclude $f = u$. 
Taking $\text{liminf}$ in  \eqref{eq27.4.6} and then infimum over all $\eta_0, \bm{\eta}_1 \in L_2 (\Sigma^T)$ satisfying \eqref{eq27.4.10},  we obtain the estimate 
\begin{align}
    \label{eq27.4.9}
    &  \|f\|_{ L_{\infty} ((0, T)) L_{2} (\Omega \times \bR^3)  } + \|f\|_{ L_2 ((0, T) \times \Omega) W^1_2 (\bR^3)}  \\
     & \le N   \|f (0, \cdot)\|_{ L_{2} (\Omega \times \bR^3) }  + N \|\eta\|_{L_2 ((0, T) \times \Omega)  W^{-1}_2 (\bR^3) }, \notag 
\end{align}
where $N = N (\delta_0, K, T)$.

\textbf{Step 2: existence of a finite energy solution.} We first show that 
   \begin{align}
          \label{eq27.4.8}
    f_{n} \to f\, \text{strongly in} \,  L_{\infty} ((0, T))  L_{2} (\Omega \times \bR^3) \, \text{and in} \,   L_2 ((0, T) \times \Omega) W^1_2 (\bR^3).
  \end{align}
We note that $w_n  = f_n  - f$ is an intermediate finite energy  solution to
   \begin{align}
          \label{eq27.4.7}
 &	Y w_{n}  - \nabla_p \cdot (\sigma_g \nabla_p w_{n}) + b \cdot \nabla_p w_{n} + (c+\lambda)  w_{n}\\
 & = (\xi_n - 1) \eta_0 +  \nabla_p \cdot \big((\xi_n \bm{\eta}_1)_{(n)} -  \bm{\eta}_1\big), \notag \\
 &w_{n} (t, x, p) = w_{n} (t, x, R_x p), \, z \in \Sigma^T_{-}, \quad  w_n (0, \cdot) = f_{0, n} (\cdot) - f_0 (\cdot). \notag
  \end{align}
By the estimate \eqref{eq27.4.9} obtained in Step 1, we have
\begin{align*}
     &  \|w_n\|_{ L_{\infty} ((0, T)) L_{2} (\Omega \times \bR^3)  } + \|w_n\|_{ L_2 ((0, T) \times \Omega) W^1_2 (\bR^3)}  \\
     & \le N   \|f_{0, n}  - f_0\|_{ L_{2} (\Omega \times \bR^3) }\\
     & + N \||\eta_0 - \xi_n \eta_0| + |\bm{\eta}_1 - (\xi_n \bm{\eta}_1)_{(n)}|\|_{L_2 (\Sigma^T)}.
\end{align*}
Passing to the limit, we prove \eqref{eq27.4.8}.
Since $f_n \in S_2  (\Sigma^T)$, $f_{0, n} \in L_2 (\Omega \times \bR^3)$, and $f_n$ satisfies SRBC,  by Lemma \ref{lemma C.5}, we have  $f_n \in C ([0, T])  L_{2} (\Omega \times \bR^3)$. Then, due to the convergence \eqref{eq27.4.8}, we conclude $f \in C ([0, T])  L_{2} (\Omega \times \bR^3)$.

Finally, we prove the validity of the weak formulation \eqref{eq27.2}.
We fix arbitrary $t \in [0, T]$. By the energy identity \eqref{eqC.5.1} with $t$ in place of $T$ applied to  Eq. \eqref{eq27.4.4}, we get
\begin{align*}
    &- \int_{ \Sigma^t }  f_n Y  \phi \, dz  + \int_{\Omega \times \bR^3} (f_n \phi) (t, x, p) - f_{0, n} (x, p) \phi (0, x, p) \, dx dp \\
    &+ \int_{ \Sigma^t } \bigg((\nabla_p \phi)^T \sigma_g \nabla_p f_n 
    + b \cdot (\nabla_p f_n) \phi + (c+\lambda) \phi \, dz = \int_{(0, t) \times \Omega}  \eta_0 \xi_n \phi - (\nabla_p \phi) \cdot (\xi_n  \bm{\eta}_1)_{(n)} \, dx d\tau.
\end{align*}
Passing to the limit as $n \to \infty$, we obtain the desired identity \eqref{eq27.2}. Thus, $f$ is the finite energy solution to  \eqref{eq1.0} - \eqref{eq1.0.0}, as desired.
\end{proof}

\section{Proof of Proposition \ref{proposition 5.2}}
                \label{section 7}
                
The section is organized as follows. 
First, in Section \ref{section 7.1} we prove the desired estimate \eqref{eq5.2.4.1} given that the linear RVML system \eqref{eq5.2} - \eqref{eq5.5} is well-posed  and the triple $[f, \bE_f, \bB_f]$ is sufficiently regular.  See the details in Proposition \ref{proposition 7.2}.
We justify the existence, uniqueness, and higher regularity in the proof of Proposition \ref{proposition 7.2} in Appendix  \ref{section 7.2}.

Denote
\begin{align}
 			\label{eq1.3}
   & 
   \sigma_{g^{+}+g^{-}}  = \underbrace{   2  \int_{\bR^3} \Phi (P, Q)  J (q) \, dq}_{= \sigma (p)} +  \int_{\bR^3} \Phi (P, Q)  J^{1/2} (q)  g (t, x, q) \cdot \bm{\xi}_0 \, dq, \\
& \label{eq6.1}
\index{$a_g$} a_g^i (z) =  -  \int \Phi^{ i j } (P, Q)   J^{1/2} (q)    \big(\frac{p_i}{2 p_0}  g (t, x, q)
 	+ \partial_{q_j}  g (t, x, q)\big) \cdot  \bm{\xi}_0\, dq,\\
 \label{eq6.2}
 & \index{$C_g$} C_g (z) = 
	  -  \frac{1}{2} \sigma^{ i j} \frac{ p_i}{  p_{ 0 } } \frac{ p_j}{ p_0} + \partial_{p_i} \big(\sigma^{ i j}  \frac{ p_j}{ p_0}\big) \\
&	-  \int  \big(\partial_{p_i} 	- \frac{ p_i}{2 p_0}\big)   \Phi^{ i j } (P, Q)  J^{1/2} (q)  \partial_{q_j}  g (t, x, q) \cdot \bm{\xi}_0 \, dq\notag,\\
\label{eq7.6}
& K g = - J^{-1/2} (p) \partial_{p_i}  \bigg(J (p) \int \Phi^{ i j } (P, Q)  J^{1/2} (q) \big(\partial_{q_j} g (t, x, q)  \\
&+ \frac{ q_j}{2 q_0} g (t, x, q)\big) \cdot  \bm{\xi}_0 \, dq\bigg) \bm{\xi}_0.\notag
\end{align}

 The following lemma will be used many times in the sequel.
\begin{lemma}
        \label{lemma 7.1}
Under the assumptions of Proposition \ref{proposition 5.2}, we have
\begin{align}
  \label{eq7.1.3}
  & 
    \|\partial_t^k g\|_{ L_{\infty} ((0, T) \times \Omega) W^{1}_{r, \theta/2^{k+  9  } } (\bR^3) }  
 \le N_0  \sup_{\tau \le T} \sqrt{\cI_{g} (\tau)} \le N_0  \sqrt{\varepsilon_0}, \, \, r \in \{2, \infty\},  k = 0, 1, \ldots, m - 8,\\
 \label{eq7.1.3.1}
 &  \|\partial_t^k g\|_{ L_{\infty} ((0, T)) C^{\alpha/3, \alpha}_{x, p} (\Omega \times \bR^3) }  
 \le N_0  \sup_{\tau \le T} \sqrt{\cI_{g} (\tau)}   \le N_0   \sqrt{\varepsilon_0}, \, \,  k = 0, 1, \ldots, m - 8,\\
  \label{eq7.1.4}
  & \|\partial_t^k [\bE_g, \bB_g]\|_{  L_{\infty} ((0, T) \times \Omega) } \le  N_0 \sqrt{\varepsilon_0}, \, \,  k = 0, 1, \ldots,  m- 7, \\
 \label{eq7.1.5}
  &\|\partial_t^k a_g\|_{  L_{\infty} (\Sigma^T) }  \le N_0 \sqrt{\varepsilon_0}, \, k = 0, 1, \ldots, m - 8, \\
 \label{eq7.1.6}
  &  \|| \sigma_{g^{+}  + g^{-} }| + |\nabla_p  \sigma_{g^{+}  + g^{-} }| + |C_g|\|_{  L_{\infty} (\Sigma^T) } \le N_0, \\
     \label{eq7.1.7}
    &\|\partial_t^k  \sigma_{g^{+}  + g^{-} }| + |\partial_t^k \nabla_p \sigma_{g^{+}  + g^{-} }| + |\partial_t^k C_g|\|_{   L_{\infty} (\Sigma^T) }  \le N_0 \sqrt{\varepsilon_0}, \,  k = 1, \ldots, m-8, \, 
\end{align}
where $\alpha \in (0, 1-12/r_4)$. Furthermore, for $h = [\sigma_{g^{+}  + g^{-} }, \nabla_p \sigma_{g^{+}  + g^{-} }, C_g, a_g]$,  and $i \in \{1, \ldots, 4\}$,  
\begin{align} 
    \label{eq7.1.8}
 \| \partial_t^k   h \|_{  L_{\infty} ((0, T))    L_{r_i} (\Omega) L_{\infty} (\bR^3)   }
     \le N_0 \sqrt{\varepsilon_0}, \, \, 
     k \le m - 4 -  i,
\end{align}
where $r_i, i = 1, \ldots, 4,$ are given by \eqref{eq5.10.10}, 
and $N_0  = N_0 (r_1, \ldots, r_4, \theta, \Omega, \alpha, m).$
\end{lemma}

\begin{proof}
In this proof $N_0 = N_0 (r_1, \ldots, r_4, \theta, \Omega, \alpha, m)$ might change from line to line.
We note that by the definition of $\cH_g (T)$ in \eqref{eq5.10.1}   and the assumption  $y_g (T) < \varepsilon_{ 0 }$ (see \eqref{eq5.2.4.2}), 
the fact that $r_4 > 12$, and the  embedding theorem for  steady $S_p$ spaces (see \eqref{eq2.5.3}), we have for $k \le \, m-8$ and $r \in \{2, \infty\}$, 
\begin{align}
    \label{eq7.1.9}
   &\|\partial_t^k g\|_{ L_{\infty} ((0, T) \times \Omega) W^{1}_{ r,  \theta/2^{k+ 9  } } (\bR^3) }  
     \lesssim_{r_4, \theta, k, \Omega}  \|\partial_t^k g\|_{  L_{\infty} ((0, T)) S_{r_4, \theta/2^{k+8} } (\Omega \times \bR^3) } \\
   & \le N_0 \sqrt{\cH_g (T)} \le   N_0 \sqrt{\varepsilon_0}. \notag
\end{align}
By the same embedding result, we obtain \eqref{eq7.1.3.1}. The estimate of the $L_{\infty}$ norm  $\partial_t^k [\bE_g, \bB_g]$ follows  from the fact that $r_3 > 3$ (see \eqref{eq5.10.2}), the Sobolev embedding theorem, the definition of $\cH_g (T)$ (see \eqref{eq5.10.1}), and the smallness assumption \eqref{eq5.2.4.2}. 
Furthermore, using the identities \eqref{eq6.1} -  \eqref{eq7.6}, the estimate  \eqref{eqB.2.1} with $r = \infty$ in  Lemma \ref{lemma B.2}, and the bound \eqref{eq7.1.3} with $r = \infty$, we obtain \eqref{eq7.1.5} -  \eqref{eq7.1.7}.

Finally, using the bound   \eqref{eqB.2.1}  again, we get
for fixed $t, x$,
\begin{align*}
     \| \partial_t^k   h (t, x, \cdot) \|_{     L_{\infty}   (\bR^3) }
     \le N_0 |\partial_t^k [\bE_g, \bB_g] (t, x)|  + N_0 \|\partial_t^k [g, \nabla_p g] (t, x, \cdot)\|_{   L_{r_i} (\bR^3)   }.
\end{align*}
Taking the $L_{\infty} ((0, T))  L_{r_i} (\Omega)$ norm and invoking the definition of $\cH_g$ in \eqref{eq5.10.1}, we get for  $k \le  m-4-i$ (cf. \eqref{eq7.1.9}),
\begin{align*}
  &  \text{l.h.s of \eqref{eq7.1.8}} \\
  &   \le N_0 \|\partial_t^k [\bE_g, \bB_g]\|_{ L_{\infty} ((0, T))   L_{r_i} (\Omega)   } + N_0 \|\partial_t^k [g, \nabla_p g]\|_{  L_{\infty} ((0, T))   L_{r_i} (\Omega \times \bR^3)   }
     \le N_0 \sqrt{\varepsilon_0}.
\end{align*}
\end{proof}

 The next lemma asserts the well-posedness of the linear RVML system. We will prove it in Appendix \ref{section 7.2} (see p. \pageref{eqF.1}).
\begin{proposition}
    \label{proposition 7.2}
    Under the assumptions of Proposition \ref{proposition 5.2}, there exist a triple $[f, \bE_f, \bB_f]$
    such that
\begin{enumerate}
    \item[a)] $\partial_t^k f, k \le m-5,$ is a  strong solution (see Definition \ref{definition 6.1}) to the linear Landau equation \eqref{eq5.2} differentiated $k$ times with respect to $t$ with the initial condition $\partial_t^k f = f_{0, k}$ (see \eqref{eq3.3.2}),
    \item[b)] $\partial_t^k f, m-4 \le k \le m,$ is a finite energy  solution to \eqref{eq5.2}  (see Definition \ref{definition 27.1}) differentiated $k$ times with respect to $t$ with the initial condition $f_{0, k}$,
    \item[c)] $\partial_t^k [\bE_f, \bB_f], k \le m-1,$ is a  strong solution to  Maxwell's equations  \eqref{eq5.2.0}-\eqref{eq5.3}  differentiated $k$ times with respect to $t$ with the perfect conductor BC and the initial condition $[\bE_{0, k}, \bB_{0, k}]$ (see \eqref{eq3.3.3} - \eqref{eq3.3.4}), whereas $\partial_t^m f$ is a  weak solution to differentiated Maxwell's equations,
    
    \item[d)] for any $k \le m$, we have $\partial_t^k [\nabla_x \cdot  \bE_f, \nabla_x \cdot  \bB_f] = \partial_t^k [\rho_f, 0]$ (cf. \eqref{eq5.4}),

   \item[e)]
       \begin{align}
        \label{eq7.90}
       \partial_t^k f (t, \cdot) \in  W^1_{\infty, \theta/2^{k+9}} (\Omega \times \bR^3), k \le m-8,  \, \text{for any} \,  t \in (0, T], 
    \end{align}
\end{enumerate}
    Furthermore, any two triples $[f^{(j)}, \bE_f^{(j)}, \bB_f^{(j)}], j = 1, 2,$ satisfying $a)-e)$ must coincide. 
\end{proposition}

The next result shows that given the energy-dissipation control (see \eqref{eq5.10.15} - \eqref{eq5.10.16}), one can establish the higher-regularity control by estimating $\cH_f (T)$ (see \eqref{eq7.96}).

\begin{proposition}
    \label{proposition 7.3}
Assuming that Proposition \ref{proposition 7.2} is valid, we have
\begin{align}
    \label{eq7.96}
 &  \cH_f (T)
 =   \sum_{i=1}^4 \sum_{k=0}^{  m-4-i } \|\partial_t^k f (\tau, \cdot)\|^2_{  S_{r_i, \theta/2^{ k + 2i   } } (\Omega \times \bR^3) }  \\
& + \sum_{k=0}^{m-1} \|\partial_t^k  [\bE_f, \bB_f] (\tau, \cdot)\|^2_{ W^1_{ 2 } (\Omega) } 
+ \sum_{i=2}^3 \sum_{k=0}^{ m  - 4 -  i} \|\partial_t^k  [\bE_f, \bB_f] (\tau, \cdot)\|^2_{ W^1_{r_i} (\Omega) } 
  \le N \varepsilon_0 \sup_{\tau \le T} \cI_f (\tau)  \notag \\
&  + N  \sum_{k=0}^{m-4} \bigg(\|\partial_t^k f\|^2_{  L_{\infty} ((0, T)) L_{2, \theta/2^k} (\Omega \times \bR^3) } 
 +  \|\partial_t^k [\bE_f, \bB_f] \|^2_{ L_{\infty} ((0, T))  L_{ 2 } (\Omega) }\bigg), \notag
\end{align}
where $N = N (r_1, \ldots, r_4, \alpha, \Omega, \theta, m)$.
\end{proposition}

\begin{proof}
   Here we estimate the  functional $\cH_f (T)$ (see \eqref{eq5.10.1}).
For the sake of clarity, we assume, additionally,   
\begin{align}
   \label{eq7.91}
   \cH_f (T) < \infty,
\end{align}
which is used to perform the descent argument (see Section \ref{section 4}).
This assumption will be removed at the end of this step.

First, we differentiate Maxwell's equations formally $k$ times in the $t$ variable and rewrite them as two systems of div-curl type as in \eqref{eq10.4} - \eqref{eq10.5}.
By the $W^1_{r_i}$ div-curl estimate (see \eqref{eq3.0.0}),   we have 
\begin{align}
    \label{eq7.92}
  & \|\partial_t^k [\bE_f, \bB_f]\|_{ L_{\infty} ((0, T)) W^1_{ r_{i} } (\Omega) } \\
  & \lesssim_{\Omega} 
  \sum_{l=k}^{k+1}  \|\partial_t^l [\bE_f, \bB_f]\|_{ L_{\infty} ((0, T)) L_{ r_{i} } (\Omega) }
  + \|\partial_t^{k} f\|_{ L_{\infty} ((0, T)) L_{ r_{i} } (\Omega \times \bR^3) }, \notag
\end{align}
where $k \le m-1$ if $i = 1$ $(r_1 = 2)$, and $k \le m-4-i$ if $i \in  \{2, 3\}$.

 Next,  differentiating the equation \eqref{eq5.2} formally and using the expressions of $A$ and $\Gamma (f, g)$ in \eqref{eqG.1.1} and \eqref{eqG.1.3}, and those of $\sigma_{g^{+}  + g^{-} }$, $a_g$, and  $C_g$ (see \eqref{eq1.3} - \eqref{eq6.2}), we conclude that  for each $t$, the function $u (t, \cdot) =  \partial_t^k f (t, \cdot)$, $k \le m-5,$ is a  strong solution to the  `steady' equation
\begin{align} 
    \label{eq7.48}
    & \frac{p}{p_0}  u  -  \nabla_p \cdot (\sigma_{g^{+}  + g^{-} }  \nabla_p u) 
    +  \bm{\xi} (\bE_g + v (p) \times \bB_g - a_g) \cdot \nabla_p u  \\
    & +(C_g - \frac{\bm{\xi}_1}{2}  v (p) \cdot \bE_g) u \notag \\
    &
        = -\partial_t^{k+1} f  + \bm{\xi} (v (p) \cdot \partial_t^k \bE_f) J^{1/2} 
          +   K (\partial_t^k f) 
        + 1_{k > 0} \sum_{j=1}^3 \sum_{k_1+k_2 = k, k_1 \ge 1 }\eta_{k_1, k_2}^j,  \notag\\
      \label{eq7.48.1}
    & u (t, x, p) = u (t, x, R_x p), (x, p) \in \gamma_{-},  \\
     & \eta^1_{k_1, k_2} = -\bm{\xi} \partial_t^{k_1}(\bE_g + v (p) \times  \bB_g ) \cdot \nabla_p (\partial_t^{k_2} f) + \frac{\bm{\xi}}{2}  (v (p) \cdot \partial_t^{k_1}  \bE_g) \partial_t^{k_2} f, \notag\\
     & \eta^2_{k_1, k_2} = \big(\partial_{p_i } \partial_t^{k_1} \sigma_{g^{+}  + g^{-} }^{i j}
     - \partial_t^{k_1} a_g^i\big) (\partial_{ p_i} \partial_t^{k_2}  f) + (\partial_t^{k_1} C_g) \partial_t^{k_2} f,  \notag\\
          & \eta^3_{k_1, k_2} =  (\partial_t^{k_1}  \sigma_{g^{+}  + g^{-} }^{i j} ) (\partial_{p_i p_j} \partial_t^{k_2}  f). \notag
\end{align}

We   apply     Proposition \ref{proposition 14.C.6} (see \eqref{eq14.C.6.6}) to $\partial_t^k f^{\pm} (t, \cdot)$ for each $t$ with 
\begin{align*}
    b = \pm (\bE_g + v (p) \times \bB_g) - a_g, \quad c = C_g - \frac{\bm{\xi}}{2}  v (p) \cdot \bE_g.
\end{align*}
We first check its assumptions  \eqref{eq1.4.1} - \eqref{eq1.4.2}, \eqref{eq1.5.1}, \eqref{eq1.2.2}.

We note that \eqref{eq1.4.1} - \eqref{eq1.4.2} in Assumption \ref{assumption 1.4} hold with $K = 1$ due to \eqref{eq7.1.3} - \eqref{eq7.1.3.1} in Lemma \ref{lemma 7.1} provided that $\varepsilon_0$ is sufficiently small.
 Similarly,  \eqref{eq1.2.2} with $K = 1$ follow directly from  
\eqref{eq7.1.6} in Lemma \ref{lemma 7.1}. 
Finally, \eqref{eq1.5.1} is valid due to \eqref{eq5.2.16}.

We  fix $i =  1, \ldots, 4$ and $0 \le k \le  m-4-i$.
Then, by the estimates  \eqref{eq14.C.6.6} - \eqref{eq14.C.6.7} and \eqref{eq14.C.6.8} - \eqref{eq14.C.6.9}   with $\theta/2^{k+2i - 1 }$  in place of $\theta$ and $\kappa = 1/2$ applied for each $t \in [0, T]$, we get  
\begin{align}
 & \label{eq7.37}
  \|  \partial_t^k f\|^2_{ L_{\infty} ((0, T)) S_{r_i, \theta/2^{k + 2 i    }} (\Omega \times \bR^3)   } 
      + 1_{ i < 4}   \|  \partial_t^k f\|^2_{ L_{\infty} ((0, T)) L_{r_{i+1}, \theta/2^{k+ 2 i}} (\Omega \times \bR^3)   }   
    \\
    & \le N  \sum_{s \in \{2, r_i\} }  \|\text{r.h.s. of \eqref{eq7.48}} \|^2_{ L_{\infty} ((0, T))   L_{s, \theta/2^{k +2 i-1 } } (\Omega \times \bR^3) } \notag \\
    & + N \|\partial_t^k f\|^2_{ L_{\infty} ((0, T))   L_{2, \theta/2^{k + 2 i - 1} } (\Omega \times \bR^3) }. \notag
    \end{align}

    Furthermore, 
  \begin{align}  
    \label{eq7.37.1}
  &\sum_{s \in \{2, r_i\} } \|\text{r.h.s. of \eqref{eq7.48}} \|^2_{ L_{\infty} ((0, T)) L_{s, \theta/2^{k    + 2 i - 1  } } (\Omega \times \bR^3) }  \le   \sum_{j = 1}^4 \cI_{j, k},  \\
&
\cI_{j, k} =  1_{k > 0} \sum_{s \in \{2, r_i\} } \sum_{k_1+k_2 = k, k_1 \ge 1 } \| \eta_{k_1, k_2}^j\|^2_{  L_{\infty} ((0, T)) L_{s, \theta/2^{k  + 2 i -1 } } (\Omega \times \bR^3) }, j  = 1, 2, 3, \notag\\
&
 \cI_{4, k} =  \sum_{s \in \{2, r_i\} } \|\partial_t^{k+1} f\|^2_{  L_{\infty} ((0, T)) L_{s, \theta/2^{k + 2 i-1} } (\Omega \times \bR^3) },    \notag \\
& 
\cI_{5, k} =  \| \partial_t^k \bE_f\|^2_{  L_{\infty} ((0, T)) L_{  r_i } (\Omega)  }, \notag \\
& 
 \cI_{6, k} =  \sum_{s \in \{2, r_i\} }  \| K (\partial_t^k f)\|^2_{  L_{\infty} ((0, T)) L_{s, \theta/2^{k + 2 i-1} }  (\Omega \times \bR^3) }.  \notag
\end{align}
We chose  weights $\theta/2^{k+2i}$ because for each $i = 1, \ldots, 4,$ and $k \le m-4-i$, we need to compensate for 
\begin{enumerate}
    \item[a)] the  `natural' weight loss in the steady $S_p$ estimate with $\kappa = 1/2$ (see \eqref{eq14.C.6.6} - \eqref{eq14.C.6.7} in Proposition \ref{proposition 14.C.6}),

   \label{b)}
    \item[b)] the presence of the term $\partial_t^{k+1} f \in L_{2, \theta/2^{k+1}} (\Sigma^T)$ on the r.h.s. of \eqref{eq7.48}, which has a worse decay than $\partial_t^k f$.
\end{enumerate}
Loosely speaking,  due $a)-b)$, for each $i$, the loss factor in the weight parameter is $\frac{1}{4}$, which leads to the factor $2^{-2i}$ in the `hierarchy of weights.'


\textit{Estimates of $\cI_{1, k}$ and $\cI_{2, k}$.} We will show that
\begin{align}
        \label{eq7.40}
& \cI_{1, k} + \cI_{2, k} \le N \varepsilon_0 \sum_{s \in \{2, r_i\} } \sum_{k_2 =  0}^{k-1} \|\partial_t^{k_2} [f, \nabla_p f]\|^2_{ L_{\infty} ((0, T)) L_{s, \theta/2^{k_2 + 2 i  } }  (\Omega \times \bR^3) } \\ 
& +  N \varepsilon_0 1_{ k \ge m -  7   }  \sum_{k_2 =  0}^{   2 } \|\partial_t^{k_2} [f, \nabla_p f]\|^2_{  L_{\infty} ((0, T)) L_{\infty, \theta/2^{k_2 + 9 } } (\Omega \times \bR^3) }. \notag
\end{align}
Recall that $k \le m-4-i$. We will consider the case when $k \ge m - 7$ since the remaining case is easier to handle due to \eqref{eq7.1.3} - \eqref{eq7.1.7}.
Furthermore, splitting the sum into $k_1 \le m- 8$ and $k_1 \ge m - 7$  gives
\begin{align}
    \label{eq7.42}
    &    \cI_{1, k} + \cI_{2, k}   \le  \sum_{k_1+k_2 = k: 1 \le k_1 \le m-   8  } \|\partial_t^{k_1}  h\|^2_{ L_{\infty} (\Sigma^T) } 
    \sum_{ s \in \{2, r_i\} } \| \partial_t^{k_2} [f, \nabla_p f]\|^2_{   L_{\infty} ((0, T)) L_{s, \theta/2^{k +2i-1 }  } (\Omega \times \bR^3) }\\
    & +  1_{   m - 7  \le k \le  m-4-i  } \sum_{ k_1+k_2 = k: k_1 \ge m - 7   } \sum_{ s \in \{2, r_i\} } \|(\partial_t^{k_1} 
 h) p_0^{-2} \|^2_{ L_{\infty} ((0, T))  L_s (\Omega \times \bR^3) } \notag \\
 & \times \| \partial_t^{k_2} [f, \nabla_p f] \|^2_{  L_{\infty, 2+ \theta/2^{k  +2 i-1 }} (\Sigma^T) }, \notag
    \end{align}
    where $h = [\bE_g, \bB_g, a_g, C_g, \sigma_{g^{+}  + g^{-} }, \nabla_p \sigma_{g^{+}  + g^{-} }].$
Due to \eqref{eq7.1.3}  - \eqref{eq7.1.5} and \eqref{eq7.1.7} in Lemma \ref{lemma 7.1},
\begin{align}
    \label{eq7.45}
   1_{1 \le k_1 \le m - 8  } \|\partial_t^{k_1}   h\|^2_{ L_{\infty} (\Sigma^T)} \le N \varepsilon_0, 
   \end{align}
   and by  \eqref{eq7.1.8} in the same lemma, we get
 \begin{align*}
  1_{ k_1 \le m-4-i } \sum_{ s \in \{2, r_i\} } \|(\partial_t^{k_1}  h) p_0^{-2}\|^2_{ L_{\infty} ((0, T)) L_s (\Omega \times \bR^3) } \le N \varepsilon_0.
\end{align*}

Next, since $k_1 \ge 1$, we have $k_2 \le k - 1$, so that the second factor in the first term on the r.h.s. of \eqref{eq7.42}  is bounded by
$$
     \sum_{ s \in \{2, r_i\} } \| \partial_t^{k_2} [f, \nabla_p f]\|^2_{ L_{\infty} ((0, T)) L_{s, \theta/2^{k_2  +2i  }} (\Omega \times \bR^3) },
$$
as desired.
Furthermore, if $k_1 \ge m-7$, one has $k_2 \le 2$ (recall that $k \le m-5$), and hence, $k_2+9 \le 11 < m-6 \le k+2i-1$, which gives
\begin{align}
    \label{eq7.77}
    2+ \theta/2^{k+2i-1} \le \theta/2^{k_2 + 9}
\end{align}
for large $\theta$.
  Then,   for sufficiently large $\theta$, the second factor in the second term on the r.h.s of \eqref{eq7.42} is bounded by 
$$
     \|\partial_t^{k_2} [f, \nabla_p f]\|^2_{ L_{\infty, \theta/2^{k_2 + 9  }} (\Sigma^T) }.
$$
Thus, the inequality in \eqref{eq7.40} is true.

\textit{Estimate of $\cI_{3, k}$.} We will show that
\begin{align}
    \label{eq7.41}
 &  \cI_{3, k} \le N \varepsilon_0 \sum_{s \in \{2, r_i\} } \sum_{ k_2 = 0 }^{k-1} \|\partial_t^{k_2} D^2_p f\|^2_{ L_{\infty} ((0, T)) L_{s, \theta/2^{k_2 + 2 i }} (\Omega \times \bR^3) }  \\
 & + N \varepsilon_0  1_{k \ge m- 7   }  \sum_{ s \in \{2, r_4\} } \sum_{ k_2 = 0 }^{  2  } \| \partial_t^{k_2} D^2_p f\|^2_{ L_{\infty} ((0, T)) L_{ r_4, \theta/2^{k_2 + 8   }} (\Omega \times \bR^3) }.  \notag
\end{align}
Inspecting the  proof of \eqref{eq7.40} and using \eqref{eq7.45}, we conclude
\begin{align}
    \label{eq7.93}
    \sum_{s \in \{2, r_i\} } \sum_{k_1+k_2 = k, 1 \le  k_1 \le m - 8  }  \|\eta_{k_1, k_2}^3\|^2_{ L_{\infty} ((0, T)) L_{s, \theta/2^{k_2 +2i-1  }} (\Omega \times \bR^3) } \le \text{r.h.s. of \eqref{eq7.41} }.
\end{align}
Hence, we may assume that $k_1 \ge m - 7$, $k_2 \le 2$.

We denote
$$
  p_0^{ \theta/2^{k+2i-1} } = p_0^{-2} p_0^{ 2 + \theta/2^{k+2i-1}}
    =:  w_1 (p) w_2 (p).
$$
By using  H\"older's inequality in the $x, p$ variables with the exponents $r_{i+1}/r_i$ and $\eta_i/r_i$, where $\eta_i: = (r_{i}^{-1} - r_{i+1}^{-1})^{-1}$, we get
\begin{align}
\label{eq14.B.1.20}
 &    \cI_{3, k}  = \sup_{\tau \le T} \bigg(\int_{\Omega \times \bR^3} |\partial_t^{k_1}  \sigma_{g^{+}  + g^{-} }|^{ r_{i} } (\tau, x, p) \, |D^2_p \partial_t^{k_2} f (\tau, x, p)|^{r_i} \,  p_0^{ r_i (\theta/2^{k+2i-1}) } \, dx dp \bigg)^{2/r_i} \\
 & \le 
 \sup_{\tau \le T} \cI_{3, 1, k} (\tau) \, \cI_{3, 2, k} (\tau), \notag  \\
&
    \cI_{3, 1, k} (\tau) = \bigg(\int_{\Omega \times \bR^3} |\partial_t^{k_1}  \sigma_{g^{+}  + g^{-} } (\tau, x, p)|^{r_{i+1}}  w_1^{r_{i+1}  } (p)\, dx dp\bigg)^{2/r_{i+1}}, \notag \\
&
    \cI_{3, 2, k} (\tau) =  \bigg(\int_{ \Omega \times \bR^3 } |D^2_p \partial_t^{k_2} f  (\tau, x, p)|^{\eta_i} \,  w_2^{ \eta_i } (p)  \, dx dp\bigg)^{2/\eta_i}. \notag
\end{align}

We estimate $\cI_{3, 1, k}$ first.
Recalling the definition of $r_{i+1}$ in \eqref{eq5.10.10} and using the embedding result in   \eqref{eq2.5.1} in  Corollary \ref{corollary 2.5}) with $r_i$ in place of $r$, and invoking the definition of $\cH_g (T)$ in \eqref{eq5.10.1},   we find
\begin{align*}
    &     \|\partial_t^{k_1} g (\tau, \cdot)\|^2_{  L_{ r_{i+1} } (\Omega \times \bR^3)}  \\
    &\le N \|\partial_t^{k_1} g (\tau, \cdot)\|^2_{  S_{r_i, \theta/2^{k_1+2i} } (\Omega \times \bR^3)}
     \le N \cH_g (T) \le N \varepsilon_0.
\end{align*}
Furthermore, differentiating the identity  \eqref{eq6.0} and using the pointwise bound \eqref{eqB.2.1} in Lemma \ref{lemma B.2}, and the fact that $k_1 \le m-4-i$, we find
\begin{align}
    \label{eq14.B.1.11}
       & \cI_{3, 1, k} (\tau) = \| p_0^{-2} \partial_t^{k_1}  \sigma_{g^{+}  + g^{-} } (\tau, \cdot) \|^2_{  L_{ r_{i+1} } (\Omega \times \bR^3) }  \\
        &
    \le N   \|\partial_t^{k_1} g (\tau, \cdot)\|^2_{  L_{ r_{i+1} } (\Omega \times \bR^3) } \le N \varepsilon_0. \notag
\end{align}
We move to  $\cI_{3, 2, k}$. We first  note that since $k_2 \le 2$, \eqref{eq7.77} is valid, and hence, we may replace $w_2 (p)$ with $p_0^{   \theta/2^{k_2+8} }$.
Furthermore, by the definition of $r_1, \ldots r_4$ in \eqref{eq5.10.10} and the fact that  $\Delta r < \frac{1}{42}$, we have
$$
    \frac{1}{\eta_i} = \frac{1}{r_i}  - \frac{1}{r_{i+1}} = \frac{1}{6} -  \Delta r \ge \frac{1}{6}  -\frac{1}{42} \ge \frac{1}{7}.
$$
Hence, by interpolating between $L_2$ and $L_{r_4}$ ($r_4 > 14$), we obtain
\begin{align}
    \label{eq7.94}
     &  \cI_{3, 2, k} (\tau)
    \le N  \sum_{ s \in \{2, r_4\} }\|\partial_t^{k_2} f (\tau, \cdot)\|^2_{     S_{s, \theta/2^{k_2+8} }  (\Omega \times \bR^3) }.
\end{align}
Combining \eqref{eq7.93} - \eqref{eq7.94}, we conclude that \eqref{eq7.41} holds.

\textit{Estimate of $\cI_{5, k}$.}
In the case when $i = 1$ and $r_i = 2$, we keep $\cI_{5, k}$ as is.
In the remaining case $i > 1$, we first note that by  Sobolev embedding and the fact that 
$$
  1 - \frac{6}{r_{i-1}} >  - \frac{6}{r_{i}}, 
$$
which follows from \eqref{eq5.10.10}, 
 we have
\begin{align*}
     \|\partial_t^k [\bE_f, \bB_f] (\tau, \cdot)\|_{  L_{ r_{i} } (\Omega) }   \lesssim_{\Omega}    \|\partial_t^k [\bE_f, \bB_f] (\tau, \cdot)\|_{ W^1_{ r_{i-1} } (\Omega) }.
\end{align*}
Hence, by \eqref{eq7.92} with $i$ replaced with $i-1$, we obtain
\begin{align}
    \label{eq7.95}
    & \cI_{4, k} =  \|\partial_t^k [\bE_f, \bB_f] \|^2_{ L_{\infty} ((0, T))  L_{ r_{i} } (\Omega) } \\
    & \le N \|\partial_t^{k} f\|^2_{ L_{\infty} ((0, T)) L_{ r_{i-1} } (\Omega \times \bR^3) } + N \sum_{l=k}^{k+1} \|\partial_t^l [\bE_f, \bB_f]\|^2_{ L_{\infty} ((0, T)) L_{ r_{i-1} } (\Omega) }.  \notag 
\end{align}

\textit{Estimate of $\cI_{6, k}$.}
By \eqref{eqB.1.1} in Lemma \ref{lemma A.1} and interpolation and H\"older's inequalities,  for any $\varepsilon_1 \in (0, 1)$ and sufficiently large $\theta$, we have
\begin{align}
    \label{eq7.97}
       & \cI_{6, k} = \sum_{s \in \{2, r_i\} }  \| K (\partial_t^k f)\|^2_{  L_{\infty} ((0, T)) L_{s, \theta/2^{k + 2 i-1} } (\Omega \times \bR^3) } \\
       & \le N   \sum_{s \in \{2, r_i\} }  \|\partial_t^k f\|^2_{  L_{\infty} ((0, T))  L_s (\Omega) W^1_s (\bR^3)    }  \notag \\
       & \le   \varepsilon_1 \|D^2_p \partial_t^k f\|^2_{ L_{\infty} ((0, T))  L_{r_i, \theta/2^{ k+ 2i}  } (\Omega \times \bR^3) } 
       + N \varepsilon_1^{-1} \|\partial_t^k f\|^2_{ L_{\infty} ((0, T))  L_{r_i, \theta/2^{k + 2 (i- 1)}  } (\Omega \times \bR^3) }.\notag
\end{align}

Finally, gathering all the estimates \eqref{eq7.37.1} - \eqref{eq7.40},  \eqref{eq7.41},  \eqref{eq7.95}-\eqref{eq7.97} gives
\begin{align}
    \label{eq7.43}
     & \sum_{s \in \{2, r_i\} } \|\text{r.h.s. of \eqref{eq7.48}} \|^2_{ L_{s, \theta/2^{k  +2i-1 } } (\Sigma^T) }  \\
     & \le     
     N \varepsilon_1^{-1}  \sum_{l=k}^{k+1}  \|\partial_t^{  l   } f\|^2_{  L_{\infty} ((0, T))  L_{    r_i, \theta/2^{ 
      l + 2 (i-1)  }  }  (\Omega \times \bR^3)  } 
    + \varepsilon_1 \|\partial_t^k D^2_p f\|^2_{  L_{\infty} ((0, T))  L_{r_i, \theta/2^{ k+ 2i}  } (\Omega \times \bR^3) }   \notag \\
  &   + \|\partial_t^{k+1} f\|^2_{  L_{\infty} ((0, T))  L_{    2, \theta/2^{  k + 2 i - 1}  }  (\Omega \times \bR^3)  }   + N 1_{i=1}  \|\partial_t^{k} \bE_f\|^2_{ L_{\infty} ((0, T)) L_{ 2 } (\Omega \times \bR^3) } \notag  \\
  &    + N 1_{i> 1}  \bigg(\|\partial_t^{k} f\|^2_{ L_{\infty} ((0, T)) L_{ r_{i-1} } (\Omega \times \bR^3) } + \sum_{l=k}^{k+1} \|\partial_t^l [\bE_f, \bB_f]\|^2_{ L_{\infty} ((0, T)) L_{ r_{i-1} } (\Omega) } \bigg)  \notag\\
    & + 1_{k > 0}  N  \varepsilon_0 \sum_{s \in \{2, r_i\} } \sum_{ k_2 = 0 }^{k-1} \|\partial_t^{k_2} f\|^2_{ L_{\infty} ((0, T)) S_{s, \theta/2^{k_2 + 2 i   }} (\Sigma^T) }  \notag  \\
    & + N  \varepsilon_0 1_{k \ge m- 7  } \sum_{ k_2 = 0 }^{  2 } \bigg(\| \partial_t^{k_2} [f, \nabla_p f]\|^2_{ L_{\infty, \theta/2^{k_2 + 9  }} (\Sigma^T) }
    + \| \partial_t^{k_2} f \|^2_{  L_{\infty} ((0, T))  S_{ r_4, \theta/2^{k_2 + 8  }} (\Omega \times \bR^3) } \bigg).  \notag 
\end{align}
We note that the first term involving weighted $L_{\infty}^{t, x, p}$ norm on the r.h.s. is bounded by 
$$
       \varepsilon_0   1_{k \ge m- 7  } \sum_{ k_2 = 0 }^{  2 } \| \partial_t^{k_2} [f, \nabla_p f]\|^2_{ L_{\infty, \theta/2^{k_2 + 9  }} (\Sigma^T) }  \le   N \varepsilon_0 \sup_{\tau \le T}  \cI_f (\tau)
$$ 
due to the first inequality in \eqref{eq7.1.3}.

 Combining  \eqref{eq7.92}, \eqref{eq7.37}, and \eqref{eq7.43},  and summing up over $k \le m-4-i$, we get
\begin{align*}
   &  \sum_{k=0}^{m-4-i}  \| \partial_t^k f\|^2_{ L_{\infty} ((0, T)) S_{r_i, \theta/2^{k +2i  }} (\Omega \times \bR^3)   }  +  1_{ i < 4}  \sum_{k=0}^{m-4-i}  \|  \partial_t^k f\|^2_{ L_{\infty} ((0, T)) L_{r_{i+1}, \theta/2^{k + 2 i}} (\Omega \times \bR^3)   }  \\
  &  + 1_{i=1}  \sum_{k=0}^{m-i} \|\partial_t^k [\bE_f, \bB_f] \|^2_{ L_{\infty} ((0, T))  W^1_{ r_{i} } (\Omega) }        \\
   & + 1_{i > 1} \sum_{k=0}^{m-4-i}   \|\partial_t^k [\bE_f, \bB_f] \|^2_{ L_{\infty} ((0, T))  W^1_{ r_{i} } (\Omega) }   
   + 1_{i > 1}  \sum_{k=0}^{m-4-i} \|\partial_t^k [\bE_f, \bB_f] \|^2_{ L_{\infty} ((0, T))  L_{ r_{i+1} } (\Omega) } \\
   &  \le    \varepsilon_1   \sum_{k=0}^{m-4-i}  \| \partial_t^k f\|^2_{  L_{\infty} ((0, T))  S_{r_i, \theta/2^{ k+ 2i}  } (\Omega \times \bR^3)   }     
   + N \varepsilon_0   \sup_{\tau \le T} \cI_f (\tau)  \notag  \\
& +   N  \varepsilon_1^{-1}  \sum_{k=  0  }^{m-3-i}  \| \partial_t^{k} f\|^2_{ L_{\infty} ((0, T))   L_{r_i, \theta/2^{k + 2 (i - 1)} }  (\Omega \times \bR^3) }  \notag \\
   &   + N 1_{i > 1} \sum_{k=0}^{m-4-i}  \|\partial_t^k f\|^2_{ L_{\infty} ((0, T)) L_{r_{i-1}}  (\Omega \times \bR^3)   }   + N 1_{i > 1} \sum_{k=0}^{m-3-i} \|\partial_t^k [\bE_f, \bB_f]\|^2_{ L_{\infty} ((0, T)) L_{r_{i-1}  }  (\Omega) }    \\
  & + N  \varepsilon_1^{-1} \sum_{k=0}^{m- 3   -i } \|\partial_t^k f\|^2_{  L_{\infty} ((0, T)) L_{2, \theta/2^k} (\Omega \times \bR^3) }.
  \end{align*} 
  We point out that 
  \begin{itemize}
  \item  by choosing  $\varepsilon_1$ sufficiently small, we may absorb the first term on the r.h.s.  into  the l.h.s.,
  \item the fourth term on the r.h.s. is bounded by the third   one due to H\"older's inequality provided that $\theta$ is sufficiently large, 
  
  \item  if we replace $i$ with $i-1$ in the second term on the l.h.s., we obtain the  third one  on the r.h.s.,
  \item  if we replace $i$ with $i-1$ in the fifth term on the l.h.s., the resulting term dominates the fifth term on the r.h.s..
  \end{itemize}

Then, by using induction on $i$,  we obtain   the desired estimate \eqref{eq7.96}. 

We note that the assumption \eqref{eq7.91} is actually not necessary, as one can use an induction argument by ascending from $k = 0$ to $k  = m-4-i$ and using the bounds \eqref{eq7.37} and \eqref{eq7.43}.
At each step of the induction argument, one needs to use the existence and uniqueness results 
\begin{itemize}
    \item[a)] for  finite energy  and strong solutions to the steady KFP equation \eqref{eq27.1.2} (see Propositions \ref{proposition 14.C.6} and \ref{proposition 27.4}), 
    \item[b)] for strong solutions to  Maxwell's equation (see Chapter $(vii)$ in \cite{DL_76}).
\end{itemize}
\end{proof}


\subsection{Proof of the bound    (\ref{eq5.2.4.1}).} 
\label{section 7.1}
In this proof, $N = N (r_1, \ldots, r_4, \alpha, \Omega, \theta, m)$.
We will do some formal calculations below 
assuming that Proposition \ref{proposition 7.2} is valid.
We first prove the energy-dissipation estimate. Combining this inequality with  $L_{\infty}^t S_p$ and $L_{\infty}^t W^1_2 (\Omega)$ bounds  in \eqref{eq7.96}, we are able to close the estimate of $y_f (T)$.

\textbf{Step 1: Energy-dissipation bound.} 
 Here, we estimate the total energy and dissipation, that is, $\cE_f (\tau) + \int_0^{\tau} \cD \, dt$ (see \eqref{eq5.10.15} and \eqref{eq5.10.16}).
First, applying the standard energy identity for the weak solution to Maxwell's equations differentiated $k$ times with respect to $t$  and using the Cauchy-Schwarz inequality, we have
\begin{align}
    \label{eq7.80}
  &  \frac{1}{2} \|\partial_t^k  [\bE_f, \bB_f]\|^2_{ L_{\infty} ((0, T)) L_2 (\Omega) } \\
  & \le \frac{1}{2} \| \bE_{0, k}, \bB_{0, k}\|^2_{ L_2 (\Omega) } + N \|\partial_t^k f\|^2_{L_2 (\Sigma^T) }
   +  \|\partial_t^k \bE_f\|^2_{  L_2 ((0, T) \times \Omega)  }. \notag
\end{align}

Next, for the sake of convenience, we introduce 
\begin{align}
        \label{eq7.98}
    \theta_k 
    = \begin{cases}
    &\theta/2^k, \, \,  k = 0, 1, \ldots, m-4, \\
    & 0, \, \,  k = m-3, \ldots, m.
    \end{cases}
\end{align}

Differentiating the linear Landau equation \eqref{eq5.2} formally $k$ times  in the  $t$ variable  and  using a variant of the energy identity \eqref{eq27.4.12}, we get for each $\tau > 0$,   
\begin{align}
    \label{eq7.11}
& \frac 1 2 \big(\|\partial_t^k f (\tau, \cdot)\|^2_{ L_{2,  \theta_k } (\Omega \times \bR^3) } - \| f_{0, k}\|^2_{ L_{2, \theta_k } (\Omega \times \bR^3) }\big)\\
& 
+ \underbrace{ \int_{ \Sigma^{\tau} }  \langle L \partial_t^k f, \partial_t^k f\rangle  \, p_0^{2  \theta_k} dz }_{ = I_1 }\notag \\
& -  \bm{\xi}_1 \underbrace{  \int_{ \Sigma^{\tau} } (v (p) \cdot \partial_t^k \bE_f)    (\partial_t^k f)  \,   \sqrt{J}    p_0^{2 \theta_k } dz}_{ =  I_2 } \notag \\
& = \underbrace{  \int_{ \Sigma^{\tau} } \langle(\partial_t^k \Gamma (f, g)),    (\partial_t^k f) \rangle  \,  p_0^{2 \theta_k } dz \notag}_{ I_4  }\\
& + \frac{ \bm{\xi} }{2} \underbrace{   \sum_{k_1+k_2=k} \binom{k}{k_1} \int_{ \Sigma^{\tau} } v (p)  \cdot (\partial_t^{k_1} \bE_g) (\partial_t^{k_2} f) (\partial_t^k f)  p_0^{2 \theta_k }\, dz }_{ I_5 }\notag \\
&- \bm{\xi} \underbrace{   \sum_{k_1+k_2=k} \binom{k}{k_1} \int_{ \Sigma^{\tau} }  \big(\partial_t^{k_1} \bE_g + v (p) \times  (\partial_t^{k_1}  \bB_g)\big) \cdot (\nabla_p \partial_t^{k_2}  f) (\partial_t^k f) \,  p_0^{2 \theta_k} dz }_{ I_6  },\notag
\end{align}
where $f_{0, k}$ is defined in \eqref{eq3.3.2}.

\textit{Estimate of `quadratic terms.'}
Using the fact that $L = -A-K$ (see \eqref{eq5}) and combining the coercivity estimate of $A$ in \eqref{eq2.4.1} in Lemma \ref{lemma 2.4} with the estimate of $K$ in \eqref{eq2.4.2}, we have
\begin{equation}
            \label{eq7.12}
    I_1 \ge \kappa \| \nabla_p \partial_t^k f\|^2_{ L_{2, \theta_k} (\Sigma^{\tau}) }  - N_1 (\theta, k) \|\partial_t^k f\|^2_{ L_{2} (\Sigma^{\tau}) }.
\end{equation}

Next, by the Cauchy-Schwarz inequality,  we get 
\begin{align}
                \label{eq7.14}
   I_2 \le   \|\partial_t^k \bE_f  \|^2_{ L_2 ((0, \tau) \times \Omega) } +  N \| \partial_t^k f\|^2_{L_{2} (\Sigma^{\tau})}.
\end{align}

\textit{Estimate of `cubic terms.'} 
To estimate $I_4 - I_6$, we need to prove the following \textbf{Claim}: for  any nonnegative integers $k_1, k_2$ such that $k_1+k_2 = k$, one has
\begin{align}
    \label{eq7.15}
  &  (i) \,   \bigg|\int_{ \Sigma^{T} } (\Gamma (\partial_t^{k_2} f, \partial_t^{k_1} g)) \cdot (\partial_t^k f) p_0^{2 \theta_k}    \, dz\bigg|   \le N \sqrt{\varepsilon_0} y_f (T), \\
           \label{eq7.16}
 &  (ii) \,  \bigg|\int_{ \Sigma^{T} } (|\partial_t^{k_1} f| + |\nabla_p \partial_t^{k_1} f|) |\partial_t^{k_2} [\bE_g, \bB_g]|  |\partial_t^k f|\,  p_0^{2 \theta_k} dz \bigg| \\
 & \le N \sqrt{\varepsilon_0} \, y_f (T). \notag
\end{align}
Then, applying $(i) - (ii)$, we get
\begin{align}
    \label{eq7.17}
    I_4 + I_5 + I_6 \le N \sqrt{\varepsilon_0} y_f (T).
\end{align}

First, we consider the case when $k \le m-4$, so that $\theta_k = \theta/2^k$. 
We start with $(i)$.
It suffices to consider the case  when
$m-7  \le k \le m -4$ 
since the remaining case is simpler thanks to \eqref{eq7.1.3} - \eqref{eq7.1.7}.
 By the estimate \eqref{eqG.4.1} in Lemma \ref{lemma G.4} with $r = 2$ in  and  $L_{\infty}^{t, x} - L_2^{t, x} - L_2^{t, x}$ H\"older's inequality,  the integral on the left-hand side of \eqref{eq7.15} is bounded by 
\begin{align*}
  &  N (\theta)  \mathcal{J}_{ k }^1   \big(1_{k_1 \le m - 8, k_2 \ge 1 } \mathcal{J}_{ k_2 }^1 \mathcal{J}_{ k_1 }^2  
 + 1_{ m- 7 \,   \le k_1,  k_2 \le 3   } \mathcal{J}_{k_2}^3 \mathcal{J}_{k_1}^4\big),  \\
  & \mathcal{J}_{l}^1 = \|\partial_t^{l} f\|_{ L_2 ((0, \tau) \times \Omega) W^1_{2, \theta/2^k} (\bR^3) },  \\
  &  \mathcal{J}_{l}^2 = \|\partial_t^{l} g\|_{  L_{\infty} ((0, \tau) \times \Omega) W^1_{2} (\bR^3)},   \\
  & \mathcal{J}_{l}^3 =  \| \partial_t^{l} f\|_{  L_{\infty} ((0, \tau) \times \Omega) W^1_{2, \theta/2^k} (\bR^3)  }, \\
  &  \mathcal{J}_{l}^4 =  \|\partial_t^{l} g\|_{ L_{2} ((0, \tau) \times \Omega) W^1_{2}  (\bR^3) }. 
 \end{align*}
 By the definition of  $\cD_f$ in \eqref{eq5.10.16},  for $l \le m-4$,
 $$
    \mathcal{J}^1_{l} \le  \sqrt{\cD_f (T)} ,
$$ 
 and  similarly, by the smallness assumption on $y_g (T)$ (see \eqref{eq5.2.4.2}), we get for $l \le m$
\begin{align*}
 \mathcal{J}_{l}^4  \le    \sqrt{\cD_{ g} (T)}   \le \sqrt{\varepsilon_0}.
\end{align*}
 Next,   due to the bound \eqref{eq7.1.3} in   Lemma \ref{lemma 7.1}, we have
 \begin{align*}
     &   1_{ k_1 \le m - 8   } \mathcal{J}_{k_1}^2  \le  N \sqrt{\varepsilon_0}. 
\end{align*}
Furthermore, observe that for $k_2 \le 3$ and $k \ge m-7$, one has  $k_2 + 9 \le 12  < m-7   \le k$ (recall that $m \ge  20$), so that 
$1_{k_2 \le  3, k \ge m -  7 \,  } \,  \theta/2^k  <   \theta/2^{k_2 + 9  }.$
By  this and   the the first inequality in \eqref{eq7.1.3}, we conclude  
\begin{align*}
    1_{k_2 \le  3, k \ge m-7  } \mathcal{J}_{k_2}^3 \le   \| \partial_t^{k_2} f\|_{ L_{\infty} ((0, T) \times \Omega) W^{1}_{2, \theta/2^{  k_2 + 9   } } (\bR^3) } \le  N \sqrt{\cH_f (T)}.
  \end{align*}
Combining the above estimates,  we obtain \eqref{eq7.15}.

The assertion $(ii)$ is proved in a similar way.
We note that in the case when $k \ge m-3$, we have $\theta_k = 0$, and the same argument gives the desired bounds \eqref{eq7.15}-\eqref{eq7.16}.

Finally, gathering the estimates  \eqref{eq7.11} - \eqref{eq7.14} and \eqref{eq7.17} and summing up over $k$, 
and invoking definitions of $\cE_f$ and $\cD_f$ in \eqref{eq5.10.15} and \eqref{eq5.10.16}, respectively, we obtain
\begin{align}
 \label{eq7.22}   
 & \sup_{\tau \le T} \cE_f (\tau) + \int_0^{T} \cD_f (\tau) \, d\tau    \\
 &
  = \sum_{k=0}^m \bigg(\|\partial_t^k f\|^2_{ L_{\infty} ((0, T)) L_{2 } (\Omega \times \bR^3) }  +  \|\partial_t^k f\|^2_{ L_2 ((0, T) \times \Omega) W^1_2 (\bR^3)}  + \|\partial_t^k [\bE_f, \bB_f]\|^2_{ L_{\infty} ((0, T)) L_2 (\Omega)   }\bigg) \notag \\
&   +  \sum_{k=0}^{m-4}  \bigg(\|\partial_t^k f\|^2_{ L_{\infty} ((0, T)) L_{2, \theta/2^k} (\Omega \times \bR^3)  }  
  +  \|\partial_t^k f\|^2_{ L_2 ((0, T) \times \Omega) W^1_{2, \theta/2^k}  (\bR^3)  }\bigg)  \notag \\
 &   \le  N  \sum_{k=0}^m \bigg(\|f_{0, k}\|^2_{ L_{2} (\Omega \times \bR^3) }  
 +    \|[\bE_{0, k}, \bB_{0, k}]\|^2_{ L_{2} (\Omega) } +  \|\partial_t^k f\|^2_{ L_2 (\Sigma^T) } 
 +  \|\partial_t^k \bE_f\|^2_{ L_2 ((0, T) \times \Omega) }\bigg)\notag  \\
 & +   N \sum_{k=0}^{m-4}    \|f_{0, k}\|^2_{ L_{2, \theta/2^k} (\Omega \times \bR^3) } 
    + N \sqrt{\varepsilon_0} \, y_f (T).\notag
\end{align}

\textbf{Step  2: closing the estimate of $y_f$.}
 Finally, combining  \eqref{eq7.22} with \eqref{eq7.96}, we obtain
 \begin{align}
 \label{eq7.60}
& y_f (T) \le N \sqrt{\varepsilon_0} y_f (T)  +  N \sum_{k=0}^m \bigg(\|f_{0, k}\|^2_{ L_{2 } (\Omega \times \bR^3) }  +   \|[\bE_{0, k},  \bB_{0, k}]\|^2_{ L_2 (\Omega) }\bigg) \\
&   + N \sum_{k=0}^{m-4} \|f_{0, k}\|^2_{  L_{2, \theta/2^k} (\Omega \times \bR^3) } 
+ N \sum_{k=0}^m \bigg(\|\partial_t^k f\|^2_{ L_2 (\Sigma^T) }  
 + \|\partial_t^k \bE_f\|^2_{ L_2 ((0, T) \times \Omega) } \bigg)\notag \\
  & 
    \le   N (\sqrt{\varepsilon_0} + T) y_f (T)  + N \varepsilon_0/M. \notag 
 \end{align}
By  choosing $\varepsilon_0 < (4N)^{-2}$, $T < (4N)^{-1}$, $M > 4N$, and  using the smallness assumption   on $[f_{0, k}, \bE_{0, k}, \bB_{0, k}]$ in \eqref{eq5.1.1}, we obtain the desired estimate $y_f (T) < \varepsilon_0$.

\section{Proof of Theorem \ref{theorem 5.1}}
                    \label{section 8}
We first state an auxiliary result that is useful for establishing both the existence and the uniqueness of the solution to the RVML system. 
See also the proof of Lemma 8.2 in \cite{KGH_20}.

\begin{lemma}
            \label{lemma 8.1}
Invoke the assumptions of Theorem \ref{theorem 5.1}  and let  $\varepsilon_0$, $\theta$,  $M$,  and $T$ be the constants introduced in the statements of that theorem (see  \eqref{5.1.0}). Furthermore, 
let $[g^{(j)}, \bE_{  g^{(j)} }, \bB_{  g^{(j)} }], j = 1, 2,$ be functions satisfying the  \eqref{eq5.2.12.1} - \eqref{eq5.2.4.2} and 
     $[f^{(j)}, \bE_{ f^{(j)} }, \bB_{ f^{(j)} }], j = 1, 2,$  be two strong solutions to the linear RVML system \eqref{eq5.2} - \eqref{eq5.5} with $[g, \bE_g, \bB_g]$ replaced with $[g^{(j)}, \bE_{ g^{(j)} }, \bB_{ g^{(j)} }], j = 1, 2,$ such that $[f^{(j)}, \bE_{ f^{(j)} }, \bB_{ f^{(j)} }], j = 1, 2,$ satisfy the conditions analogous to $(i) - (iv)$ in Theorem \ref{theorem 5.1}.  
We also denote $f^{1, 2} = f^{(1)} - f^{(2)}$, $\bE_{f}^{ 1, 2}= \bE_{f^{(1)}} - \bE_{f^{(2)}}$ and define $\bB_f^{1, 2}$,  $g^{1, 2}, \bE_{g}^{ 1, 2}, \bB_{g}^{1, 2}$ in the same way.
Then, we have 

\begin{align}     
            \label{eq8.1.0}    
   & \|\partial_t^k  f^{1, 2} \|^2_{ L_{\infty} ((0, T)) L_2 (\Omega \times \bR^3) } 
   + \|\partial_t^k f^{1, 2}\|^2_{ L_{   2    } ((0, T) \times \Omega) W^1_2 (\bR^3) }\\
   &+ \|\partial_t^k [\bE_{f }^{1, 2}, \bB_{f }^{1, 2}]\|^2_{   L_{\infty} ((0, T))  L_2 (\Omega)  } 
   \le \frac{1}{2} \big(\|\partial_t^k  g^{1, 2} \|^2_{ L_{\infty} ((0, T)) L_2 (\Omega \times \bR^3) } \notag \\
   &+     \| \partial_t^k g^{1, 2}\|^2_{   L_2 ((0, T) \times \Omega) W^1_2 (\bR^3)   } 
   + \|\partial_t^k [\bE_{g }^{1, 2}, \bB_{g }^{1, 2}]\|^2_{  
     L_{\infty} ((0, T))  L_2 (\Omega)   }\big). \notag
  \end{align}
\end{lemma}

\begin{proof}[Proof of Lemma \ref{lemma 8.1} ]
    We inspect the argument we used to establish the energy-dissipation bound \eqref{eq7.22}. In particular, we write down the equation satisfied by $\partial_t^k f^{1, 2}$ and use a variant of the energy identity in \eqref{eq27.4.12}.
    The `quadratic' terms in the energy identity are estimated in the same way as in \eqref{eq7.12}-\eqref{eq7.14}.
    On the other hand, we need to slightly modify the estimates of the `cubic' terms. For the sake of clarity, we focus on the integral 
    \begin{align*}
    & I_{4} = \int_{ \Sigma^{\tau} } \langle\partial_t^k \big(\Gamma (f^{(1)}, g^{(1)}) - \Gamma (f^{(2)}, g^{(2)})\big),    \partial_t^k f^{1, 2} \rangle \,  dz   \\
    & = \underbrace{\int_{ \Sigma^{\tau} } \langle\partial_t^k \big(\Gamma (f^{1, 2}, g^{(1)})\big),    \partial_t^k f^{1, 2} \rangle}_{ = I_{4, 1} } \,  dz 
    + \underbrace{\int_{ \Sigma^{\tau} } \langle\partial_t^k \big(\Gamma (f^{(2)}, g^{1, 2})\big),    \partial_t^k f^{1, 2} \rangle \,  dz}_{ = I_{4, 2} }.
    \end{align*}
Inspecting the proof of \eqref{eq7.15}, using the bounds  $y_{g^{(1)}} < \varepsilon_0$, $y_{f^{(2)}} < \varepsilon_0$ combined with Lemma \ref{lemma 7.1}, and employing the Cauchy-Schwarz inequality, we conclude
\begin{align*}
   & I_{4, 1} \le N \sqrt{ \varepsilon_0} \sum_{k=0}^m \bigg( \|\partial_t^k f^{1, 2}\|^2_{ L_{\infty} ((0, T)) L_{2 } (\Omega \times \bR^3) }  +  \|\partial_t^k f^{1, 2}\|^2_{ L_2 ((0, T) \times \Omega) W^1_2 (\bR^3)}\bigg), \\
   & I_{4, 2} \le N  \varepsilon_0 \sum_{k=0}^m  \bigg( \|\partial_t^k g^{1, 2}\|^2_{ L_{\infty} ((0, T)) L_{2 } (\Omega \times \bR^3) }  +  \|\partial_t^k g^{1, 2}\|^2_{ L_2 ((0, T) \times \Omega) W^1_2 (\bR^3)}\bigg)  \\
   &
   + \frac{1}{2}  \|\partial_t^k f^{1, 2}\|^2_{ L_2 ((0, T) \times \Omega) W^1_2 (\bR^3)},
\end{align*}
where $N  = N (r_1, \ldots, r_4, \theta, \Omega, \alpha, m)$. For the closure, one needs to take $\varepsilon_0$ and $T$ sufficiently small. 
\end{proof}

\begin{proof}[Proof of Theorem \ref{theorem 5.1}] We note that the uniqueness follows directly from the above lemma. 

The existence is proved by passing to the limit in the iteration scheme \eqref{eq5.1}-\eqref{eq5.1.0.1}. Since the argument is standard (see, for example, \cite{KGH_20}), we will not present it here but point out major steps. 
\begin{enumerate}
\item By  Proposition \ref{proposition 7.2}, the sequence $[f^n, \bE^n, \bB^n], n \ge 1,$ is well-defined, and 
by \eqref{eq5.2.4.1} in  Proposition \ref{proposition 5.2}, one has $y_{f_n} (T) < \varepsilon_{ 0 }$ for each $n$. By \eqref{eq8.1.0} in Lemma \ref{lemma 8.1}, the sequence $\partial_t^k f^n, k \le m,$ is Cauchy in 
$L_{\infty}^t L_2^{x, p} \cap  L_2^{t, x} W^1_2 (\bR^3)$, and $[\bE^n, \bB^n], n \ge 1,$ is a Cauchy sequence in the space $L_{\infty}^t L_2^x$, and hence, $[f^n, \bE^n, \bB^n]$ converge to some  $[f, \bE, \bB]$. In addition, using \eqref{eq8.1.0} again, we conclude that all the temporal derivatives up to order $m$ also converge.
\item By using Green's identity \eqref{eqC.5.1}, we write down the weak formulation of the system with a test function $\phi$ satisfying \eqref{eq27.2.1} - \eqref{eq27.2.3}.
 Due to the uniform in $n$ estimates in Proposition \ref{proposition 5.2} and the fact that $f_n$ converges to $f$, we may pass to the limit in the weak formulation. In particular, one needs to use Lemma \ref{lemma 8.1} to pass to the limit in the integrals involving the Lorentz and collisional terms.

    \item  Due to the convergence in  $(1)$, $\partial_t^k f  \in  L_{\infty} ((0, T)) L_2 (\Omega \times \bR^3) \cap L_2 ((0, T) \times \Omega) W^1_2 (\bR^3), k \le m,$ is an intermediate finite energy solution (see Definition \ref{definition 27.6}  in the proof of Proposition \ref{proposition 27.4})  to Eq. \eqref{eq5.0} formally  differentiated $k$ times in $t$ with the SRBC and $\partial_t^k f (0, \cdot) \equiv f_{0, k} (\cdot)$.
  We point out that in the proof of the aforementioned proposition, we showed that any intermediate finite energy solution is a finite energy solution in the sense of Definition \ref{definition 27.1}, and hence, $\partial_t^k f \in C ([0, T]) L_2 (\Omega \times \bR^3), k \le m$, as desired.
Furthermore, since $\partial_t^k f \in S_2 (\Sigma^T), k \le m-5$, by Remark \ref{remark 27.8}, we conclude that $\partial_t^k f,  k \le m$
is a strong solution to the $k$ times differentiated Landau equation.
    
    \item By using a limiting argument, we conclude that $\partial_t^k [\bE_f, \bB_f] \in C ([0, T]) L_2 (\Omega) \cap L_{\infty} ((0, T)) W^1_2 (\Omega), k \le m-1,$  is a  strong solution to Maxwell's equations \eqref{eq5.0.1}-\eqref{eq5.0.2} with the perfect conductor BC, initial data $[\bE_{0, k}, \bB_{0, k}]$, whereas $\partial_t^m [\bE_f, \bB_f] \in  L_{\infty} ((0, T)) L_2 (\Omega)$ is a weak solution.  In addition, the identities in \eqref{eq5.0.3} formally differentiated $k$ times in $t$ are valid.  The fact that  $\partial_t^m [\bE_f, \bB_f] \in C ([0, T]) L_2 (\Omega)$   can be proved by a mollification argument as in the proof of Theorem 4.1 in Chapter $(vii)$ in \cite{DL_76}. 
\end{enumerate}
\end{proof}

\textbf{Acknowledgement.} The authors thank the anonymous referees for their interest in the present work and express sincere gratitude to the first referee for identifying several typos.

\appendix

\section{}

\label{Appendix A}

\begin{lemma}
		\label{lemma A.6}
		Let $\Psi: \Omega_{r_0} (x_0) \times \bR^3 \to \mathbb{H}_{-}$ be a local diffeomorphism given by \eqref{eq2.55} -  \eqref{eq2.56}.   
		Then, the following assertions hold.

$(i)$		  For 
$$
C (y) =  \bigg(\frac{\partial x}{\partial y}\bigg)^T \bigg(\frac{\partial x}{\partial y}\bigg),
$$
one has
\begin{equation}
            \label{eqA.6.2}
    C^{i 3} (y) = 0, \, \, i \in \{1, 2\}, \quad \text{if} \, \, y_3 = 0.
\end{equation}

$(ii)$ For any $y \in \psi (\Omega_{r_0}) \cap \{y_3 = 0\}$ and any $w$,
\begin{equation}
			\label{eqA.6.3}
	\bigg|\bigg(\frac{\partial x}{\partial y}\bigg) w\bigg| =  \bigg|\bigg(\frac{\partial x}{\partial y}\bigg) \bm{R} w\bigg|.
\end{equation}

$(iii)$
Let $u$ be a function on $\overline{\Omega_{r_0} (x_0)} \times \bR^3$ satisfying 
\begin{equation}
        \label{eqA.6.1}
	u (x, p) = u (x,  R_x p), (x, p) \in \gamma_{-}
\end{equation}
and denote
\begin{align*}
&	\mathsf{U} (x, p) = \int_{\bR^3} \Phi (P, Q) u (x, q) \, dq, \quad \mathsf{\widehat U} (y, w) = \mathsf{U} (x (y), p (y, w)),\\
&
	\mathfrak{U} (y, w)  = \bigg(\frac{\partial y}{\partial x}\bigg) \mathsf{ \widehat  U} (y, w) \bigg(\frac{\partial y}{\partial x}\bigg)^T.
\end{align*}
Then, one has
\begin{equation}
        \label{eqA.6.6}
	\mathfrak{U}^{i 3} (y, w)  = - \mathfrak{U}^{i 3} (y,  \bm{R} w), \,  i \in  \{1, 2\}, \quad \text{if} \, \, y_3 = 0.
\end{equation}
\end{lemma}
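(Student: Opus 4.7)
The plan is to handle the three assertions in order, exploiting the explicit form of $\psi^{-1}$ and the tensor structure of the Belyaev--Budker kernel.

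For $(i)$, I would first compute $\bigl(\tfrac{\partial x}{\partial y}\bigr)|_{y_3=0}$ directly from the explicit formula for $\psi^{-1}$. Because the correction $y_3(-\rho_1^{(y_3)},-\rho_2^{(y_3)},1)^T$ carries a factor of $y_3$, it drops out at $y_3=0$, and differentiation also kills the $y_3$-dependence of $\rho_i^{(y_3)}$ there, so the Jacobi matrix on $\{y_3=0\}$ has columns $(1,0,\rho_1)^T$, $(0,1,\rho_2)^T$, $(-\rho_1,-\rho_2,1)^T$. A one-line inner-product computation then shows that the first two columns are Euclidean-orthogonal to the third, which is exactly $C^{i3}|_{y_3=0}=0$ for $i\in\{1,2\}$.

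For $(ii)$, the identity is most cleanly obtained as a corollary of $(i)$. Writing $A=\bigl(\tfrac{\partial x}{\partial y}\bigr)|_{y_3=0}$, one has $|Aw|^2=w^T C w$ and $|A\bm{R}w|^2=w^T(\bm{R}C\bm{R})w$. Since $\bm{R}=\mathrm{diag}(1,1,-1)$, the matrix $\bm{R}C\bm{R}$ differs from $C$ only in the $(i,3)$ and $(3,i)$ entries (by a sign), so $(i)$ gives $\bm{R}C\bm{R}=C$ on $\{y_3=0\}$ and the claim follows. (Alternatively, one may just invoke the identity $A\bm{R}w=p-2(p\cdot n_x)n_x$ with $p=Aw$, already recorded in the excerpt, which exhibits $A\bm{R}w$ as the specular reflection of $Aw$ and hence of equal Euclidean length.)

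The main obstacle is $(iii)$, whose proof rests on a tensor transformation law for $\Phi$ under $R_x$ together with the SRBC on $u$. First I would establish that $\Phi(R_xP,R_xQ)=R_x\Phi(P,Q)R_x$: since $R_x$ is an orthogonal involution, $(R_xp)\cdot(R_xq)=p\cdot q$, so $\Lambda$ and the scalar parts of $S$ are $R_x$-invariant, while the outer products $(p-q)\otimes(p-q)$ and $p\otimes q+q\otimes p$ transform as $M\mapsto R_xMR_x$, giving the claim. Next, in $\mathsf{U}(x,R_xp)$ I would make the change of variables $q\mapsto R_x q$ (Jacobian $1$) and apply the boundary condition \eqref{eqA.6.1} to remove the reflection from $u$; the outcome is
\begin{equation*}
\mathsf{U}(x,R_xp)=R_x\,\mathsf{U}(x,p)\,R_x,\qquad x\in\partial\Omega.
\end{equation*}
Using the identity $A\bm{R}w=R_x(Aw)$ recalled in the excerpt, this yields $\mathsf{\widehat U}(y,\bm{R}w)=R_x\,\mathsf{\widehat U}(y,w)\,R_x$ when $y_3=0$.

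Finally, I would pass to $\mathfrak{U}=B\,\mathsf{\widehat U}\,B^T$ with $B=\bigl(\tfrac{\partial y}{\partial x}\bigr)|_{y_3=0}=A^{-1}$. From $A\bm{R}=R_xA$ (the same identity, read as a matrix equation) one obtains the conjugation rule $BR_x=\bm{R}B$, and taking transposes gives $R_xB^T=B^T\bm{R}$. Substituting produces
\begin{equation*}
\mathfrak{U}(y,\bm{R}w)=\bm{R}\,\mathfrak{U}(y,w)\,\bm{R}\qquad\text{on }\{y_3=0\},
\end{equation*}
and reading off the $(i,3)$ entry for $i\in\{1,2\}$, where $\bm{R}^{ii}=1$ and $\bm{R}^{33}=-1$, yields \eqref{eqA.6.6}. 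The delicate point throughout is bookkeeping between the ambient reflection $R_x$ (defined via $n_x$) and the coordinate reflection $\bm{R}$ in the flattened variables; the identity $A\bm{R}=R_xA$ is what ties the two together and makes the argument go through.
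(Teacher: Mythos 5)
Your argument is correct and proves all three parts, but your route through $(iii)$ is genuinely different from the paper's. For $(i)$ and $(ii)$ you essentially recover the paper's computation (the explicit Jacobi matrix and the observation $\bm{R}C\bm{R}=C$ on $\{y_3=0\}$), just packaged in terms of column orthogonality; that is the same content. For $(iii)$, however, the paper never isolates the covariance law $\Phi(R_xP,R_xQ)=R_x\Phi(P,Q)R_x$: instead it first changes variables $q=(\partial x/\partial y)w'$ so that the whole problem lives in flattened momentum coordinates, forms the pulled-back kernel $\Xi(y,w,w')=(\partial y/\partial x)\Phi(\widehat P,\widehat Q)(\partial y/\partial x)^T$, performs a second change $w'\to\bm R w'$, and then verifies $\Xi^{i3}(y,w,w')=-\Xi^{i3}(y,\bm R w,\bm R w')$ by directly dissecting $\Lambda$ and the three terms of $S$ using the block form of $C$ from part $(i)$. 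Your proof organizes the same algebra more conceptually: you first establish the ambient-frame covariance of $\Phi$ under the orthogonal involution $R_x$ and the resulting relation $\mathsf{U}(x,R_xp)=R_x\mathsf{U}(x,p)R_x$ on $\partial\Omega$, then transport it to flattened coordinates through the conjugation rule $BR_x=\bm R B$ (equivalently $A\bm R=R_xA$, which is the same identity the paper records when showing $\Psi$ preserves the SRBC), and read off the $(i,3)$ entries from $\mathfrak{U}(y,\bm R w)=\bm R\,\mathfrak{U}(y,w)\,\bm R$. What this buys you is a cleaner separation of the two ingredients — the $O(3)$-covariance of the Belyaev--Budker tensor and the geometric identification of $R_x$ with $\bm R$ at the boundary — so the sign flip in $\mathfrak{U}^{i3}$ emerges from one conjugation rather than a term-by-term check of $S$. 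The paper's version, by contrast, never needs the abstract covariance statement and works entirely with the pulled-back kernel $\Xi$, which is a bit more computational but keeps all objects in the coordinates actually used in Step 3 of the proof of Lemma \ref{lemma 7.4}. Both proofs ultimately rest on the same facts from part $(i)$; the difference is whether the reflection symmetry of $\Phi$ is extracted once as a lemma or verified in situ.
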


\begin{proof}
$(i)$ We assume that $\rho^{(\varepsilon)}$ is a mollification of $\rho$ with a standard mollifier $\iota$ and let $\rho^{(\varepsilon), j}$ be the mollification of $\rho$ with the mollifier $(-y_j \partial_{y_j} \iota), j = 1, 2$. The assertion follows from the identities
\begin{align}
\label{eqA.6.0}
&
\bigg(\frac{\partial x}{\partial y}\bigg) = 	 \begin{pmatrix}
						 1 - y_3 \rho_{1 1}^{(y_3) }&  -y_3 \rho_{1 2}^{ (y_3) }& - \rho_1^{ (y_3)  }
       + y_3  (\rho_{11}^{ (y_3), 1 } + \rho_{12}^{ (y_3), 2 }) \\
						-y_3 \rho_{1 2}^{(y_3) }  &  1 - y_3 \rho_{2 2}^{ (y_3) } & - \rho_2^{(y_3)} +  y_3  (\rho_{12}^{ (y_3), 1 }  + \rho_{22}^{ (y_3), 2 })   \\
						\rho_1	&		\rho_2		& 1\\
						\end{pmatrix},  \\
&	C (y_1, y_2, 0)  						= \begin{pmatrix}
						 1  &  0&  \rho_1\\
						0  &  1 &  \rho_2\\
						-\rho_1	&		-\rho_2		& 1\\
						\end{pmatrix}
						 \begin{pmatrix}
						 1  &  0& - \rho_1\\
						0  &  1 & - \rho_2\\
						\rho_1	&		\rho_2		& 1\\
						\end{pmatrix} \notag\\
						&
						= \begin{pmatrix}
						 1   + \rho_1^2&  \rho_1 \rho_2 &  0\\
						\rho_1 \rho_2  & 1+ \rho^2_{2} &  0\\
						0	&		0		&1+\rho_1^2+\rho_2^2
						\end{pmatrix},\notag
\end{align}
where $\rho_{i j} = \partial_{y_i y_j} \rho$.

$(ii)$ The desired identity  follows from the equality
$$
	\bigg|\bigg(\frac{\partial x}{\partial y}\bigg) \bm{R} w\bigg|^2 = C^{i j} (y) (\bm{R} w)_i  (\bm{R} w)_j
$$
and \eqref{eqA.6.2}.

$(iii)$ We  denote $q_0 = (1+|q|^2)^{1/2}$, 
\begin{align*}
&	w' = \bigg(\frac{\partial y}{\partial x}\bigg) q, \, \, 	\widehat p_0  = p_0 (x (y),  p (y, w)), \, \, \widehat q_0  = q_0 (x (y),  q (y, w')),\\
&
    \widehat P (y, w) = (\widehat p_0, p (y, w)), \, \, \widehat Q (y, w') = (\widehat q_0, q (y, w)).
\end{align*}

Furthermore, changing variables $q = \big(\frac{\partial x}{\partial y}\big) w'$ gives
\begin{align}
&		\mathsf{\widehat U} (y, w)  = \int_{\bR^3} \Phi (\widehat P,  Q) u (x (y), q) \, dq \notag\\
& \label{eqA.6.11}
= \underbrace{\bigg|\text{det}  \bigg(\frac{\partial x}{\partial y}\bigg)\bigg|}_{ = \mathcal{J} } \int_{\bR^3} \Phi (\widehat P, \widehat Q) \widehat u (y, w') \, dw', \\
\label{eqA.6.11.1}
&		\mathfrak{ U} (y, w)  = \mathcal{J}    \int_{\bR^3} \Xi (y, w, w')    \widehat u (y, w') \, dw',
 \end{align}
where $\widehat u (y, w') = u (x (y), q (y, w'))$ (cf. \eqref{eq2.0.0}), and
\begin{equation}
                \label{eqA.6.4}
    	\Xi (y, w, w') =  \bigg(\frac{\partial y}{\partial x}\bigg) 
	\Phi (\widehat P, \widehat Q)   \bigg(\frac{\partial y}{\partial x}\bigg)^T.
\end{equation}
Furthermore, by the change of variables $w' \to \bm{R} w'$, 
\begin{equation}
            \label{eqA.6.12}
\mathfrak{ U} (y, \bm{R} w)  	= \mathcal{J}  \int_{\bR^3} \Xi (y, \bm{R} w, \bm{R} w')    \widehat u (y, \bm{R} w') \, dw'.
\end{equation}
Since $u$ satisfies the SRBC (see \eqref{eqA.6.1}), we have 
\begin{equation}
            \label{eqA.6.13}
	\widehat u (y, \bm{R} w) = \widehat u (y, w),  \, \,   \quad \text{if} \, \, y_3 = 0,
\end{equation}
Thus, due to \eqref{eqA.6.11} - \eqref{eqA.6.13}, to prove \eqref{eqA.6.6},  
it suffices to demonstrate that
\begin{equation}
                \label{eqA.6.5}
    \Xi^{i 3} (y, w, w') = - \Xi^{i 3} (y, \bm{R} w, \bm{R} w'), \,  i \in \{1, 2\}, \, \,   \text{whenever} \, \, y_3 = 0.
\end{equation}

\textbf{Verification of \eqref{eqA.6.5}.}
First, by the definition of $\Phi$ in \eqref{eq1.8} - \eqref{eq1.11}, 
\begin{align}
    \label{eqA.6.14}
    \Xi = \frac{\Lambda (\widehat P, \widehat Q)}{\widehat p_0 \widehat q_0}   \bigg(\frac{\partial y}{\partial x}\bigg)  S (\widehat P, \widehat Q) \bigg(\frac{\partial y}{\partial x}\bigg)^T.
\end{align}
We will need the following identities:
\begin{align}
\label{eqA.6.7}
     & 	\widehat p_0   =  \bigg(1 + \bigg|\bigg(\frac{\partial x}{\partial y}\bigg) w\bigg|^2\bigg)^{1/2}, \quad \widehat q= \bigg(1 + \bigg|\bigg(\frac{\partial x}{\partial y}\bigg) w'\bigg|^2\bigg)^{1/2},\\
\label{eqA.6.8.1}
&
	\widehat P \cdot \widehat Q  =    \widehat p_0 \widehat q_0 -  w^T \bigg(\frac{\partial x}{\partial y}\bigg)^T \bigg(\frac{\partial x}{\partial y}\bigg)  w',\\
&
	p (y, w) \otimes q (y, w') = \bigg(\frac{\partial x}{\partial y}\bigg) w (w')^T  \bigg(\frac{\partial x}{\partial y}\bigg)^T,\notag\\
	&
\bigg(\frac{\partial y}{\partial x}\bigg)	p (y, w) \otimes q (y, w') \bigg(\frac{\partial y}{\partial x}\bigg)^T
=  w (w')^T, \notag\\
  & \label{eqA.6.9}
  \bigg(\frac{\partial y}{\partial x}\bigg)  S (\widehat P, \widehat Q) \bigg(\frac{\partial y}{\partial x}\bigg)^T \\
	&=  \big((\widehat P\cdot \widehat Q)^2 - 1\big) \bigg(\frac{\partial y}{\partial x}\bigg)  \bigg(\frac{\partial y}{\partial x}\bigg)^T\notag\\
	& -  (w-w') \otimes (w-w')\notag\\
	& +  (\widehat P\cdot \widehat Q - 1)  (w \otimes w' + w' \otimes w)\notag.
\end{align}
We first handle the factor $\Lambda (\widehat P, \widehat Q)$ (see \eqref{eq1.8}). By \eqref{eqA.6.2} in the assertion $(i)$, we have
\begin{equation}
\label{eqA.6.8}
    f (y, w, w') = f (y, \bm{R} w, \bm{R} w'), \, \, \text{for} \, \, f = \widehat p_0, \widehat q_0, \widehat P \cdot \widehat Q,   \quad \text{if} \, \, y_3 = 0,
\end{equation}
and, hence, \eqref{eqA.6.8} is also true for $f = \Lambda (\widehat P, \widehat Q)$.

We consider the remaining factor on the r.h.s of \eqref{eqA.6.14}. By   \eqref{eqA.6.2} in the assertion $(i)$, we conclude that \eqref{eqA.6.5} holds with $\Xi$ replaced with the left-hand side of \eqref{eqA.6.9}. Thus, \eqref{eqA.6.5} holds, and the desired identity \eqref{eqA.6.6} is valid.
\end{proof}

\begin{lemma}
		\label{lemma A.3}
Let 
$M$ be a nondegenerate  $3$ by $3$ matrix,
and denote
\begin{align}
    \label{A.3.1}
      	\mathfrak{W} (w) = \frac{w}{(1+ |M w|^2)^{1/2}} =: v.
\end{align}
For the sake of convenience, we also denote $v = \mathfrak{W} (w)$.

Then, the following assertions hold.

$(i)$ 
\begin{equation}
            \label{A.3.0}
\underbrace{	| D^{ j } \mathfrak{W}|}_{ = D_w^j  v }  \le N (M) (1+|w|^2)^{-j/2}, j = 1, 2.
\end{equation}

$(ii)$ Let $m \ge 1$ be a  number.
Then, $\mathfrak{W}: \{|w| < m\} \to \bR^3$ is a diffeomorphism onto its image, and
\begin{align}
	\label{A.3.8}
  &\sup_{   \mathfrak{W} (\{|w| < m\}) }  |  \underbrace{ D^j \mathfrak{W}^{-1} }_{ = D_v^j w }|  < N  m^{2j+1}, j = 1, 2, 3. \\
\label{A.3.11}
& 
    \sup_{   \mathfrak{W} (\{|w| < m\}) } |\underbrace{ D \big((D \mathfrak{W}) \circ \mathfrak{W}^{-1}\big)}_{  = D_v \big( (D_w v) (w (v))\big)   }    | \le N m,
\end{align}
where $N  = N (|M|)$.
\end{lemma}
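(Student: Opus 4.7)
The plan is to prove both assertions by direct computation, exploiting the nondegeneracy of $M$ via the two-sided bound $c(M)(1+|w|) \le \phi(w) := (1+|Mw|^2)^{1/2} \le C(M)(1+|w|)$, where $c, C$ depend only on $|M|$ and $|M^{-1}|$.

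For assertion $(i)$, I would differentiate $\mathfrak{W}^k = w^k/\phi$ componentwise. The first derivatives are $\partial_{w_i}\mathfrak{W}^k = \delta_{ik}/\phi - w^k (M^T M w)_i/\phi^3$, and since $|w|/\phi \le N(M)$ and $|M^T M w|/\phi \le N(M)$, one immediately gets $|D\mathfrak{W}| \le N/\phi \le N(1+|w|^2)^{-1/2}$. The second derivative calculation is analogous: every further differentiation produces either a factor $1/\phi$ or a factor of the form $w^j/\phi^2$, both of which are comparable to $1/\phi$, giving $|D^2\mathfrak{W}| \le N/\phi^2 \le N(1+|w|^2)^{-1}$.

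For assertion $(ii)$, the key observation is that $\mathfrak{W}$ admits an explicit inverse. Squaring $v = w/\phi(w)$ and solving for $\phi$ yields $\phi = (1-|Mv|^2)^{-1/2}$ whenever $|Mv|<1$, so
\begin{equation*}
\mathfrak{W}^{-1}(v) = \frac{v}{(1-|Mv|^2)^{1/2}}.
\end{equation*}
On the image $\mathfrak{W}(\{|w|<m\})$ one has the identity $1 - |Mv|^2 = 1/(1+|Mw|^2)$, hence $1-|Mv|^2 \ge (1+|M|^2 m^2)^{-1} \ge c m^{-2}$; moreover $|Mv| < 1$ gives $|v| \le |M^{-1}|$ uniformly. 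Thus $\mathfrak{W}$ is a diffeomorphism onto its image. Differentiating $\mathfrak{W}^{-1}$ in $v$, each derivative introduces at most one extra factor of $(1-|Mv|^2)^{-1}$ (from differentiating powers of $1-|Mv|^2$); a bookkeeping argument shows $|D^j \mathfrak{W}^{-1}| \le N(1-|Mv|^2)^{-(2j+1)/2} \le N m^{2j+1}$ for $j = 1, 2, 3$, which is \eqref{A.3.8}.

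For the composite bound \eqref{A.3.11}, a naive estimate using $D_v[(D_w v)(w(v))] = -(D_v w)^{-1}(D_v^2 w)(D_v w)^{-1}$ together with $|D_v^2 w| \le Nm^5$ would give only $Nm^5$, so the point is to exploit the algebraic simplification that occurs after substitution. Direct substitution of $w(v) = v(1-|Mv|^2)^{-1/2}$ into $\partial_{w_i} v^k = \delta_{ik}/\phi - w^k (M^T M w)_i/\phi^3$ collapses the expression to
\begin{equation*}
(D_w v)(w(v)) = (1-|Mv|^2)^{1/2}\bigl[I - v \otimes M^T M v\bigr].
\end{equation*}
Differentiating this in $v$ produces one summand with factor $(1-|Mv|^2)^{-1/2} \le N m$ (from $\partial_v$ hitting the outer square root) times a bounded matrix, and a second summand with factor $(1-|Mv|^2)^{1/2} \le 1$, so the total is bounded by $Nm$. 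The main obstacle, then, is recognizing that the explicit algebraic cancellation above is needed to avoid losing powers of $m$; without it the bound would be off by a factor of $m^4$.
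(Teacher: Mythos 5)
Your proposal is correct and follows essentially the same route as the paper: explicit componentwise differentiation of $\mathfrak{W}$ using nondegeneracy of $M$ for $(i)$, the explicit inverse $w = v(1-|Mv|^2)^{-1/2}$ with the identity $1-|Mv|^2 = (1+|Mw|^2)^{-1}$ and the uniform bound $|v|\le |M^{-1}|$ for \eqref{A.3.8}, and for \eqref{A.3.11} the same algebraic collapse $(D_w v)\circ\mathfrak{W}^{-1} = (1-|Mv|^2)^{1/2}\bigl[I - v\otimes (M^TM v)\bigr]$ that the paper derives from its formulas (A.3.4), (A.3.7), and the inverse expression. The remark that the naive chain-rule/inverse-matrix estimate loses powers of $m$ and that the explicit cancellation is the crux of \eqref{A.3.11} is a fair reading of why the paper records that collapsed formula rather than estimating term by term.
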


\begin{proof}
$(i)$ Let $c_{i k}$ is the $i k$-th entry of the matrix $M^T M$. Then, by direct computations,
\begin{align}
\label{A.3.4}
	&\frac{\partial v_i}{\partial w_j}  = \frac{\delta_{i j}}{ (1+ |M w|^2)^{1/2} } - \frac{ c_{j l} w_l w_i }{(1 + |M w|^2)^{3/2}  }, \\
	&\frac{\partial^2 v_i}{\partial w_j \partial w_k}  = -\frac{\delta_{i j} c_{ k l }  w_l }{ (1+ |M w|^2)^{3/2} } -  \frac{ c_{j k}  w_i + \delta_{k i} c_{j l}  w_l }{(1 + |M w|^2)^{3/2}  }
	+ \frac{3  c_{j l} c_{k l'}  w_i w_l    w_{l'} }{ 2(1 + |M w|^2)^{5/2}  }. \notag
\end{align}
Combining the above identities with the fact that
$$
	|M w|^2 \ge N (M) |w|^2, 
$$
we prove the first assertion.

$(ii)$  Multiplying both sides of  \eqref{A.3.1} by $M$ gives
\begin{align}
			\label{A.3.7}
&	|M v|^2 =  \frac{|M w|^2}{1+ |M w|^2},  \quad 1 - |M v|^2 =  \frac{1}{1+ |M w|^2},\\
			\label{eqA.3.2}
&	|M w|^2 = \frac{|M v|^2}{1- |M v|^2}, \quad w = \frac{v}{(1- |M v|^2)^{1/2}}.
\end{align}
Note that $1 - |M v|^2$ is bounded away from $0$ on $\mathfrak{W} (\{|w| < m\})$, and hence $\mathfrak{W} : \{|w| < m\} \to \bR^3$ is a  diffeomorphism.

Next, differentiating the second identity in \eqref{eqA.3.2}, we get
\begin{align}
&\label{A.3.5}
	\frac{\partial w_i}{\partial v_j} = \frac{\delta_{i j}}{ (1- |M v|^2)^{1/2} } + \frac{ c_{j l} v_l v_i }{(1- |M v|^2)^{3/2}  },\\
&	\frac{\partial^2 w_i}{\partial v_j \partial v_k}  =  \frac{ P_3 (v) }{ (1 - |M v|^2)^{5/2}  }, \quad
\frac{\partial^3 w_i}{\partial v_j \partial v_k  \partial v_r} =    \frac{P_5 (v)}{ (1- |M v|^2)^{7/2} }, \notag
\end{align}
where $P_3 (v)$ and $P_5 (v)$ are certain polynomials of orders $3$ and $5$ with coefficients bounded by $N (|M|)$.
Then, by \eqref{A.3.7} and \eqref{A.3.5}, and the fact that
\begin{equation}
				\label{A.3.6}
	|v|  = |\mathfrak{W} (w)| \le N (M),
\end{equation}
 for $v \in  \mathfrak{W} (\{|w| < m\})$, we have
\begin{align*}
& \bigg|\frac{\partial w_i}{\partial v_j}\bigg|  \le  N  (1+ |M w|^2)^{3/2} 
	 \le  N  m^3, 
\end{align*}
where $N = N (M)$.
Similarly, we prove the estimates of the second and third-order derivatives.

Finally, to prove the bound \eqref{A.3.11}, we note that by \eqref{A.3.4} and \eqref{A.3.7} -\eqref{eqA.3.2}, 
$$
    \frac{\partial v_i}{\partial w_j}  = (1-|M v|^2)^{1/2} (\delta_{i j} - c_{j l} v_i v_j).
$$    
Differentiating the above expression and using \eqref{A.3.6}, we conclude 
$$
    \big|D_v \frac{\partial v_i}{\partial w_j}\big| \le N   (1-|M v|^2)^{-1/2} \le N m,
$$
so that \eqref{A.3.11} is true.
\end{proof}

\begin{lemma}
			\label{lemma A.5}
Let $n \ge 0$, $G \subset \bR^3$ be the even extension of $\psi (\Omega_{r_0} (x_0))$ across the plane $y_3 = 0$ (see Step 3 in the proof of Lemma \ref{lemma 7.4}).
Let $\mathcal{W}$ and   $\Upsilon_n$ be the mappings given by \eqref{eq2.11.3} and \eqref{eq2.52}, respectively.
Then, the following assertions hold.

$(i)$ The mapping $\Upsilon_n: G \times \{|w| < 2^{n+2}\}$ is  a bi-Lipschitz  homeomorphism onto its image, and
\begin{equation}
            \label{eqA.5.3}
	\Upsilon^{-1}_n (y, v) = (y,  \mathsf{W} (y, v)),
\end{equation}
where
$$
	\mathsf{W} (y, v) = 
\begin{cases}
 \frac{v}{ \big(1 -  \big|\mathsf{M} (y) v\big|^2\big)^{1/2}}, \, \, (y, v) \in \Upsilon_n \big(\psi (\Omega_{r_0} (x_0)) \times \{|w| < 2^{n+2}\}\big), \\   \frac{v}{ \big(1 -  \big|[\mathsf{M} (\bm{R} y)] \bm{R} v\big|^2\big)^{1/2}}, \,\,  (y, v) \in \Upsilon_n \big(G \cap \bR^3_{+} \times \{|w| < 2^{n+2}\}\big), 
\end{cases}
$$
and
$$
    \mathsf{M} (y) = \bigg(\frac{\partial x}{\partial y}\bigg) (y).
$$
Furthermore, for the sake of convenience, we denote
$$
    v = \mathcal{W} (y, w), \quad w = \mathsf{W} (y, v).
$$

$(ii)$ One has
\begin{align}
    	\label{A.5.0.1}
	& \|   \underbrace{  \nabla_y  \mathsf{W}}_{ = \displaystyle \frac{\partial w}{\partial y} }  \|_{  L_{\infty} \big((0, T) \times \Upsilon_n (G \times \{|w| < 2^{n+2}\})\big)   } \le N 2^{  n},	\\
			\label{A.5.0}
	&\|  \underbrace{  \nabla_y \nabla_w \mathcal{W} }_{ = \displaystyle \frac{\partial^2 v}{\partial y \partial w}  }  \|_{  L_{\infty} ((0, T) \times G \times \{|w| < 2^{n+2}\})   } \le N 2^{- n},\\
    	\label{A.5.0.0}
	& \||   \underbrace{ \nabla_y  \nabla_v  \mathsf{W} }_{ =  \displaystyle \frac{\partial^2 w}{\partial y \partial v} }|
 + |  \underbrace{ \nabla_v  \nabla_v  \mathsf{W} }_{ =  \displaystyle  \frac{\partial^2 w}{\partial v^2 } }    |\|_{  L_{\infty} \big((0, T) \times \Upsilon_n (G \times \{|w| < 2^{n+2}\})\big)   } \le N 2^{5 n},
	\end{align}
where  $N = N (\Omega)$.
\end{lemma}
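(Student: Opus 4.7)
The plan is to reduce both parts to a pointwise-in-$y$ analysis that leverages Lemma \ref{lemma A.3} together with the $C^{1,1}$ regularity of $\psi$. The latter guarantees that $\mathsf{M}(y) = (\partial x/\partial y)(y)$ and $\nabla_y \mathsf{M}$ are uniformly bounded on the closure of $\psi(\Omega_{r_0}(x_0))$, and hence on $G$ by the reflection definition of $\mathcal{W}$ and $\mathsf{W}$.

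For part $(i)$, I would observe that $\Upsilon_n$ preserves the $y$-coordinate, so it is a homeomorphism onto its image iff $\mathcal{W}(y, \cdot)$ is a homeomorphism of $\{|w| < 2^{n+2}\}$ onto its image for each fixed $y$. On $y \in \psi(\Omega_{r_0}(x_0))$, where $\mathcal{W}(y, w) = w/(1 + |\mathsf{M}(y) w|^2)^{1/2}$, this is exactly Lemma \ref{lemma A.3} $(ii)$ with $M = \mathsf{M}(y)$, and the inverse formula \eqref{eqA.3.2} produces the first branch of $\mathsf{W}(y, v)$. On $y \in G \cap \bR^3_+$, the same reasoning with $M = \mathsf{M}(\bm{R} y)\bm{R}$ gives the second branch. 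The global bi-Lipschitz bounds then reduce to the derivative bounds in $(ii)$, since $\mathsf{M}$ and $\mathsf{M}^{-1}$ are uniformly bounded on the closure of $\psi(\Omega_{r_0})$, with the $G \cap \bR^3_+$ estimates obtained by reflection.

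For part $(ii)$, I would differentiate the explicit formulas for $\mathcal{W}$ and $\mathsf{W}$ directly. For $\nabla_y \nabla_w \mathcal{W}$, starting from \eqref{A.3.4} and differentiating in $y$ produces rational expressions whose numerators contain at most $N|w|^4$ (factors of $|w|^2$ coming from $\partial_y |\mathsf{M}(y) w|^2 = w^T \partial_y(\mathsf{M}^T \mathsf{M}) w$) while the denominators are at least $(1 + |\mathsf{M}w|^2)^{3/2} \gtrsim (1 + |w|)^3$, yielding the $O((1+|w|)^{-1}) \le N 2^{-n}$ bound. For the bounds on $\nabla_y \mathsf{W}$, $\nabla_y \nabla_v \mathsf{W}$, and $\nabla_v \nabla_v \mathsf{W}$, I would differentiate $\mathsf{W}(y, v) = v(1 - |\mathsf{M}(y) v|^2)^{-1/2}$, using the key identity $(1 - |\mathsf{M}(y) v|^2)^{-1} = 1 + |\mathsf{M}(y) w|^2$ from \eqref{A.3.7} together with the uniform bound $|v| \le N$ on $\Upsilon_n(G \times \{|w| < 2^{n+2}\})$ (which follows from \eqref{A.3.6}) and the $C^{1,1}$ bounds on $\mathsf{M}$. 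In particular, the bound on $\nabla_v \nabla_v \mathsf{W}$ is the $j = 2$ case of \eqref{A.3.8}.

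The main technical obstacle is the careful bookkeeping of the rational expressions and the successive powers of $(1 + |\mathsf{M}w|^2) \sim 2^{2n}$ that appear after each $y$- or $v$-derivative, and tracking which factors of $|w|$ in the numerator cancel against factors of $(1 + |\mathsf{M}w|^2)^{1/2}$ in the denominator. Handling the extension across the plane $\{y_3 = 0\}$ is essentially free: since the extensions of $\mathcal{W}$ and $\mathsf{W}$ are defined by the reflection symmetry $(y, w) \mapsto (\bm{R} y, \bm{R} w)$, every pointwise bound on the $y \in \bR^3_-$ side transfers immediately to the $y \in \bR^3_+$ side.
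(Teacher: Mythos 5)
The core computation for part $(ii)$ — explicit differentiation of $\mathcal{W}$ and $\mathsf{W}$ using the formulas from Lemma \ref{lemma A.3} together with $|v| \le N$ and $(1-|Mv|^2)^{-1} = 1 + |Mw|^2$ — is the same as the paper's, and your derivation of \eqref{eqA.5.3} from \eqref{eqA.3.2} is correct. The genuine gap is your claim that handling the extension across $\{y_3 = 0\}$ is ``essentially free.'' It is not: the two branches of $\mathcal{W}$ are built from different matrices, namely $\mathsf{M}(y)$ on $y_3 < 0$ and $\mathsf{M}(\bm{R}y)\bm{R}$ on $y_3 > 0$, and for $\mathcal{W}$ to be even continuous at $y_3 = 0$ one needs $|\mathsf{M}(y)w| = |\mathsf{M}(y)\bm{R}w|$ there. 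That is exactly the identity \eqref{eqA.6.3} of Lemma \ref{lemma A.6} $(ii)$, which depends on the orthogonality structure \eqref{eqA.6.2} ($C^{i3}(y_1,y_2,0)=0$, $i\in\{1,2\}$) built into the specific flattening map $\psi$ — it does not follow from reflection symmetry alone. The paper makes this continuity check, together with the analogous check for $\partial v/\partial w$ and $\partial w/\partial v$ using \eqref{A.3.4}--\eqref{A.3.5}, the central step of the proof of $(i)$: without it, a jump at the interface would destroy global injectivity and the Lipschitz bound for $\Upsilon_n^{-1}$, even with uniform pointwise derivative bounds on each half-space. Your reduction of ``global bi-Lipschitz'' to ``pointwise derivative bounds on each side'' is therefore incomplete, and the slice-wise-homeomorphism-for-each-$y$ criterion you invoke is also not sufficient, since joint continuity in $(y,w)$ is what is at issue.

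There is also a small accounting slip in your $\nabla_y\nabla_w\mathcal{W}$ bound: the terms whose numerators are of degree $|w|^4$ carry denominator $(1+|Mw|^2)^{5/2}$, not $(1+|Mw|^2)^{3/2}$, so ``largest numerator over smallest denominator'' as you wrote it gives $|w|^4/(1+|w|)^3 = O(|w|) = O(2^n)$, the wrong sign of the exponent. The correct observation — which the paper's explicit formulas make plain — is that in each individual term the denominator degree exceeds the numerator degree by one, so each term is $O((1+|w|)^{-1}) \le N2^{-n}$.
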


\begin{proof}
$(i)$ First, note that \eqref{eqA.5.3} follows from  \eqref{eqA.3.2}.
We now show that $\Upsilon_n$ is bi-Lipschitz. Since $\Omega$ is a $C^{1, 1}$ domain, we only need to show that
the functions $\mathcal{W}, \mathsf{W}$ 
are continuous across the boundary $\{y_3 = 0\} \times \bR^3$. 
To this end, it suffices to demonstrate that
\begin{align*}
	\bigg|\bigg(\frac{\partial x}{\partial y}\bigg) w\bigg| =  \bigg|\bigg(\frac{\partial x}{\partial y}\bigg) \bm{R} w\bigg| \quad \text{whenever} \, \, y_3 = 0.
\end{align*}
The latter is true thanks to \eqref{eqA.6.3} in Lemma \ref{lemma A.6} $(ii)$.

$(ii)$ Invoke the notation of Lemma \ref{lemma A.3}. 
Let $M (y)$ be either 
$$
	\bigg(\frac{\partial x}{\partial y}\bigg) (y) \, \, \text{or} \, \,   \bigg(\big(\frac{\partial x}{\partial y}\big)(\bm{R} y)\bigg)  \bm{R},
$$
and $C (y) = (c^{i j}, i, j  = 1, 2, 3) := M^T M$, and 
$$
    v (y, w) = \frac{w}{(1+|M w|^2)^{1/2}}.
$$

First, we claim that the functions 
$$
    \bigg(\frac{\partial v }{\partial w}\bigg), \bigg(\frac{\partial w}{\partial v}\bigg)
$$
are continuous across the set $\{y_3 = 0\} \times \bR^3$.
This assertion follows from the explicit expressions of these functions (see \eqref{A.3.4} and \eqref{A.3.5}) and the identity \eqref{eqA.6.3}. Hence, we only need to prove \eqref{A.5.0} - \eqref{A.5.0.0}  away from $\{y_3 = 0\} \times \bR^3$.

Next, by \eqref{eqA.3.2} and \eqref{A.3.7}, whenever $y_3 \neq 0$, we have
$$
    \frac{\partial w_i}{\partial y_{r}} =   \frac{ (\partial_{y_r} c_{l l'}) v_l v_{l'} v_i }{(1-|M v|^2)^{1/2}}
     = (\partial_{y_r} c_{l l'}) v_l v_{l'} v_i (1+|M w|^2)^{1/2},
$$
and this  implies \eqref{A.5.0.1}.
Furthermore, by \eqref{A.3.4} and \eqref{A.3.5}, away from $\{y_3 = 0\}$, the following identities hold:  
\begin{align*}
		&
	\frac{\partial^2 v_i}{\partial w_j \partial y_{r}}  = (\partial_{y_r} c_{l l'}) w_{l} w_{l'} \bigg(-\frac{1}{2}\frac{\delta_{i j}  }{ (1+ |M w|^2)^{3/2} } + \frac{3}{2} \frac{ c_{j l} w_l w_i }{(1 + |M w|^2)^{5/2}  }\bigg),\notag\\
& \quad \quad  \quad \quad \quad -\frac{ (\partial_{y_r} c_{j l}) w_l w_i }{ (1+ |M w|^2)^{3/2} },\\
	& 
\frac{\partial^2 w_i}{\partial v_j \partial y_r} = (\partial_{y_r} c_{l l'} v_{l} v_{l'}) \bigg(\frac{1}{2} \frac{\delta_{i j}}{ (1- |M v|^2)^{3/2} } -  \frac{3}{2}  \frac{ c_{j k} v_k v_i }{(1- |M v|^2)^{5/2}  }\bigg)\\
&\quad \quad  \quad \quad \quad + \frac{ (\partial_{y_r} c_{j l}) v_l v_i }{(1- |M v|^2)^{3/2}  }.
\end{align*}
The first identity implies \eqref{A.5.0}.
Furthermore, the second identity combined with \eqref{A.3.7}, and \eqref{A.3.6} yield
$$
	\bigg|\frac{\partial^2 w_i}{\partial v_j \partial y_r}\bigg| 
	\le N  (1+|M w|^2)^{5/2} \le N (\Omega)  2^{5 n}.
$$
 The bound of 
$\nabla_v  \nabla_v  \mathsf{W}$ follows from \eqref{A.3.8} with $j=  2$.
The assertion $(ii)$ is proved.
\end{proof}

\begin{lemma}
			\label{lemma A.4}
Let $G$ be an  bounded domain and $\psi: G \to \bR^3$ be a diffeomorphism
such that
$$
	\|D \psi\|_{  C (G) } \le N_0, \quad	 \|D (\psi)^{-1}\|_{  C (\psi (G)) } \le N_1.
$$
Let $a$ be a bounded matrix-valued function such that
$$
	\delta_1 |\xi|^2 \le	a^{i j} (v) \xi_i \xi_j \le \delta_2 |\xi|^2, \,\forall v \in G, \xi \in \bR^3.
$$
Denote
$$
	\tilde a  =  (D \psi) (a \circ \psi^{-1}) (D \psi)^T.
$$
Then, $\tilde a$ satisfies 
\begin{equation}
                \label{eqA.4.1}
	\tilde \delta_1 |\xi|^2 \le 	\tilde a^{i j} (w) \xi_i \xi_j \le \tilde \delta_2 |\xi|^2, \, w \in \psi (G), \xi \in \bR^3
\end{equation}
with
$$
	\tilde \delta_1 =  c  \, \delta_1 N_1^{-2}, \quad  \tilde \delta_2 = c^{-1}  \delta_2 N_0^2,
$$
where $c \in (0, 1)$.
\end{lemma}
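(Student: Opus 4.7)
The plan is to reduce the quadratic form $\tilde{a}^{ij}(w)\xi_i\xi_j$ to a quadratic form for $a$ via a linear change of variable in the test vector, and then invoke the assumed ellipticity of $a$ together with the operator norm bounds on $D\psi$ and $D\psi^{-1}$.

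First, fix $w \in \psi(G)$ and let $v = \psi^{-1}(w) \in G$. For $\xi \in \bR^3$, introduce the auxiliary vector
\[
    \eta := (D\psi(v))^T \xi,
\]
so that by the definition of $\tilde{a}$, one has the key identity
\[
   \tilde{a}^{ij}(w)\xi_i\xi_j = a^{kl}(v)\,\eta_k \eta_l.
\]
This is the only structural step; everything else reduces to controlling $|\eta|$ in terms of $|\xi|$ and vice versa.

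For the upper bound, I would estimate $|\eta| \le \|D\psi(v)\|\,|\xi| \le N_0 |\xi|$ and combine with the assumed upper ellipticity of $a$ to get
\[
    \tilde{a}^{ij}(w)\xi_i\xi_j \le \delta_2 |\eta|^2 \le \delta_2 N_0^2 |\xi|^2,
\]
which yields $\tilde{\delta}_2 = \delta_2 N_0^2$ (up to the absolute constant $c^{-1}$ in the statement). For the lower bound, the key is to invert the relation between $\xi$ and $\eta$: differentiating the identity $\psi^{-1}\circ \psi = \mathrm{id}$ gives $(D\psi^{-1})(w) = (D\psi(v))^{-1}$, and hence
\[
    \xi = \bigl((D\psi(v))^T\bigr)^{-1}\eta = \bigl((D\psi^{-1})(w)\bigr)^T \eta,
\]
from which $|\xi| \le \|D\psi^{-1}\|\,|\eta| \le N_1 |\eta|$, i.e. $|\eta| \ge N_1^{-1}|\xi|$. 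Combining with the lower ellipticity of $a$ yields
\[
    \tilde{a}^{ij}(w)\xi_i\xi_j \ge \delta_1 |\eta|^2 \ge \delta_1 N_1^{-2}|\xi|^2,
\]
which gives $\tilde{\delta}_1 = \delta_1 N_1^{-2}$.

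Both bounds are pointwise in $w$ and hold uniformly on $\psi(G)$ by the hypotheses on $\|D\psi\|_{C(G)}$ and $\|D\psi^{-1}\|_{C(\psi(G))}$. There is no genuine obstacle here: the lemma is a linear-algebra identity dressed up as a change-of-variables statement, and the only care needed is the bookkeeping that keeps the transpose in the right place when relating $\xi$ and $\eta$. The absolute constant $c \in (0,1)$ in the statement is harmless slack, presumably inserted to accommodate later applications where the norms of $D\psi, D\psi^{-1}$ are only controlled up to a multiplicative factor.
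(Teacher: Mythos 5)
Your proof is correct and takes essentially the same route as the paper: write $\xi^T\tilde a\,\xi$ as $\eta^T a\,\eta$ with $\eta=(D\psi)^T\xi$, then bound $|\eta|$ above by $N_0|\xi|$ and below by $N_1^{-1}|\xi|$ using the operator-norm hypotheses. The only difference is that you spell out the inversion $(D\psi)^{-1}=D\psi^{-1}$ for the lower bound where the paper states it tersely.
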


\begin{proof}
To prove the lower bound, note that 
$$
	\xi^T \tilde a \xi  = ((D \psi)^T \xi) (a \circ \psi^{-1}) ((D \psi)^T \xi)^T \ge \delta_1 |(D \psi)^T \xi|^2 \ge  c \delta_1 N_1^{-2}.
$$
The upper bound follows from the same argument.
\end{proof}

\section{Auxiliary results about the relativistic Landau equation near J\"uttner's solution}

\begin{lemma}[Lemma 6 in \cite{GS_03}]	
				\label{lemma G.1}
				For sufficiently regular functions $f = (f^{+}, f^{-})$, $g = (g^{+}, g^{-})$, $h = (h^{+}, h^{-})$ on $\bR^3$, the following formulas hold:
 \begin{align}
             \label{eqG.1.1}
&	A f  = 2 \nabla_p \cdot (\sigma  \nabla_{p} f)    -  \frac{1}{2} (v(p))^T \sigma v (p)   f + \nabla_p \cdot \big(\sigma v (p)\big) f, \\
     \label{eqG.1.2}
&	K f = - J^{-1/2} (p) \partial_{p_i}  \bigg(J (p) \int \Phi^{ i j } (P, Q)  J^{1/2} (q) \big(\partial_{q_j} f (q)  \\
&+ \frac{ q_j}{2 q_0} f (q)\big) \cdot  \bm{\xi}_0 \, dq\bigg) \bm{\xi}_0, \notag\\
         \label{eqG.1.3}
&	\Gamma_{\pm} (g, h) =   \big(\partial_{p_i} - \frac{ p_i}{2 p_0}\big)   \partial_{p_j} g_{\pm} (p) \int \Phi^{i j} (P, Q) J^{1/2} (q)  h (q) \cdot \bm{\xi}_0 \, dq\\
&	 -   \big(\partial_{p_i} - \frac{ p_i}{2 p_0}\big) g_{\pm} (p) \int \Phi^{i j} (P, Q) J^{1/2} (q) \partial_{q_j}  h (q) \cdot \bm{\xi}_0 \, dq,\notag
      \end{align}
    where $\sigma$ is defined in \eqref{eq6.1.1}.
\end{lemma}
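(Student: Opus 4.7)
The plan is to derive all three identities by pure algebraic manipulation starting from the definition of $\cC$ in \eqref{eq1.1.2} together with the explicit form of the J\"uttner Maxwellian $J(p) = e^{-p_0}$. Three elementary facts carry the entire proof: (a) the Maxwellian derivative relations $\nabla_p J = -J v(p)$ and $\nabla_p J^{\pm 1/2} = \mp\tfrac12 J^{\pm 1/2} v(p)$, which follow from $\nabla_p p_0 = v(p)$; (b) the gauge identity $J^{-1/2}(p)\,\partial_{p_i}\bigl(J^{1/2}(p) F_i(p)\bigr) = \bigl(\partial_{p_i} - \tfrac{p_i}{2 p_0}\bigr) F_i(p)$, which is a direct consequence of (a); and (c) the null-space property $\Phi(P,Q)\bigl(v(p) - v(q)\bigr) = 0$ of the Belyaev-Budker kernel, i.e., $\Phi(P,Q) v(p) = \Phi(P,Q) v(q)$, which is the relativistic analogue of the classical Landau orthogonality and encodes the conservation of relativistic energy-momentum in binary collisions.

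For \eqref{eqG.1.1}: substitute $u = J^{1/2} f^\pm$, $w = J$ into $\cC(u,w)$. Using (a) the integrand becomes $J(q) J^{1/2}(p)\bigl[\nabla_p f^\pm + (v(q) - \tfrac12 v(p)) f^\pm\bigr]$. Invoking (c) to replace $\Phi \cdot (v(q) - \tfrac12 v(p))$ by $\tfrac12 \Phi v(p)$ and recognising the factor $\int \Phi(P,Q) J(q)\,dq$ as (a multiple of) $\sigma(p)$, one obtains $\cC(J^{1/2} f^\pm, J) = \tfrac12 \nabla_p \cdot \{J^{1/2}\sigma(\nabla_p f^\pm + \tfrac12 v(p) f^\pm)\}$. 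Multiplying by $2 J^{-1/2}$, applying Leibniz, and noting that two cross-terms of the form $v(p)\cdot\sigma\nabla_p f^\pm$ appear with opposite signs (by symmetry of $\sigma$) and cancel, one recovers \eqref{eqG.1.1}.

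For \eqref{eqG.1.2} and \eqref{eqG.1.3} the strategy is identical in spirit. For $K$, plug $u = J$, $w = J^{1/2}(f^+ + f^-)$ into $\cC$; using (a) and (c) the bracket $\nabla_p u\cdot w - u\,\nabla_q w$ collapses to the single vector field $-J(p) J^{1/2}(q)\bigl[\nabla_q(f^+ + f^-) + \tfrac{q}{2 q_0}(f^+ + f^-)\bigr]$, and premultiplication by $J^{-1/2}(p)$ followed by contraction with $\bm{\xi}_0$ yields \eqref{eqG.1.2}. For $\Gamma_\pm$, substitute $u = J^{1/2} g^\pm$, $w = J^{1/2}(h^+ + h^-)$; both $J^{1/2}$ factors come outside, the $-\tfrac12 v(p) + \tfrac12 v(q)$ piece is annihilated by (c), and what remains inside the divergence is exactly $J^{1/2}(p)$ times the integrand of \eqref{eqG.1.3}. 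The gauge identity (b) then converts the outer operator $J^{-1/2}\partial_{p_i}(J^{1/2}\,\cdot\,)$ into $\partial_{p_i} - \tfrac{p_i}{2 p_0}$, producing the desired shape.

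The only nontrivial input is the verification of (c), which I expect to be the main obstacle — not conceptually deep, but tedious bookkeeping. I would prove it by showing $S(P,Q)\bigl(p/p_0 - q/q_0\bigr) = 0$ componentwise: using $P\cdot Q = p_0 q_0 - p\cdot q$, $|p|^2 = p_0^2 - 1$, and $|q|^2 = q_0^2 - 1$ one finds the scalar identity $(p-q)\cdot(v(p) - v(q)) = (P\cdot Q - 1)(p_0^{-1} + q_0^{-1})$ and a vector identity $(p_i q_j + q_i p_j)(p_j/p_0 - q_j/q_0) = (q_0^{-1} - P\cdot Q/p_0) p_i + (P\cdot Q/q_0 - p_0^{-1}) q_i$; these collect against the diagonal contribution $((P\cdot Q)^2 - 1)(p_i/p_0 - q_i/q_0)$ and, after elementary rearrangement, cancel term-by-term. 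Once (c) is in hand, the three formulas follow as straightforward applications of Leibniz, (a), and (b), which is why \cite{GS_03} states them as a lemma whose proof is a direct calculation.
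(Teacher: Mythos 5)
The paper does not prove this lemma — it cites Lemma~6 of \cite{GS_03} and stops — so the relevant comparison is with the direct algebraic derivation there, which is exactly the route you take. Your identification of the three ingredients is right, and the key claim (c) is the genuine content: I checked your scalar identity $(p-q)\cdot(v(p)-v(q)) = (P\cdot Q-1)(p_0^{-1}+q_0^{-1})$ and your two vector dot products, and after collecting the three pieces of $S(P,Q)\bigl(v(p)-v(q)\bigr)$, the coefficients of both $p$ and $q$ inside the common factor $(P\cdot Q-1)$ do collapse to zero, so $\Phi\,v(p)=\Phi\,v(q)$ holds. The derivations of \eqref{eqG.1.2} and \eqref{eqG.1.3} that you sketch are correct as stated.

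There is, however, a bookkeeping wrinkle in your derivation of \eqref{eqG.1.1} that you need to settle rather than leave implicit. The paper's \eqref{eq1.3} sets $\sigma(p)=2\int\Phi(P,Q)J(q)\,dq$, so $\int\Phi J\,dq=\sigma/2$, and your intermediate identity $\cC(J^{1/2}f^\pm,J)=\tfrac12\nabla_p\cdot\{J^{1/2}\sigma(\nabla_p f^\pm+\tfrac12 v f^\pm)\}$ is consistent with that convention. But then multiplying by $2J^{-1/2}$, applying the gauge identity (b), and cancelling the two symmetric cross-terms gives
\begin{align*}
A_\pm f \;=\; \nabla_p\cdot(\sigma\nabla_p f^\pm) \;+\; \tfrac12\,\nabla_p\cdot(\sigma v)\,f^\pm \;-\; \tfrac14\,v^T\sigma v\,f^\pm,
\end{align*}
which is every term of \eqref{eqG.1.1} \emph{halved}. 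The resolution is that \eqref{eqG.1.1} is copied verbatim from Lemma~6 of \cite{GS_03}, where $\sigma$ is defined \emph{without} the factor $2$, i.e.\ $\sigma_{\mathrm{GS}}=\int\Phi J\,dq$; the cross-reference to \eqref{eq1.3} in the lemma statement is a notational slip in the paper. So you should either state explicitly that you are using $\sigma_{\mathrm{GS}}=\int\Phi J\,dq$ (in which case the intermediate step is $\cC=\nabla_p\cdot\{J^{1/2}\sigma_{\mathrm{GS}}(\cdots)\}$, not $\tfrac12\nabla_p\cdot\{\cdots\}$, and the factor $2J^{-1/2}$ produces the leading $2$ in \eqref{eqG.1.1}), or rewrite \eqref{eqG.1.1} with $\sigma/2$. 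As you left it, the intermediate formula and the claimed conclusion use incompatible normalizations. Formulas \eqref{eqG.1.2} and \eqref{eqG.1.3} contain no $\sigma$ and are unaffected.
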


\begin{lemma}[Corollary 4.5 with $\alpha = -3$ in \cite{L_00})] 
				\label{lemma G.2}
Let $\sigma$ be the function defined in \eqref{eq6.1.1}.	Then, the following assertions hold.

$(i)$ There exist constants $N_1, N_2 > 0$ such that for any $\xi \in \bR^3$
\begin{align}
        \label{eqG.2.1}
	N_1 |\xi|^2	\le 	\sigma^{i j} (p) \xi_i \xi_j \le N_2 |\xi|^2.
\end{align}

$(ii)$ For any multi-index $\beta$,
\begin{align}
    \label{eqG.2.2}
	|D^{\beta}_p \sigma (p)| \le N (\beta) p_0^{-|\beta|}.
\end{align}
\end{lemma}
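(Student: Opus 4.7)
The plan is to verify both parts by direct pointwise analysis of the Belyaev--Budker integrand, using the Jüttner weight $J(q) = e^{-q_0}$ to control growth at infinity and the algebraic structure of $S(P,Q)$ to control the singularity at $q = p$. First I would expand near the singular set: writing $q = p + \varepsilon w$ one computes $P \cdot Q - 1 = \tfrac{\varepsilon^2}{2 p_0^2}\bigl(p_0^2 |w|^2 - (p \cdot w)^2\bigr) + O(\varepsilon^3)$, whence $\Lambda \sim |q-p|^{-3}$. On the other hand each of the three terms in $S(P,Q)$ vanishes at $P = Q$ to second order, so $|S| \lesssim |q-p|^2$. Hence $|\Phi(P,Q)| \lesssim |q-p|^{-1}$ near $q = p$, which is integrable on $\mathbb{R}^3$, while the exponential decay of $J(q)$ absorbs polynomial growth of $S$ and the weak decay of $\Lambda$ at infinity. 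This gives the upper bound $|\sigma(p)| \le N_2$ for each fixed $p$; uniformity in $p$ requires the change of variables $q \mapsto q + p'$ (or passage to the rest frame of $P$) to see that the bound does not degenerate as $|p| \to \infty$.

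For the lower ellipticity bound in (i), I would argue that, pointwise in $q \ne p$, the matrix $S(P,Q)$ is positive semidefinite of rank two with a null direction that varies with $q$; this is seen by expanding $S$ in an orthonormal frame adapted to $p - q$ and $p$, using the identity $S^{ij} P_i = S^{ij} Q_i = 0$ (a consequence of the conservation structure of the Landau collision operator). Integrating against the strictly positive weight $J(q)$ destroys the degeneracy and produces a full-rank positive-definite matrix $\sigma(p)$ for each $p$. Uniformity in $p$ is obtained via a Lorentz boost $L$ that takes $P$ to its rest frame: the kernel $\Phi$ transforms tensorially, the boost-conjugated weight $J(L^{-1}q)$ remains a smooth positive function with uniform Gaussian-type decay (depending only on $|v(p)|<1$), and a compactness argument on the unit sphere in $\xi$ yields $N_1 > 0$ independent of $p$.

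For part (ii), I would change to physical velocity coordinates $v = v(p) = p/p_0$, observing that $p = v/\sqrt{1-|v|^2}$, $p_0 = 1/\sqrt{1-|v|^2}$, and $\partial v_i/\partial p_j = p_0^{-1}(\delta_{ij} - v_i v_j)$, so each $p$-derivative brings exactly one factor of $p_0^{-1}$. The estimate $|D_p^{\beta}\sigma(p)| \le N(\beta)\, p_0^{-|\beta|}$ thus reduces to showing that $\widetilde{\sigma}(v) := \sigma(p(v))$ has uniformly bounded $v$-derivatives of all orders on the open unit ball. Inside the ball, derivatives may be brought under the integral by dominated convergence (the resulting integrands are estimated exactly as in Step 1); the limit $|v| \to 1$ is handled by the same Lorentz-boost framework as in Step 2, in which $\widetilde\sigma(v)$ corresponds to a rest-frame integral against a boost-translated Maxwellian and is manifestly smooth with bounded derivatives.

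The main obstacle in both parts is the behavior at large $|p|$ (equivalently, $|v| \to 1$): the pointwise estimates of $\Phi$ and its derivatives deteriorate, and controlling the integral uniformly requires the boost change-of-variables. Once that framework is set up--reducing everything to uniform estimates on a Maxwellian-type integrand over $\mathbb{R}^3$ parametrized by the bounded variable $v$--both the uniform ellipticity in (i) and the sharp derivative decay in (ii) follow by standard dominated-convergence arguments on a compact parameter set.
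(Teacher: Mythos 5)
The paper does not prove this lemma at all: it is taken verbatim from Corollary~4.5 (with $\alpha = -3$) of Lemou~\cite{L_00}, and the authors simply cite that reference. So your attempt to reconstruct a self-contained proof is a genuinely different route from what the paper does.

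Your overall plan -- control the $|q-p|^{-1}$ singularity of $\Phi$, use the Jüttner weight for decay at infinity, exploit the rank-2 positive semidefinite structure of $S$ together with the strict positivity of $J$ for the lower bound, and then handle uniformity as $|p|\to\infty$ -- is a reasonable outline of how such a proof should go, and your near-diagonal expansion of $P\cdot Q - 1$ and the resulting $\Lambda \sim |q-p|^{-3}$, $|S|\lesssim|q-p|^2$ are correct. However, there is a concrete error: the claimed identities $S^{ij}P_i = S^{ij}Q_i = 0$ are false. The null direction of $S(P,Q)$ (and hence of $\Phi$) is the relative velocity $\frac{p}{p_0} - \frac{q}{q_0}$, not $p$ or $q$; one checks by a short computation that $S\bigl(\frac{p}{p_0} - \frac{q}{q_0}\bigr) = 0$ while $Sp$ and $Sq$ are generically nonzero. (At $p=0$, $S = |q|^2(\bm 1 - \hat q\otimes\hat q)$ with null vector $q$, which coincides with $v(p)-v(q)$ there but not in general.) This matters because the positive-semidefiniteness argument and the ``integrate out the null direction'' step hinge on knowing the correct kernel of $S$.

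The second soft spot is the uniformity-in-$p$ claim. You appeal to a Lorentz boost to the rest frame of $P$ and assert that the boosted Jüttner weight retains ``uniform Gaussian-type decay depending only on $|v(p)|<1$.'' But the rapidity of that boost is $\operatorname{arctanh}|v(p)|$, which diverges as $|p|\to\infty$; the boosted Maxwellian $e^{-P\cdot Q}$ does not have properties that are manifestly uniform in the boost parameter, and the ellipticity constant $N_1$ and the weighted derivative bound $p_0^{-|\beta|}$ are exactly the quantities one must track through this degeneration. This is precisely the delicate part that Lemou's proof handles with care and that a dominated-convergence/compactness appeal does not resolve by itself. As a blind sketch it correctly identifies where the difficulty lies, but the argument as written would not close without substantial additional work at large $|p|$.
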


\begin{lemma}
        \label{lemma B.2}
Let $k \ge 0$ be an integer, $r \in (3/2, \infty]$, and  $g \in W^k_r (\bR^3)$.
Then, for
\begin{equation}
        \label{eqB.2.0}
  I (p) =  \int  \Phi^{i j} (P, Q) J^{1/2} (q)   g (q)  \, dq,
\end{equation}
we have
\begin{equation}
            \label{eqB.2.1}
    \|D^k_p I\|_{ L_{\infty} (\bR^3) } \lesssim  \|g\|_{W^k_r (\bR^3) }.
\end{equation}
\end{lemma}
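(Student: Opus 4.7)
The plan is to prove the bound by induction on $k$. The base case $k=0$ is handled by a direct H\"older estimate with a pointwise kernel bound, and the inductive step is carried out via integration by parts that transfers momentum derivatives from the kernel $\Phi^{ij}(P,Q)$ onto $g$.

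For $k = 0$, I will first establish the pointwise kernel estimate
$$|\Phi^{ij}(P,Q)|\, J^{1/2}(q) \lesssim \frac{e^{-c p_0}}{|p-q|}\, \mathbf{1}_{|p-q| \le 1} + J^{1/2}(q)\, \mathbf{1}_{|p-q| > 1},$$
for some $c > 0$. Near the diagonal, the expansion $(P\cdot Q)^2 - 1 \gtrsim |p-q|^2/p_0^2$ yields $\Lambda \lesssim p_0^3/|p-q|^3$, while $|S| \lesssim |p-q|^2$ times a polynomial in $p$; the constraint $|p-q|\le 1$ then forces $q_0 \ge p_0 - 1$, so the exponential weight $J^{1/2}(q) = e^{-q_0/2}$ absorbs all polynomial growth in $p$. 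Away from the diagonal the inequalities $P\cdot Q \le 2 p_0 q_0$, $\Lambda \lesssim (P\cdot Q)^{-1}$ and $|S| \lesssim (P\cdot Q)^2$ yield $|\Phi| \lesssim 1$, which is controlled by $J^{1/2}(q)$ alone. Taking the $L_{r'}$-norm in $q$ of this kernel and noting that $r' < 3$ (equivalently $r > 3/2$) makes the singular piece integrable, so the $L_{r'}$-norm is uniformly bounded in $p$. H\"older's inequality then gives $|I(p)| \lesssim \|g\|_{L_r}$.

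For the inductive step, each momentum derivative falling on the kernel is split as
$$\partial_{p_i} \Phi^{jk}(P,Q) = -\partial_{q_i}\Phi^{jk}(P,Q) + \big[(\partial_{p_i} + \partial_{q_i})\Phi^{jk}\big](P,Q).$$
The first term, after integration by parts against $J^{1/2}(q)g(q)$, produces an integral of $\Phi^{jk}(P,Q) J^{1/2}(q)\partial_{q_i}g(q)$ plus one of $\Phi^{jk}(P,Q)(\partial_{q_i}J^{1/2}(q))g(q)$; both fit the induction hypothesis with one fewer derivative on the kernel. The symmetric remainder $(\partial_{p_i}+\partial_{q_i})\Phi^{jk}$ has singularity at most $|p-q|^{-1}$, the same as $\Phi$ itself, because the combination $(\partial_{p_i}+\partial_{q_i})(P\cdot Q)$ vanishes on the diagonal $p=q$, supplying the extra power of $|p-q|$ needed to compensate the extra differentiation of $((P\cdot Q)^2-1)^{-3/2}$. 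The polynomial-in-$p,q$ factors produced by $\partial_{p_i}+\partial_{q_i}$ acting on $p_0q_0$ and on $p\otimes q + q\otimes p$ in $S$ are harmless and absorbed as in the base case. Iterating this decomposition $k$ times produces a finite sum of integrals of the form $\int \widetilde K_\alpha(p,q)J^{1/2}(q)\partial^\alpha g(q)\,dq$ with $|\alpha|\le k$, each kernel $\widetilde K_\alpha$ satisfying the same pointwise bound used in the base case; H\"older then controls each term by $\|\partial^\alpha g\|_{L_r}$.

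The main obstacle is the verification that the symmetric derivative $(\partial_{p_i}+\partial_{q_i})\Phi^{jk}$ genuinely enjoys the same kernel bound as $\Phi$, i.e.\ that the extra differentiation of $\Lambda$ is compensated by an extra factor of $P\cdot Q - 1$ (equivalently $|p-q|^2/p_0^2$ near the diagonal) arising from symmetric differentiation. A direct but tedious calculation using \eqref{eq1.8}--\eqref{eq1.11} together with the identities $\partial_{p_i}(P\cdot Q) = q_0 p_i/p_0 - q_i$ and $\partial_{q_i}(P\cdot Q) = p_0 q_i/q_0 - p_i$ is required; the key cancellation is that the sum of these two equals $p_i(q_0/p_0 - 1) + q_i(p_0/q_0 - 1)$, which is $O(|p-q|)$ near the diagonal by a Taylor expansion of $q_0$ about $p_0$.
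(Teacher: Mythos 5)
Your proof is essentially correct and self-contained, but it takes a genuinely different route from the paper, which simply cites the pre-computed differentiation formula from Theorem~3 (and Lemma~2) of Guo--Strain~\cite{GS_03}. That reference uses the \emph{exact} annihilating vector field $\Theta_i = \partial_{p_i} + \frac{q_0}{p_0}\partial_{q_i}$, for which one checks directly that $\Theta_i(P\cdot Q) \equiv 0$, so that $\Lambda$ is untouched and the kernel estimate $|\Theta_{\beta_1}\Phi| \lesssim p_0^{-|\beta_1|} q_0^7(1+|p-q|^{-1})$ follows without further cancellation bookkeeping; the paper then concludes by H\"older exactly as you do. You instead reconstruct the argument using the translation-invariant derivative $\partial_{p_i} + \partial_{q_i}$, which does not annihilate $P\cdot Q$ but (as you show via the factorization $p_i(q_0/p_0-1)+q_i(p_0/q_0-1)=(q_0-p_0)(p_i/p_0-q_i/q_0)$) yields enough decay to keep the singularity at $|p-q|^{-1}$. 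The two choices of commuting vector field trade off a cleaner exact cancellation (paper, via \cite{GS_03}) against a more elementary and self-contained derivation (yours). One accounting point worth tightening: in the last sentence you describe the symmetric derivative of $P\cdot Q$ as $O(|p-q|)$, but that alone is \emph{not} sufficient --- differentiating $((P\cdot Q)^2-1)^{-3/2}$ once costs $|p-q|^{-2}$, so you genuinely need $O(|p-q|^2)$ (in fact $O(|p-q|^2/p_0)$) to recover the $|p-q|^{-1}$ singularity. Your explicit factorization does deliver this, since each factor is $O(|p-q|)$, so the proof closes; but the stated order $O(|p-q|)$ would not suffice and should be corrected to $O(|p-q|^2)$.
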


\begin{proof}
    By Theorem 3 in \cite{GS_03} (see p. 281 therein), for any multi-index $\beta = (\beta_1, \beta_2, \beta_3)$,  
\begin{align*}
    &D^{\beta}_p  \int \Phi^{i j} (P, Q) J^{1/2} (q)    g (q) \, dq \\
    &  = \sum_{\beta^1+\beta^2  \le \beta} \int \Theta_{\beta_1} (p, q) \Phi^{i j} (P, Q) J^{1/2} (q) \partial_{\beta_2}  g (q) \phi^{\beta}_{\beta_1, \beta_2} (p, q) \, dq,
\end{align*}
where 
 $$
	\Theta_{\beta_1} (p, q) =  (\partial_{p_1} + \frac{q_0}{p_0} \partial_{q_1})^{\beta_1^1} (\partial_{p_2} + \frac{q_0}{p_0} \partial_{q_2})^{\beta_2^1} (\partial_{p_3} + \frac{q_0}{p_0} \partial_{q_3})^{\beta_3^1},
$$
and $\phi^{\beta}_{\beta_1, \beta_2}$ is a smooth function satisfying the bound
$$
    |\phi^{\beta}_{\beta_1, \beta_2, \beta_3} (p, q)| \lesssim q_0^{|\beta|} p_0^{|\beta_1|- |\beta|}.
$$
By using the above identity, the estimate 
$$
    |\Theta_{\beta_1} (p, q) \Phi (P, Q)|  \le N  p_0^{-|\beta_1|} q_0^7 (1+ |p-q|^{-1})
$$
(see Lemma 2 on p. 277 in \cite{GS_03}),
and  H\"older's inequality with $r \in (3/2, \infty]$ and $r' = r/(r-1) \in [1, 3)$, 
we obtain \eqref{eqB.2.1}.
\end{proof}

The following lemma follows directly from  Lemma 4 on p. 287 in \cite{GS_03}
\begin{lemma}
        \label{lemma B.5}
    For $r \in (3/2, \infty]$, $g \in W^{  1 }_r (\bR^3)$, the following identity holds in the sense of distributions:
   \begin{align}
        \label{eqB.5.1}
     &  \partial_{p_i} \int    \Phi^{ i j } (P, Q)  J^{1/2} (q)   \partial_{q_j}  g ( q) \, dq  \\
     & = -  \partial_{p_i}   \int    \Phi^{ i j } (P, Q)  J^{1/2} (q)  \frac{ q_j}{2 q_0}   g (q)  \, dq  \notag\\
    &  - 4  \int  \frac{P \cdot Q}{p_0 q_0} \bigg((P \cdot Q)^2 - 1\bigg)^{-1/2}  J^{1/2} (q)   g (q) \, dq - \kappa (p) J^{1/2} (p) g (p), \notag
     \end{align}
     where $\kappa (p) = 2^{7/2} \pi p_0 \int_0^{\pi} (1+|p|^2 \sin^2 \theta)^{-3/2} \sin (\theta) \, d\theta$.
\end{lemma}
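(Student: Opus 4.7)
The plan is straightforward: integrate by parts in the momentum variable $q_j$, then invoke the cited Lemma~4 of \cite{GS_03} to resolve the distributional divergence of $\Phi^{ij}$ that appears. First I would write the integrand of the left-hand side via the product rule, using $\partial_{q_j}J^{1/2}(q) = -\frac{q_j}{2q_0}J^{1/2}(q)$ (since $J(q)=e^{-q_0}$ and $\partial_{q_j} q_0 = q_j/q_0$), as
\begin{equation*}
\Phi^{ij}(P,Q)J^{1/2}(q)\partial_{q_j}g(q) = \partial_{q_j}\!\bigl(\Phi^{ij}J^{1/2}g\bigr) - (\partial_{q_j}\Phi^{ij})J^{1/2}g + \Phi^{ij}\tfrac{q_j}{2q_0}J^{1/2}g.
\end{equation*}
Integrating over $q \in \mathbb{R}^3$ annihilates the total-derivative term (justified for a.e.\ $p$ by the Gaussian decay of $J^{1/2}$, the kernel bound in Lemma~2 on p.~277 of \cite{GS_03}, and the $W^1_r$-regularity of $g$), and then applying $\partial_{p_i}$ yields
\begin{equation*}
\partial_{p_i}\!\int\! \Phi^{ij}J^{1/2}\partial_{q_j}g\,dq = -\partial_{p_i}\!\int\!(\partial_{q_j}\Phi^{ij})J^{1/2}g\,dq + \partial_{p_i}\!\int\! \Phi^{ij}\tfrac{q_j}{2q_0}J^{1/2}g\,dq.
\end{equation*}

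The core step is to identify the first term on the right with the three terms in \eqref{eqB.5.1}. This is precisely the content of Lemma~4 of \cite{GS_03}: it characterizes the mixed divergence $\partial_{p_i}\partial_{q_j}\Phi^{ij}(P,Q)$ as a distribution on $\mathbb{R}^3_p \times \mathbb{R}^3_q$, exhibiting it as the sum of an $L^1_{\mathrm{loc}}$ function proportional to $\frac{P\cdot Q}{p_0 q_0}\bigl((P\cdot Q)^2-1\bigr)^{-1/2}$ plus a $\delta$-mass concentrated on the diagonal $\{p=q\}$ with coefficient $\kappa(p)$ given by the angular integral in the statement of Lemma~\ref{lemma B.5}. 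Testing this decomposition against $J^{1/2}(q)g(q)$, pairing the resulting diagonal contribution with $J^{1/2}(p)g(p)$, and combining the extra $\frac{q_j}{2q_0}$ factor produced by that lemma with the already-present $\frac{q_j}{2q_0}$ term from the integration by parts yields, after straightforward algebraic bookkeeping, exactly \eqref{eqB.5.1}.

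The main obstacle, which is entirely absorbed into the citation, is the distributional interpretation of $\partial_{p_i}\partial_{q_j}\Phi^{ij}$: because the $\bigl((P\cdot Q)^2-1\bigr)^{-3/2}$ singularity of the Belyaev--Budker kernel is critical for two derivatives, the identification of the $\delta$-mass on the diagonal requires a mollification argument (or a flux computation on shrinking tubes around $\{p=q\}$) to produce the explicit constant $\kappa(p)$. A secondary technical point is the passage from smooth compactly supported $g$ to $g \in W^1_r(\mathbb{R}^3)$ with $r>3/2$, which is handled by a routine density argument: both sides of \eqref{eqB.5.1} are continuous in $g$ with respect to the $W^1_r$-norm, the right-hand side by the same $L^\infty$ estimates used in Lemma~\ref{lemma B.2}, so one approximates $g$ by $C^\infty_c$ functions and passes to the limit.
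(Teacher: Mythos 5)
The paper offers no argument of its own for this lemma --- it is a bare citation of Lemma~4 on p.~287 of \cite{GS_03} --- so there is no proof to compare against line by line, but your proposal is a plausible and structurally sound reconstruction of what that citation is absorbing. The product rule, the identity $\partial_{q_j} J^{1/2} = -\frac{q_j}{2q_0} J^{1/2}$, the vanishing of the total-derivative boundary term (both at infinity via the Gaussian decay, and at the diagonal since $\Phi \sim |p-q|^{-1}$ is subcritical for the flux), and the recognition that the $\kappa(p)$ term is a diagonal $\delta$-mass arising from the two-derivative-critical singularity of the Belyaev--Budker kernel are all correct. The one point you should verify before relying on this derivation: the phrase \emph{``follows directly''} in the paper suggests that Lemma~4 of \cite{GS_03} may already be stated in the \emph{integrated} form, namely as a formula for $\partial_{p_i}\int \Phi^{ij}\sqrt{J(q)}\bigl(\partial_{q_j} + \tfrac{q_j}{2q_0}\bigr)g(q)\,dq$, in which case \eqref{eqB.5.1} is obtained by simply moving the $\tfrac{q_j}{2q_0}$ contribution to the other side of the equation, with no integration by parts at all. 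Your route instead requires a distributional characterization of $\partial_{p_i}\partial_{q_j}\Phi^{ij}$ itself (absolutely continuous part $\sim \frac{P\cdot Q}{p_0 q_0}((P\cdot Q)^2-1)^{-1/2}$ plus the $\delta$-mass with coefficient $\kappa(p)$), which is equivalent in content but not obviously the literal statement of Lemma~4; if it is not, your integration by parts has produced a term $-\partial_{p_i}\int(\partial_{q_j}\Phi^{ij})\sqrt{J}g\,dq$ that the cited lemma does not directly address, and you would have to re-derive the kernel-level identity from the integrated one. The sign bookkeeping you flag (the $+\partial_{p_i}\int\Phi^{ij}\frac{q_j}{2q_0}\sqrt{J}g\,dq$ from your IBP must combine with a $-2$ factor from the kernel identity to produce the $-1$ in the target) is consistent with this, but deserves to be written out explicitly rather than left as ``straightforward algebraic bookkeeping.'' The concluding density argument from $C^\infty_c$ to $W^1_r$ with $r>3/2$, using the $L^\infty$ kernel estimates of Lemma~\ref{lemma B.2}, is the right closing step.
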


\begin{lemma}
        \label{lemma A.1}
Let $r \in (3/2, \infty]$, $g = (g^{+}, g^{-}) \in W^{  1 }_r (\bR^3)$ and $a_g$, $C_g$ and $K g$ be given by \eqref{eq6.1} - \eqref{eq7.6}, respectively.
    Then, one has
        \begin{align}
        \label{eqB.1.5}
                  &  \|a_g\|_{ L_{\infty } (\bR^3) } \le     \|g\|_{ W^1_{r} (\bR^3) },  \\
                             \label{eqB.1.3}
          & \|C_g\|_{ L_{\infty } (\bR^3) } \le N   +     N \|g\|_{ W^1_{r} (\bR^3) }, \\
               \label{eqB.1.1}
             &   |K g| (p) \le N J^{1/4} (p) \|g\|_{W^1_r (\bR^3)},
          \end{align}
          where $N  = N (r)$.
\end{lemma}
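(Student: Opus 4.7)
The plan is to combine the integral representations~\eqref{eq6.1},~\eqref{eq6.2},~\eqref{eq7.6} with three tools: the kernel estimates of Lemma~\ref{lemma B.2}, the integration-by-parts identity of Lemma~\ref{lemma B.5}, and the pointwise bounds on $\sigma$ and $\nabla_p\sigma$ from Lemma~\ref{lemma G.2}. I would treat the three bounds in order of increasing difficulty.

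For $a_g$, I would split $a_g^i$ according to the decomposition $\frac{p_i}{2p_0} g(q) + \partial_{q_j} g(q)$ inside the integrand. The first summand is, up to the bounded factor $|p_i/p_0|\le 1$, the integral $I(p)$ of Lemma~\ref{lemma B.2} with $\tilde g = g$, so its $L_\infty$ norm is controlled by $\|g\|_{L_r}$; the second is $I(p)$ with $\tilde g = \partial_{q_j} g$, controlled by $\|\nabla_p g\|_{L_r}$. Summing yields~\eqref{eqB.1.5}. For $C_g$, the non-integral terms $-\frac{1}{2}\sigma^{ij}\frac{p_i}{p_0}\frac{p_j}{p_0}$ and $\partial_{p_i}(\sigma^{ij}\frac{p_j}{p_0})$ are bounded by an absolute constant thanks to the boundedness of $\sigma$ and the $p_0^{-1}$-decay of $\nabla_p\sigma$ from Lemma~\ref{lemma G.2}, which accounts for the free constant $N$ in~\eqref{eqB.1.3}. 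In the integral term I would distribute $(\partial_{p_i} - \frac{p_i}{2p_0})$: the $\frac{p_i}{2p_0}$ piece is immediately of the form handled by Lemma~\ref{lemma B.2} with $\tilde g = \partial_{q_j} g$, and the $\partial_{p_i}$ piece is precisely the left-hand side of identity~\eqref{eqB.5.1}, which rewrites it as a sum of three pieces (a $\partial_{p_i}$-integral of $\frac{q_j}{2q_0}g$, an integral against the milder kernel $\frac{P\cdot Q}{p_0 q_0}((P\cdot Q)^2-1)^{-1/2}$, and the local term $-\kappa(p) J^{1/2}(p) g(p)$), each readily bounded by $\|g\|_{W^1_r}$.

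For $Kg$, Leibniz rule and $\partial_{p_i} J = -\frac{p_i}{p_0} J$ reduce the expression to
\[
Kg = J^{1/2}(p)\Bigl[\tfrac{p_i}{p_0} I_i - \partial_{p_i} I_i\Bigr] \bm{\xi}_0,\quad I_i = \int \Phi^{ij}(P,Q) J^{1/2}(q)\bigl(\partial_{q_j} g + \tfrac{q_j}{2q_0} g\bigr)\cdot \bm{\xi}_0\, dq.
\]
The first bracketed term is bounded by $\|g\|_{W^1_r}$ as in the $a_g$ estimate; the second is handled exactly as in $C_g$, splitting $I_i$ according to $\partial_{q_j}g+\frac{q_j}{2q_0}g$, applying Lemma~\ref{lemma B.2} with $k=1$ to the $\frac{q_j}{2q_0}g$ half, and Lemma~\ref{lemma B.5} to the $\partial_{q_j}g$ half. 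The overall prefactor $J^{1/2}(p)$, together with the elementary inequality $J^{1/2}(p)\le J^{1/4}(p)$ on $\mathbb{R}^3$, yields~\eqref{eqB.1.1}.

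The main obstacle is the appearance of $\partial_{p_i}\int \Phi^{ij} J^{1/2} \partial_{q_j} g\, dq$ in both the $C_g$ and $Kg$ estimates: a direct use of Lemma~\ref{lemma B.2} with $k=1$ would demand $g\in W^2_r$, which is not assumed. Lemma~\ref{lemma B.5} resolves this by trading the outer $\partial_{p_i}$ for integrations against $g$ alone (plus the local correction $\kappa(p)J^{1/2}(p)g(p)$ and the milder Coulomb-type kernel); checking that this weaker kernel still satisfies the $L_{r'}$-in-$q$ bound underpinning Lemma~\ref{lemma B.2} under the stated exponent constraint $r>3/2$ is the only quantitative point requiring care.
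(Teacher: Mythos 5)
Your proof takes essentially the same route as the paper's: Lemma~\ref{lemma B.2} for the direct kernel bounds, Lemma~\ref{lemma G.2} for the non-integral terms in $C_g$, Lemma~\ref{lemma B.5} to trade the outer $\partial_{p_i}$ on the $\partial_{q_j}g$ integral for integrations against $g$ alone plus the local correction, and a Leibniz-rule decomposition of $Kg$ into three pieces that mirror the pieces already treated in the $C_g$ estimate. The only cosmetic difference is that you consolidate the Leibniz expansion into a single expression $J^{1/2}(p)\bigl[\tfrac{p_i}{p_0}I_i-\partial_{p_i}I_i\bigr]\bm{\xi}_0$, whereas the paper names the three resulting integrals $K_1,K_2,K_3$ and matches them to $C_{g,2},C_{g,1,1},C_{g,1}$; these are the same calculation.

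One remark worth flagging, though it applies equally to the paper's proof and so does not count against you: the local term $\kappa(p)J^{1/2}(p)g(p)$ produced by Lemma~\ref{lemma B.5} (and, after the extra $J^{1/2}$ prefactor, $\kappa(p)J(p)g(p)$ in the $Kg$ bound) is controlled pointwise by $\|g\|_{W^1_r}$ only when $W^1_r(\bR^3)\hookrightarrow L_\infty(\bR^3)$, i.e.\ for $r>3$; for $r\in(3/2,3]$ the Maxwellian weight does not by itself beat a possible Sobolev-admissible pointwise singularity of $g$, so this step deserves either an exponent restriction or a separate argument. Since the paper glosses over the same point, your write-up is faithful to its level of rigor.
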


\begin{proof}
\textit{Estimate of $a_g$.}
The estimate  follows from the definition of $a_g$ (see \eqref{eq6.1}) and \eqref{eqB.2.1} with $k = 0, 1$ (see Lemma \ref{lemma B.2}). 

\textit{Estimate of $C_g$.}
 By the estimates of $\sigma$ in \eqref{eqG.2.2},
  we only need to handle the integral term in \eqref{eq6.2}, 
 which we decompose as follows:
  \begin{align*}
   &\partial_{p_i} \int    \Phi^{ i j } (P, Q)  J^{1/2} (q)   \partial_{q_j}  g ( q) \cdot \bm{\xi}_0 \, dq \\
   &  - \frac{p_i}{2 p_0}  \int  	   \Phi^{ i j } (P, Q)  J^{1/2} (q)  \partial_{q_j}  g (q) \cdot \bm{\xi}_0 \, dq =: C_{g, 1} + C_{g, 2}.
\end{align*}
Next, by the identity \eqref{eqB.5.1}, 
  \begin{align*}
& C_{g, 1} =  - \partial_{p_i} \int    \Phi^{ i j } (P, Q)  J^{1/2} (q)  \frac{ q_j}{2 q_0}   g (q) \cdot \bm{\xi}_0 \, dq \\
    &  - 4  \int  \frac{P \cdot Q}{p_0 q_0} \bigg((P \cdot Q)^2 - 1\bigg)^{-1/2}  J^{1/2} (q)   g (q) \cdot \bm{\xi}_0 \, dq \\
 &    - \kappa (p) J^{1/2} (p) g (p) \cdot \bm{\xi}_0  =:   C_{g, 1, 1}  +   C_{g, 1, 2}  +   C_{g, 1, 3} .
\end{align*}
Applying
  the estimate \eqref{eqB.2.1} with $k = 0, 1$ to the terms $C_{g, 1, 1}$ and $C_{g, 2}$,
  we get
$$
     |C_{g, 1, 1}| + |C_{g,  2}| \leq N \|g\|_{W^1_r (\bR^3)}.
$$
By a simple bound
$$
    P \cdot Q -  1  \ge N_1 \big(\frac{|p-q|^2}{q_0^2} 1_{ |p-q| < (|p|+1)/2 } + \frac{p_0}{q_0}  1_{ |p-q| \ge (|p|+1)/2 }\big)
$$
and  H\"older's inequality,
$$
    |C_{g, 1, 2}| \le  N \|g\|_{ L_r (\bR^3)}.
$$
Finally, we note that the last inequality also holds for $C_{g, 1, 3}$ 
since $\kappa$ is a bounded function. Thus, \eqref{eqB.1.3} is valid.

\textit{Estimate of $K g$.}
First, we split  the integral in \eqref{eqG.1.2} as follows: 
	\begin{align*}
	&	K g  = (\partial_{p_i} p_0) J^{1/2} (p) \int \Phi^{i j} (P, Q) J^{1/2} (q) (\partial_{q_j} g (q) + \frac{q_j}{2} g (q)) \cdot \bm{\xi}_0 \, dq \,  \bm{\xi}_0\\
	&
		 -  J^{1/2} (p) \partial_{p_i}  \int \Phi^{i j} (P, Q) J^{1/2} (q)   \frac{q_j}{2 q_0} g (q) \cdot \bm{\xi}_0 \, dq\,  \bm{\xi}_0  \\
   & -  J^{1/2} (p) \partial_{p_i} \int  \Phi^{i j} (P, Q) J^{1/2} (q)  \partial_{q_j} g (q) \cdot \bm{\xi}_0 \, dq\,  \bm{\xi}_0 =: K_1 + K_2 + K_3.
	\end{align*}
 We observe that the following terms are similar:
 \begin{itemize}
     \item[--] $K_1$ and $C_{g, 2}$, 
     \item[--] $K_2$ and $C_{g, 1, 1}$, 
     \item[--] $K_3$ and $C_{g, 1}$. 
 \end{itemize}
  Hence, the estimate \eqref{eqB.1.1} is proved by repeating the above argument.
  \end{proof}

\begin{lemma}
			\label{lemma A.2}
Let $g$ be a function  satisfying Assumption \ref{assumption 1.4} (see \eqref{eq1.4.1} - \eqref{eq1.4.2}).
Then, for $\sigma_g$ defined in \eqref{eq6.0}, one has
\begin{align}
    \label{eqA.2.1}
    \|\nabla_p \sigma_g\|_{ L_{\infty} (\Sigma^T) } +	\|\sigma_g\|_{   L_{\infty} 
 ((0, T)) C^{\varkappa/3, \varkappa}_{x, p} (\Omega \times \bR^3 )   } \le N (K).
\end{align}
\end{lemma}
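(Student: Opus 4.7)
The plan is to exploit the fact that $\sigma_g(t,x,p)$ depends on $(t,x)$ only through the factor $g(t,x,q)$ inside the integral, so that the $x$-regularity is inherited directly from $g$, while the $p$-regularity comes from kernel estimates already packaged in Lemma \ref{lemma B.2}. Write
\begin{equation*}
\sigma_g(t,x,p) = 2\sigma(p) + \int_{\bR^3} \Phi(P,Q) J^{1/2}(q) g(t,x,q)\cdot \bm{\xi}_0 \, dq,
\end{equation*}
where $\sigma$ is the Maxwellian part from \eqref{eq1.3} and \eqref{eq6.1.1}. The $L_\infty$ bound $\|\sigma\|_{L_\infty}+\|\nabla_p \sigma\|_{L_\infty} \le N$ is Lemma \ref{lemma G.2}.

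\textbf{Step 1 ($p$-derivative).} For fixed $(t,x)$, apply Lemma \ref{lemma B.2} to $g(t,x,\cdot)$ with $k=1$ and $r=\infty$, and use $\|g\|_{L_\infty((0,T)\times \Omega)W^1_\infty(\bR^3)} \le N(K)$ from \eqref{eq1.4.1}–\eqref{eq1.4.2} (Hölder continuity plus the bound on $\nabla_p g$ immediately yield $g\in L_\infty$). This gives $\|\nabla_p \sigma_g\|_{L_\infty(\Sigma^T)} \le N(K)$ as required, and by $k=0$ one also gets $\|\sigma_g\|_{L_\infty(\Sigma^T)}\le N(K)$.

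\textbf{Step 2 (Hölder in $p$).} The $L_\infty$ bound on $\nabla_p \sigma_g$ from Step 1 gives the Lipschitz estimate $|\sigma_g(t,x,p_1)-\sigma_g(t,x,p_2)|\le N(K)|p_1-p_2|$. Interpolating against the $L_\infty$ bound yields $|\sigma_g(t,x,p_1)-\sigma_g(t,x,p_2)| \le N(K)|p_1-p_2|^\varkappa$ for any $\varkappa \in (0,1]$ (the interpolation is trivial: for $|p_1-p_2|\le 1$ use the Lipschitz estimate, and for $|p_1-p_2|\ge 1$ use the $L_\infty$ bound).

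\textbf{Step 3 (Hölder in $x$).} Since only $g(t,x,q)$ carries $x$,
\begin{equation*}
\sigma_g(t,x_1,p)-\sigma_g(t,x_2,p) = \int_{\bR^3} \Phi(P,Q) J^{1/2}(q)\bigl(g(t,x_1,q)-g(t,x_2,q)\bigr)\cdot \bm{\xi}_0\, dq.
\end{equation*}
By \eqref{eq1.4.1}, $|g(t,x_1,q)-g(t,x_2,q)|\le K|x_1-x_2|^{\varkappa/3}$ uniformly in $q$. Pulling this factor out and bounding the remaining integral $\int |\Phi(P,Q)| J^{1/2}(q)\, dq$ uniformly in $p$ by Lemma \ref{lemma B.2} (applied to the bounded function $1$ with $k=0$) yields $|\sigma_g(t,x_1,p)-\sigma_g(t,x_2,p)|\le N(K)|x_1-x_2|^{\varkappa/3}$. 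Combining Steps 2 and 3 via the triangle inequality gives the full anisotropic seminorm bound and completes the proof of \eqref{eqA.2.1}.

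There is no real obstacle: the only point requiring care is that the $L_\infty$-control of $\int|\Phi(P,Q)|J^{1/2}(q)\,dq$ in $p$ relies on the Hölder-type integrability of the Belyaev--Budker kernel near the collisional singularity $P\cdot Q=1$, but this is precisely what Lemma \ref{lemma B.2} (with $r > 3/2$) delivers after cancelling one derivative via the explicit formula from \cite{GS_03}.
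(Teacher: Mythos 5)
Your proposal is correct and follows essentially the same route as the paper: Lemma \ref{lemma B.2} with $k=1$, $r=\infty$ for the $\nabla_p$-bound and the $p$-Lipschitz estimate, and the $x$-H\"older regularity of $g$ for the $x$-increment, combined via interpolation. One small wording issue in Step 3: Lemma \ref{lemma B.2} bounds $|\int\Phi^{ij}\,J^{1/2}\,g\,dq|$, not $\int|\Phi|\,J^{1/2}\,dq$, so citing it ``applied to the constant function $1$'' does not literally justify pulling the factor out of the integral; the cleaner move (and the one the paper makes) is to apply Lemma \ref{lemma B.2} with $k=0$, $r=\infty$ directly to the increment $g(t,x_1,\cdot)-g(t,x_2,\cdot) \in L_\infty(\bR^3)$, which gives $|\sigma_g(t,x_1,p)-\sigma_g(t,x_2,p)| \le N\,\|g(t,x_1,\cdot)-g(t,x_2,\cdot)\|_{L_\infty} \le N\,K|x_1-x_2|^{\varkappa/3}$ with no extra kernel integrability claim needed.
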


\begin{proof}
First, note that the estimate of $\nabla_p \sigma_g$ follows directly from \eqref{eqB.2.1} with $k=1$ and the assumption \eqref{eq1.4.2}. 
Furthermore, for any $t \ge 0$ and $x_1, x_2 \in \Omega$, $p \in \bR^3$,
$$
	\sigma_g (t, x_1, p) - \sigma_g (t, x_2, p) = \int \Phi (P, Q) J^{1/2} \big(g (t, x_1, q) - g (t, x_2, q)\big) \, dq.
$$
Then, by \eqref{eqB.2.1} with $k=0$ and the assumption  \eqref{eq1.4.1}, 
\begin{align*}
	|\sigma_g (t, x_1, p) - \sigma_g (t, x_2, p)|
&  \le N \sup_{ p \in \bR^3}  |g (t, x_1, p) - g (t, x_2, p)|\\
&	\le N |x_1 - x_2|^{\varkappa/3}. 
\end{align*}
Now the assertion follows from the above inequality, the $L_{\infty}$ estimate of $|\nabla_p \sigma_g|$, and the interpolation inequality for H\"older spaces.
\end{proof}

\begin{lemma}[cf. Lemma 7 of \cite{GS_03}]
				\label{lemma 2.4}
For any $\theta \ge 0$,  there exists $\kappa > 0$ such that for any and 
any $g = (g^{+}, g^{-}) \in  W^1_{2, \theta} (\bR^3), h = (h^{+}, h^{-}) \in  L_{2, \theta} (\bR^3)$,
\begin{align}
    \label{eq2.4.1}
    -  \langle  A g,  g \, p_0^{2\theta} \rangle \, \ge  \kappa \|\nabla_p g\|^2_{L_{2, \theta} (\bR^3) }  -  N (\theta) \|g\|^2_{L_{2} (\bR^3)}.
\end{align}
Furthermore, for any $\varepsilon \in (0, 1)$,
\begin{align}
    \label{eq2.4.2}
	\bigg|\int_{\bR^3} (K g) \cdot h \, p_0^{2\theta} dp\bigg|
\le  \varepsilon \|g\|^2_{ W^1_{2}(\bR^3) } + N (\theta) \varepsilon^{-1}  \|h\|^2_{ L_{2} (\bR^3) }.
\end{align}
\end{lemma}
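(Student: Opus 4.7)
The plan is to treat the two inequalities separately, since they rely on quite different tools from the preceding lemmas.

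For the coercivity estimate \eqref{eq2.4.1}, I would start from the explicit representation of $A$ in Lemma \ref{lemma G.1}, applied componentwise:
$$A_\pm f = 2\nabla_p\cdot(\sigma\nabla_p f^\pm) - \tfrac{1}{2}v(p)^T\sigma v(p) f^\pm + \nabla_p\cdot(\sigma v(p)) f^\pm.$$
Multiplying by $g^\pm p_0^{2\theta}$, integrating over $\bR^3$, and integrating by parts on the divergence term yields
$$-\int_{\bR^3}(Ag)\cdot g\,p_0^{2\theta}\,dp = 2\int \sigma\nabla_p g\cdot\nabla_p(g p_0^{2\theta})\,dp + \tfrac{1}{2}\int v^T\sigma v\,|g|^2 p_0^{2\theta}\,dp - \int \nabla_p\cdot(\sigma v)\,|g|^2 p_0^{2\theta}\,dp.$$
Expanding $\nabla_p(g p_0^{2\theta})$ and using the uniform ellipticity $\sigma^{ij}\xi_i\xi_j\geq N_1|\xi|^2$ from Lemma \ref{lemma G.2}(i), the principal contribution is bounded below by $2N_1\|\nabla_p g\|_{L_{2,\theta}(\bR^3)}^2$. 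The cross term $2\int\sigma\nabla_p g\cdot g\,\nabla_p p_0^{2\theta}\,dp$ is handled by Cauchy-Schwarz with a small parameter, absorbing a fraction of $\|\nabla_p g\|_{L_{2,\theta}}^2$. Finally, the lower-order scalar multipliers $\tfrac{1}{2}v^T\sigma v$ and $\nabla_p\cdot(\sigma v)$ are bounded uniformly on $\bR^3$ thanks to $|v(p)|\leq 1$ and the derivative decay $|\partial_{p_i}\sigma^{ij}|\lesssim p_0^{-1}$ from Lemma \ref{lemma G.2}(ii).

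For the estimate on $K$ in \eqref{eq2.4.2}, the key ingredient is the pointwise exponential decay in Lemma \ref{lemma A.1}:
$$|Kg|(p)\leq N J^{1/4}(p)\|g\|_{W^1_r(\bR^3)},\qquad r\in(3/2,\infty].$$
Distributing the weight by Cauchy-Schwarz gives
$$\bigg|\int(Kg)\cdot h\,p_0^{2\theta}\,dp\bigg|\leq \bigg(\int|Kg|^2 p_0^{4\theta}\,dp\bigg)^{1/2}\|h\|_{L_2(\bR^3)}\leq N\|g\|_{W^1_r(\bR^3)}\bigg(\int J^{1/2}p_0^{4\theta}\,dp\bigg)^{1/2}\|h\|_{L_2(\bR^3)}.$$
Since $J(p)=e^{-p_0}$, the Gaussian-type integral $\int J^{1/2}p_0^{4\theta}\,dp$ converges for every $\theta$, leaving $\lesssim C(\theta)\|g\|_{W^1_r(\bR^3)}\|h\|_{L_2(\bR^3)}$. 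Specializing to $r=2$ and applying Young's inequality $ab\leq\varepsilon a^2+\varepsilon^{-1}b^2$ produces \eqref{eq2.4.2} directly.

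The main obstacle lies in matching the error in \eqref{eq2.4.1} with the unweighted norm $\|g\|_{L_2(\bR^3)}^2$ rather than the weighted $\|g\|_{L_{2,\theta}(\bR^3)}^2$ that naively emerges from the zeroth-order and cross terms above. My plan to deal with this is to split those residual terms, which carry weights $p_0^{2\theta-1}$ or $p_0^{2\theta-2}$, into a large-$|p|$ region and a compact region $\{p_0\leq R\}$. On the compact region the weight is bounded by $R^{2\theta}$, yielding $N(\theta,R)\|g\|_{L_2(\bR^3)}^2$; on the large-$|p|$ region I would use a weighted Hardy-type trick exploiting that the missing factor $p_0^{-1}$ provides a dissipation of order $\kappa'\|\nabla_p g\|_{L_{2,\theta}}^2$ (after integration by parts against an appropriate radial weight), so that taking $R$ large enough lets the large-$|p|$ tail be absorbed into a small fraction of the principal coercive term. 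This reduction, combined with the straightforward coercivity of the leading $\sigma$-quadratic form, closes \eqref{eq2.4.1}.
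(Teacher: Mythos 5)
Your proof of \eqref{eq2.4.2} is correct and coincides with the paper's intended argument, which is to invoke the pointwise bound $|Kg|(p)\le N J^{1/4}(p)\|g\|_{W^1_r(\bR^3)}$ of Lemma~\ref{lemma A.1}, distribute the weight by Cauchy--Schwarz, and finish with Young's inequality.

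For \eqref{eq2.4.1} the overall shape of the argument (expand $A$ via Lemma~\ref{lemma G.1}, integrate by parts, invoke the uniform ellipticity of $\sigma$ from Lemma~\ref{lemma G.2}) is the right one, but the mechanism you propose for absorbing the residuals does not close for large $\theta$. The cross term is
$$2\int \sigma\nabla_p g\cdot g\,\nabla_p p_0^{2\theta}\,dp = 4\theta\int (\sigma\nabla_p g)\cdot (gp)\,p_0^{2\theta-2}\,dp,$$
whose coefficient is $O(\theta)$. After Cauchy--Schwarz with a small parameter $\varepsilon$, the error carries the factor $\varepsilon^{-1}\theta^2\|g\|^2_{L_{2,\theta-1}}$. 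Your plan is to dispatch $\|g\|^2_{L_{2,\theta-1}}$ on the large-$|p|$ tail by a weighted Hardy inequality. But the Hardy estimate (integrating $\nabla_p\cdot(p\,p_0^{2\theta-2})$ against $g^2$) yields $\|g\|_{L_{2,\theta-1}}\le \tfrac{2}{3}\|\nabla_p g\|_{L_{2,\theta}}$, whose constant is $\theta$-independent and does not shrink as the cutoff radius $R\to\infty$; so the best you can get is $\varepsilon + C\theta^2/\varepsilon$ in front of $\|\nabla_p g\|^2_{L_{2,\theta}}$, which has minimum of order $\theta$ over $\varepsilon$ and therefore swamps the $\theta$-independent coercivity constant $2N_1$ once $\theta$ is moderately large. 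The ``missing factor $p_0^{-1}$'' you invoke supplies the same fixed Hardy constant no matter how large $R$ is taken.

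The missing ingredient is precisely the term you dismissed as merely bounded: $\tfrac12\int v^T\sigma v\,|g|^2p_0^{2\theta}\,dp$. By Lemma~\ref{lemma G.2}(i), $v^T\sigma v\ge N_1|v|^2$ is bounded \emph{below} away from zero for $p_0\gtrsim 1$, so this term is a genuine dissipation at weight $p_0^{2\theta}$ that grows one power of $p_0^2$ \emph{faster} than the cross-term residual (which sits at weight $p_0^{2\theta-2}$). Using Cauchy--Schwarz in the $\sigma$-inner product to split the cross term, the bad residual has the form $N\theta^2\int v^T\sigma v\,g^2 p_0^{2\theta-2}\,dp$, and the positive mass term dominates it pointwise whenever $p_0\gtrsim\theta$; the compact region $\{p_0\lesssim\theta\}$ is where the unweighted error $N(\theta)\|g\|^2_{L_2}$ comes from. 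Equivalently, substituting $h=g\,p_0^\theta$ one finds that the $O(\theta)$ cross terms cancel exactly and the quadratic form becomes $2\int\sigma\nabla_p h\cdot\nabla_p h\,dp+\int c(p)h^2\,dp$ with $c(p)=v^T\sigma v(\tfrac12-2\theta^2p_0^{-2})-\nabla_p\cdot(\sigma v)$, which is manifestly nonnegative for $p_0$ large; this is the structure behind ``the same argument'' deferred to Lemma~7 of \cite{GS_03}.
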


\begin{proof}
In the case when $\theta = 0$, the estimate \eqref{eq2.4.1} is proved in Lemma 7 in \cite{GS_03}. The case $\theta > 0$ is handled by the same argument, and hence, we omit the proof. The bound \eqref{eq2.4.2} follows from \eqref{eqB.1.1} in Lemma \ref{lemma A.1}.
\end{proof}

\begin{lemma}
        \label{lemma G.4}
 For  sufficiently regular functions $f_j = (f^{+}_j, f^{-}_j)$, $j = 1, 2, 3,$ on $\bR^3$ and any  $r \in (3/2, \infty]$ and $\theta \ge 0$, we have
\begin{align}
    \label{eqG.4.1}
    &  \big|\langle   \Gamma (f_1, f_2),  f_3  p_0^{2 \theta} \rangle\big| \\
   & \lesssim_{\theta} \|\nabla_p f_1 \|_{  L_{2, \theta} (\bR^3) } \|f_2\|_{  L_{r} (\bR^3) } \|f_3\|_{  W^1_{2, \theta} (\bR^3) }\notag\\
      & + \|f_1 \|_{  L_{2, \theta} (\bR^3) } \|\nabla_p f_2\|_{  L_{r} (\bR^3) } \|f_3\|_{  W^1_{2, \theta} (\bR^3) }. \notag
\end{align}
\end{lemma}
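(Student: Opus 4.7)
The plan is to substitute the explicit formula for $\Gamma$ furnished by Lemma \ref{lemma G.1} \eqref{eqG.1.3}, integrate by parts to shift the outer first-order operator $(\partial_{p_i}-\tfrac{p_i}{2p_0})$ onto the test function $f_3 p_0^{2\theta}$, and then apply H\"older's inequality combined with the pointwise bound of Lemma \ref{lemma B.2} on the momentum integrals against $\Phi^{ij}$. Concretely, writing $\mathcal{I}^{ij}[f_2](p):=\int \Phi^{ij}(P,Q)J^{1/2}(q)f_2(q)\cdot\bm{\xi}_0\,dq$ and $\mathcal{J}^{i}[f_2](p):=\int \Phi^{ij}(P,Q)J^{1/2}(q)\partial_{q_j}f_2(q)\cdot\bm{\xi}_0\,dq$, Lemma \ref{lemma G.1} gives
\[
\Gamma_\pm(f_1,f_2)=\bigl(\partial_{p_i}-\tfrac{p_i}{2p_0}\bigr)\bigl[\partial_{p_j}f_{1,\pm}\,\mathcal{I}^{ij}[f_2]\bigr]-\bigl(\partial_{p_i}-\tfrac{p_i}{2p_0}\bigr)\bigl[f_{1,\pm}\,\mathcal{J}^{i}[f_2]\bigr],
\]
so pairing with $f_3 p_0^{2\theta}$ decomposes the inner product into two pieces $T_1$ and $T_2$.

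For each piece I would integrate by parts in $p_i$, noting that the adjoint of $(\partial_{p_i}-\tfrac{p_i}{2p_0})$ relative to Lebesgue measure is $-(\partial_{p_i}+\tfrac{p_i}{2p_0})$. This produces
\[
T_1=-\int \partial_{p_j}f_{1,\pm}\,\mathcal{I}^{ij}[f_2]\Bigl(\partial_{p_i}(f_3^{\pm}p_0^{2\theta})+\tfrac{p_i}{2p_0}f_3^{\pm}p_0^{2\theta}\Bigr)dp,
\]
\[
T_2=\ \ \int f_{1,\pm}\,\mathcal{J}^{i}[f_2]\Bigl(\partial_{p_i}(f_3^{\pm}p_0^{2\theta})+\tfrac{p_i}{2p_0}f_3^{\pm}p_0^{2\theta}\Bigr)dp.
\]
Since $|\partial_{p_i}p_0^{2\theta}|\le 2\theta\,p_0^{2\theta-1}$ and $|p_i/(2p_0)|\le 1/2$, the parenthesized expressions satisfy the weighted pointwise bound $\lesssim_\theta (|\nabla_p f_3|+|f_3|)p_0^{2\theta}$. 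The crucial observation, which transfers the derivative cost off $f_2$ in $T_2$, is that the integration by parts applied to $T_2$ leaves \emph{no} derivative on $\mathcal{J}^{i}[f_2]$ and no derivative on $f_1$.

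The final step is an $L^\infty$-$L^{2,\theta}$-$L^{2,\theta}$ H\"older estimate for each of $T_1$ and $T_2$, invoking Lemma \ref{lemma B.2} with $k=0$ to control
\[
\|\mathcal{I}[f_2]\|_{L_\infty(\bR^3)}\lesssim \|f_2\|_{L_r(\bR^3)}, \qquad \|\mathcal{J}[f_2]\|_{L_\infty(\bR^3)}\lesssim \|\nabla_p f_2\|_{L_r(\bR^3)},
\]
which is exactly where the hypothesis $r>3/2$ is used. Distributing the weight $p_0^{2\theta}$ evenly via $p_0^{\theta}\cdot p_0^{\theta}$ then produces $\|\nabla_p f_1\|_{L_{2,\theta}}\|f_3\|_{W^1_{2,\theta}}$ in the bound for $T_1$ and $\|f_1\|_{L_{2,\theta}}\|f_3\|_{W^1_{2,\theta}}$ in the bound for $T_2$, completing the estimate with the $\theta$-dependent constant absorbed from the weight derivatives.

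The main (and essentially only) technical point is to justify the integration by parts, which requires vanishing of boundary contributions at infinity; under the blanket ``sufficiently regular'' hypothesis (e.g.\ Schwartz in $p$) this is immediate, and the general case follows by density using the fact that the final right-hand side of \eqref{eqG.4.1} is continuous in the stated norms. No delicate cancellation between $T_1$ and $T_2$ is needed, and no appeal to the finer identity of Lemma \ref{lemma B.5} is required, since the derivative structure has already been arranged in the formulation of $\Gamma$ in Lemma \ref{lemma G.1}.
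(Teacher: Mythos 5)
Your proposal is correct and follows essentially the same approach as the paper's proof: substitute the explicit form of $\Gamma$ from Lemma \ref{lemma G.1} (\eqref{eqG.1.3}), integrate by parts in $p_i$ to shift the outer operator onto $f_3 p_0^{2\theta}$, then apply the $L_\infty$-$L_2$-$L_2$ H\"older estimate together with the bound \eqref{eqB.2.1} from Lemma \ref{lemma B.2} with $k=0$. Your write-up is somewhat more detailed than the paper's (which is quite terse), but the decomposition into $T_1,T_2$, the way the weight is split, and the key lemma invoked are identical.
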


\begin{proof}
Invoke the explicit expression of $\Gamma (f, g)$ in \eqref{eqG.1.3}. For the sake of simplicity, we assume that $g$ and $h$ are scalar functions, and we estimate a simplified integral  given by (cf. \eqref{eqG.4.1})
\begin{align*}
&     I =    \langle  \big(\partial_{p_i} - \frac{ p_i}{2 p_0}\big)    \int \Phi^{i j} (P, Q) J^{1/2} (q) \partial_{p_j} f_1 (p)  f_2 (q)  \, dq, f_3 p_0^{2 \theta} \rangle\\
& - \langle  \big(\partial_{p_i} - \frac{ p_i}{2 p_0}\big)    \int \Phi^{i j} (P, Q) J^{1/2} (q) f_1 (p)  \partial_{q_j} f_2 (q)  \, dq, f_3 p_0^{2 \theta}\rangle. \notag
\end{align*}
Integrating by parts in $p$ gives
\begin{align*}
  &  I =     \langle \partial_{p_j} f_1    \int \Phi^{i j} (P, Q) J^{1/2} (q)  f_2 (q)  \, dq, (-\partial_{p_i} - \frac{ p_i}{2 p_0}) (f_3 p_0^{2 \theta}) \rangle \\
 & + \langle  f_1    \int \Phi^{i j} (P, Q) J^{1/2} (q)   \partial_{q_j} f_2 (q)  \, dq, (\partial_{p_i}  + \frac{ p_i}{2 p_0}) (f_3  p_0^{2 \theta})\rangle 
  =: I_1+I_2. \notag
\end{align*}
Finally, applying the $L_2-L_{\infty}-L_2$ H\"older's inequality to $I_1$ and $I_2$ using the bound \eqref{eqB.2.1} with $k=0$, we obtain \eqref{eqG.4.1}.
\end{proof}

\section{Verification of estimates $(\ref{eq2.58})$ - $(\ref{eq2.47})$}
                                \label{section }
\begin{lemma}
            \label{lemma A.7}
Estimates \eqref{eq2.58} - \eqref{eq2.47.2} are true.            
\end{lemma}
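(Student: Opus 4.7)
The plan is to trace through the change of variables in Step 4 of the proof of Lemma \ref{lemma 7.4} step by step, pairing each bound with the appropriate derivative estimate for the diffeomorphism $\mathcal{W}_y$ coming from Lemma \ref{lemma A.3}, and for the full map $\Upsilon_n$ from Lemma \ref{lemma A.5}. Recall that by \eqref{eq2.53.0} the function $\mathcal{\hathat U}$ is supported where $|w| \lesssim 2^n$, so in all pointwise bounds we may freely use the localization $|w(y,v)| < 2^{n+2}$. Throughout, $N$ depends only on $\Omega$ and $K$.

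First, for the diffusion coefficients: the matrix $\mathcal{A}$ is obtained from $\sigma_g$ by conjugation with the bounded matrix $(\partial y/\partial x)$, so $|\mathcal{A}|$ and $|\nabla_w \mathcal{A}|$ are $L_\infty$-bounded by Lemma \ref{lemma 6.1} and Lemma \ref{lemma A.2}. Since $\mathbb{A} = (\partial v/\partial w)\, \mathcal{\hathat A}\, (\partial v/\partial w)^T$ and $|\partial v/\partial w| \le N 2^{-n}$ by \eqref{A.3.0}, one gets $|\mathbb{A}| \le N 2^{-2n} \le N$ on the support, yielding \eqref{eq2.62}. For the ellipticity \eqref{eq2.58}, apply Lemma \ref{lemma A.4} with $M = \partial v/\partial w$: the reciprocal bound $|(\partial v/\partial w)^{-1}| = |\partial w/\partial v| \le N 2^{3n}$ from \eqref{A.3.8} gives a lower eigenvalue $\tilde\delta_1 \ge c \delta_0 2^{-6n}$, which transfers to $\mathfrak{A} = \mathbb{A}\zeta_n + (1-\zeta_n)\mathbf{1}_3$ since $\mathbf{1}_3 \ge 2^{-6n}\mathbf{1}_3$. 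For \eqref{eq2.18}, the chain rule $\nabla_v \mathbb{A} = (\nabla_w \mathbb{A})\cdot(\partial w/\partial v)$ combines with $|\nabla_w \mathbb{A}| \le N 2^{-3n}$ (an extra $2^{-n}$ from differentiating $\partial v/\partial w$ via \eqref{A.3.0}) and $|\partial w/\partial v| \le N 2^{3n}$, giving $|\nabla_v \mathbb{A}| \le N$; the gradient bound for $\zeta_n$ in \eqref{eq2.17} then passes the bound to $\mathfrak{A}$.

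The H\"older bound \eqref{eq2.23} is the most delicate point. By Lemma \ref{lemma A.2}, $\sigma_g$ is in $C^{\varkappa/3,\varkappa}_{x,p}$, and this survives conjugation with $(\partial y/\partial x) \in C^{0,1}_b$, so $\mathcal{A}$ lies in the same space. The change of variables $w \mapsto v = \mathcal{W}(y,w)$ distorts distances: whereas $|v_1 - v_2| \sim |w_1 - w_2|/(1+|Mw|^2)$ can be as small as $2^{-2n}|w_1 - w_2|$, the forward map $w \mapsto v$ is $N 2^{-n}$-Lipschitz, so for the $v$-H\"older norm we pay one factor of $2^{n\varkappa} \le 2^n$; for the $y$-H\"older norm we must also absorb the $y$-dependence of $\mathsf{W}$, but \eqref{A.5.0.1} gives $|\nabla_y \mathsf{W}| \le N 2^n$ which contributes a tame factor via interpolation with $L_\infty$. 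A careful bookkeeping of these two contributions yields $\|\mathfrak{A}\|_{L_\infty((0,T))C^{\varkappa/3,\varkappa}_{y,v}} \le N 2^n$. I expect this step to be the main obstacle, since one must verify that the H\"older exponent and the growth $2^n$ match the stated bound rather than some larger power.

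For the drift terms: \eqref{eq2.45} follows because $\mathcal{B}$ is $L_\infty$-bounded (by \eqref{eq1.2.2}) and $\mathbb{B} = (\partial v/\partial w)\mathcal{\hathat B}$ gains a factor $2^{-n}$ from \eqref{A.3.0}. For the `geometric' drift $\mathbb{X}$, note from \eqref{eq2.0.1} that $|\mathcal{X}| \le N|W| \cdot |\partial p/\partial y|$; since $|W| \le 1$ and $|\partial p/\partial y| \le N|w| \lesssim 2^n$ on the support, $|\mathcal{X}| \lesssim 2^n$, which multiplied by $|\partial v/\partial w| \lesssim 2^{-n}$ gives \eqref{eq2.46}. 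Differentiating in $v$ by the chain rule and using $|\partial w/\partial v| \le N 2^{3n}$ produces the $2^{2n}$ loss in \eqref{eq2.47}. Finally, $\mathbb{G} = (\partial v/\partial w)(\partial w/\partial y)v$: $|v| \le N$ by \eqref{A.3.6}, $|\partial v/\partial w| \le N 2^{-n}$, and $|\partial w/\partial y| \le N 2^n$ by \eqref{A.5.0.1}, giving \eqref{eq2.47.1}; for \eqref{eq2.47.2} one differentiates once more in $v$, picking up the worst factor $|\nabla_v \nabla_v \mathsf{W}| \le N 2^{5n}$ from \eqref{A.5.0.0} or equivalently $|\partial w/\partial v| \cdot 2^n$ growth, multiplied by $|\partial v/\partial w| \sim 2^{-n}$, for the net gain of $2^{4n}$. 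All constants are geometric and independent of $\lambda, \varepsilon, \varepsilon_0$ as required.
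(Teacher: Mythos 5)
Your proof follows the paper's general strategy of pairing each bound with the relevant derivative estimates for $\mathcal{W}_y$ and $\Upsilon_n$ from Lemmas~\ref{lemma A.3} and \ref{lemma A.5}, and the ellipticity argument via Lemma~\ref{lemma A.4} and the lower-order bounds for $\mathbb{B}$, $\mathbb{X}$, $\mathbb{G}$ are correct in outline. However, there is a genuine gap in your treatment of the H\"older estimate \eqref{eq2.23}, and it is not a matter of bookkeeping as you anticipate.

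You write that ``$\sigma_g$ is in $C^{\varkappa/3,\varkappa}_{x,p}$, and this survives conjugation with $(\partial y/\partial x) \in C^{0,1}_b$, so $\mathcal{A}$ lies in the same space.'' This is false as stated: $\mathcal{A}$ is defined in \eqref{eq2.11} by a piecewise formula, equal to $A$ on $(G\cap\overline{\bR^3_-})\times\bR^3$ and to $\bm{R}A(t,\bm{R}y,\bm{R}w)\bm{R}$ on the reflected side. A priori this even extension can be \emph{discontinuous} across $\{y_3=0\}\times\bR^3$ no matter how smooth $\sigma_g$ is, because the off-diagonal entries $A^{i3}$, $i\in\{1,2\}$, change sign under reflection. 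The global H\"older regularity of $\mathfrak{A}$ across the mirror plane is precisely what Lemma~\ref{lemma 4.1} needs, and it is a nontrivial structural fact: it requires verifying the compatibility identity $A^{i3}(t,y_1,y_2,0,w)=-A^{i3}(t,y_1,y_2,0,\bm{R}w)$, which is the content of Lemma~\ref{lemma A.6}(iii) and relies essentially on the specular reflection boundary condition \eqref{eq1.5.1} satisfied by $g$. Your argument nowhere invokes the SRBC on $g$, so it cannot establish \eqref{eq2.23}; without this continuity the $S_2$ estimate at the heart of Lemma~\ref{lemma 7.4} would fail. A smaller issue is the chain-rule bookkeeping for $\nabla_v\mathbb{A}$: your formula $\nabla_v\mathbb{A}=(\nabla_w\mathbb{A})\cdot(\partial w/\partial v)$ conflates $\mathbb{A}$ (already a function of $v$ via \eqref{eq2.9}) with $\mathcal{A}$, and one must apply the product rule to the three factors $(\partial v/\partial w)\,\mathcal{\hathat A}\,(\partial v/\partial w)^T$, using \eqref{eq2.16.1} for the Jacobian factors and the chain rule with $\partial w/\partial v$ for $\mathcal{\hathat A}$; the numerology ultimately comes out the same, but the argument as written does not parse.
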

\begin{proof}
\textit{Ellipticity and boundedness of the leading coefficients.} 
\begin{enumerate}

\item Bounds of $\mathcal{A}$ (see \eqref{eq2.11}). By Lemma  \ref{lemma 6.1} and \eqref{eqA.4.1}, for sufficiently small $r_0$,  
$$
	(\delta_0/4)  |\xi|^2 \le	A^{i j} (z) \xi_i \xi_j \le (4 \delta^{-1}_0) |\xi|^2, \,  \forall z \in \bR^7_T, \xi \in \bR^3, 
$$
where $A$ is defined in \eqref{eq2.0}, 
and hence, the same estimate also holds for $\mathcal{A}$. 

\item Bounds of $\mathfrak{A}$ (see \eqref{eq2.10}). First, we estimate $\mathbb{A}$ via  Lemma \ref{lemma A.4} with $\psi = \mathcal{W}_y$. By \eqref{A.3.0} - \eqref{A.3.8} with  $m = 2^{n+2}$, 
the assumptions of  Lemma \ref{lemma A.4} hold with $N_0 = N_0' 2^{-n}$, $N_1 = N_1' 2^{3 n}$, and $\delta_1 = \delta_0/4$. Then, by \eqref{eqA.4.1}), we conclude that for any $\xi \in \bR^3$ and $z \in (0, T) \times \Upsilon_n (G \times \{|p| < 2^{n+2}\}$, 
\begin{equation}
			\label{eqA.7.1}
	N' 2^{- 6 n}    \le  \mathbb{A}^{i j}  (z) \xi_i \xi_j \le N'',
\end{equation}
and hence, the same bounds (with, perhaps, different constants $N'$ and $N''$) are true for $\mathfrak{A}$.
\end{enumerate}

\textit{Boundedness of $\nabla_v \mathfrak{A}$.}
\begin{enumerate}

\item Estimate of $\nabla_w \mathcal{A}$. By \eqref{eqA.2.1} in Lemma \ref{lemma A.2} and the construction of the coefficients $A$ (see \eqref{eq2.0}), 
\begin{equation*}
	\|\nabla_w A\|_{   L_{\infty} ((0, T) \times \psi (\Omega_{r_0} (x_0)) \times \bR^3)   } \le N (K, \Omega).
\end{equation*}
  Then, by \eqref{eq2.11} we have 
\begin{equation}
						\label{eqA.7.2}
	\|\nabla_w \mathcal{A}\|_{   L_{\infty} ((0, T) \times  G \times \bR^3)   } \le N (K, \Omega).
\end{equation}

\item Estimate of $\mathfrak{A}$. First, we estimate $\mathcal{\hathat A}$, which is given \eqref{eq2.22}. By   \eqref{A.3.11}) with $m=2^{n+2}$,
\begin{equation}
            \label{eq2.16.1}
	\|\nabla_v \big(\frac{\partial v }{\partial w}\big) (y, w (y, v))\|_{   L_{\infty} (\Upsilon (G \times \{|w| < 2^{n+2}\}))  } \le N 2^{  n},
\end{equation}
 Then, by   the chain rule and \eqref{eqA.7.2} - \eqref{eq2.16.1}, one has
$$
\|\nabla_v \mathcal{\hathat A}\|_{   L_{\infty} ((0, T) \times \Upsilon (G \times \{|w| < 2^{n+2}\}))  } \le N 2^{ n}.
$$

Next,  recall that $\mathbb{A}$ is defined in \eqref{eq2.9}. Combining \eqref{eq2.15} (with $j=1$) and \eqref{eq2.16.1} with the last inequality, we get
\begin{align*}
	\|\nabla_v \mathbb{A}\|_{   L_{\infty} ((0, T) \times \Upsilon (G \times \{|w| < 2^{n+2}\}))  } \le N.
\end{align*}
Then, by the definition of $\mathfrak{A}$ (see \eqref{eq2.10}) and the last inequality    gives the desired bound \eqref{eq2.18}, that is
$$
    \|\nabla_v \mathfrak{A}\|_{   L_{\infty}  ((0, T) \times  \bR^6)  } \le N.
$$
\end{enumerate}

\textit{H\"older continuity   of the leading coefficients.}
Here we verify \eqref{eq2.23}.

\begin{enumerate}

\item Estimate of $\mathcal{A}$ (see \eqref{eq2.11}). First,  by  the definition of $A$ in \eqref{eq2.0} and \eqref{eqA.2.1} in Lemma \ref{lemma A.2}, we have
$$
	\|A\|_{   L_{\infty} ((0, T)) C^{\varkappa/3, \varkappa}_{x, p} (\psi (\Omega_{r_0} (x_0)) \times \bR^3)   } \le N (K, \Omega, \varkappa).
$$
To show that $\mathcal{A}$  is H\"older continuous, that is, 
\begin{equation}
			\label{eq2.20}
		\|\mathcal{A}\|_{   L_{\infty} ((0, T)) C^{\varkappa/3, \varkappa}_{x, p} (G \times \bR^3)   } \le N (K, \Omega, \varkappa),
\end{equation}
it suffices to check that 
\begin{equation}
			\label{eq2.6}
	\mathcal{A} \quad \text{is continuous across} \, \,  \{y_3 = 0\} \times \bR^3.
\end{equation}
Note that for any arbitrary $3$ by $3$ symmetric matrix $M = (m^{i j}, i, j = 1, 2, 3)$, 
$$
	\bm{R} M \bm{R} = \begin{pmatrix}
		m^{11} & m^{1 2} & -m^{1 3}\\
		m^{12} & m^{2 2} & -m^{2 3}\\
		-m^{13} & -m^{ 2 3} & m^{3 3}
		\end{pmatrix}.
$$
Then, by the definition of $\mathcal{A}$ (see \eqref{eq2.11}), if  the identity
\begin{equation}
			\label{eq2.19}
		 A^{ i 3} (t, y_1, y_2, 0, w) =  -  A^{ i 3} (t, y_1, y_2, 0,  \bm{R} w),  i  \in  \{1, 2\}
\end{equation}
is valid, then, \eqref{eq2.6} is also true. The identity \eqref{eq2.19} follows from Lemma \ref{lemma A.6} because  Assumption \ref{assumption 1.5} is valid as $g$ satisfies the SRBC (see \eqref{eq1.5.1}).

\item Estimate of $\mathfrak{A}$. First, we estimate $\mathcal{\hathat A}$ (see \eqref{eq2.22}).  By  \eqref{A.3.8} with $m = 2^{n+2}$ and \eqref{A.5.0.1}, we have
\begin{equation}
\begin{aligned}
			\label{eq2.21}
   & 	\|\frac{\partial w}{\partial v}\|_{  L_{\infty} (\Upsilon (G \times \{|w| < 2^{n+2}\}))   }  \le N (\Omega) 2^{ 3 n}, \\
   &    \|\frac{\partial w}{\partial y}\|_{  L_{\infty} (\Upsilon (G \times \{|w| < 2^{n+2}\}))   } \le N (\Omega) 2^{  n}.
     \end{aligned}
\end{equation}
By the definition of $\mathcal{\hathat A}$ in \eqref{eq2.22} and \eqref{eq2.20}, and \eqref{eq2.21},  we have
\begin{equation}
						\label{eq2.24}
		\|\mathcal{\hathat A} \|_{   L_{\infty} ((0, T)) C^{\varkappa/3, \varkappa}_{y, v} (\Upsilon (G \times \{ |w| < 2^{n+2}\}))   } \le  N (K, \Omega, \varkappa)  2^{ 3 n}.
\end{equation}

Next, we estimate the H\"older norm of $\mathbb{A}$. 
First, we  need to bound the H\"older norm of 
$$
    \bigg(\frac{\partial v}{\partial w}\bigg) (y, w (y, v)).
$$
By  the chain rule, for any $(y, v) \in \Upsilon (G \times \{ |w| < 2^{n+2}\})$,
\begin{equation}
            \label{eq2.24.1}
\begin{aligned}
   & |\nabla_y \big(\frac{\partial v}{\partial w}\big) (y, w (y, v))| \le  \|\nabla_y \frac{\partial v}{\partial w}\|_{  L_{\infty} (G \times \{|w| < 2^{n+2}\}) } \\
   & + \|\nabla_w \frac{\partial v}{\partial w}\|_{ L_{\infty} (G \times \{|w| < 2^{n+2}\}) } \|\frac{\partial w}{\partial y}\|_{ L_{\infty} (\Upsilon_n (G \times \{|w| < 2^{n+2}\})) }
\end{aligned}
\end{equation}
Hence, by \eqref{A.5.0} in Lemma  \ref{lemma A.5}  $(ii)$, the first term on the l.h.s. of \eqref{eq2.24.1} is bounded by $N (\Omega) 2^{-n}$. Furthermore, \eqref{A.3.0}) and  \eqref{A.5.0.1}, we conclude that the second term on the r.h.s of \eqref{eq2.24.1} is also bounded by $N 2^{-n}$, and hence, 
\begin{equation}
			\label{eq2.25}
	\|\nabla_y \big(\frac{\partial v}{\partial w}\big) (y, w (y, v))\|_{  L_{\infty} (\Upsilon (G \times \{|w| < 2^{n+2}\}))   }   \le N (\Omega) 2^{ - n}.
\end{equation}
Furthermore, by the definition of $\mathbb{A}$ in \eqref{eq2.9},
 \eqref{eq2.24}, and  \eqref{eq2.16.1} and \eqref{eq2.25}, we obtain 
\begin{equation}
			\label{eq2.25.2}
		\|\mathbb{A}\|_{   L_{\infty} ((0, T)) C^{\varkappa/3, \varkappa}_{y, v} (\Upsilon (\bR^3 \times \{ |w| < 2^{n+2}\}))   } \le  N (K, \Omega, \varkappa)  2^{   n}.
\end{equation}
Finally, by the definition of $\mathfrak{A}$ (see \eqref{eq2.10}), the bound  \eqref{eq2.25.2},  and our choice of the cutoff function $\zeta_n$ (see \eqref{eq2.17}),  we obtain 
$$
\|\mathfrak{A}\|_{   L_{\infty} ((0, T)) C^{\varkappa/3, \varkappa}_{y, v} (\bR^6)} \le  N (K, \Omega, \varkappa)  2^{   n}
$$
(see \eqref{eq2.23}).

\textit{Estimates of the lower-order terms.} Invoke the definition $\mathbb{B}$ (see \eqref{eq2.0}, \eqref{eq2.11.2}, \eqref{eq2.22},  \eqref{eq2.9.2}). By the assumption \eqref{eq1.2.2} and \eqref{eq2.0}, and \eqref{eq2.11.2},   we have
\begin{align*}
    \|\mathcal{B}\|_{ L_{\infty} ((0, T) \times G \times \bR^3) },
\end{align*}
and, then, by the first inequality in \eqref{eq2.15}, we obtain \eqref{eq2.45}.

Next, recall the definition of $\mathbb{X}$ (see \eqref{eq2.0.1}, \eqref{eq2.11.1}, \eqref{eq2.22}, \eqref{eq2.9.1}).  Note that by \eqref{eq2.0.1},  for any $(y, w) \in  \psi (\Omega_{3r_0/4}) \times \{|w| <  2^{n+2} \}$,
\begin{equation}
                \label{eq2.51}
\begin{aligned}                
    &|X (y, w)| + |\nabla_w X (y, w)| \\
    & \le  N (\Omega) (1+|w|^2)^{1/2}  \le N 2^n.
\end{aligned}    
\end{equation}
Hence, by \eqref{eq2.11.1}, the same bound is true for $\mathcal{X}$. Furthermore, by the definition of $\mathbb{X}$ (see \eqref{eq2.9.1}) and  the first inequality in \eqref{eq2.15}, we get 
$$
    \|\mathbb{X}\|_{ L_{\infty}  (\Upsilon (G \times \{|w| < 2^{n+2}\})) }  \le N (\Omega).
$$
Next, recall the definition of $\mathcal{\hathat X}$ (see \eqref{eq2.22}). 
By the chain rule,  \eqref{eq2.51} and \eqref{eq2.21} (cf. \eqref{eq2.24}), we get
\begin{equation}
                \label{eq2.51.1}
    \|\nabla_v \mathcal{\hathat X}\|_{ L_{\infty} (\Upsilon (G \times \{|w| < 2^{n+2}\})) }  \le N 2^{ 3 n}.
\end{equation}
Finally, by the definition of $\mathbb{X}$ (see \eqref{eq2.9.1}),  \eqref{eq2.51.1} and \eqref{eq2.16.1}, and the first inequality in \eqref{eq2.15}, we conclude that the bound \eqref{eq2.47} is valid, that is 
$$
    \|\nabla_v \mathbb{X}\|_{ L_{\infty} (\Upsilon (G \times \{|w| < 2^{n+2}\})) }   \le N 
 2^{ 2  n}.
$$

Finally, we estimate the second `geometric' coefficient $\mathbb{G}$ (see \eqref{eq2.9.1.1}). 
By \eqref{eq2.21} and the first inequality in  \eqref{eq2.15},
$$
    \|\mathbb{G}\|_{ L_{\infty} (\Upsilon (G \times \{|w| < 2^{n+2}\})) } \le N.
$$
Furthermore, by differentiating \eqref{eq2.9.1.1} and using the estimates \eqref{eq2.16.1} and \eqref{eq2.21}, and \eqref{A.5.0.0} combined with the 
the first inequality in  \eqref{eq2.15}, we conclude
$$
    \|\nabla_v \mathbb{G}\|_{ L_{\infty} (\Upsilon (G \times \{|w| < 2^{n+2}\})) } \le N 2^{ 4 n}.
$$
\end{enumerate}
\end{proof}

\section{Relativistic kinetic transport equation in a domain}
                                                                    \label{section 9}
All the assertions here are either contained in \cite{BP_87} or \cite{U_86}, or can be easily proved by adapting the arguments therein.
We start by introducing the relativistic counterpart of the set of test functions in \cite{BP_87} (see Definition \ref{definition C.2}).

\begin{definition}
			\label{definition C.1}
We say that $G \subset \Sigma^T \cup \Sigma^T_{\pm}$ is a good set if there is a positive lower bound of the length of the characteristic lines  $(t+s, x + v (p) s, p)$ inside $\Sigma^T \cup \Sigma^T_{\pm}$ that intersect  $G$.
\end{definition}

\begin{definition}
                \label{definition C.2}
Let $\mathsf{\Phi}$ be the set of functions $\phi$ on $\Sigma^T$ such that 
\begin{itemize}[--]
    \item $\phi$ is continuously differentiable along the  characteristic lines $(t+s, x + v (p) s, p)$,

    \item $\phi$, $Y \phi$ are bounded functions on $\Sigma^T$,

    \item the support of $\phi$ is a  bounded  good set.
\end{itemize}
\end{definition}

\begin{remark}
		\label{remark 2.2.4}
By following the argument of Lemma 2.1 in \cite{G_93}, one can show that
$$
	C^1_0 \bigg(\overline{\Sigma^T}
\setminus \big(((0, T) \times \gamma_0) \cup (\{0\} \times \partial \Omega \times \bR^3)   \cup (\{T\} \times \partial \Omega \times \bR^3)\big)\bigg) \subset \mathsf{\Phi}.
$$
\end{remark}

\begin{definition}
                \label{definition C.3}
For $r \in [1, \infty)$, we say that $\xi \in L_{r,  \theta, \text{loc} } (\Sigma^T_{\pm})$ if for any good set $G$,
one has $\xi 1_{G} \in L_{r, \theta} (\Sigma^T_{\pm})$. 
\end{definition}

To define the traces of functions on $\Sigma^T$, we need the following assertion, which is similar to Proposition 1 in \cite{BP_87}.
\begin{proposition}
			\label{proposition C.1}
Let $r \in [1, \infty)$,  $\theta \ge 0$ be numbers,
and $u \in L_{r, \theta} (\Sigma^T)$ be a function 
such that $Y u \in L_{r, \theta} (\Sigma^T)$.
Then, there exist unique functions $u_{\pm}, u_T, u_0$ on $\Sigma^T_{\pm}$ and $\Omega \times \bR^3$, respectively, such that
\begin{itemize}[--] 
\item $u_{\pm} \in L_{r, \theta, \text{loc} } (\Sigma^T)$, $u_T, u_0 \in L_{r, \theta} (\Omega \times \bR^3),$

\item the following Green's identity holds for any $\phi \in \Phi$:
 	\begin{equation}
				\label{eqC.1.1}
 	\begin{aligned}
 	    &		\int_{\Sigma^T}  (Y u)  \phi
  +   (Y \phi)    u \, dz
	\\
	  &
		 =  \int_{\Omega \times \bR^3 }       u_T (x, p) \phi (T, x, p) \, dx dp
		 -
		 \int_{\Omega \times \bR^3 }     u_0 (x, p)  \phi (0, x, p) \, dx dp
	  \\
 	   &
		\quad +  \int_{\Sigma^T_{+}}    u_{+} \phi \, |v (p) \cdot n_x| \,  dS_x dp  dt
	-  \int_{\Sigma^T_{-}}    u_{-} \phi \, |v (p) \cdot n_x| \,  dS_x dp  dt.
	\end{aligned}
	   	\end{equation}
\end{itemize}
\end{proposition}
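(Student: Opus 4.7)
The plan is to adapt the classical method-of-characteristics trace theorem of \cite{BP_87} (and \cite{U_86}) to the relativistic setting, in which $v(p) = p/p_0$ plays the role of $p$. The key observation is that $|v(p)| < 1$ is smooth and bounded, so all characteristic computations go through essentially verbatim. The relevant characteristic flow is $s \mapsto \Phi_{t,x,p}(s) := (t+s, x + v(p)s, p)$, which for fixed $p$ is merely a translation in $t,x$ with constant velocity $v(p)$. For each $(t,x,p) \in \Sigma^T$ let $s_\pm(t,x,p)$ denote the forward/backward exit times from $\Sigma^T$; the exit of a characteristic lies on $\{0\}\times\Omega\times\bR^3$, $\{T\}\times\Omega\times\bR^3$, or on $\Sigma^T_\pm$ (generically, since grazing exit constitutes a negligible set).

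First I would perform a change of variables locally trivializing the transport: writing $z = \Phi_{t_0,x_0,p}(s)$ in a characteristic tube (with transverse coordinates chosen on any convenient cross-section away from $\gamma_0$), the operator $\Yr$ becomes $\partial_s$. The hypotheses $u,\Yr u \in L_{r,\theta}(\Sigma^T)$ together with Fubini imply that along $dp\,d(\text{cross-section})$-almost every characteristic, both $u$ and $\partial_s u$ lie in $L^r_{\text{loc}}$ of the relevant $s$-interval; by the one-dimensional fundamental theorem of calculus this makes $s \mapsto u(\Phi_{t_0,x_0,p}(s))$ absolutely continuous. One then defines $u_\pm, u_0, u_T$ as the pointwise limits of $u$ along characteristics at the appropriate endpoint. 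Measurability and independence of the parametrization are automatic from the construction.

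Second I would verify the membership $u_0, u_T \in L_{r,\theta}(\Omega\times\bR^3)$ and $u_\pm \in L_{r,\theta,\mathrm{loc}}(\Sigma^T_\pm)$. This uses the identity
\[
u_+(t,x,p) = u(\Phi_{t,x,p}(-\sigma)) + \int_{-\sigma}^0 (\Yr u)(\Phi_{t,x,p}(s))\,ds,
\]
valid for any $\sigma \in (0, -s_-(t,x,p))$, together with Hölder's inequality and integration over a good set (Definition \ref{definition C.1}); the positive lower bound on characteristic length built into the notion of good set gives uniform control. The $L_{r,\theta}$ bounds on $u_0,u_T$ follow from the same identity with $\sigma \to T$ and Fubini in the cross-sectional variables. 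The weight $p_0^{\theta}$ is transported unchanged along characteristics since $p$ is constant in $s$, so the weighted estimates are immediate.

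Third I would establish Green's identity \eqref{eqC.1.1}. For smooth $u$ the identity is standard integration by parts; for general $u$ one mollifies along characteristics (convolution in $s$, which commutes with $\Yr$) to obtain $u_\epsilon \to u$ and $\Yr u_\epsilon \to \Yr u$ in $L_{r,\theta}$, with the traces of $u_\epsilon$ converging in $L_{r,\theta,\mathrm{loc}}$ on the support of any $\phi \in \mathsf{\Phi}$ (the good-set support condition is exactly what makes the boundary integrals close uniformly in $\epsilon$). Uniqueness of the quadruple $(u_+,u_-,u_0,u_T)$ follows by testing the difference of two candidate identities against a dense collection of $\phi \in \mathsf{\Phi}$ concentrated near each of the four boundary pieces in turn; such test functions exist away from $\gamma_0$ by Remark \ref{remark 2.2.4}.

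The main obstacle is bookkeeping around the grazing set $\gamma_0$, where $v(p)\cdot n_x = 0$ and characteristics can be arbitrarily short. This is precisely why $u_\pm$ is only asserted to lie in $L_{r,\theta,\mathrm{loc}}$ rather than globally, and why test functions are restricted to $\mathsf{\Phi}$ (characteristic-length lower bound on their support). Once the good-set localization is enforced at every step, no uniform estimate near $\gamma_0$ is required, and the remainder of the argument is a routine relativistic translation of \cite{BP_87}; the relativistic boundedness $|v(p)| \le 1$ in fact removes the large-$p$ delicacies that appear in the Newtonian case.
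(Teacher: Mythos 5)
Your proposal is essentially correct and matches the approach the paper itself indicates, namely adapting Proposition 1 of \cite{BP_87} (together with \cite{U_86}) to the relativistic transport operator $\Yr = \partial_t + v(p)\cdot\nabla_x$; the paper does not supply a detailed proof for Proposition~\ref{proposition C.1} but simply cites these references, and your write-up is a faithful fleshing-out of what that adaptation looks like (straightening characteristics, absolute continuity of $u$ along almost every characteristic, pointwise endpoint limits as definitions of traces, and Green's identity via mollification in the characteristic parameter). The only place where the argument is glossed over is the claim that the global $L_{r,\theta}$ bound on $u_0, u_T$ follows ``from the same identity with $\sigma\to T$''; since the backward exit time from $(T,x,p)$ can be arbitrarily small for $x$ near $\partial\Omega$, one actually needs the standard averaging-in-$\sigma$ trick (which produces a $1/\ell(z)$ factor) combined with a change of variables to a characteristic-tube parametrization to show the resulting weighted integral is finite, exactly as in \cite{BP_87}; this is a routine but nontrivial step that deserves to be spelled out rather than dismissed as ``$\sigma\to T$ and Fubini.''
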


\begin{definition}
			\label{definition C.4}
\index{$f_{\pm}$} Such functions $u_{\pm}, u_T, u_0$ are called the traces of a function $u$.
\end{definition}

The next lemma shows that $u_{\pm}$ belongs to a certain weighted Lebesgue space (see \cite{U_86}).
\begin{lemma}[Ukai's trace lemma]
    \label{lemma C.6}
For $r \ge 1$ and $u$ be such that $u, Y u \in L_r (\Sigma^T)$. Then, we have $u_{\pm} \in L_r (\Sigma^T_{\pm}, w |v (p) \cdot n_x|)$, where  $w (z) = \min \{1, l (z)\}$, and $l (z)$ is the length of the characteristic line $(t + s, x + v (p) s, p)$ inside $\Sigma^T \cup \Sigma^T_{\pm}$, and, in addition, 
\begin{equation}
        \label{eqC.6.1}
    \|u\|_{ L_r (\Sigma^T_{\pm}, w |v (p) \cdot n_x|) } \le N  \|Y u\|_{ L_r (\Sigma^T)}
    + N \|u\|_{ L_r (\Sigma^T)},
\end{equation}
where $N = N (r, T)$, and the weighted Lebesgue space on the l.h.s. is defined in \eqref{1.2.0.0}.
\end{lemma}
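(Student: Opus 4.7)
The plan is to argue by density from the case of smooth compactly supported $u$, for which the trace is classically well defined, and then pass to the limit using Proposition \ref{proposition C.1}. So assume $u \in C^1(\overline{\Sigma^T})$. For $(t,x,p) \in \Sigma^T_{+}$, let $\tau_b(t,x,p) \ge 0$ denote the backward exit time, i.e.\ the supremum of $s \in [0,t]$ such that $(t-s', x - v(p) s', p) \in \Sigma^T$ for all $s' \in [0,s)$. By construction $l(z) = \tau_b(z)$ for $z \in \Sigma^T_{+}$, and along the characteristic
\[
u(t,x,p) = u(t-s, x-v(p)s, p) + \int_0^s (\Yr u)(t-\sigma, x-v(p)\sigma, p)\,d\sigma, \quad s \in [0,\tau_b(z)].
\]
Raising to the $r$-th power, applying the elementary inequality $(a+b)^r \lesssim_r a^r + b^r$, and then Hölder on the integral yields
\[
|u(z)|^r \lesssim_r |u(t-s, x-v(p)s, p)|^r + s^{r-1} \int_0^s |\Yr u(t-\sigma, x-v(p)\sigma, p)|^r\,d\sigma.
\]

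I would then multiply this pointwise bound by $|v(p) \cdot n_x|$ and integrate $s \in [0, w(z)]$, where $w(z) = \min\{1, \tau_b(z)\}$. Using $s \le w(z) \le 1$ to bound the prefactor $s^{r-1}$, this gives
\[
w(z)\,|u(z)|^r |v(p) \cdot n_x| \lesssim_r |v(p) \cdot n_x| \int_0^{w(z)} \!\!|u(t-s, x-v(p)s, p)|^r\,ds + |v(p) \cdot n_x| \int_0^{w(z)} \!\!|\Yr u(t-\sigma, x-v(p)\sigma, p)|^r\,d\sigma.
\]
Next, I would integrate over $\Sigma^T_{+}$ with respect to $dS_x\,dp\,dt$. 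The key tool is the following coarea-type identity: the map
\[
\Phi: \{(z,s) : z \in \Sigma^T_{+},\ s \in [0, \tau_b(z)]\} \to \Sigma^T, \qquad (t,x,p,s) \mapsto (t-s, x-v(p)s, p),
\]
is injective (a point in $\Sigma^T$ determines a unique characteristic and hence at most one backward exit at $\Sigma^T_{+}$), and the pushforward of the measure $|v(p) \cdot n_x|\,dS_x\,dp\,dt\,ds$ under $\Phi$ is dominated by Lebesgue measure $dt'\,dx'\,dp$ on $\Sigma^T$. This follows because the flow of $\Yr = \partial_t + v(p) \cdot \nabla_x$ preserves $dt\,dx\,dp$ and because $|v(p) \cdot n_x|\,dS_x\,dp$ is precisely the flux measure on the outgoing boundary (standard disintegration as in \cite{BP_87, U_86}). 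Applying Fubini together with this pushforward bound gives
\[
\int_{\Sigma^T_{+}} w(z)\,|u(z)|^r |v(p) \cdot n_x|\,dS_x\,dp\,dt \lesssim \|u\|_{L_r(\Sigma^T)}^r + \|\Yr u\|_{L_r(\Sigma^T)}^r.
\]
An identical argument with forward characteristics handles $u_{-}$.

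Finally, for general $u$ with $u, \Yr u \in L_r(\Sigma^T)$, I would approximate by smooth functions via mollification along characteristic directions (as in \cite{BP_87}), which preserves the transport regularity and for which the above estimate holds uniformly. Passing to the limit and invoking the uniqueness of traces in Proposition \ref{proposition C.1} identifies the limit with $u_{\pm}$ and gives \eqref{eqC.6.1}. The main obstacle is the rigorous justification of the pushforward inequality on non-convex $\Omega$, where backward characteristics may re-enter $\Omega$ through $\gamma_{-}$ multiple times and approach the grazing set $\gamma_0$; this is resolved by localizing near the boundary with charts (taking $r_0$ small enough that characteristics cross the local chart at most once within a short time window) and verifying the change of variables there, summing over a partition of $\Sigma^T$, as in the original arguments of \cite{U_86, BP_87}.
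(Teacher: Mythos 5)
The paper does not present a proof of this lemma: Section~\ref{section 9} opens by stating that all assertions there are contained in, or follow by adapting, the arguments of \cite{U_86} and \cite{BP_87}, and your proposal is a correct reconstruction of exactly that standard trace argument. The key steps --- integrating $\Yr u$ along backward characteristics from $z\in\Sigma^T_{+}$, applying H\"older, truncating the $s$-integration at $w(z)\le l(z)$, and then using the change of variables $(z,s)\mapsto(t-s,\,x-v(p)s,\,p)$ whose Jacobian produces precisely the flux density $|v(p)\cdot n_x|\,dS_x\,dp\,dt\,ds \mapsto dt'\,dx'\,dp$, together with the injectivity of this map (which holds globally even for non-convex $\Omega$ because $s\le\tau_b$ restricts to first backward exits) --- carry over verbatim to the relativistic transport $v(p)=p/p_0$, since $v$ depends only on $p$ and the characteristics remain straight lines in $(t,x)$.
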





\begin{proposition}[see Theorem 5.1.2 in \cite{U_86}]
			\label{proposition C.2}
Let $r \in [1, \infty), \theta \ge 0$ be numbers  and $u$ and $\phi$ be the functions in the following class: 
\begin{itemize}
    \item $u,  Y u \in L_{r, \theta} (\Sigma^T)$, 
    \item either $u_0$ or $u_T$ belongs to $L_{2, \theta} (\Omega \times \bR^3)$, 
    \item  either $u_{+}$ or $u_{-}$ belongs to $L_{2, \theta} (\Sigma^T_{\pm}, |v (p) \cdot n_x|)$.
\end{itemize}
Then, we have 
\begin{equation}
		\label{eqC.2.1}
\begin{aligned}
    &    \int_{\Omega \times \bR^3} (u_T \phi_T (x, p)  - u_0 \phi_0 (x, p)) \, p_0^{\theta r} dx dp\\
&
   +  \int_{\Sigma^T_{+}}    u_{+} \phi_{+}   p_0^{\theta r}\,  |v (p) \cdot n_x|  \,  dS_x dp dt
    -  \int_{ \Sigma^T_{-} }     u_{-} \phi_{-}  p_0^{\theta r}\,  |v (p) \cdot n_x|  \,  dS_x dp dt\\
 &
  =
     \int_{\Sigma^T} \big((Y f) \phi + (Y \phi) f\big)  p_0^{\theta r} \, dz.
 \end{aligned}
\end{equation}
\end{proposition}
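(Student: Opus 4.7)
\medskip

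\noindent\textbf{Proof proposal.} The strategy is to promote Green's identity in Proposition \ref{proposition C.1} from test functions $\phi \in \mathsf{\Phi}$ to general $\phi$ in the stated regularity class, exploiting the elementary but decisive fact that the weight $p_0^{\theta r}$ depends only on the momentum variable and is therefore annihilated by the relativistic transport operator: $\Yr(p_0^{\theta r}) = 0$.

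First, I would dispose of the weight. Assume momentarily that $\phi \in \mathsf{\Phi}$. Since $\mathrm{supp}\,\phi$ is a bounded good set, the product $\psi := \phi\, p_0^{\theta r}$ again belongs to $\mathsf{\Phi}$, and $\Yr\psi = (\Yr\phi)\, p_0^{\theta r}$. Plugging $\psi$ into \eqref{eqC.1.1} immediately yields \eqref{eqC.2.1} in this restricted setting. Thus the whole problem reduces to approximating a general admissible $\phi$ by a sequence $\phi_n \in \mathsf{\Phi}$ with simultaneous convergence of traces.

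Next, I would construct $\phi_n \in \mathsf{\Phi}$ by the standard three-step procedure: (i) truncate in momentum by a smooth cutoff equal to $1$ for $|p|\le n$; (ii) multiply by a cutoff supported away from the grazing set $\gamma_0$ and from the temporal endpoints $t = 0, T$, chosen so that the support is a good set in the sense of Definition \ref{definition C.1}; (iii) regularize along characteristics by convolving in the transport variable $s$ in $(t+s, x+v(p)s, p)$, which commutes with $\Yr$ and produces $C^1$ regularity along characteristic lines. This is essentially the construction of \cite{BP_87} and \cite{U_86} adapted to the relativistic transport $\Yr$ (the only change is that $v(p) = p/p_0$ replaces $p$, which is irrelevant for the algebra). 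By standard arguments, one obtains convergences $\phi_n \to \phi$ and $\Yr\phi_n \to \Yr\phi$ in $L_{r,\theta}(\Sigma^T)$, $(\phi_n)_0 \to \phi_0$, $(\phi_n)_T \to \phi_T$ in $L_{2,\theta}(\Omega\times\bR^3)$, and $(\phi_n)_\pm \to \phi_\pm$ in $L_{2,\theta}(\Sigma^T_\pm, |v(p)\cdot n_x|)$; the last of these uses the Ukai-type trace bound \eqref{eqC.6.1} to upgrade $L_r$-convergence in the interior to convergence of traces on the weighted boundary spaces.

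Finally, I would pass to the limit: applying the identity from the first step to each $\phi_n$ and invoking the convergences above—pairing $u, \Yr u$ (in $L_{r,\theta}$) against $\phi_n, \Yr\phi_n$, and pairing $u_0, u_T, u_\pm$ against the corresponding traces of $\phi_n$—gives \eqref{eqC.2.1} in the limit. The main obstacle is step (ii) of the approximation: the functions in $\mathsf{\Phi}$ must have supports whose characteristic lengths are bounded below, which forces the cutoff to degenerate near $\gamma_0$. The saving grace is that the boundary measure carries the factor $|v(p)\cdot n_x|$, so a $\delta$-neighborhood of $\gamma_0$ contributes at most $O(\delta)$ to the boundary integrals, and the error from the cutoff vanishes as the neighborhood shrinks. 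Since this construction is by now classical, I would carry it out only schematically and refer to \cite{U_86} and \cite{BP_87} for the technical details.
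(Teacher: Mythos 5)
Your strategy is the right one, and it matches what Ukai and Beals--Protopopescu do; the paper itself offers no proof of Proposition \ref{proposition C.2}, attributing it to \cite{U_86} with the remark that the Appendix results ``can be easily proved by adapting the arguments therein.'' In particular, the observation that $\Yr (p_0^{\theta r}) = 0$ (so that $\phi \mapsto \phi\, p_0^{\theta r}$ maps $\mathsf{\Phi}$ into $\mathsf{\Phi}$) is correct and immediately reduces the weighted identity to \eqref{eqC.1.1}.

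One point needs sharpening before the limit can be taken. The Ukai trace estimate \eqref{eqC.6.1} controls $(\phi_n - \phi)_{\pm}$ only in the degenerate norm $L_{2}\bigl(\Sigma^T_{\pm},\, w\, |v(p)\cdot n_x|\bigr)$ with $w(z) = \min\{1, l(z)\}$, and $w$ vanishes not only near the grazing set $\gamma_0$ but also near the temporal endpoints $t=0,T$; it therefore does not, by itself, ``upgrade'' interior $L_{r,\theta}$-convergence to convergence of traces in $L_{2,\theta}(\Sigma^T_{\pm}, |v(p)\cdot n_x|)$, as your text claims. The repair is essentially the one you already sketch at the end, but it needs to be applied to both sources of degeneracy and organized as a double limit: first hold the good-set cutoff $\chi_n$ fixed, so that $l(z)$ is bounded below on $\operatorname{supp}\chi_n$, hence $w \simeq 1$ there and the mollification error in $L_2(w|v\cdot n_x|)$ controls the error in $L_2(|v\cdot n_x|)$ on that set; then let the cutoff exhaust $\Sigma^T_{\pm}$, and use the absolute continuity of $\int |u_{\pm}\phi_{\pm}|\, p_0^{\theta r}\,|v(p)\cdot n_x|\,dS_x\,dp\,dt$ (which follows from the assumed $L_{2,\theta}(\Sigma^T_\pm,|v\cdot n_x|)$ bounds on both $u_{\pm}$ and $\phi_{\pm}$) to kill the contributions from the thin neighborhoods of $\gamma_0$ and of $\{t=0\}\cup\{t=T\}$. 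With that rearrangement the argument closes; this is a correction of a loosely stated step rather than of the approach.
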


The following lemma shows that one can drop the `strong' integrability conditions on the traces $u$ and $\phi$ on $\Sigma^T_{\pm}$ in Proposition \ref{proposition C.2} if $u$ and $\phi$ satisfy the SRBC.
\begin{lemma}
            \label{lemma C.5}
We assume that 
\begin{itemize}
    \item[--]  $u, \phi, Y u, Y \phi \in L_{2, \theta} (\Sigma^T)$, 
    \item[--]  either $u_0, \phi_0  \in L_{2, \theta} (\Omega \times \bR^3)$ or the same holds for $u_T, \phi_T$, 
    \item[--] $u$ and $\phi$  satisfy the SRBC.
  \end{itemize}
Then, the following variant of the energy identity holds:
\begin{align}
    \label{eqC.5.1}
   & \int_{ \Omega \times \bR^3 }  \big((u \phi) (T, x, p) -   (u \phi) (0, x, p)\big) \, p_0^{\theta r} dxdp \\
   & =  \int_{\Sigma^T} \big(u (Y \phi) + \phi (Y u)\big) \, p_0^{\theta r} dz. \notag
\end{align}
In addition, $u, \phi \in C ([0, T]) L_2 (\Omega \times \bR^3)$.
\end{lemma}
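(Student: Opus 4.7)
The strategy is to reduce the claim to the Green-type identity in Proposition~\ref{proposition C.2} and exploit the specular reflection symmetry to cancel the boundary contributions on $\Sigma^T_\pm$. First I would verify that, under the assumptions, we may actually invoke Proposition~\ref{proposition C.2}; then I would change variables $p\mapsto R_x p$ on one of the two boundary pieces and use the SRBC for both $u$ and $\phi$ to show the $\Sigma^T_\pm$ integrals exactly cancel.

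The only nontrivial hypothesis to verify for Proposition~\ref{proposition C.2} is the strong trace integrability $u_\pm,\phi_\pm \in L_{2,\theta}(\Sigma^T_\pm,|v(p)\cdot n_x|)$, since Proposition~\ref{proposition C.1} only yields a priori $L_{2,\theta,\mathrm{loc}}$ traces. Ukai's trace lemma (Lemma~\ref{lemma C.6}) gives the slightly weaker bound
\begin{equation*}
\|u_\pm\|^2_{L_2(\Sigma^T_\pm,\,w|v\cdot n_x|p_0^{2\theta})}\le N(\|u\|^2_{L_{2,\theta}(\Sigma^T)}+\|\Yr u\|^2_{L_{2,\theta}(\Sigma^T)}),
\end{equation*}
with $w=\min\{1,l(z)\}$, and the weight $w$ only degrades where a backward characteristic meets the boundary in time less than $1$. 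I would upgrade this to the unweighted bound using the SRBC: if a backward characteristic from $(t,x,p)\in\gamma_+$ has short run-length $l<1$ before reflecting off $\partial\Omega$, the SRBC identifies $u$ on the incoming trace with $u$ on the outgoing reflected trace, so Ukai's estimate may be iterated along the reflected characteristic. Since $\Omega$ is $C^{1,1}$ and $T<\infty$, a finite number of reflections exhausts $[0,T]$ on the complement of a grazing null set, yielding $u_\pm,\phi_\pm\in L_{2,\theta}(\Sigma^T_\pm,|v\cdot n_x|)$.

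Having verified this, applying Proposition~\ref{proposition C.2} gives
\begin{equation*}
\int_{\Omega\times\bR^3}(u_T\phi_T-u_0\phi_0)\,p_0^{\theta r}\,dx\,dp+\cI_+-\cI_-=\int_{\Sigma^T}\bigl(u\,\Yr\phi+\phi\,\Yr u\bigr)p_0^{\theta r}\,dz,
\end{equation*}
where $\cI_\pm:=\int_{\Sigma^T_\pm}u_\pm\phi_\pm\,p_0^{\theta r}|v\cdot n_x|\,dS_x\,dp\,dt$. The reflection $p\mapsto R_xp$ is an orthogonal involution, so it preserves $dp$, $p_0$ and $|v\cdot n_x|$, and it maps $\gamma_+$ onto $\gamma_-$. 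Applying this change of variables inside $\cI_+$ and using the SRBC $u_-(t,x,p)=u_+(t,x,R_xp)$ (and likewise for $\phi$) converts $\cI_+$ into $\cI_-$, so $\cI_+-\cI_-=0$ and~\eqref{eqC.5.1} follows upon identifying $u_0(x,p)=u(0,x,p)$ and $u_T(x,p)=u(T,x,p)$ a.e.\ (and similarly for $\phi$).

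The anticipated main obstacle is the bootstrap step for trace integrability in a possibly non-convex $\Omega$, where the geometry of multiple reflections near the grazing set is delicate. A cleaner fallback, if the iteration becomes cumbersome, is a density argument: approximate $(u,\phi)$ by smoother pairs satisfying SRBC using the mirror-extension-plus-mollification construction developed in Section~\ref{section 5} of this paper, apply the identity to the approximations (for which Proposition~\ref{proposition C.2} trivially applies), and pass to the limit using the uniform $L_{2,\theta}$ bounds on $u,\phi,\Yr u,\Yr\phi$ and the continuity of the trace map.
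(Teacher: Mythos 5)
The cancellation of the $\Sigma^T_\pm$ terms via the change of variables $p\mapsto R_x p$ and the SRBC is correct and is exactly the mechanism the paper uses. However, your primary argument---upgrading the Ukai weighted trace bound $\|u_\pm\|_{L_2(\Sigma^T_\pm,\,w|v\cdot n_x|)}<\infty$ to the unweighted $\|u_\pm\|_{L_2(\Sigma^T_\pm,\,|v\cdot n_x|)}<\infty$ by iterating along reflected characteristics---has a genuine gap. The degeneracy of the weight $w=\min\{1,l(z)\}$ occurs at the grazing set $\gamma_0$, and the specular map $R_x$ preserves the tangential component of $p$, so the reflection of a near-grazing characteristic is again a near-grazing characteristic. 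Iterating Ukai's estimate along such a chain does not leave a neighbourhood of $\gamma_0$; on the contrary, near grazing the segments get shorter and the number of bounces up to time $T$ diverges, so the iteration accumulates rather than terminates. Under the standing hypotheses ($u,\,\Yr u\in L_{2,\theta}(\Sigma^T)$, SRBC) there is no reason the traces should be in the unweighted class, and the paper makes no such claim---which is precisely why it does not attempt to verify the hypotheses of Proposition~\ref{proposition C.2} directly for $u$ and $\phi$.

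The paper's proof (following Lemma~3.7 of \cite{DGY_21}) is closer to your fallback: one regularizes and passes to the limit. But the regularization is a \emph{cutoff} $\xi_\varepsilon$ supported away from the grazing set in the flattened coordinates $(y,w)$ (a function of $y_3^2+w_3^2$), not a mollification. The point of the cutoff is that $\phi\xi_\varepsilon$ vanishes identically near $\gamma_0$, so its traces are trivially in the unweighted $L_2(|v\cdot n_x|)$ class and Proposition~\ref{proposition C.2} applies; moreover, choosing $\xi_\varepsilon$ even in $(y_3,w_3)$ preserves the SRBC. The extra commutator terms introduced by the cutoff (the terms labelled $I_{2,2}$, $I_{3,2}$ in the paper, involving $\xi'\big(\tfrac{y_3^2+w_3^2}{\varepsilon^2}\big)$ and the geometric vector fields $W$, $X$) are then shown to vanish as $\varepsilon\to 0$. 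A mollification of the mirror extension, as you suggest, does not by itself produce traces with the required integrability, so if you want to pursue the approximation route you should replace mollification with the grazing-set cutoff.
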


\begin{proof}
We repeat the argument of Lemma 3.7 in \cite{DGY_21}. The key idea is to cut off away from the grazing set so that the traces on $\Sigma^T_{\pm}$ of the regularized function $\phi_{\varepsilon}$ are of class $L_{2} (\Sigma^T, p_0^{2 \theta} |v \cdot n_x|)$ and they satisfy the SRBC. Then, the  Green's identity \eqref{eqC.2.1} is applicable. We list below a few minor modifications in the argument of the aforementioned lemma.
\begin{itemize}
    \item[--]  We note that the integrals over $\Sigma^T_{\pm}$ cancel out thanks to the SRBC.

    \item[--] One needs modify   integrals $I_2$ and $I_3$:
     \begin{align*}
&	I_2 =   I_{2, 1} + I_{2, 2} : =   \int_{\bH^T_{-}}  (W \cdot \nabla_y \widehat \phi) \xi_{\varepsilon} (y, w) \xi \bigg(\frac{t}{\varepsilon}\bigg) \xi\bigg(\frac{T-t}{\varepsilon}\bigg) \widehat u \, dydwdt  \\
	&\quad \quad  +  2\int_{   \bH^T_{-}:  \varepsilon^2 < y_3^2 + w_3^2 < 2 \varepsilon^2 }  \widehat u  \widehat \phi   \xi \bigg(\frac{t}{\varepsilon}\bigg) \xi \bigg(\frac{T-t}{\varepsilon}\bigg)   \frac{ W_3  y_3}{\varepsilon^2}
	\, \xi' \bigg(\frac{y_3^2 + w^2_3}{\varepsilon^2}\bigg) \,  dydwdt,\\
&
    I_3 =  I_{3, 1} + I_{3, 2} : =   - \int_{ \mathbb{H}^T_{-} }  (X \cdot \nabla_w \widehat \phi)\xi_{\varepsilon} (y, w)   \xi \bigg(\frac{t}{\varepsilon}\bigg) \xi \bigg(\frac{T-t}{\varepsilon}\bigg)    \widehat u \,  dydwdt\\
& \quad \quad - 2 
    \int_{  \bH^T_{-}: \varepsilon^2 < y_3^2 + w_3^2 < 2 \varepsilon^2 }
     \widehat u  \widehat \phi \xi \bigg(\frac{t}{\varepsilon}\bigg) \xi \bigg(\frac{T-t}{\varepsilon}\bigg)
   \frac{ X_3  w_3}{\varepsilon^2} \,  \xi' \bigg(\frac{ y_3^2 + w_3^2}{\varepsilon^2}\bigg) \,  dydwdt,
\end{align*}
where $W$ and $X$ are defined by  \eqref{eqD.1}. Repeating the argument on p. 489 in \cite{DGY_21}, we conclude
\begin{align*}
	\lim_{\varepsilon \to 0} (I_{2, 2} + I_{3, 2}) =0
\end{align*}
\end{itemize}
 The rest of the proof is the same as in Lemma 3.7 in \cite{DGY_21}.
\end{proof}

\section{Verification of the identities $\ref{eq2.1}$ and $\ref{eq2.14}$}
			\label{Appendix E}
\subsection{Identity  $\ref{eq2.1}$}
                    \label{section E.1}
Let $\Omega$ be a $C^2$ bounded domain, and
$$ 
	\psi: \Omega_{r_0} (x_0) \to \bR^3
$$ 
be a local $C^2$ diffeomorphism, and  
$\mathsf{\Psi}: (x, p) \to (y, w)$ be a mapping given by
$$
	y = \mathsf{\psi} (x), \quad w = (D \mathsf{\psi})\,   p.
$$
For a function $f$ vanishing outside $(0, T) \times \Omega_{r_0} (x_0)  \cap \bR^3$, we set 
$$
	\widehat f (y, w)   = f (\mathsf{\Psi}^{-1} (y, w)) = f (x (y), p (y, w)).
$$

We compute the transport term $Y$ in the $(t, y, w)$ variables. 
We repeat the calculations of Appendix A in \cite{DGY_21} with minor modifications.

Let $f \in L_{1, \text{loc}} (\bR^{7}_T), \phi  \in C^{0, 1}_0 (\bR^7_T)$ be functions such that $f (\cdot, x, \cdot), \phi (\cdot, x, \cdot) = 0$ for $x \not \in \Omega_{r_0}$.
 Using the chain rule gives
\begin{align*}
  &   (\nabla_p \phi) (t, x (y), p (y, w))    = \bigg(\frac{\partial y}{ \partial x}\bigg)^T \nabla_w \widehat \phi (t, y, w),\\
  &     \big(\nabla_x \phi\big) (t, x (y), p (y, w))\\
 &   = 	\bigg(\frac{\partial y}{ \partial x}\bigg)^T   \big[\nabla_y  	\widehat \phi (t, y, w)
	  -  \bigg(\frac{\partial p}{\partial y}\bigg)^T (\nabla_p \phi) (t, x (y), p (y, w))\big] \\
&
     = \bigg(\frac{\partial y}{ \partial x}\bigg)^T  \nabla_y  	\widehat \phi (t, y, w)
	  - \bigg(\frac{\partial y}{ \partial x}\bigg)^T \bigg(\frac{\partial p}{\partial y}\bigg)^T \bigg(\frac{\partial y}{ \partial x}\bigg)^T \nabla_w \widehat \phi  (t, y, w).
 \end{align*}
Therefore, 
\begin{align*}
    &   \bigg(\frac{p^T}{ \sqrt{1+|p|^2} }\bigg) (y, w)  \big(\nabla_x \phi\big) (t, x (y), p (y, w))  \\
	& =  \frac{w^T}{ \sqrt{1+|\frac{\partial x}{ \partial y} w|^2} }  \nabla_y  	\widehat \phi (t, y, w)   -  \frac{w^T}{ \sqrt{1+|\frac{\partial x}{ \partial y} w|^2} }   \bigg(\frac{\partial p}{\partial y}\bigg)^T \bigg(\frac{\partial y}{ \partial x}\bigg)^T  \nabla_w \widehat \phi  (t, y, w)\\
	& = W  \cdot \nabla_y  	\widehat \phi (t, y, w) - X \cdot \nabla_w \widehat \phi  (t, y, w),
\end{align*}
where
\begin{align}
\label{eqD.1}
	& W = \frac{w}{ \sqrt{1+|\frac{\partial x}{ \partial y} w|^2} }, \, \, \,  X =   \bigg(\frac{\partial y}{ \partial x}\bigg) \bigg(\frac{\partial p}{\partial y}\bigg) W =    \bigg(\frac{\partial y}{ \partial x}\bigg) \bigg(\frac{\partial (\frac{\partial x}{ \partial y} w)  }{\partial y}\bigg) W.
\end{align}
Thus, by the above computation, we have
\begin{equation}
\begin{aligned}
			\label{eqC.2}
&	\int_{\Sigma^T} (Y \phi) f \, dz \\
	&=\int_{ \bH^T_{-} }  (\partial_t \widehat \phi +  W \cdot \nabla_y \widehat \phi) \widehat f  \, \mathsf{J} \, dy dw dt  - \int_{ \bH^T_{-} }  (X \cdot \nabla_w  \widehat \phi) \widehat f   \mathsf{J}\, dy dw dt. 
\end{aligned}
\end{equation}
where $\bH^T_{-}$ is defined in \eqref{eq1.2.0}, and
$$
	\mathsf{J} =  \bigg(\text{det} \bigg(\frac{\partial x}{ \partial y}\bigg)\bigg)^2.
$$


The following lemma is a consequence of \eqref{eqC.2}.
\begin{lemma}
            \label{lemma D.1}
Let $\Psi$ be the local diffeomorphism given by \eqref{eq2.55} - \eqref{eq2.56},
  $r > 1$ be a number,
and $u \in L_r (\Sigma^T)$ be a function such that
\begin{itemize}
\item[--] $\nabla_p u \in L_{2, 1} (\Sigma^T)$,  $Y u \in L_2 (\Sigma^T)$,
\item[--]  $u (t, \cdot, p) = 0$ for $x \not \in \Omega_{ r } (x_0)$.
\end{itemize}
Then, one has
\begin{align}
    \label{eqD.1.1}
 \|Y  u\|_{ L_r (\Sigma^T ) } \le N \|(\partial_t + W \cdot \nabla_y)\widehat u\|_{ L_r ( \bH^T_{-}   ) }
+ N \|\nabla_p  u\|_{ L_{r, 1} ( \Sigma^T   ) },
\end{align}
where $N  =  N (\Omega, r)$.
\end{lemma}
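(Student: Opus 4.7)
The plan is to establish the pointwise change-of-variables identity for the relativistic transport operator,
\[
\widehat{\Yr u}(t,y,w) = (\partial_t + W \cdot \nabla_y)\widehat u(t,y,w) \;-\; X(y,w) \cdot \nabla_w \widehat u(t,y,w),
\]
and then control the two pieces in $L_r$ using that $\Psi$ has bounded Jacobian together with the pointwise bound $|X| \lesssim p_0$.

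First I would note that for any $C^1$ test function $\phi$ the above pointwise identity is a direct consequence of the chain rule: the computation already carried out in Section~\ref{section E.1} (leading to \eqref{eqD.1}) shows that $v(p)\cdot \nabla_x \phi = W\cdot \nabla_y \widehat\phi - X\cdot \nabla_w\widehat\phi$ at corresponding points, and adding $\partial_t \phi = \partial_t \widehat \phi$ yields the claim. For $u$ merely satisfying the hypotheses of the lemma, the identity then holds a.e.\ by a standard mollification argument: mollify in $p$ alone (which commutes with $\Yr$), and then mollify in $x$ using a radial mollifier, exploiting that $u(t,\cdot,p)$ is supported in $\Omega_r(x_0)$ strictly inside $\Omega_{r_0}(x_0)$, and pass to the limit using $\Yr u \in L_r$ and $\nabla_p u\in L_{r,1}$ to control the three terms separately.

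Second, I would apply the triangle inequality to get
\[
|\Yr u(t,x,p)| \;\le\; \bigl|(\partial_t + W\cdot\nabla_y)\widehat u\bigr|\bigl(t,\psi(x),D\psi(x)p\bigr) \;+\; \bigl|X\cdot \nabla_w\widehat u\bigr|\bigl(t,\psi(x),D\psi(x)p\bigr),
\]
take $L_r$ norms on $\Sigma^T$, and change variables $(x,p)\mapsto (y,w)$ using $dx\,dp = \mathsf{J}\,dy\,dw$ with $\mathsf{J}=(\det \partial x/\partial y)^2$ bounded above and below on $\psi(\Omega_{r_0}(x_0))$ (since $\psi$ is a $C^{1,1}$ diffeomorphism on a compact set). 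This immediately absorbs the first term into $\|(\partial_t+W\cdot\nabla_y)\widehat u\|_{L_r(\bH^T_-)}$ up to a constant $N(\Omega,r)$, giving the first part of the right-hand side of \eqref{eqD.1.1}.

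The only nontrivial step is the bound on the $X$-term, and this is where I expect the main (though modest) obstacle: one must match the growth in $w$ of $X$ with the relativistic weight $p_0$. From \eqref{eqD.1}, $X = (\partial y/\partial x)(\partial p/\partial y)W$ where $(\partial p/\partial y)_{ij} = (\partial^2_{y_i y_j} x_k)\,w_k$ is \emph{linear} in $w$ (with bounded coefficients since $\psi^{-1}\in C^{1,1}$), while $|W| \le 1$ by its definition. Hence $|X(y,w)|\le N(\Omega)|w|$. Since $p = (\partial x/\partial y)w$ and $(\partial x/\partial y)$ is invertible with bounded inverse, we have $|p|\sim |w|$ and therefore $|X|\le N p_0(p(y,w))$. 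Combining with the chain-rule identity $\nabla_w \widehat u = (\partial x/\partial y)^T \nabla_p u$, we obtain the pointwise bound $|X\cdot\nabla_w\widehat u|(y,w)\le N\, p_0(p(y,w))\,|\nabla_p u(x(y),p(y,w))|$. Changing variables back to $(x,p)$ yields
\[
\|X\cdot\nabla_w \widehat u\|_{L_r(\bH^T_-)} \;\le\; N\,\|p_0\,|\nabla_p u|\|_{L_r(\Sigma^T)} \;=\; N\,\|\nabla_p u\|_{L_{r,1}(\Sigma^T)},
\]
which combined with the first bound gives \eqref{eqD.1.1}.
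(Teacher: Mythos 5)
Your proof is correct and follows essentially the same route as the paper: decompose $\Yr u$ via the change of variables from \eqref{eqC.2}/\eqref{eqD.1}, then bound the $X\cdot\nabla_w\widehat u$ contribution against $\|\nabla_p u\|_{L_{r,1}}$ using that $X$ grows linearly in $w$. One tiny slip: the claim $|W|\le 1$ is not quite literal, since $|W| = |w|/(1+|(\partial x/\partial y)w|^2)^{1/2}$ is bounded by a constant $N(\Omega)$ depending on the lower bound of the singular values of $\partial x/\partial y$, not necessarily by $1$; this does not affect the conclusion $|X|\lesssim |w|\lesssim p_0$. (As an aside, the paper's own one-line proof states a bound on $|W|$ where a bound on $|X|$ is what is actually needed; you have correctly filled that in.)
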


\begin{proof}
The estimate follows from \eqref{eqC.2} and the fact that for any $y \in \psi (\Omega_{r_0} (x_0))$ and $w \in \bR^3$,
$$
    |W (y, w)| \le N (\Omega) \max\{|w|, 1\}.
$$

\subsection{Identity  $\ref{eq2.14}$}
                \label{section E.2}
Invoke the notation of Step 4 in the proof of Lemma  \ref{lemma 7.4} (see p. \pageref{eq2.53}).
Proceeding as in Section \ref{section E.1}, for any test function $\phi$, we have
\begin{align*}
    &(\nabla_w  \phi) (t, y, w (y, v)) = \big(\frac{\partial v}{\partial w}\big)^T \nabla_v \hathat \phi (t, y, v)\\
    &  (\nabla_y  \phi) (t, y, w (y, v)) = \nabla_y \hathat \phi (t, y, v) - \big(\frac{\partial w}{\partial y}\big)^T  \big(\frac{\partial v}{\partial w}\big)^T \nabla_v  \hathat \phi (t, y, v).
\end{align*}
By this computation,  we conclude
\begin{align}
    \label{eqC.3}
    \int_{\bR^7_T} (\mathcal{W} \cdot \nabla_y \phi) \,  \mathcal{U} \, dy dw dt
     = \int_{\bR^7_T} (v \cdot  \nabla_y \hathat \phi)\,  \mathcal{\hathat U}  \, dy dv dt- \int_{\bR^7_T} (\mathbb{G}  \cdot \nabla_v \hathat \phi) \,  \mathcal{\hathat U}  \, dy dv dt,
\end{align}
where
$$
    \mathbb{G}  = (\frac{\partial v}{\partial w}\big)\big(\frac{\partial w}{\partial y}\big) v.
$$
\end{proof}


\section{$S_r$ theory for the KFP equation on the whole space}
                            \label{Appendix F}
\begin{assumption}$(\gamma_{\star})$
                  \label{assumption D.1}
There exists $R_0 > 0$ such that for any $z_0 = (t_0, x_0, v_0)$ satisfying $t_0 < T$  and $r \in (0, R_0]$,
\begin{equation}
			\label{eqD.0}
     \text{osc}_{x, p} (a, Q_r (z_0)) \le \gamma_{  \star  },
\end{equation}
where
\begin{align}
	\label{D.1.1}
  & \text{osc}_{x, p} (a, Q_r (z_0))\\
&= r^{- 14 } \int_{t_0 - r^2}^{t_0} \int_{  D_r (z_0, t) \times D_r (z_0, t)}
   |a (t, x_1, p_1) - a (t, x_2, p_2)| \, dx_1 dp_1 dx_2 dp_2 \, dt\notag,
\end{align}
  and
  $$
    D_r (z_0, t) =  \{(x, p): |x  - x_0 - (t-t_0) p_0|^{1/3} < r, |p-p_0| < r      \}.
  $$
\end{assumption}


\begin{remark}
			\label{remark D.1}
Note that, if $a \in  L_{\infty} ((-\infty, T)) C^{\varkappa/3, \varkappa}_{x, v}  (\bR^6)$ for some $\varkappa \in (0, 1]$, then, for any $\gamma_{\star} \in (0, 1)$, Assumption \ref{assumption D.1} $(\gamma_{\star})$ holds
with $R_0  = ([a]^{-1}_{ L_{\infty} ((-\infty, T)) C^{\varkappa/3, \varkappa}_{x, v}  (\bR^6) } \gamma_{\star})^{1/\varkappa}$.
\end{remark}

The following theorem is a simplified version of Theorem 2.4 of \cite{DY_21a}.
\begin{theorem}
            \label{theorem D.1}
Let 
\begin{itemize}
\item[--] $r > 1$, $K > 0$, $\lambda \ge 0$, $-\infty<S < T<\infty$,  be numbers, 
\item[--] $a, b, c$ satisfy the assumptions  \eqref{eq1.1} and \eqref{eq1.2.2},
\item[--] $[a]_{ L_{\infty} ((0, T)) C^{\varkappa/3, \varkappa}_{x, p} (\bR^6) } \le L$ for some $\varkappa \in (0, 1]$ and $L > 0$. 
\end{itemize}
Then, for any
$f \in L_r ((S, T) \times \bR^{6})$,
the equation
$$
	   (\partial_t + p \cdot \nabla_x) u  - a^{i j} \partial_{p_i p_j} u +  b \cdot \nabla_p u + c u + \lambda u = f, \quad u (0, \cdot) = 0
$$
 has a unique solution
$u  \in S_r^{N} ((S, T) \times \bR^{6})$ (see \eqref{eq1.2.4.0}).
In addition,
\begin{align*}
	 & \|u\| + \|\nabla_p u\| + \|D^2_p u\| +  \|(\partial_t + p \cdot \nabla_x) u\|  \\
	&
	+  \|(-\Delta_x)^{1/3} u\|
	+\|\nabla_p (-\Delta_x)^{1/6} u\|   \le N \|f\|,
\end{align*}
where $\|\,\cdot\,\|=\|\,\cdot\,\|_{L_r ((S, T) \times \bR^{6})  }$ and $N = N (\delta, \varkappa, r, K,  T-S, L)$.
\end{theorem}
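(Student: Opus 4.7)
The plan is to reduce the statement to the main $S_r$ estimate of \cite{DY_21a} (Theorem 2.4 there), which applies precisely to KFP equations whose leading coefficients satisfy the BMO-type oscillation control in Assumption \ref{assumption D.1}. Accordingly, the first step is to verify that the hypothesis $[a]_{L_\infty((0,T))C^{\varkappa/3,\varkappa}_{x,p}(\bR^6)}\le L$ implies Assumption \ref{assumption D.1}$(\gamma_\star)$ for any small $\gamma_\star\in(0,1)$, with $R_0$ depending only on $L,\varkappa,\gamma_\star$. This is essentially the content of Remark \ref{remark D.1}: on each cylinder $Q_r(z_0)$ of radius $r\le R_0$, the oscillation of $a(t,x,p)$ is bounded by $L r^{\varkappa}$, so choosing $R_0=(\gamma_\star/L)^{1/\varkappa}$ does the job. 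Since the Hölder seminorm in $(x,p)$ is taken with respect to the metric $|x_1-x_2|^{1/3}+|p_1-p_2|$, this is exactly the scaling used in the definition of $D_r(z_0,t)$.

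Next I would handle the principal part of the equation, i.e.\ assume $b\equiv 0$, $c\equiv 0$, $\lambda\ge \lambda_0$ for some large $\lambda_0$, and look at
\[
(\partial_t + p\cdot\nabla_x)u - a^{ij}\partial_{p_ip_j}u + \lambda u = f.
\]
Applying the $S_r$ theory of \cite{DY_21a}, one obtains the existence of a unique $u\in S_r((S,T)\times\bR^6)$ with $u(S,\cdot)\equiv 0$ together with the estimate
\[
\|u\|_{S_r}+\|(-\Delta_x)^{1/3}u\|_r+\|\nabla_p(-\Delta_x)^{1/6}u\|_r \le N\|f\|_r,
\]
where $N$ depends on $\delta,\varkappa,r,L,T-S$ (through $R_0$). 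The fractional spatial gains follow from the Sobolev-type embeddings for $S_r$ proved in \cite{PR_98}, applied to the equation itself via the identity $(\partial_t+p\cdot\nabla_x)u = a^{ij}\partial_{p_ip_j}u - \lambda u + f$.

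The next step is to incorporate the lower-order terms $b\cdot\nabla_p u + cu$. Since $b,c\in L_\infty$ with bound $K$, one has
\[
\|b\cdot\nabla_p u + cu\|_r \le K\,(\|\nabla_p u\|_r + \|u\|_r),
\]
and by interpolation between $u$ and $D^2_p u$ in $L_r$ (using the Gagliardo--Nirenberg inequality in the momentum variable, Hölder in $t,x$), one has
\[
\|\nabla_p u\|_r \le \varepsilon\|D^2_p u\|_r + N(\varepsilon)\|u\|_r.
\]
Choosing $\varepsilon$ small and $\lambda_0$ large, the lower-order terms can be absorbed into the left-hand side, yielding the a priori estimate for $\lambda\ge\lambda_0$. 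The method of continuity, with parameter running from the principal-part equation to the full equation, then gives existence and uniqueness in $S_r$.

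Finally, to reach arbitrary $\lambda\ge 0$ (including $\lambda=0$), I would use the standard substitution $u(t,x,p)=e^{(\lambda_0-\lambda)(t-S)}\tilde u(t,x,p)$, which shifts the zeroth-order coefficient $c+\lambda\mapsto c+\lambda_0$ but keeps $a,b$ unchanged; the factor $e^{(\lambda_0-\lambda)(T-S)}$ is absorbed into the constant $N$ (which is already allowed to depend on $T-S$). The main obstacle I anticipate is not any of the above perturbative steps, which are routine, but rather the careful verification that the oscillation bound in Assumption \ref{assumption D.1} is compatible with the Hölder metric $|x|^{1/3}+|p|$ used throughout; once this is in place, the result of \cite{DY_21a} is directly applicable and the remaining analysis is standard.
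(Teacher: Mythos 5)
The paper does not actually prove Theorem \ref{theorem D.1}; it states it verbatim as ``a simplified version of Theorem 2.4 of \cite{DY_21a}'' and treats it as a cited black box, so there is no in-paper argument to compare against. Your proposal is therefore an attempt to reconstruct the proof rather than to match one, and at a high level the outline (verify the oscillation condition from the H\"older bound, establish the estimate for the principal part, absorb lower-order terms by interpolation for large $\lambda$, run the method of continuity, and handle general $\lambda$ by a time-exponential substitution) is the standard and correct route that \cite{DY_21a} itself follows.

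That said, two concrete points need to be tightened. First, you have the citations transposed: in the paper's notation, Theorem \ref{theorem D.1} \emph{is} Theorem 2.4 of \cite{DY_21a} (the solvability result for H\"older coefficients), while the BMO-type a priori estimate with Assumption \ref{assumption D.1}$(\gamma_\star)$ is Theorem \ref{theorem D.2}, which is Corollary 2.6 of \cite{DY_21a}. Your reduction should invoke the latter, not the former, or the argument is circular. Second, the fractional spatial bounds $\|(-\Delta_x)^{1/3}u\|_r$ and $\|\nabla_p(-\Delta_x)^{1/6}u\|_r$ do not come from the embedding theorem of \cite{PR_98} applied after the fact; they are part of the $S_r$ a priori estimate itself (established in \cite{DY_21a} by a Fourier-multiplier/freezing-coefficient argument for the model operator $\partial_t+p\cdot\nabla_x-\Delta_p$). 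The \cite{PR_98} reference gives Sobolev--Morrey embeddings of $S_r$ into higher-integrability Lebesgue and H\"older spaces, which is a different statement, and it is used in the paper for a different purpose (Corollary \ref{corollary 2.5}). Finally, note that Theorem \ref{theorem D.2} is stated on $(-\infty,T)\times\bR^6$ while you need $(S,T)\times\bR^6$ with zero data at $t=S$; one must extend by zero for $t<S$ (and $a$ by, say, $\bm{1}_3$), as is done implicitly in the proof of Lemma \ref{lemma 4.1}, to make the two settings compatible. None of these defeat the plan, but as written the appeal to \cite{PR_98} for the fractional derivatives is a genuine gap.
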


\begin{theorem}[Corollary 2.6 of \cite{DY_21a}]
                                                \label{theorem D.2}
Invoke the  assumptions  of Theorem \ref{theorem D.1}
and drop the H\"older continuity of $a$ assumption.
Then, there exist constants
$$
\kappa = \kappa (r) > 0, \quad
\beta = \beta (r) > 0,
\quad	\gamma_{  \star  }  =   \delta^{ \kappa  }  \widetilde \gamma_{  \star } (r)  > 0,
$$
such that if the condition \eqref{eqD.0} in Assumption \ref{assumption D.1} $(\gamma_{ \star })$ holds,
then for any $u \in S_{r}^{N} ((-\infty, T) \times \bR^6 )$  and $\lambda \ge 0$,
\begin{equation}
				\label{eqD.2.1}
\begin{aligned}
&	\|u\|_{S_{r}^{N} ((-\infty, T) \times \bR^6) }\\
 &  \le N \delta^{-\beta} ( \|(\partial_t + p \cdot \nabla_x) u  - a^{i j}\partial_{p_i p_j} u + b \cdot \nabla_p u + c u + \lambda u \|_{ L_{r} ((-\infty, T) \times \bR^6) }\\
&\quad + N R_0^{-2}\|u\|_{ L_{r} ((-\infty, T) \times \bR^6) }),
\end{aligned}
\end{equation}
where      $N  = N (r,  K)$, and $R_0 \in (0, 1)$ is the constant in Assumption \ref{assumption D.1} $(\gamma_{\star})$.
\end{theorem}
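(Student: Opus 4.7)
The plan is to bootstrap from Theorem D.1, which already handles the case of H\"older continuous $a$, up to the situation where $a$ only satisfies the small mean-oscillation condition $(\gamma_\star)$. This is a classical Calder\'on--Zygmund/Krylov-type perturbation argument adapted to the kinetic/anisotropic scaling induced by the drift $p \cdot \nabla_x$. First, I would reduce to the homogeneous zero-initial-data case by extending $u$ trivially for $t > T$ and, if needed, shifting the time interval; the point is that we are proving an \emph{a priori} estimate so the issue of extension is cosmetic provided we preserve the $S_r$ norm. I would next pass to the kinetic scaling group $(t,x,p) \mapsto (r^2 t, r^3 x, r p)$ under which the operator $\partial_t + p \cdot \nabla_x - a^{ij}\partial_{p_i p_j}$ is invariant and the oscillation functional in \eqref{D.1.1} is preserved, together with the anisotropic balls $D_r$. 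This lets me normalize $R_0 = 1$ at the cost of the factor $R_0^{-2}$ that appears in \eqref{eqD.2.1}.

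The heart of the proof is a local mean-oscillation estimate for $D^2_p u$ and $(\partial_t + p \cdot \nabla_x) u$ on each kinetic cylinder $Q_r(z_0)$ with $r \le 1$. On such a cylinder, freeze the leading coefficients by replacing $a(t,x,p)$ with a constant approximant $\bar a_{z_0}$ (e.g.\ the $L_1$-average of $a$ over $Q_r(z_0)$). The function $u$ solves
\[
(\partial_t + p \cdot \nabla_x) u - \bar a_{z_0}^{ij} \partial_{p_i p_j} u = f - b\cdot \nabla_p u - c u - \lambda u + (a^{ij} - \bar a_{z_0}^{ij})\partial_{p_i p_j} u,
\]
and the constant-coefficient piece is controlled by an explicit fundamental-solution estimate (or by Theorem D.1 applied to $\bar a_{z_0}$, whose H\"older seminorm is zero). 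Splitting $u = u_1 + u_2$, where $u_1$ solves the frozen-coefficient problem with the \emph{full} right-hand side on a slightly larger cylinder and $u_2$ solves the frozen homogeneous problem on $Q_r(z_0)$ with the boundary data inherited from $u$, one obtains an interior Schauder/mean-oscillation bound for $D^2_p u_2$ (smooth since $\bar a_{z_0}$ is constant) and an $L_r$ bound for $D^2_p u_1$ with constant depending only on $\delta$ and $r$. Balancing these two contributions produces the key inequality
\[
\fint_{Q_{\rho}(z_0)} |D^2_p u - (D^2_p u)_{Q_\rho}|^r \,dz \lesssim \Bigl(\frac{\rho}{r}\Bigr)^{\sigma r}  \mathcal{M}(|D^2_p u|^r)(z_0) + \Bigl(\frac{r}{\rho}\Bigr)^{\tau} \mathcal{M}(|f|^r + |u|^r + |\nabla_p u|^r)(z_0) + \gamma_\star^{\alpha} \mathcal{M}(|D^2_p u|^r)(z_0),
\]
for appropriate exponents $\sigma, \tau, \alpha > 0$, where $\mathcal{M}$ is the kinetic Hardy--Littlewood maximal function tailored to the anisotropic cylinders $Q_\rho$.

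The global $L_r$ estimate then follows by the Fefferman--Stein sharp-function theorem in the kinetic setting: $\|D^2_p u\|_{L_r} \lesssim \|(D^2_p u)^{\sharp}\|_{L_r}$, where the sharp function is bounded pointwise by the right-hand side of the local oscillation estimate. Choosing $\rho/r$ small to absorb the first term and then $\gamma_\star$ sufficiently small (depending only on $r$ and $\delta$, which is where the factors $\gamma_\star = \delta^\kappa \tilde\gamma_\star(r)$ and $\delta^{-\beta}$ enter) lets us absorb the term $\mathcal{M}(|D^2_p u|^r)$ back into the left-hand side via the Hardy--Littlewood maximal inequality on $L_{r}$. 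The leftover terms in $|\nabla_p u|$ and $|u|$ are controlled by interpolation and by the factor $R_0^{-2} \|u\|_{L_r}$ after rescaling. Estimates for $\nabla_p u$ and $(\partial_t + p \cdot \nabla_x) u$ follow by the same machinery (or by interpolation and the equation itself).

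The main obstacle will be the local approximation step: because $a$ is only controlled in mean and not pointwise, the commutator $(a - \bar a_{z_0})\partial_{p_i p_j} u$ cannot be dominated in $L_\infty$, so one needs an $L_r$--$L_q$ bootstrap (with $q > r$) applied to the constant-coefficient piece to convert the mean-oscillation smallness into an $L_r$ perturbation. This requires a higher integrability estimate for $D^2_p u_2$ obtained from the explicit Gaussian-type fundamental solution of $\partial_t + p \cdot \nabla_x - \bar a^{ij}\partial_{p_i p_j}$, plus a reverse H\"older inequality. Handling the anisotropy of the cylinders $Q_r$ (the $r^{14}$ normalization in \eqref{D.1.1} reflects the homogeneous dimension $2 + 3\cdot 3 + 3 = 14$) carefully throughout the argument, so that all scaling exponents match, will be the technically delicate point.
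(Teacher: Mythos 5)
This statement is quoted verbatim from Corollary~2.6 of \cite{DY_21a}; the paper contains no proof of it, only the citation. So there is no internal argument to compare your sketch against. That said, your outline does track the strategy actually used in \cite{DY_21a}: it is a Krylov-type mean-oscillation argument, in which one freezes the leading coefficients on kinetic cylinders, uses the constant-coefficient (or Hölder-coefficient) estimate as the ``good'' model, controls the discrepancy through the smallness of $\text{osc}_{x,p}(a, Q_r)$, and closes with a Fefferman--Stein sharp-function/maximal-function argument adapted to the anisotropic scaling $(t,x,p)\mapsto(r^2 t, r^3 x, rp)$ with homogeneous dimension $14$. Your identification of the $r^{-14}$ normalization, the kinetic scaling invariance of the operator and of the oscillation functional, and the role of the threshold $\gamma_\star=\delta^\kappa\widetilde\gamma_\star(r)$ are all consistent with that scheme.

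Two points are glossed over, though neither is a fatal flaw in the outline. First, you treat the $b\cdot\nabla_p u + cu + \lambda u$ terms implicitly as a harmless perturbation; this is fine because $b,c$ are bounded and $\lambda\ge 0$, but it should be said that they are moved to the right-hand side and absorbed, which is the reason the constant $N$ picks up dependence on $K$. Second, the $R_0^{-2}\|u\|_{L_r}$ term is not literally produced by ``rescaling to normalize $R_0=1$'' the way you state it: the scale $R_0$ only controls the small radii at which Assumption~\ref{assumption D.1}$(\gamma_\star)$ gives oscillation smallness, so the Fefferman--Stein argument yields the full $S_r$ bound only after a partition-of-unity/covering step over scales $\le R_0$, and the $R_0^{-2}$ factor is exactly the commutator cost of that localization applied to the second-order operator. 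Spelling this out is where the factor enters, rather than through a single global rescaling. Finally, for a complete proof one would also need the kinetic analogue of the maximal/sharp function machinery (filtration of partitions or kinetic dyadic cubes compatible with the drift), which you gesture at but which is genuinely nontrivial; \cite{DY_21a} builds it explicitly.
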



\begin{lemma}[see Lemma D.6 in \cite{DGY_21}]
			\label{lemma D.3}
Invoke the assumptions of Theorem \ref{theorem D.1} and let 
\begin{itemize}[--]
\item $T > 0$, $\lambda \ge 0$,   $1 <  q < r$ be numbers,  
\item $u \in S_q^{N} (\bR^7_T)$
be a function such that $u (0, \cdot) \equiv 0$, 
and
$$
	h: = (\partial_t + p \cdot \nabla_x) u  - a^{i j} \partial_{p_i p_j} u + b \cdot \nabla_p u + (c + \lambda) u  \in L_r  (\bR^7_T).
$$
\end{itemize}
Then, $u \in S_r^{N}  (\bR^7_T)$.
\end{lemma}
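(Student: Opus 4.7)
Denote by $L$ the kinetic Fokker--Planck operator
$$
L u := (\partial_t + p \cdot \nabla_x) u - a^{ij} \partial_{p_i p_j} u + b \cdot \nabla_p u + (c+\lambda) u,
$$
so the hypotheses read $u \in S_q(\bR^7_T)$, $u(0,\cdot)\equiv 0$, and $Lu = h \in L_r(\bR^7_T)$. The first observation is that since $u \in S_q$ and the coefficients $a,b,c$ are uniformly bounded, every summand of $Lu$ is itself in $L_q(\bR^7_T)$; consequently $h = Lu$ lies in $L_q(\bR^7_T) \cap L_r(\bR^7_T)$.

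I would then apply Theorem~\ref{theorem D.1} with parameter $r$ to the right-hand side $h \in L_r$ to produce a unique $v \in S_r(\bR^7_T)$ satisfying $Lv = h$ with $v(0,\cdot) \equiv 0$. The same theorem with parameter $q$ identifies the given $u$ as the unique $S_q$-solution of that problem. If one could show $v = u$ a.e., the lemma would follow immediately. The difficulty is that $u \in S_q$ and $v \in S_r$ live a priori in different function spaces, so neither uniqueness clause can be invoked directly on the difference $u - v$.

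The plan for bridging this gap is by density. Pick a sequence $h_n \in C^\infty_c(\bR^7_T)$ with $h_n \to h$ simultaneously in $L_q(\bR^7_T)$ and $L_r(\bR^7_T)$ --- such a sequence exists because $h \in L_q \cap L_r$ and $C^\infty_c$ is dense in the intersection equipped with the maximum of the two norms (first truncate $|h|$ and its support by dominated convergence, then mollify). For each $n$, let $w_n$ solve $L w_n = h_n$ with $w_n(0,\cdot) \equiv 0$. Since $h_n$ is smooth with compact support, a classical hypoelliptic/Schauder-type argument for KFP operators with H\"older continuous leading coefficient $a$, combined with Gaussian-type decay of the associated fundamental solution, produces $w_n \in S_s(\bR^7_T)$ for every $s > 1$; in particular $w_n$ simultaneously realizes the unique $S_q$- and the unique $S_r$-solution supplied by Theorem~\ref{theorem D.1} for the data $h_n$. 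The a priori estimates of that theorem then yield
$$
\|w_n - v\|_{S_r(\bR^7_T)} \le N \|h_n - h\|_{L_r(\bR^7_T)} \to 0,
\qquad
\|w_n - u\|_{S_q(\bR^7_T)} \le N \|h_n - h\|_{L_q(\bR^7_T)} \to 0.
$$
Passing to a common subsequence, both convergences force pointwise a.e. convergence, so $u = v$ a.e., and therefore $u = v \in S_r(\bR^7_T)$.

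The hard part will be justifying the claim $w_n \in \bigcap_{s>1} S_s(\bR^7_T)$ when $h_n \in C^\infty_c$ --- a purely classical regularity-and-decay statement for the KFP operator with H\"older continuous leading coefficient, decoupled from the scale-critical issues that make Theorem~\ref{theorem D.1} itself nontrivial. Everything else then reduces to a routine application of the well-posedness estimates recorded in Theorem~\ref{theorem D.1}.
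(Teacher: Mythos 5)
The overall strategy is a sensible one, and the preliminary observation that $h = Lu \in L_q(\bR^7_T)$ automatically (since $u \in S_q$ and $a,b,c$ are bounded) is correct and necessary. Your honest identification of the bottleneck — showing that for $h_n \in C^\infty_c$ the solution $w_n$ lands in $\bigcap_{s>1} S_s(\bR^7_T)$ — is also exactly right. The gap is that you then dismiss this bottleneck as ``a purely classical regularity-and-decay statement,'' which it is not in the present setting. The leading coefficient $a^{ij}$ is only bounded and measurable in $t$ (the H\"older bound in \eqref{eq4.1.0} is uniform in $t$, not H\"older in $t$), so Schauder theory for the full operator in $(t,x,p)$ does not apply, and the Kolmogorov operator's degeneracy rules out classical elliptic/parabolic regularity. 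The known Gaussian upper bounds for fundamental solutions of variable-coefficient Kolmogorov operators (Polidoro, Di Francesco--Pascucci, etc.) are typically proved under a H\"older-in-$t$ hypothesis as well; citing ``Gaussian-type decay of the associated fundamental solution'' without an actual reference that covers measurable-in-$t$ coefficients is a real hole, not a formality. Moreover, even with a pointwise Gaussian bound, passing to global $L_s$ membership of $D^2_p w_n$ for all $s$ takes further work.

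The more robust route to the same consistency statement is to notice that it is not ``decoupled from'' Theorem~\ref{theorem D.1} but is rather a structural byproduct of how that theorem is proved. The $S_p$ theory of \cite{DY_21a} is built by a method-of-continuity/freezing argument that starts from the constant-coefficient Kolmogorov operator $\partial_t + p\cdot\nabla_x - \Delta_p$, for which the fundamental solution is explicit and the solution for $C^\infty_c$ data manifestly lies in $S_s$ for every $s$. At each stage of the continuation, the solution operator $\mathcal{G}_p: L_p \to S_p$ is characterized by the same fixed-point construction for every $p$, so for $h \in L_q\cap L_r$ the fixed point computed in $S_q$ and in $S_r$ coincide; one never needs a separate regularity theorem for the variable-coefficient operator with nice data. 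In short, the density-of-$C^\infty_c$ idea is salvageable, but the claim you flag as hard cannot be outsourced to classical Schauder/fundamental-solution theory under the stated coefficient hypotheses; it has to be extracted from the construction in \cite{DY_21a}, or one must mollify the coefficient $a$ in $t$ as well and then control the commutator error, which is an argument that needs to be written out.
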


\begin{lemma}
			\label{lemma 4.1}
Let 
\begin{itemize}[--]
\item $\delta \in (0, 1)$, $\varkappa \in (0, 1]$, $K > 0$, $M > 0, \lambda \ge 0$ be numbers,
\item
$u \in L_2 ((0, T) \times \bR^3_x) W^1_2 (\bR^3_p)$,
\item $\delta |\xi|^2 < a^{i j} (z) \xi_i \xi_j < \delta^{-1} |\xi|^2$ for any $z \in \bR^7_T$ $\xi \in \bR^3$, 
\item \begin{equation}
            \label{eq4.1.0}
    \|a\|_{ L_{\infty} ((0, T))  C^{\varkappa/3, \varkappa}_{x, v} (\bR^6)  }, \|\nabla_p a\|_{ L_{\infty} (\bR^7_T)  } \le K \delta^{-1}
\end{equation}
for some $K > 1$,
\item $h \in L_2 (\bR^7_T)$, 
\item for any $\phi \in C^{1}_0 ([0, T] \times \bR^6)$ such that $\phi (0, \cdot), \phi (T, \cdot) \equiv 0$, one has
\begin{align*}
  \int (-(\partial_t \phi  + p \cdot \nabla_x \phi) u + (\nabla_p \phi)^T a \nabla_p u  + \lambda u \phi - h \phi) \, dz  = 0.
\end{align*}
\end{itemize}
Then,  the following assertions hold.

$(i)$ 
 One has  $u \in S_2^{N} (\bR^7_T)$ and,  there exists 
 $\beta  = \beta (\varkappa) > 0$ such that
\begin{equation}
            \label{eq4.1.3}
	\|u\|_{  S_2^{N}   (\bR^7_T) } \le N \delta^{-\beta} \big(\|h\|_{  L_2   (\bR^7_T) } +  \||u|+|\nabla_p u|\|_{  L_2   (\bR^7_T) }\big),
\end{equation}
where $N = N (\varkappa, K) > 0$.

$(ii)$ If, additionally, $h, \nabla_p u \in L_r (\bR^7_T)$, for some $r \in (2, \infty)$,
then,  $u \in S_r (\bR^7_T)$, and
\begin{equation}
            \label{eq4.1.4}
            \|u\|_{   S_r^{N} (\bR^7_T)  } \le N \delta^{-\beta} \big(\|h\|_{  L_r   (\bR^7_T) } + \||u|+|\nabla_p u|\|_{  L_r   (\bR^7_T) }\big),
\end{equation}
where $\beta = \beta (r, \varkappa) > 0$,  $N  = N (r, \varkappa, K) > 0$.
\end{lemma}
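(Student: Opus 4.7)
The strategy is to reduce the statement to the $S_r$-theory for kinetic Fokker-Planck equations on the whole space---namely, the solvability in Theorem \ref{theorem D.1} and the a priori estimate in Theorem \ref{theorem D.2}---after first rewriting the equation in non-divergence form. Since $\nabla_p a \in L_\infty(\bR^7_T)$ by \eqref{eq4.1.0}, integrating by parts in the weak identity recasts the PDE as
$$(\partial_t + p \cdot \nabla_x) u - a^{ij} \partial_{p_i p_j} u + b \cdot \nabla_p u + \lambda u = h \quad \text{in } \mathcal{D}'(\bR^7_T),$$
with $b^j := -\partial_{p_i} a^{ij}$, $\|b\|_{L_\infty} \leq N K \delta^{-1}$. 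The H\"older regularity of $a$ in \eqref{eq4.1.0}, combined with Remark \ref{remark D.1}, verifies Assumption \ref{assumption D.1}$(\gamma_\star)$ with $R_0 \simeq (\delta \gamma_\star / K)^{1/\varkappa}$, placing the equation within the scope of Theorems \ref{theorem D.1}--\ref{theorem D.2}.

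For part $(i)$, I would localize in time via a cutoff $\zeta \in C^\infty(\bR)$ with $\zeta \equiv 0$ on $(-\infty, 0]$ and $\zeta \equiv 1$ on $[\sigma, T]$ for a small parameter $\sigma > 0$, and set $w := \zeta u$, extended by zero for $t \leq 0$. Then $w, \nabla_p w \in L_2((-\infty, T) \times \bR^6)$, $w(0, \cdot) = 0$, and $w$ satisfies, weakly on $(-\infty, T) \times \bR^6$,
$$(\partial_t + p \cdot \nabla_x) w - a^{ij} \partial_{p_i p_j} w + b \cdot \nabla_p w + \lambda w = \zeta h + \zeta' u \in L_2.$$
Theorem \ref{theorem D.1} then provides a unique $\tilde w \in S_2((-\infty, T) \times \bR^6)$ solving the corresponding Cauchy problem with zero initial data; an energy estimate for the difference $r := w - \tilde w$ (using the ellipticity of $a$, Young's inequality on the drift term, and Gronwall) forces $r \equiv 0$, so $w = \tilde w \in S_2$. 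Applying the a priori bound \eqref{eqD.2.1} of Theorem \ref{theorem D.2} with $r = 2$ to $w$, and noting $\|\zeta h + \zeta' u\|_{L_2} \leq \|h\|_{L_2} + \sigma^{-1}\|u\|_{L_2}$ together with $w \equiv u$ on $[\sigma, T] \times \bR^6$, yields the $S_2$ bound on $u|_{(\sigma, T) \times \bR^6}$. A symmetric localization near $t = T$ combined with a finite covering of $(0, T)$ and the choice $\sigma \simeq R_0^2$ then delivers \eqref{eq4.1.3}, with the $\sigma^{-1}\|u\|_{L_2}$ term absorbed into the $\|u\|_{L_2} + \|\nabla_p u\|_{L_2}$ contributions on the right-hand side.

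For part $(ii)$, once $u \in S_2(\bR^7_T)$ is established, I would invoke Lemma \ref{lemma D.3} (with $q = 2$) applied to $w = \zeta u$ to bootstrap it to $S_r$, exploiting $h, \nabla_p u \in L_r$ together with the uniform bound on $b$ to control the RHS $\zeta h + \zeta' u$ in $L_r$. The quantitative estimate \eqref{eq4.1.4} is then obtained by re-running the time-localization argument with Theorem \ref{theorem D.2} applied at exponent $r$. \textbf{The main obstacle} will be justifying the energy-based identification $w = \tilde w$: it requires that $r = w - \tilde w$ admit a continuous-in-time $L_2(\bR^6)$ representative with $r(0, \cdot) = 0$, a regularity not immediate from the weak formulation. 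The remedy is a Lions--Magenes-type argument: rewriting $a^{ij} \partial_{p_i p_j} r = \partial_{p_i}(a^{ij} \partial_{p_j} r) - (\partial_{p_i} a^{ij}) \partial_{p_j} r$ and moving the diffusion term to the right, the equation gives $(\partial_t + p \cdot \nabla_x) r \in L_2((-\infty, T); H^{-1}_{p, \text{loc}})$; combined with $\nabla_p r \in L_2$ and a straightening of free-transport characteristics via $(x, p) \mapsto (x + p t, p)$, this upgrades $r$ to $C([0, T]; L_2)$, legitimizing the energy identity and the subsequent Gronwall step.
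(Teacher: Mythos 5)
Your overall strategy matches the paper's: pass to non-divergence form with $b^j = -\partial_{p_i} a^{ij}$, invoke Remark \ref{remark D.1} to verify Assumption \ref{assumption D.1}, construct an auxiliary solution with Theorem \ref{theorem D.1}, identify it with $u$ through an energy argument (with a Lions--Magenes-type upgrade to $C([0,T];L_2)$ continuity that the paper also implicitly relies on in the ``duality-pairing'' testing step), and close the argument with Theorem \ref{theorem D.2} for the estimate and Lemma \ref{lemma D.3} for the $L_r$ bootstrap. So the key analytic inputs are identified correctly.

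The genuine gap is in the time localization. The paper simply extends $u$, $h$ by zero for $t<0$ (and $a$ by $\bm{1}_3$), works on $(-\infty,T)\times\bR^6$, constructs $u_1\in S_2((-\infty,T)\times\bR^6)$ with $u_1\mathbf{1}_{t\le 0}\equiv 0$, and shows $U=u-u_1\equiv 0$ by the energy identity. This gives $u\in S_2$ on \emph{all} of $(0,T)\times\bR^6$ in one shot. Your cutoff $\zeta$, which vanishes near $t=0$ and equals $1$ on $[\sigma,T]$, only produces $u=w\in S_2((\sigma,T)\times\bR^6)$: on $(0,\sigma)$ nothing is established. The proposed fix --- ``a symmetric localization near $t=T$ combined with a finite covering of $(0,T)$'' --- cannot work: (a) any cutoff that qualifies for Theorem \ref{theorem D.1} must vanish at its left endpoint and hence never covers the region where it vanishes, and (b) a cutoff equal to $1$ near $t=0$ would give $w_2(0,\cdot)=u(0,\cdot)\neq 0$ (a priori), so Theorem \ref{theorem D.1} does not apply; the endpoint $t=T$ is not the problematic one for a forward-in-time Cauchy problem. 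The only way to cover a neighborhood of $t=0$ is to use the vanishing initial trace $u(0,\cdot)\equiv 0$, which is implicit in the statement and necessary for the conclusion (otherwise $D^2_p u$ need not be in $L_2$ near $t=0$, as one sees already for the heat semigroup applied to an $H^1\setminus H^2$ initial datum). Once you grant $u(0,\cdot)=0$, the zero extension renders the cutoff unnecessary: this is precisely the paper's route, and is the cleaner one. As a minor additional point, your absorption of the $\sigma^{-1}\|u\|_{L_2}$ term via $\sigma\simeq R_0^2$ is compatible with the claimed $\delta^{-\beta}$ dependence, but this complication disappears in the zero-extension approach.
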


\begin{proof}
In this proof, $N$ is a constant independent of $\delta$.

$(i)$
\textit{Step 1: $u  \in  S_2^{N} (\bR^7_T)$.}
For $t < 0$ we set $u$ and $h$ to be $0$, and $a$ to be $\bm{1}_3$.
Then, for any smooth function $\phi$ with compact support in  $(-\infty, T) \times \bR^6$,   we have 
$$
	 \int \bigg(-(\partial_t \phi  + p \cdot \nabla_x \phi) u +  (\nabla_p \phi)^T a \nabla_p u - \lambda u \phi  - h \phi\bigg) \, dz  = 0.
$$
 This implies that 
$
 (\partial_t + p \cdot \nabla_x) u \in L_2 ((-\infty, T)\times \bR^3_x) W^{-1}_2 (\bR^3_p).
$

Next, by Theorem \ref{theorem D.1}, the equation 
\begin{equation}
			\label{eq4.1.1}
	\partial_t u_1 + p \cdot \nabla_x u_1 - a^{i j} \partial_{p_i p_j} u_1 + \lambda u_1  = h - \partial_{p_i} a^{i j}  \partial_{p_j} u
\end{equation}
has a unique solution $u_1 \in S_2^{N} ((-\infty, T)\times \bR^6)$ such that $u_1 1_{t \le 0} \equiv 0$.
Then,  for $U = u - u_1$, we have
$$
    U \in  L_2 ((-\infty, T)\times \bR^3_x) W^{1}_2 (\bR^3_p), \quad (\partial_t + p \cdot \nabla_x) U \in  L_2 ((-\infty, T)\times \bR^3_x) W^{-1}_2 (\bR^3_p),
$$
 and the identity
$$
	  (\partial_t  + p \cdot \nabla_x)  U  - \nabla_p \cdot (a \nabla_p U) + \lambda U = 0
$$
holds in $L_2 ((-\infty, T)\times \bR^3_x) W^{-1}_2 (\bR^3_p)$.
Then, by `testing' the above identity with $u$ in the sense of the duality pairing between $L_2 ((-\infty, T)\times \bR^3) W^{k}_2 (\bR^3)$, $k =  \pm 1$, and integrating by parts in  $p$ ,  we get for a.e.  $s \in (-\infty, T)$,
$$
		\int_{  \bR^6}  U^2 (s, x, p) \, dx dp  +   \int_{  (-\infty, s) \times \bR^6 }  (\delta |\nabla_p U|^2 + \lambda U^2) \, dz   = 0.
$$
We conclude $U \equiv 0$, and hence, $u \in S_2^{N} ((-\infty, T) \times \bR^6)$.

\textit{Step 2: $S_2$ estimate.}
By the assumption \eqref{eq4.1.0} and Remark \ref{remark D.1}, 
for any $\gamma_{\star} \in (0, 1)$, the condition \eqref{eqD.0} in Assumption  \ref{assumption D.1} $(\gamma_{\star})$ holds with
\begin{equation}
			\label{eq4.8.2}
	R_0  =  K^{-1/\varkappa} \delta^{1/\varkappa} \gamma^{1/\varkappa}_{\star}.
\end{equation}
Furthermore, let
$$
	\beta   > 0, \quad \kappa   > 0, \quad \gamma_{   \star  } = \delta^{\kappa}   \widetilde \gamma_{\star} (\varkappa)  > 0
$$
be the numbers in Theorem \ref{theorem D.2} with $r = 2$.
Then, by the estimate \eqref{eqD.2.1}  applied to Eq. \eqref{eq4.1.1} and the assumption \eqref{eq4.1.0}, we conclude that there exists $N = N (\varkappa)$ such that
\begin{align*}
	&\|u\|_{  S_2^{N}   (\bR^7_T) }  =  \|u_1\|_{  S_2^{N}   (\bR^7_T) } \\
&	\le  N \delta^{-\beta} \||h| + |\partial_{p_i} a^{i j}  \partial_{p_j} u|\|_{  L_2 (\bR^7_T)  } + N  K^{-2/\varkappa} \delta^{- 2\kappa/\varkappa} \|u\|_{  L_2 (\bR^7_T)  }\\
&	\le   N \delta^{-\beta} \|h\|_{  L_2 (\bR^7_T)  }  + N K   \delta^{-\beta}  \|\nabla_p u\|_{  L_2 (\bR^7_T)  }   + N  \delta^{- 2\kappa/\varkappa} \|u\|_{  L_2 (\bR^7_T)  },
\end{align*}
where we used the fact that $K > 1$.

$(ii)$ Applying Lemma \ref{lemma D.3} to Eq. \eqref{eq4.1.1}, we conclude that  $u \in S_r^{N} (\bR^7_T)$.  The desired estimate \eqref{eq4.1.4}  is obtained via  Theorem \ref{theorem D.2} in the same way as \eqref{eq4.1.3}.
\end{proof}

\begin{lemma}[embedding for the steady $S_p^{N} (\bR^{2d})$ space]
        \label{lemma 14.C.7}
    Let $d \ge 1$,  $p \in (1, \infty)$, and $u \in S_p^{N} (\bR^{2d})$ (see \eqref{eq1.2.4.0}).
    Then, the following assertions hold.

    $(i)$ For any $p \in (1, 2d)$ and $q > 1$ satisfying
    $$
        \frac{1}{q} > \frac{1}{p} -  \frac{1}{2d},
    $$
    we have
    \begin{align}
        \label{eq14.C.7.1}
        \|u\|_{L_q (\bR^{2d})} \lesssim_{d, p, q} \|u\|_{ S_p^{N} (\bR^{2d}) }.
        \end{align}

    $(ii)$   For any $p \in (1, 4d)$ and $q > 1$ satisfying
$$
        \frac{1}{q} >   \frac{1}{p} -  \frac{1}{4d},
$$
    one has
        \begin{align}
             \label{eq14.C.7.2}
        \|\nabla_v u\|_{L_q (\bR^{2d})} \lesssim_{d, p, q} \|u\|_{ S_p^{N} (\bR^{2d}) }.
        \end{align}

    $(iii)$ For $p > 2d$,
     \begin{align}
             \label{eq14.C.7.3}
        \|u\|_{ L_{\infty} (\bR^{2d}) } \lesssim_{d, p} \|u\|_{ S_p^{N} (\bR^{2d}) }.
            \end{align}
            Furthermore, if $p > 4d$ and $\alpha \in \big(0,  1 -\frac{4d}{p}\big)$,
            \begin{align}
                   \label{eq14.C.7.4}
                   \|[u, \nabla_v u]\|_{ C^{\alpha/3, \alpha}_{x, v} (\bR^{2d}) } \lesssim_{d, p, \alpha} \|u\|_{ S_p^{N} (\bR^{2d}) }.
            \end{align}
\end{lemma}

\begin{proof}
$(i) - (ii)$ We denote
$$
    f = v \cdot \nabla_x u - \Delta_v u + u.
$$
Let $\Gamma (t, x, v; t',  x', v')$ be the fundamental solution of the operator $(\partial_t + v \cdot \nabla_x) - \Delta_v$.
It is well known that (see, for example, \cite{PR_98})
$$
    \Gamma (t, x, v; t', x', v') = (t - t')^{- 2 d} \mathfrak{p} \bigg(\frac{x - x' - (t-t')v'}{(t-t')^{3/2}}, \frac{v-v'}{(t-t')^{1/2}}\bigg),
$$
where $\mathfrak{p}$ is a certain Gaussian function.
Then, we have
\begin{align*}
      &  u (x, v)  = \int_0^{\infty} \int_{\bR^{2d}} e^{-t} \, \Gamma (t, x, v; 0, x', v') f (x', v') \, dx' dv'dt \\
      &= \int_0^{\infty} \int_{\bR^{2d}} t^{-2d} e^{-t} \,\mathfrak{p} \bigg(\frac{x - x' }{t^{3/2}}, \frac{v-v'}{t^{1/2}}\bigg)  \tilde f (x', v') \, dx' dv' dt,
\end{align*}
where $\tilde f (x, v) = f (x + t v, v)$.

Next, let $r$ be the number defined by the relation
$$
    \frac{1}{r} + \frac{1}{p}= 1 + \frac{1}{q}.
$$
Then, by Minkowski and Young's inequalities,
\begin{align*}
     &   \|u\|_{ L_q (\bR^{2d})} \le \|f\|_{ L_p (\bR^{2d})} \int_0^{\infty} e^{-t} t^{-2d} \|\mathfrak{p} \big(\frac{\cdot }{t^{3/2}}, \frac{\cdot}{t^{1/2}}\big)\|_{L_r (\bR^{2d}) } \, dt.
     \end{align*}
Since $1 - 1/r < 1/(2d)$,  the second factor on the right-hand side is bounded by
$$
    N (d) \int_0^{\infty} e^{-t}  t^{-2d (1 - 1/r)} \, dt < \infty,
$$
 and hence, the estimate \eqref{eq14.C.7.1} is valid.  The second assertion \eqref{eq14.C.7.2} is proved in the same way.

$(iii)$ A simple application of  H\"older's inequality gives
\begin{align}
    \label{eq14.C.7.3.1}
   |u (x, v)| \lesssim_{d} \|f\|_{ L_p (\bR^{2d})}  \int_0^{\infty} e^{-t}  t^{-2d/p } \, dt
   \lesssim_{d, p} \|f\|_{ L_p (\bR^{2d})},
\end{align}
and hence,  \eqref{eq14.C.7.3} is true.
The proof of \eqref{eq14.C.7.4} follows from the identity
\begin{align*}
      &  \nabla_v u (x, v)  = \int_0^{\infty} \int_{\bR^{2d}} e^{-t} t^{-2d-1/2} \, (\nabla_v \mathfrak{p}) \bigg(\frac{x - x' - t v' }{t^{3/2}}, \frac{v-v'}{t^{1/2}}\bigg)  f (x', v') \, dx' dv' dt
\end{align*}
and the argument in  \eqref{eq14.C.7.3.1}. We omit the technical details.
\end{proof}

\section{Proof of Proposition \ref{proposition 7.2}}
    \label{section 7.2}

\begin{lemma}
            \label{lemma 6.8}
We invoke the assumptions of Proposition  \ref{proposition 27.4}   and let  $f$ be the finite energy solution to  \eqref{eq1.0}-\eqref{eq1.0.0}. We assume, additionally, that for some $0 \le  \theta_1 \le \theta$,
\begin{align}
\label{eq6.8.6}
&\partial_t [g,  \nabla_p g,   b, \nabla_p \cdot b,  c] \in L_{\infty} (\Sigma^T),\\ 
\label{eq6.8.7}
&\partial_t \eta \in   L_2 ((0, T) \times \Omega) W^{-1}_{2, \theta_1  } (\bR^3), 
\end{align}
and, for 
\begin{equation}
                \label{eq6.8.0}
\begin{aligned}
   & f_1 (x, p):= - v (p) \cdot \nabla_x f_0 (x, p) + \nabla_p \cdot (\sigma_{f_0} (x, p) \nabla_p f_0 (x, p))\\
    & - b (0, x, p) \cdot \nabla_p f_0 (x, p) - c (0, x, p) f_0 (x, p) + \eta (0, x, p)
\end{aligned}
\end{equation}
(understood in the sense of distributions)
one has 
$$
    f_1 \in L_{2, \theta_1 }  (\Omega \times \bR^3),
$$ 
where $\sigma_{f_0}$ is given by \eqref{eq6.0} with $g$ replaced with $f_0$.
In addition, we assume that $f_0 \in L_2 (\Omega)  W^1_{2} (\bR^3)$ is a finite energy  solution to the steady equation  \eqref{eq6.8.0} with SRBC (see Definition \ref{definition 27.1}), where $f_1$ is viewed as the r.h.s. 
Then,  
\begin{align*}
    \partial_t f \in  C ([0, T])  L_{2, \theta_1 } (\Omega \times \bR^3) \cap L_2 ((0, T) \times \Omega) W^1_{2, \theta_1 } (\bR^3),
 \end{align*}   
 and, furthermore,   $u = \partial_t f$  is a finite energy  solution to
 \begin{align}
\label{eq6.8.5}
&Y u - \nabla_p \cdot (\sigma_g \nabla_p u) + b \cdot \nabla_p u + c u = \eta_1, \,  \, z \in \Sigma^T, \\
& u (t, x, p) = u (t, x, R_x p), (t, x, p) \in \Sigma^T_{-}, \quad u (0, x, p) = f_1 (x, p), (x, p) \in \Omega \times \bR^3,\notag
\end{align}
where 
\begin{align}
\label{eq6.8.8}
    \eta_1  = \partial_t \eta - \big(-\nabla_p \cdot ((\partial_t \sigma_g) \nabla_p f) + (\partial_t b) \cdot \nabla_p f + (\partial_t c) f\big). 
\end{align}
\end{lemma}

\begin{proof}[Proof of Lemma \ref{lemma 6.8}]
For the sake of clarity, we consider the case when $\theta, \theta_1 = 0$. The argument in the remaining case is the same as the one presented here.
 Let us first consider the equation \eqref{eq6.8.5}.
By the definition of $\eta_1$ (see \eqref{eq6.8.8}), the assumptions of the present lemma, and the fact that $f \in  L_2 ((0, T) \times \Omega) W^{1}_{2} (\bR^3)$, we conclude
$$
    \eta_1 \in  L_2 ((0, T) \times \Omega) W^{-1}_{2} (\bR^3).
$$
Then, by Proposition  \ref{proposition 27.4},   the problem \eqref{eq6.8.5}  has a unique finite energy solution (see Definition \ref{definition 27.1}).
Furthermore, we denote
$$
    \widetilde f (t, x, p) = \int_0^t u (s, x, p) \, ds + f_0 (x, p).
$$
To prove the lemma, it suffices to show that $\widetilde f \equiv f$. 


Next, by using  a simple identity
$$
    (\xi_1 \xi_2) (t) = (\xi_1 \xi_2) (0) + \int_0^t [\xi_1' (s) \xi_2 (s) + \xi_1 (s) \xi_2' (s)]  \, ds
$$
with $\xi_1 = \sigma_g, b, c$ and $\xi_2 = \tilde f, \nabla_p \tilde f$, 
and \eqref{eq6.8.0} - \eqref{eq6.8.5},  we formally conclude that $\tilde f$ is a finite energy solution to the equation 
\begin{align*}
&Y \widetilde f (z) - \nabla_p \cdot (\sigma_g (z) \nabla_p \widetilde f (z))  +  b (z) \cdot  \nabla_{p} \widetilde f (z) 
+  c (z) \widetilde f (z) -  \eta (z) \\
& =    - \eta (0, x, p) + v (p) \cdot  \nabla_x f_0 (x, p) -  \nabla_p \cdot (\sigma_{f_0} (x, p) \nabla_p  f_0 (x, p)) \\
& + b (0, x, p) \cdot \nabla_p f_0 (x, p)  + c (0, x, p)  f_0 (x, p) + u (0, x, p)  \\
&+  \int_0^t \bigg( -\nabla_p \cdot  \big((\partial_t \sigma_g (s, x, p)) \nabla_p (\widetilde f - f) (s, x, p)\big)\\
&+ (\partial_t b (s, x, p)) \cdot \nabla_p (\widetilde f - f) (s, x, p)  + (\partial_t  c (s, x, p))  (\widetilde f - f) (s, x, p) \bigg) \, ds,
\end{align*}
with SRBC and the initial data $\tilde f (0, \cdot) \equiv f_0 (\cdot)$. 
We note that the sum of the non-integral terms on the right-hand side of the above identity equals $0$ due to \eqref{eq6.8.0}.
Hence, the function $w = \widetilde f - f$ is a finite energy  solution to
\begin{align}
    \label{eq6.8.10}
&Y  w (z) - \nabla_p \cdot (\sigma_g (z) \nabla_p w (z))  +  b (z) \cdot  \nabla_{p} w (z) 
+  c (z) w (z)\\
&=\int_0^t \bigg( -\nabla_p \cdot  \big((\partial_t \sigma_g (s, x, p)) \nabla_p w (s, x, p)\big) \notag \\
&+ (\partial_t b (s, x, p)) \cdot \nabla_p w (s, x, p)  + (\partial_t  c (s, x, p))  w (s, x, p) \bigg) \, ds \notag
\end{align}
with the SRBC and the initial data $w (0, \cdot) \equiv 0$.
To make the above argument rigorous, one needs to work with the weak formulations of Eqs. \eqref{eq27.2} and \eqref{eq6.8.5}, and use the fact that $f_0$ is a finite energy  solution to \eqref{eq6.8.0} with the SRBC.  

Finally, by applying an `energy' type identity \eqref{eq27.4.12} to Eq. \eqref{eq6.8.10}, using  
integration by parts in $p$, and the Cauchy-Schwarz inequality, we get
\begin{align*}
\|w (t, \cdot)\|^2_{ L_2 (\Omega \times \bR^3)} + \|\nabla_p w \|^2_{ L_2 (\Sigma^t) } 
\le N \,  t  (\|w \|^2_{ L_2 (\Sigma^t) } + \|\nabla_p w \|^2_{ L_2 (\Sigma^t) }), t \in [0, T],
\end{align*}
where $N$ is independent of $t$.
Taking $t \le (2N)^{-1}$ and using the Gronwall's inequality, we conclude that $w =  0$ on $\Sigma^{T_1}$ where $T_1 = \min\{(2N)^{-1}, T\}$. Similarly, we show that $w = 0$ on $\Sigma^t$ for $t \in [T_1,  \min\{T_1+(2N)^{-1}, T\}]$ and so on.  Thus, $f \equiv \widetilde f$. 
\end{proof}

\begin{proof}[Proof of Proposition \ref{proposition 7.2}]
The uniqueness follows from the  estimate \eqref{eq7.60} with vanishing `initial data' $f_{0, k}, \bE_{0, k}, \bB_{0, k}$.

To show the existence, we  consider the iteration scheme $[h_{(n)}, \bE_{(n)}, \bB_{(n)}], n \ge 0,$ such that $[h_{(0)}, \bE_{(0)}, \bB_{(0)}]  = [f_{0, 0}, \bE_{0, 0}, \bB_{0, 0}]$, and given $[h_1, \bE_1, \bB_1] = [h_{(n)}, \bE_{(n)}, \bB_{(n)}]$, 
the next iteration   $[h_2, \bE_2, \bB_2] = [h_{(n+1)}, \bE_{(n+1)}, \bB_{(n+1)}]$ is defined as the  strong solution to the system
\begin{align}
    \label{eqF.1}
	&Y h_2 + \bm{\xi} (\bE_g + v (p) \times \bB_g) \cdot \nabla_p h_2 -  \frac{\bm{\xi}}{2}  (v (p) \cdot \bE_g) h_2  -  A  h_2  \\
	&=   \bm{\xi}_1 (v (p) \cdot  \bE_1)  J^{1/2}  +  K h_1  + \Gamma (h_2, g) \notag\\
    \label{eqF.2}
 & h_2 (t, x, p) = h_2 (t, x, R_x p), \, \, z \in \Sigma^T_{-}, \quad  h_2 (0, \cdot) \equiv f_{0, 0},\\
    \label{eqF.3}
&	\partial_t \bE_2 - \nabla_x \times \bB_2 = - \int v (p) J^{1/2} (p) h_1 (p) \cdot \bm{\xi} \, dp,\\
    \label{eqF.4}
 &
	\partial_t \bB_2  + \nabla_x \times \bE_2 = 0, \\
    \label{eqF.5}
  &
	\nabla_x \cdot \bE_2 = \int J^{1/2}  h_1 (p) \cdot \bm{\xi} \, dp, \quad \nabla_x \cdot \bB_2 = 0,\\ 
    \label{eqF.6}
& (\bE_2 \times n_x)_{|\partial \Omega} = 0, \quad (\mathbf{B}_2 \cdot n_x)_{|\partial \Omega} = 0,\\
    \label{eqF.7}
    & \bE_2 (0, \cdot) \equiv \bE_{0, 0} (\cdot),  \quad  \bB_2 (0, \cdot) \equiv \bB_{0, 0} (\cdot).
 \end{align}
 We assume that $[h_1, \bE_1, \bB_1]$ satisfies 
    \begin{align}
    \label{eqF.8}
& \partial_t^k h_1 \in C ([0, T]) L_2 (\Omega \times \bR^3) \cap L_2 ((0, T)) W^1_2 (\Omega \times \bR^3), k \le m, \\
    \label{eqF.9}
& \partial_t^k [\bE_1, \bB_1] \in C ([0, T]) L_2 (\Omega), k \le m, \\
    \label{eqF.10}
&
    \partial_t^k h_1 (0, \cdot) = f_{0, k} (\cdot), \, (\text{see} \, \eqref{eq3.3.2}) \,  k \le m, \\
    \label{eqF.11}
    &\partial_t^k  [\bE_1, \bB_1] (0, \cdot) = [\bE_{0, k} \bB_{0, k}] (\cdot) \, \,  (\text{see} \, \eqref{eq3.3.3} - \eqref{eq3.3.4}), k \le m,\\
    \label{eqF.12}
& \partial_t \rho_k + \nabla_x \cdot \mathbf{j}_k = 0 \, \text{(in the sense of  distributions)}, k \le m, \\
& 
\text{where} \,  \rho_k (t, x) = \int_{\bR^3} J^{1/2} (p) \partial_t^k h_1 (t, x, p) \cdot \bm{\xi} \, dp,  \notag \\
&\mathbf{j}_k  (t, x) =  \int_{\bR^3} J^{1/2} (p) v (p)  \partial_t^k h_1 (t, x, p) \cdot \bm{\xi} \, dp, \notag \\
    \label{eqF.13}
& y^{(\lambda)}  (h_1, \bE_1, \bB_1) \le N_1, \quad \sum_{k=0}^{m-1} \|e^{-\lambda \tau} \partial_t^k [\bE_1, \bB_1]\|^2_{ L_{\infty} ((0, T)) W^1_2 (\Omega) } \\
& +  \sum_{k=0}^{m-8} \|e^{-\lambda \tau} \partial_t^k [\bE_1, \bB_1]\|^2_{ L_{\infty} ((0, T) \times \Omega) } \le N_1 N_2, \notag
\end{align}
where $N_1, \lambda > 1$ are constants depending only on $r_1, \ldots, r_4, \Omega, \theta, f_{0, k}, \bE_{0, k}, \bB_{0, k}, k \le m$,  $N_2 = N_2 (\Omega) > 1$,  and 
\begin{align}
    \label{eqF.14}
   & y^{(\lambda)}  (h_1, \bE_1, \bB_1) =    \sup_{\tau \le T} \cI^{(\lambda)} (h_1, \bE_1, \bB_1, \tau) + \int_0^{T} \cD^{(\lambda)}   (h_1, \bE_1, \bB_1, \tau) \, d\tau, \\
    \label{eqF.15}
   & \cD^{(\lambda)} (h_1, \bE_1, \bB_1, \tau)  =   \sum_{k=0}^m  \bigg(\| e^{-\lambda \tau} (\sqrt{\lambda}  |\partial_t^k f (\tau, \cdot)| + |\nabla_p \partial_t^k f (\tau, \cdot)|)\|^2_{  L_2 (\Omega \times \bR^3) }\\ 
 &  +  \lambda  \|e^{-\lambda \tau} \partial_t^k [\bE_1, \bB_1] (\tau, \cdot)\|^2_{ L_2 (\Omega) }\bigg) \notag\\
   &  + \sum_{k=0}^{ m - 4 }  \| e^{-\lambda \tau} (\sqrt{\lambda}  |\partial_t^k f (\tau, \cdot)| + |\nabla_p \partial_t^k f (\tau, \cdot)|)\|^2_{  L_{2, \theta/2^{k}  } (\Omega \times \bR^3) }, \notag\\
    \label{eqF.16}
   & \cE^{(\lambda)} (h_1, \bE_1, \bB_1, \tau)  =  \sum_{k=0}^m    \bigg(\|e^{-\lambda \tau} \partial_t^k  f (\tau, \cdot) \|^2_{ L_2  (\Omega \times \bR^3) } 
   + \|e^{-\lambda \tau}\partial_t^k [\bE_1, \bB_1] (\tau, \cdot)\|^2_{ L_2 (\Omega) }\bigg) \notag  \\
   & +  \sum_{k=0}^{ m - 4 } \|e^{-\lambda \tau} \partial_t^k  f (\tau, \cdot)\|^2_{ L_{2, \theta/2^{k} } (\Omega \times \bR^3) },\\
    \label{eqF.17}
  & \cI^{(\lambda)} (h_1, \bE_1, \bB_1, \tau)  = \cE^{(\lambda)} (h_1, \bE_1, \bB_1, \tau) \\
  & +  \sum_{k=0}^{m-8}   \bigg(\|e^{-\lambda \tau} \partial_t^k  h_1 (\tau, \cdot) \|^2_{    L_{\infty} (\Omega) W^1_{\theta/2^{k+9}} (\bR^3)       }
     + \sum_{s \in \{2, r_4\} } \|e^{-\lambda \tau} D_p^2 \partial_t^k  h_1\|^2_{ L_s (\Sigma^{\tau}) } \bigg). \notag
\end{align}

We will show that the following assertions are true.
\begin{itemize}
     \item[i)] 
    $\partial_t^k [\bE_2, \bB_2], k \le m,$ is a weak solution to  Maxwell's equations \eqref{eqF.3} - \eqref{eqF.4} formally differentiated $k$ times with respect to $t$  with the perfect conductor BC and $\partial_t^k [\bE_2, \bB_2] (0, \cdot) = [\bE_{0, k}, \bB_{0, k}], k \le m$. For $k \le m-1$, the same pair is a strong solution. In addition, the identities in \eqref{eqF.5} formally differentiated $k$ times in $t$ are  valid.
    
    \item[ii)] $\partial_t^k h_2, k \le m,$ is a finite energy solution to the equation \eqref{eqF.1} differentiated formally $k$ times with respect to $t$ with 
    the initial conditions $\partial_t^k h_2 (0, \cdot) = f_{0, k} (\cdot)$ and with the SRBC.
        \item[iii)] the assumptions \eqref{eqF.8} - \eqref{eqF.12} hold with  $[h_2, \bE_2, \bB_2]$  in place of  $[h_1, \bE_1, \bB_1]$,
     \item[iv)]
     \begin{align}
        \label{eqF.18}
       &  y^{(\lambda)}  (h_2, \bE_2, \bB_2) \le N_1, \quad \sum_{k=0}^{m-1} \|e^{-\lambda \tau} \partial_t^k [\bE_2, \bB_2]\|^2_{ L_{\infty} ((0, T)) W^1_2 (\Omega) } \\
         &+ \sum_{k=0}^{m-8} \|e^{-\lambda \tau} \partial_t^k [\bE_2, \bB_2]\|^2_{ L_{\infty} ((0, T) \times \Omega) } \le N_1 N_2. \notag
     \end{align}
\end{itemize}

The  weight  $e^{-\lambda \tau}$ enables us to close the $L_{\infty, \theta/2^{k+9}} (\Sigma^T)$ estimate via the unsteady $S_p$ a priori estimates (see \eqref{eq2}, \eqref{eq2.3.1} -\eqref{eq2.3.1.5} in Propositions \ref{proposition 2.1}  and \ref{proposition 2.3}) by leveraging the  $L_{\infty}$ and $L_2$ control in the estimates of the `free' terms  $K (\partial_t^k h_1)$ and  $\bm{\xi}_1 (v (p) \cdot  \partial_t^k  \bE_1)  J^{1/2}$. See \eqref{eqF.23} and the paragraph below.
Furthermore,  the control of the last term in \eqref{eqF.17} is needed to estimate the  $L_{r_i}$-norm   of the free term $K (\partial_t^k h_1)$, which appears in the unsteady $S_p$ estimate.

If the assertions $i) - (iv)$ are true, then, by repeating a limiting argument in the proof of Theorem \ref{theorem 5.1} in Section \ref{section 8}, we conclude that there exist $[f, \bE_f, \bB_f]$ such that the desired assertions $a)- e)$ in Proposition \ref{proposition 7.2} are valid (see p. \pageref{eq7.90}). 

\textbf{Proof of $(i) - (iv)$.}
We will prove assertions in the following order: $(i)$, $(iii)$, $(iv)$, and $(ii)$.

 $(i)$ We use the standard existence/uniqueness results for weak/strong solutions to Maxwell's equations with the perfect conductor boundary conditions (see, for example, Chapter $(vii)$, Section 4 in \cite{DL_76}).
   In particular, 
   the differentiated in $t$ equations in \eqref{eqF.5} are satisfied 
   due to the continuity equations for $\partial_t^k h_1, k \le m,$ (see \eqref{eqF.12}) and the compatibility conditions  
   \eqref{eq3.3.8} - \eqref{eq3.10.1}  on the initial data $[\bE_{0, k}, \bB_{0, k}]$ combined with the fact that $\partial_t^k h_1 (0, \cdot) \equiv f_{0, k}$ (see \eqref{eqF.10}).
   Thus, the assertion $(i)$ is valid.

$(iv)$ In this argument, $N  = N (r_1, \ldots, r_4, \theta, \Omega, m, \alpha)$. First, we prove the estimates \eqref{eqF.18}, assuming that $(ii) - (iii)$ are true. 
We modify the proof of the estimate \eqref{eq5.1.4} given in Section \ref{section 7.1}.

\textit{$L_{\infty}$ estimate of $\partial_t^k [\bE_2, \bB_2], k \le m-8$.}
We establish the second estimate in \eqref{eqF.18} and specify the constant $N_2$. In this argument, $N_2  = N_2 (\Omega)$ is a constant which might change from line to line.
By applying $W^1_2$ div-curl estimate in \eqref{eq3.0.0} to Maxwell's equations differentiated $k$ times in $t$ and rewritten as div-curl systems (see \eqref{eq10.4} - \eqref{eq10.5}), and using the first bound in \eqref{eqF.13},   we have
\begin{align}
    \label{eqF.20}
  &  \sum_{k=0}^{m-1} \|e^{-\lambda \tau} \partial_t^k [\bE_2, \bB_2]\|^2_{ L_{\infty} ((0, T)) W^1_2 (\Omega) } \\
  & \le N_2  \sum_{k=0}^{m} \|e^{-\lambda \tau} \partial_t^k [\bE_2, \bB_2]\|^2_{ L_{\infty} ((0, T)) L_2 (\Omega) }
  + N_2 \sum_{k=0}^{m-1} \|e^{-\lambda \tau} \partial_t^k h_1\|^2_{ L_{\infty} ((0, T)) L_2 (\Omega \times \bR^3) } \le N_2 N_1 \notag
\end{align}
(recall $N_1 > 1$),  which gives the bound of the first term in the second estimate in \eqref{eqF.18}.

Next,  using the $W^1_6$ div-curl estimate, the Sobolev embedding $W^1_2 \subset L_6$, and \eqref{eqF.20}, and the $L_{\infty}^{t, x, p}$ bound of $\partial_t^k h_1, k \le m-8,$ in \eqref{eqF.13}, we conclude
\begin{align*}
  &  \sum_{k=0}^{m-8} \|e^{-\lambda \tau} \partial_t^k [\bE_2, \bB_2]\|^2_{ L_{\infty} ((0, T)) W^1_6 (\Omega) } \\
  & \le N_2  \sum_{k=0}^{m-  7 } \|e^{-\lambda \tau} \partial_t^k [\bE_2, \bB_2]\|^2_{ L_{\infty} ((0, T)) L_6 (\Omega) }
  + N_2 \sum_{k=0}^{m-8} \|e^{-\lambda \tau} \partial_t^k h_1\|^2_{ L_{\infty} (\Sigma^T) } \le N_2 N_1.
\end{align*}
Finally, thanks to the embedding $W^1_6 \subset L_{\infty}$, we obtain the desired estimate of the second term in the second estimate in \eqref{eqF.18}.

\textit{Total energy estimate.}
First, we derive  an estimate of the total instant energy and dissipation
\begin{align}
    \label{eqF.25}
  &  \sup_{\tau \le T} \cE^{(\lambda)} (h_2, \bE_2, \bB_2, \tau) + \int_0^T  \cD^{(\lambda)} (h_2, \bE_2, \bB_2, \tau) \, d\tau \\
  & \le N \sqrt{\varepsilon_0} y^{(\lambda)} (h_2, \bE_2, \bB_2) +  N \lambda^{-1}  N_1 N_2 
+  N \cE^{(\lambda)} (h_2, \bE_2, \bB_2, 0), \notag
\end{align}
where  $N_1$ and $N_2$ are the constants in \eqref{eqF.13}.
We follow the argument of Step 1 in Section \ref{section 7.1} (see p. \pageref{eq7.80}) by making the following minor modifications: 
\begin{itemize}
\item  we add the weight $e^{-2 \lambda t}$ to all the terms  therein, 
    \item we modify the integrals $I_1$ and $I_2$. 
\end{itemize}
In particular, in $I_1$ (see \eqref{eq7.11}), one needs to replace $L f$ with $-A h_2 - K h_1$. We then apply the estimate \eqref{eq2.4.1}  in Lemma  \ref{lemma 2.4}:
\begin{align}
    \label{eqF.21}
  &  \int_{\Sigma^{\tau}}   \langle - A (\partial^k_t h_2), p_0^{2 \theta_k} \partial^k_t h_2  \rangle e^{-2 \lambda t}   \, dz \\
  &  \ge \kappa \| e^{- \lambda t} \nabla_p \partial^k_t h_2\|^2_{ L_{2,  \theta_k     }   (\Sigma^{\tau}) }  - N \|e^{- \lambda t}  \partial^k_t h_2\|^2_{L_{2  } (\Sigma^{\tau})}. \notag
\end{align}
Furthermore, using the symmetry of the operator $K$ and the bound \eqref{eq2.4.2}, we get 
\begin{align}
    \label{eqF.21.1}
   & -\int_{\Sigma^{\tau}}  \big(K (\partial^k_t h_1)) (\partial_t^{k_1} h_2) e^{- 2\lambda t} \, dz\\
   & \ge -  (\kappa/2) \| e^{- \lambda t}  \partial^k_t h_2\|^2_{ L_{2}  ((0, \tau) \times \Omega) W^1_2 (\bR^3)  }  
     - N \|e^{- \lambda t}  \partial^k_t h_1\|^2_{L_{2  } (\Sigma^{\tau})}. \notag
\end{align}
 We note that by the assumption \eqref{eqF.13} and the presence of the factor $\lambda$ in the definition of $\cD^{(\lambda)} (h_1, \bE_1, \bB_1, \tau)$ (see \eqref{eqF.15}),  the last term on the r.h.s. of \eqref{eqF.21.1} can be replaced with $- N  N_1 \lambda^{-1}$.
Furthermore,  in the term $I_2$ in \eqref{eq7.11}, we replace $\bE_f$ with $\bE_1$ and proceed  as in  \eqref{eqF.21}.

\textit{Unsteady $S_p$ estimate.} 
Here, we estimate the remaining term in \eqref{eqF.17}, which is the sum of squares of weighted $L_{\infty}^{t, x} W^1_{\infty}$ norms. This will be done via the \textit{unsteady} $S_p$ estimate.
 We first note that $u = e^{-\lambda t} \partial_t^k h_2, k \le m-8,$  formally satisfies the identities
\begin{align}
    \label{eqF.22}
    &Y  u  -  \nabla_p \cdot (\sigma_{g^{+}  + g^{-} }   \nabla_p u) 
    +  \bm{\xi} (\bE_g + v (p) \times \bB_g - a_g) \cdot \nabla_p u  \\
    & +(\lambda + C_g - \frac{\bm{\xi}_1}{2}  v (p) \cdot \bE_g) u \notag \\
    &
        =  e^{-\lambda t}  \bigg(K (\partial_t^k   h_1) + \bm{\xi} (v (p) \cdot \partial_t^k \bE_1) J^{1/2}  + 1_{k > 0} \sum_{j=1}^3 \sum_{k_1+k_2 = k, k_1 \ge 1 }\eta_{k_1, k_2}^j\bigg),  \notag\\
    & u (t, x, p) = u (t, x, R_x p), z \in \Sigma^T_{-},  \quad u (0, \cdot)= f_{0, k} (\cdot) \, \text{(see \eqref{eq3.3.2})},  \notag\\
     & \eta^1_{k_1, k_2} = -\bm{\xi} \partial_t^{k_1}(\bE_g + v (p) \times  \bB_g ) \cdot \nabla_p (\partial_t^{k_2} h_2) + \frac{\bm{\xi}}{2}  (v (p) \cdot \partial_t^{k_1}  \bE_g) \partial_t^{k_2} h_2, \notag\\
     & \eta^2_{k_1, k_2} = \big(\partial_{p_i } \partial_t^{k_1}  \sigma_{g^{+}  + g^{-} }^{i j}
     - \partial_t^{k_1} a_g^i\big) (\partial_{ p_i} \partial_t^{k_2}  h_2) + (\partial_t^{k_1} C_g) \partial_t^{k_2} h_2,  \notag\\
          & \eta^3_{k_1, k_2} =  (\partial_t^{k_1}   \sigma_{g^{+}  + g^{-} }^{i j}) (\partial_{p_i p_j} \partial_t^{k_2}  h_2). \notag
 \end{align}
For $i \le 4$ and $k \le m-8$, we apply the unsteady $S_p$ estimates in \eqref{eq2}, \eqref{eq2.3.1}, and \eqref{eq2.3.1.5} with $\theta/2^{k+2i}$ in place of $\theta$ and $\kappa = \frac{1}{2}$, and we get
\begin{align}
    \label{eqF.24}
    &\|e^{-\lambda t}\partial_t^k h_2\|^2_{ S_{r_i, \theta/2^{k+2i+1}} (\Sigma^T)  } + 1_{i=4} \|e^{-\lambda t} \partial_t^k h_2\|^2_{ L_{\infty} ((0, T) \times \Omega) W^1_{\infty, \theta/2^{k+   2 i  + 1    } } (\bR^3)  } \\
    & \le  N \sum_{s \in \{2, r_i\} } \bigg(\|e^{-\lambda t}(\text{r.h.s of  }  \eqref{eqF.22})\|^2_{ L_{s, \theta/2^{k+2i}} (\Sigma^T)  } + \lambda^2 \|f_{0, k}\|^2_{ S_{s, \theta/2^{k+2i}} (\Omega \times \bR^3)   }\bigg) \notag\\
  & + N \|e^{-\lambda t}\partial_t^k h_2\|^2_{ L_{2,  \theta/2^{k+2i}  } (\Sigma^T)  }.  \notag
\end{align}
We follow the argument in the proof of \eqref{eq7.96} in Proposition \ref{proposition 7.3}  with minor modifications:
\begin{itemize}
    \item The loss of decay in the $p$ variable is different than that  in the $L_{\infty}^t S_p$ estimate in \eqref{eq7.96}   since the term $\partial_t^{k+1} f$ is on the l.h.s. in the present argument (cf. $b)$ on p. \pageref{b)}).
    \item The main difference is the estimate of the `free' terms $e^{-\lambda t} K \partial_t^k h_1$ and $v (p) \cdot \partial_t^k \bE_1 J^{1/2}$, as the rest of the terms on the r.h.s. of \eqref{eqF.22} are handled in the same way as in the proof of \eqref{eq7.96} (cf. \eqref{eq7.40} and \eqref{eq7.41}).
\end{itemize}
Let us consider the first two terms on the r.h.s. of \eqref{eqF.22}. 
By interpolating between $L_2$ and $L_{\infty}$, exploiting the presence of the factor $\lambda$ in front of the $L_2$ norm of $\bE_1$ in $\cD^{(\lambda)} (h_1, \bE_1, \bB_1, \tau)$, and using the $L_{\infty}^{t, x}$  bound of $\bE_1$ in \eqref{eqF.13}, we get
\begin{align}
    \label{eqF.23}
   & \|e^{-\lambda t} v (p) \cdot \partial_t^k \bE_1 J^{1/2}\|^2_{ L_{r_i} ((0, T) \times \Omega) }  \\
   & \le   \|e^{-\lambda t} \partial_t^k \bE_1\|^{4/r_i}_{L_2 ((0, T) \times \Omega)} \|e^{-\lambda t} \partial_t^k \bE_1\|^{2-4/r_i}_{L_{\infty} ((0, T) \times \Omega)} \le N_2^{ 1-2/r_i  } N_1 \lambda^{-2/r_i}. \notag
\end{align}

Next, by \eqref{eqG.1.2} Lemma \ref{lemma G.1} and interpolation inequality, we have (cf. \eqref{eq7.97})  
\begin{align*}
 &   \|e^{-\lambda t} K (\partial_t^k h_1)\|^2_{  L_{r_i, \theta/2^{k+2i}} (\Sigma^T)  }  \le N \|e^{-\lambda t} \partial_t^k h_1\|^2_{ L_{r_i} ((0, T) \times \Omega) W^1_{r_i} (\bR^3)       } \\
 & 
 \le \lambda^{-1/r_i} \|e^{-\lambda t} D^2_p \partial_t^k h_1\|^2_{ L_{r_i} (\Sigma^T)  }
 + N \lambda^{1/r_i}  \|e^{-\lambda t}  \partial_t^k h_1\|^2_{ L_{r_i} (\Sigma^T)  }.
  \notag
\end{align*}
Furthermore, since $k \le m-8$, 
by interpolating between $L_2$ and $L_{\infty}$ and using the bounds of $h_1$ in \eqref{eqF.13} (cf. \eqref{eqF.23}), the last term is bounded by
\begin{align*}
 N    N_1 \lambda^{1/r_i} \lambda^{-2/r_i}  = N N_1 \lambda^{-1/r_i}.
\end{align*}
By the above argument, H\"older's  inequality, and the fact that  the last term on the r.h.s. in \eqref{eqF.17} is bounded by $N_1$ (see \eqref{eqF.13}), we get
\begin{align}
    \label{eqF.28}
 &  \sum_{ s \in \{2, r_i\} }  \|e^{-\lambda t} K (\partial_t^k h_1)\|^2_{  L_{s, \theta/2^{k+2i} } (\Sigma^T) }    \\
 &  \le 2 \lambda^{-1/r_i} \sum_{ s \in \{2, r_i\} }    \|e^{-\lambda t}  D^2_p \partial_t^k h_1\|^2_{ L_{s }   (\Sigma^T)  } 
   +  N    N_1 \lambda^{-1/r_i} \le  N    N_1 \lambda^{-1/r_i}. \notag
\end{align}

Thus, combining \eqref{eqF.24} with the estimates of the `free terms' \eqref{eqF.23} - \eqref{eqF.28} and with the bounds of nonlinear terms (cf. \eqref{eq7.40} - \eqref{eq7.41}),    we obtain
\begin{align}
    \label{eqF.26}
   & 
   \sum_{k=0}^{m-8} \bigg(\|e^{-\lambda t} \partial_t^k h_2\|^2_{ L_{\infty} ((0, T) \times \Omega) W^1_{\infty, \theta/2^{k+9} } (\bR^3)  }
   + \sum_{i=1}^4 \|e^{-\lambda t}\partial_t^k h_2\|^2_{ S_{r_i, \theta/2^{k+2i+1}} (\Sigma^T)  } \bigg) \\
 &  \le N \varepsilon_0 y^{(\lambda)} (h_2, \bE_2, \bB_2)  
   +   N N_1 N_2 \lambda^{ -1/r_4 } +   N \lambda^2  \mathcal{S}_f (0),\notag
\end{align}
where
$$
    \mathcal{S}_f (0): = \sum_{k=0}^{m-8} \sum_{ s \in \{2, r_4\} } \|f_{0, k}\|^2_{ S_{s, \theta/2^{k+8} } (\Omega \times \bR^3) }.
$$

Finally, gathering \eqref{eqF.25} and \eqref{eqF.26} gives
\begin{align*}
& y^{(\lambda)} (h_2, \bE_2, \bB_2) \le N \sqrt{\varepsilon_0} y^{(\lambda)} (h_2, \bE_2, \bB_2) +   N  \lambda^{  -1/r_4  } N_1  \\
& +  N \cE_f (0) +  N \lambda^2 \mathcal{S}_f (0),
\end{align*}
where $\cE_f (0)$ is defined in \eqref{eq5.1.1}.
Choosing $\varepsilon_0 < (2N)^{-2}$ gives
$$
 y^{(\lambda)} (h_2, \bE_2, \bB_2) \le N (N_1 \lambda^{-1/r_4} + \cE_f (0) +  \lambda^2 \mathcal{S}_f (0)).
$$
Furthermore, choosing $\lambda > (4 N)^{r_4} + \lambda_0$ gives  $N_1 N \lambda^{-1/r_4} < N_1/4$.
Finally, choosing 
$
    N_1 > (4/3) N  \big(\cE_f (0) +  \lambda^2 \cS_f (0)\big),
$ we obtain
$$
     y^{(\lambda)} (h_2, \bE_2, \bB_2)  \le N_1,
$$
as desired.

$(ii)$ First, we note that by the estimates of the free terms \eqref{eqF.23} - \eqref{eqF.28}, the assumption on $f_{0, 0}$ in the statement of Theorem \ref{theorem 5.1},  and Propositions \ref{proposition 2.1} - \ref{proposition 2.3}, the problem \eqref{eqF.22} with $k = 0$ has a unique strong  solution $h_2$, and, in addition,
\begin{align}
     \label{eqF.29}
      &  h_2 \in C ([0, T]) L_{2, \theta} (\Omega \times \bR^3) \cap L_2 ((0, T) \times \Omega) W^1_{2, \theta} (\bR^3) \\
   & \cap S_{r_i, \theta/2^{2i+1}} (\Sigma^T) \cap L_{\infty} ((0, T) \times \Omega) W^{1}_{\infty, \theta/2^{k+9}} (\bR^3), i = 1, \ldots, 4. \notag
\end{align}
Next, we use an induction argument. 

\textbf{Claim 1.} We assume that for some $k_0 \in \{1, \ldots, m-8\}$,   and all $k \le k_0 -1$, one has
\begin{align}
  \label{eq7.2.11}
& \partial_t^k h_2 \in C ([0, T]) L_{2, \theta/2^k} (\Omega \times \bR^3) \cap L_2 ((0, T) \times \Omega) W^1_{2, \theta/2^k} (\bR^3),   \\
 \label{eq7.2.11.1}
& \partial_t^k h_2 \in S_{r_i, \theta/2^{k+2i+1}} (\Sigma^T)  \\
& \cap L_{\infty} ((0, T) \times \Omega) W^{1}_{\infty, \theta/2^{k+9}} (\bR^3),  \, i = 1, \ldots, 4,  \notag\\
\label{eq7.2.13} 
 &   u = e^{-\lambda t} \partial_t^k f \, \text{is a  strong solution to \eqref{eqF.22}. } 
\end{align}
 Then,  we claim that  \eqref{eq7.2.11}  - \eqref{eq7.2.13} hold for all $k \le k_0$.

\textbf{Claim 2.} Invoke the definition of $\theta_k$ in \eqref{eq7.98}.  We assume that for some $k_0 \in \{m - 7, \ldots, m\}$  and all $k \le k_0-1$, one has
\begin{align}
    \label{eqF.31}
 & \partial_t^k h_2 \in C ([0, T]) L_{2, \theta_k} (\Omega \times \bR^3) \cap L_2 ((0, T) \times \Omega) W^1_{2, \theta_k} (\bR^3), \\
    \label{eqF.32}
 &   u = e^{-\lambda t} \partial_t^k f \, \text{is a finite energy  solution to \eqref{eqF.22}}. 
\end{align}
Then, \eqref{eqF.31}  - \eqref{eqF.32} hold for all $k \le k_0$.  

\textbf{Proof of Claim 1.}
 To justify the differentiation with respect to $t$ and \eqref{eq7.2.11}    with $k_0$ in place of $k$, we use Lemma \ref{lemma 6.8} with $\partial_t^{k_0-1} h_2$ in place of $f$,  $f_0$ and $f_1$ replaced with $f_{0, k_0-1} \in L_{2, \theta/2^{k_0-1} } (\Omega \times \bR^3)$ and $f_{0, k_0} \in L_{2, \theta/2^{k_0}  } (\Omega \times \bR^3)$, respectively, and
\begin{align}
  &   b =  \pm (\bE_g + v (p) \times \bB_g) - a_g,  c = (C_g \mp \frac{1}{2}  v (p) \cdot \bE_g),  \notag \\
  \label{eqF.34}
  & \eta = \text{r.h.s. of \eqref{eqF.22} with $k$ replaced with $k_0-1$}.
\end{align}
We check the conditions of Lemma \ref{lemma 6.8}. First,  it follows from the argument of \eqref{eqF.25} that 
\begin{align}
    \label{eqF.35}
    \eta \in  L_2 ((0, T) \times \Omega) W^{-1}_{2, \theta/2^{k_0-1}} (\bR^3), 
    \quad  \partial_t \eta \in  L_2 ((0, T) \times \Omega) W^{-1}_{2, \theta/2^{k_0}} (\bR^3).
\end{align}
Finally, we check the condition $\nabla_p \cdot b \in L_{\infty} (\Sigma^T)$ with $b = a_g$, where $a_g$ defined in \eqref{eq6.1}.
We note that
\begin{align*}
    & \partial_{p_i} a_g^i (t, x, p) =  -  \partial_{p_i}  \int \Phi^{ i j } (P, Q)   J^{1/2} (q)    \frac{p_i}{2 p_0}  g (t, x, q) \cdot (1, 1) \, dq \\
 &	+  \partial_{p_i} \int \Phi^{ i j } (P, Q)   J^{1/2} (q)  \partial_{q_j}  g (t, x, q) \cdot (1, 1)\, dq =  I_1 + I_2.
\end{align*}
By the estimate \eqref{eqB.2.1} with $k = 1$,
\begin{align}
    \label{eqF.33}
        \|I_1\|_{L_{\infty} (\Sigma^T)  } \le  N \|g\|_{ L_{\infty} ((0, T) \times \Omega) W^1_2 (\bR^3) }.
\end{align}
Next, to handle $I_2$, we will use the identity \eqref{eqB.5.1}:
   \begin{align}
        \label{eqB.5.1.1}
     &  \partial_{p_i} \int    \Phi^{ i j } (P, Q)  J^{1/2} (q)   \partial_{q_j}  g ( q) \, dq  \\
     & =   \partial_{p_i}   \int    \Phi^{ i j } (P, Q)  J^{1/2} (q)  \frac{ q_j}{2 q_0}   g (q)  \, dq  \notag\\
    &  - 4  \int  \frac{P \cdot Q}{p_0 q_0} \bigg((P \cdot Q)^2 - 1\bigg)^{-1/2}  J^{1/2} (q)   g (q) \, dq - \kappa (p) J^{1/2} (p) g (p), \notag
     \end{align}
     where $\kappa (p) = 2^{7/2} \pi p_0 \int_0^{\pi} (1+|p|^2 \sin^2 \theta)^{-3/2} \sin (\theta) \, d\theta$.
By \eqref{eqB.2.1} with $k = 1$,  the first term on the r.h.s. of \eqref{eqB.5.1.1} is bounded by the r.h.s. of \eqref{eqF.33}.
The remaining terms are handled similarly.
Thus, $\|\nabla_p \cdot a_g\|_{ L_{\infty} ((0, T) \times \Omega \times \bR^3) }$ is bounded by the r.h.s. of \eqref{eqF.33}.
Hence, by Lemma \ref{lemma 6.8}, $\partial_t^{k_0} h_2$ is a finite energy solution to \eqref{eqF.22}, and \eqref{eq7.2.11}   holds with $k$ replaced with $k_0$, as claimed.

Next, to deduce that $\partial_t^{k_0} h_2$ is a strong solution that satisfies the desired $S_{r_i}$ regularity in \eqref{eq7.2.11.1}, we use Propositions \ref{proposition 2.1}  - \ref{proposition 2.3}   combined with the argument of \eqref{eqF.26}.   Thus, \textbf{Claim 1} is proved.

\textbf{Proof of Claim 2.} We repeat the proof of \textbf{Claim 1} with one minor modification. We note that to apply Lemma \ref{lemma 6.8}, we need \eqref{eqF.35} to hold, where $\eta$ is defined in \eqref{eqF.34}.
This estimate was established in the proof of the energy bound \eqref{eqF.25}. See the argument of \eqref{eq7.22}. In particular, to handle the cubic terms (see \eqref{eq7.15} - \eqref{eq7.16}),  we need to control certain weighted $L_{\infty}^{t, x, p}$ norms of $\partial_t^k [h_2, \nabla_p h_2], k \le m/2,$
which  was done in \textbf{Claim 1} (see \eqref{eq7.2.11.1}).

$(iii)$ Since $(ii)$ is valid, we only need to verify the continuity equation \eqref{eqF.12}.
To this end, we note that the functions $H_j = J + J^{1/2} h_j, j = 1, 2,$ and $[\bE_j, \bB_j], j = 1, 2$, satisfy  the identities
\begin{align}
  & \label{eq5.6}
    Y H_2^{+} +  (\mathbf{E}_g + v (p)   \times \mathbf{B}_g) \cdot \nabla_p H_2^{+} \\
    & = \cC (H_2^{+}, G^{+}+ G^{-})+ \cC \big(J, J^{1/2} (h_2^{+}+h_2^{-}-g^{+}-g^{-})\big) \notag \\
    &   - (\bE_1 - \bE_g) \cdot \nabla_p J  \notag \\
   & \label{eq5.7}
   Y H_2^{-} - (\mathbf{E}_g + v (p)   \times \mathbf{B}_g) \cdot \nabla_p H_2^{-} \notag  \\
   & = \cC (H_2^{-}, G^{-} +  G^{+}) + \cC \big(J, J^{1/2} (h_2^{+}+h_2^{-}-g^{+}-g^{-})\big)\\
    & +(\bE_1 - \bE_g) \cdot \nabla_p J.  \notag 
 \end{align}
Differentiating formally the above identities $k$ times in $t$ and integrating over $p \in \bR^3$, we obtain the continuity equation \eqref{eqF.12}.
\end{proof}

\printindex

\end{document}